\documentclass[11pt]{article}
\usepackage[letterpaper, left = 1in, right = 1in, top = 1 in, bottom = 1in]{geometry}
\usepackage[utf8]{inputenc}
\usepackage{amsmath, amssymb, amsthm, bbm, bm}
\usepackage{authblk, graphicx, subcaption, xcolor}
\usepackage{enumerate}
\usepackage{subfiles}
\usepackage[sc]{mathpazo}
\usepackage[linktocpage=true]{hyperref}
\usepackage[normalem]{ulem}
\hypersetup{
 colorlinks,
 linkcolor=blue,          
 citecolor=red!50!black,       
 filecolor=black,   
 urlcolor=black, 
 pdftitle={},
 pdfauthor={},
 pdfcreator={},
 pdfsubject={},
 pdfkeywords={}
}
\usepackage[sort&compress,numbers]{natbib}
\usepackage{tikz}

\newcommand{\E}{\mathbb{E}}

\numberwithin{equation}{section}
\newtheorem{theorem}{Theorem}[section]
\newtheorem{lemma}[theorem]{Lemma}

\newtheorem{definition}[theorem]{Definition}
\newtheorem{proposition}[theorem]{Proposition}
\newtheorem{remark}{Remark}

\newcommand\blfootnote[1]{%
  \begingroup
  \renewcommand\thefootnote{}\footnote{#1}%
  \addtocounter{footnote}{-1}%
  \endgroup
}

\author{
  Zihao He$^1$,  
  Souvik Dhara$^2$,
  Debankur Mukherjee$^3$
}

\title{Waning Immunity Fails to Restore a Positive Epidemic Threshold on Power--Law Networks}
\date{}
\begin{document}
\maketitle

\begin{abstract} 
In a seminal work, Chatterjee and Durrett (2009) established that for the SIS epidemic process on random graphs with power-law degree distributions, the infection survives for an exponentially long time (in the network size) for any fixed,~positive infection rate. Equivalently, the critical infection rate separating polynomial and exponential survival regimes is zero.

In contrast, a substantial body of work in the physics literature conjectures, based primarily on numerical evidence and heuristic mean-field arguments, that introducing waning immunity (as in the SIRS process) yields a strictly positive critical infection rate on random graphs with power-law degrees; see, e.g., Pastor-Satorras et al.~(2015), Ferreira et al.~(2016), Silva et al.~(2022). In particular, below this threshold, the epidemic is expected to persist only for a polynomial duration. A recent work by Friedrich et al.~(2024) reinforces this perspective by proving polynomial survival for the SIRS process on star graphs, which is in contrast to the exponential survival in the SIS case that underpins Chatterjee and Durrett's arguments.

In this paper, we disprove this conjecture and show that the epidemic threshold is also zero for the SIRS process on the configuration model with power-law degree distribution with exponent $\tau>2$. Our proof uncovers a novel bottleneck structure for the SIRS dynamics, which we term a ``hierarchical star'' of order $2$, and show that it sustains the infection for an exponentially long time with high probability.
\end{abstract}

\blfootnote{$^1$Georgia Institute of Technology, \emph{Email:} \href{mailto:hezihao@gatech.edu}{hezihao@gatech.edu}}
\blfootnote{$^2$Georgia Institute of Technology, \emph{Email:}  \href{mailto:souvik.dhara@isye.gatech.edu}{souvik.dhara@isye.gatech.edu}}
\blfootnote{$^3$Georgia Institute of Technology, \emph{Email:} \href{mailto:debankur.mukherjee@isye.gatech.edu}{debankur.mukherjee@isye.gatech.edu}}

\section{Introduction}
Spreading processes on complex networks have long been a central topic in probability theory with classical foundations in mathematical epidemiology 
and extensive modern developments on heterogeneous contact structures 
\cite{AndersonMay1991,KeelingRohani2008,PastorSatorras2015rev,andersson2000stochastic,DraiefBook2009}.
A core objective is to understand the \emph{long-run behavior} of the dynamics on large graphs~\cite{Liggett1985IPS}, in particular, the dichotomy between rapid extinction and long-term persistence, as well as the scaling of survival times.

\paragraph{Two classes of spreading processes.}
Broadly, epidemic processes on graphs often fall into two qualitatively different classes.
In the first class, each edge can transmit infection at most once, as in Susceptible-Infected-Recovered (SIR) and Susceptible-Exposed-Infected-Recovered (SEIR)--type models.
In such cases, the `recovered' (R) state is an absorbing state, i.e., once a vertex reaches the R state, it is effectively removed from the network.
Consequently, these processes have an intrinsic {graph--exploration} structure and the epidemic typically discovers a growing infected set and then becomes extinct after exploring some portion of the network. The eventual infected population comprises either a negligible or a non-negligible fraction of vertices, depending on whether the infection rate lies below or above a certain positive threshold, respectively.
Rigorous asymptotics can be obtained via branching process-approximation and exploration process arguments on random graphs \cite{JansonLuczakWindridge2014,BallEtAlCLT2021,LashariTrapman2021}.

In the second class, vertices can be reinfected and each edge may transmit the infection repeatedly over time.
The canonical example is the Susceptible-Infected-Susceptible (SIS) process, also known as the {contact process}~\cite{Liggett1985IPS}. 
Here, each infected vertex becomes susceptible again at rate $1$ independently of all other processes, and each infected vertex transmits infection to a susceptible neighbor at rate $\lambda>0$ independently of all other processes.
Equivalently, a susceptible vertex becomes infected at total rate $\lambda$ times its number of infected neighbors.

\paragraph{Persistence of SIS on power-law random graphs.}
For SIS, a central object of interest is the \emph{extinction time} $T$, defined as the first time at which no infection remains in the network. A sequence of breakthrough results~\cite{BergerBorgsChayesSaberi2005, ChatterjeeDurrett2009, MountfordMourratValesinYao2013, BhamidiNamNguyenSly2021AOP} established that the extinction time on finite random networks can be exponentially large in the graph size $n$, even at arbitrarily small infection rates~$\lambda$. Early work by Berger et al.~\cite{BergerBorgsChayesSaberi2005} showed that, for any fixed $\lambda>0$, the infection can survive on an $n$--leaf star for exponential time, i.e.,  $T\geq \exp(\Omega(n))$ with high probability.
Due to the \emph{attractiveness} property~\cite[Definition III.2.1]{Liggett1985IPS} of the SIS process, this suggests that extinction times can be exponentially large on graphs containing high-degree vertices. This was first established rigorously for configuration models with power-law degree distributions having exponent $\tau>3$ by Chatterjee and Durrett~\cite{ChatterjeeDurrett2009}, who proved that for any fixed $\lambda>0$ and any $\delta>0$, the extinction time is at least $\exp(n^{1-\delta})$ with high probability. Subsequently, on the same random graph model, Mountford et al.~\cite{MountfordValesinYao2016} obtained a refined metastable description, and more recently, Bhamidi et al.~\cite{BhamidiNamNguyenSly2021AOP} showed that the survival time is $\exp(\Omega(n))$ for any fixed $\lambda>0$, as long as the degree distribution has a diverging moment generating function, i.e., $\E[e^{cD}] = \infty$ for all $c>0$. Across these results, the underlying mechanism is that a \emph{hub} (i.e., a star) can sustain infection for long periods, repeatedly reinfecting its neighborhood.

\paragraph*{SIRS model and recent literature.}~In this paper, we study the Susceptible--Infected--Recovered--Susceptible (SIRS) process on configuration models with power-law degree distributions. 
The SIRS model incorporates temporary immunity following infection: recovered vertices are immune to reinfection, but this immunity wanes over time at rate $\rho > 0$, after which the vertex returns to the susceptible state. The waning-immunity feature is both natural and widely studied in the physics and epidemiology literature. The model can be viewed as an interpolation between the SIR and SIS processes, obtained in the limiting cases $\rho = 0$ and $\rho = \infty$, respectively. Despite its modeling relevance, there are far fewer rigorous results for SIRS on graphs. To the best of our knowledge, the only works establishing sharp extinction-time estimates for SIRS are those of Friedrich et al.~\cite{FriedrichEtAlSIRS2024} and Lam et al.~\cite{LamNguyenYang2024}. In contrast to \cite{BergerBorgsChayesSaberi2005}, these works show that the SIRS epidemic on a star graph becomes extinct on polynomial time scales for any fixed $\lambda, \rho>0$. Moreover, \cite{FriedrichEtAlSIRS2024} identified exponential survival regimes on dense expanders, such as Erd\H{o}s--R\'enyi random graphs with $np \gg \log n$, using carefully constructed Lyapunov function arguments, an approach that breaks down for random graphs with bounded average degrees.

\paragraph{The physics conjecture.}
A large body of work in the physics literature, relying primarily on numerical simulations and analytical approximations such as pair quenched mean-field (PQMF) theory and recurrent dynamical message-passing (rDMP), conjectures the existence of a strictly positive epidemic threshold for the SIRS process on networks with power-law degree distributions with exponent $\tau > 3$~\cite{PastorSatorras2015rev,Ferreira2016collective,SilvaEtAlPRE2022}. 
Determining whether this threshold is zero or strictly positive is a foundational problem in non-equilibrium statistical mechanics. If introducing temporary immunity shifts the system from a zero threshold (as in SIS) to a finite threshold, this would indicate that local recovery dynamics can overcome the effects of extreme topological heterogeneity in scale-free networks. The intuition driving the physics consensus rests on the breakdown of the "hub activation" mechanism \cite{Ferreira2016collective}. In the SIS model, a single high-degree hub and its low-degree neighbors form a star graph capable of mutually sustaining the infection for exponentially long periods, thereby driving the global epidemic threshold to zero. In the SIRS model, however, the temporary immunity phase acts as a dynamic disruptor which causes stars to recover in polynomial duration as proved in \cite{FriedrichEtAlSIRS2024,LamNguyenYang2024}. Consequently, the physics literature argues that isolated hubs fail to maintain the endemic state, forcing the network to rely on ``collective activation'' across nodes, which theoretically necessitates a finite, macroscopic infection rate $\lambda_c>0$ to prevent rapid global extinction.

\paragraph{Our contribution.}
Our main result shows that this picture, while intuitively appealing, is inaccurate in the large-network limit.
We prove that on configuration models with i.i.d.~degrees sampled from a power--law distribution with exponent $\tau>2$, for every fixed infection rate $\lambda>0$ and immunity-loss rate $\rho>0$, the SIRS extinction time is stretched--exponential in $n$ with high probability.
In particular, the threshold for long-term survival is, in fact, zero.

The discrepancy between our findings and the prevailing physics intuition shows the inherent limitations of both mean-field theories on sparse random graphs and finite-size simulations. To overcome the failure of single-hub activation, our proof identifies a new localized survival mechanism, which we term a ``hierarchical star.'' We show that while a single isolated star graph cannot sustain the infection for exponentially long times under SIRS dynamics, a star whose neighborhood contains polynomially many stars can act as a robust, localized reservoir.
Even if individual hubs temporarily enter the immune state, this hierarchical arrangement ensures that enough interconnected hubs remain active or susceptible at any given moment to preserve the local chain of infection.

Generally, one key obstacle in analyzing SIRS is structural: unlike the contact process, SIRS does not possess the attractiveness property~\cite[Definition III.2.1]{Liggett1985IPS}. Consequently, even if one identifies a favorable subgraph, long survival on that subgraph does not automatically extend to the full graph via monotone coupling, as in the arguments of \cite{ChatterjeeDurrett2009}. To this end, our key technical contribution lies in overcoming this lack of attractiveness. We use a modified version of the Harris construction and provide a careful analysis to track infections occurring along a selected subset of edges, which allows us to bypass traditional monotone coupling arguments. This approach enables us to derive a lower bound on the survival time via infection persistence on hierarchical stars, thereby ruling out the possibility of a zero epidemic threshold in the presence of waning immunity.

\section{Model and Preliminaries}
\subsection{SIRS Process}
\label{ssec:SIRS}
Let $G=(V,E)$ be an undirected graph. If $G$ is not simple, we write $G^{\mathrm{s}}=(V,E^{\mathrm{s}})$ for its underlying simple graph, obtained by deleting all self-loops and replacing multiple edges between the same pair of vertices by a single edge. Throughout the paper, whenever $G$ is not simple, the SIRS process on $G$ is defined to be the SIRS process on $G^{\mathrm{s}}$.

The SIRS process on a simple graph $G=(V,E)$ with parameters $\lambda,\rho>0$, denoted by $\mathrm{SIRS}(G,\lambda,\rho)$, is defined as the following continuous-time Markov chain (CTMC) with state space $\{S,I,R\}^{|V|}$.
At any time $t\ge 0$, each vertex $v\in V$ is in one of three states—susceptible (S), infected (I), or recovered (R)—denoted by $\sigma_v(t)\in\{S,I,R\}$. The configuration of the network at time $t$ is represented by
\[
\boldsymbol{\sigma}(t):=(\sigma_v(t))_{v\in V}\in\{S,I,R\}^{|V|}.
\]
Each vertex evolves according to the cycle
\[
S \xrightarrow{\ \lambda k\ } I \xrightarrow{\ 1\ } R \xrightarrow{\ \rho\ } S,
\]
with the indicated transition rates, where $k$ denotes the number of infected neighbors of the vertex at that time.
To be precise, for any two configurations $X,Y\in \{S,I,R\}^{|V|}$, the time-homogeneous transition rate $q(X,Y)$ from state $X$ to state $Y$ is nonzero only when $Y$ differs from $X$ at exactly one vertex $i\in V$. The rates are given by: 
\begin{align*}
q(X,Y)=\left\{ \begin{aligned}
& \lambda \sum_{j' \in V: \{i,j'\} \in E} \mathbf{1}\{ X_{j'}=I \}, & &\text{if } X_i=S,\, Y_i=I,\, \text{and } X_j=Y_j\ \forall j\neq i,\\
& 1, & &\text{if } X_i=I,\, Y_i=R,\, \text{and } X_j=Y_j\ \forall j\neq i,\\
& \rho, & &\text{if } X_i=R,\, Y_i=S,\, \text{and } X_j=Y_j\ \forall j\neq i,\\
& 0, & &\text{otherwise.}
\end{aligned}\right.
\end{align*}
The SIR and SIS processes can be recovered as limiting cases of the SIRS dynamics by taking $\rho = 0$ and $\rho = \infty$, respectively, so that recovered vertices remain immune indefinitely or immediately return to the susceptible state. SIS can be viewed as a spin system and its mathematical theory is well developed, including fundamental tools such as attractiveness and duality \cite{Liggett1985IPS,PastorSatorras2015rev}.

Now we focus on the extinction time of the SIRS process. Let $T:=\inf\{t>0:\ \sigma_v(t)\neq I \text{ for all } v\in V\}$ be the first time when no vertex remains infected. We say that the infection dies out at time $T$, since after time $T$, all vertices are in states $S$ or $R$, and the system remains infection-free thereafter. Large $T$ indicates long quasi-stationary activity before absorption, while small $T$ signals rapid die-out. The extinction time for SIS is defined the same as SIRS.
Upper and lower bounds on $T$, its scaling (polynomial vs.\ exponential), and its dependence on graph structure have been extensively studied for contact-type dynamics; see, e.g., \cite{Liggett1985IPS,ChatterjeeDurrett2009,MountfordMourratValesinYao2013, BergerBorgsChayesSaberi2005,BhamidiNamNguyenSly2019,HuangDurrett2020}.

We also introduce the \emph{threshold-SIRS process}, which is a weaker spreading mechanism compared to the standard SIRS dynamics. In this model, a susceptible vertex becomes infected at a constant rate, rather than at a rate proportional to the number of infected neighbors, provided it has at least one infected neighbor. That is, the infection rate is modified from $\lambda k$ to $\lambda \mathbf{1}\{k \ge 1\}$. Formally, the transition rates are given by
$q(X,Y) = \lambda \mathbf{1}\{ \sum_{j \in V : \{i,j\} \in E} \mathbf{1}\{X_j = I\} \ge 1 \},$
whenever $X_i = S$, $Y_i = I$, and $X_j = Y_j$ for all $j \ne i$.
As we discuss in Remark~\ref{rem:threshold}, the conclusion of our main results also hold for the threshold-SIRS process.

\subsection{Modified Harris construction}
\label{ssec:harris}
On a common probability space, we realize the above $\mathrm{SIRS}(G,\lambda,\rho)$ using a graphical representation. For the SIS process (contact process), the Harris graphical construction is standard and widely used
\cite{Harris1972,Liggett1985IPS,DurrettTenLecturesPS}.
For the SIRS dynamics considered here, we use a straightforward modification of the SIS construction:
in addition to the usual infection transmission and recovery marks, we equip each vertex with an independent Poisson point process
of rate $\rho$ producing $R\to S$ marks. We spell out the resulting construction below.

Let
\[
\mathcal{H}\ :=\ \Bigl(\, \bigl(N_v^{\mathrm{rec}},N_v^{\mathrm{sus}}\bigr)_{v\in V},\ 
\bigl(N_{u\to v}^{\mathrm{inf}}\bigr)_{(u,v):\{u,v\}\in E}\Bigr),
\]
where the components of $\mathcal{H}$ are mutually independent Poisson point processes \cite{Liggett1985IPS,DurrettTenLecturesPS}:
$N_v^{\mathrm{rec}}$ has rate $1$, $N_v^{\mathrm{sus}}$ has rate $\rho$, and for each ordered pair
$(u,v)$ with $\{u,v\}\in E$, $N_{u\to v}^{\mathrm{inf}}$ has rate $\lambda$.
For a point process $N$, let $N(A)$ denote the number of points in a Borel set $A\subseteq [0,\infty)$.

For $t\ge 0$, let $\mathcal{H}|_{[0,t]}$ denote the restriction of all these point processes to $[0,t]$, and set
\[
\mathcal{F}_t\ :=\ \sigma\!\left(\mathcal{H}|_{[0,t]}\right),\qquad 
\mathcal{F}\ :=\ \sigma(\mathcal{H}),
\]
with $\mathbb{F}=(\mathcal{F}_t)_{t\ge 0}$ the associated filtration, where $\sigma(\cdot)$ denotes the smallest $\sigma$-algebra generated by the indicated random objects.
We view each Poisson point process $N$ as a random locally finite subset of $(0,\infty)$.
Thus, $t\in N$ means that $N$ has a mark at time $t$ (equivalently, $N(\{t\})=1$).

Fix an initial state $\boldsymbol{\sigma}(0)\in\{S,I,R\}^{V}$. Given $\mathcal{H}$, we construct $\boldsymbol{\sigma}(t)$ pathwise as a c\`adl\`ag process as follows.
At any mark time $t>0$:
\begin{enumerate}[(i)]
    \item if $t\in N_v^{\mathrm{rec}}$ and $\sigma_v(t^-)=I$, then $\sigma_v(t)=R$;
    \item if $t\in N_v^{\mathrm{sus}}$ and $\sigma_v(t^-)=R$, then $\sigma_v(t)=S$;
    \item if $t\in N_{u\to v}^{\mathrm{inf}}$ and $\sigma_u(t^-)=I$ and $\sigma_v(t^-)=S$, then $\sigma_v(t)=I$.
\end{enumerate}
All other marks have no effect. By independence of the Poisson marks, simultaneous events occur with probability zero; hence this defines
$\boldsymbol{\sigma}(t)$ uniquely, and $\boldsymbol{\sigma}$ is adapted to $\mathbb{F}$.
We refer to this graphical representation for the SIRS dynamics as the \emph{modified Harris construction}.\\

\subsection{Power-law Random Graphs and Configuration Model Construction}
\label{ssec:power-law}
Having specified the stochastic dynamics, we now fix the class of underlying random graphs on which the process evolves. Throughout, we work with sparse random graphs generated by the configuration model, a standard framework in random graph theory and a common choice in the study of epidemic processes on networks \cite{frieze2015random,janson2009simple,vanDerHofstad2024rgcn2,BallEtAlCLT2021,LashariTrapman2021}. Power-law random graphs are particularly natural in the present context, since their heavy-tailed degree distributions capture highly heterogeneous contact patterns with rare hubs, a structural feature known to play an important role in sustaining long-lived activity in reinfection processes such as SIS and SIRS \cite{PastorSatorras2015rev}.

Fix $\tau>2$ and assume that $D$ has a power-law distribution, namely
$\mathbb{P}(D=k)=C_\tau\,k^{-\tau}\mathbf{1}\{k\ge3\},$
where
$C_\tau=(\sum_{j=3}^{\infty}j^{-\tau})^{-1}.$ 
We impose the minimum-degree condition $D\ge 3$ in order to ensure connectivity and to facilitate logarithmic diameter estimates; see \cite[Theorem 4.24, Theorem~7.19]{vanDerHofstad2024rgcn2}.

For each fixed $n$, we construct the configuration model as follows: Independently sample degrees $d_1,\dots,d_n$ i.i.d.~from the distribution of $D$. Conditional on the event that $\sum_{i=1}^n d_i$ is even, attach $d_i$ half-edges to vertex $i$, and then pair all half-edges uniformly at random to form edges. The resulting object is, in general, a multigraph, since self-loops and multiple edges may occur. We denote it by $G(n,\tau)$ and refer to it as the \emph{power-law random graph} for simplicity.

For $\tau \in (2,3]$, the probability that $G(n,\tau)$ is simple tends to $0$ as $n \to \infty$; see \cite[Theorem~1.1]{janson2009simple}. In contrast, for $\tau > 3$, it is known that there exists a constant $c > 0$ such that, for all sufficiently large $n$, $\mathbb{P}\bigl(G(n,\tau)\text{ is simple}\bigr) \ge c$;
see \cite[Theorem~1.1]{janson2009simple}. Consequently, for $\tau > 3$, concentration results and high-probability bounds for the configuration model multigraph continue to hold after conditioning on simplicity. Accordingly, we will henceforth work with the configuration model $G(n,\tau)$ without conditioning on simplicity.

\subsection{Main Results} 
We establish the zero-threshold behavior for the SIRS process with a stretched-exponential survival time.
\begin{theorem}\label{thm_main1}
Fix $\lambda, \rho>0, \delta\in (0,1)$ and $\tau>2$. Consider the process $\mathrm{SIRS}(G(n,\tau),\lambda,\rho)$ initialized such that all vertices are infected at time $t=0$. Then, as $n\to\infty$,
\[
\mathbb{P}\big(T\ge \exp(n^{1-\delta})\big)\to 1,
\]
where $\mathbb{P}$ denotes the joint probability law of the random graph $G(n,\tau)$ and the SIRS Markov chain.
\end{theorem}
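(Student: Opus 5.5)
The proof rests on a ``hierarchical star'' of order $2$ embedded in $G(n,\tau)$. It consists of a top hub $v_0$ of degree $n^{a}$ adjacent (possibly through a path of bounded length) to $m=n^{b}$ second-level hubs $v_1,\dots,v_m$. Each $v_i$ has its own set $L_i$ of $k=n^{c}$ leaves, with the sets $L_i$ pairwise disjoint and disjoint from the other hubs. The exponents $a,b,c>0$ will be chosen depending on $\tau$ and $\delta$ so that $m$ is at least $n^{1-\delta'}$ up to logarithmic factors.

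\textbf{Step 1: existence of the structure.} This is a standard configuration-model computation. The number of vertices of degree at least $n^{c}$ is about $n^{1-c(\tau-1)}$, and the half-edges of the top hub pair with high-degree vertices with probability proportional to their degree share. If $\tau>3$ is the hardest case, I would take the top hub to have degree about $n^{1/(\tau-1)}$ and reach the second-level hubs through short paths, or through common neighbors along which infection passes with probability bounded below. If that fails, I would instead take many disjoint stars and link them via the giant component's short paths. Leaves would be restricted to degree-$3$ neighbors, which are w.h.p.\ distinct across hubs.

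\textbf{Step 2: the single-star ``burst'' lemma.} This is the quantitative core. Take a star with center $v$ and $k$ leaves, and suppose $v$ becomes infected while at least $k/2$ leaves are susceptible. Then, with probability $1-e^{-\Omega(k)}$, within time $O(1)$ at least $\epsilon k$ leaves become infected. Each leaf then reinfects $v$ once $v$ returns to $S$, so $v$ stays ``active'' for a random duration of order $k^{\gamma}$ before the leaves collectively pass to $R$ (this is the polynomial survival of \cite{FriedrichEtAlSIRS2024,LamNguyenYang2024}). Afterwards the star is quiet, and it needs only time $O(\log k)$ for most leaves to regain $S$.

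\textbf{Step 3: reduce to a recurrent process with a drift.} Say a second-level star is ``recharged'' if at least half of its leaves are susceptible, and say $v_0$ is ``on'' if it has an infected neighbor among active hubs. Each active $v_i$ reinfects $v_0$ repeatedly during its burst, and each infection of $v_0$ transmits, within $O(1)$ time, to a constant fraction of the recharged $v_i$, thereby restarting bursts. Let $X(t)$ be the number of active second-level stars. I would show that whenever $X(t)\ge \eta m$, each star ending its burst is replaced by new bursts at a higher rate, giving a supermartingale-type drift: $X$ falls below $\eta m/2$ only after time $e^{\Omega(m)}$. The tool is a Freedman/Azuma bound on discretized time windows of length $n^{\epsilon}$, using independence of distinct stars' Harris marks given the hub trajectory.

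Non-attractiveness is handled as the introduction indicates. I never compare with a smaller graph. Instead I only lower-bound events built from Harris marks on the selected edges (leaf recoveries, leaf $R\to S$ clocks, transmission marks along the chosen edges). Extra infections from outside can put vertices into $R$ early, so the lemmas must be stated with ``at least half of the leaves susceptible'' conditions that are verified directly from each leaf's own clocks over a window. I would do this by showing that a leaf's state at the end of a quiet window of length $C\log k$ is $S$ with probability bounded below regardless of outside pressure. This last point I expect to be the main obstacle: an adversarial external infection could keep leaves cycling. To address it, I would allow leaves to be infected externally but use a crude bound: any infected leaf also counts toward reinfecting its hub, so external infection only helps activity, and recharging is replaced by the statement that a constant fraction of leaves is in $S\cup I$.

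Starting from all-infected, the structure is initially fully active. Combining the steps with $m\ge n^{1-\delta}\log n$ gives $T\ge \exp(n^{1-\delta})$ w.h.p. The same argument applies verbatim to the threshold-SIRS process, since only single transmissions along chosen edges are used.
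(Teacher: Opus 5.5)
\textbf{Overall verdict: there is a genuine gap in the global architecture.} A single order-$2$ structure cannot give $\exp(n^{1-\delta})$, and your fallback does not close either.

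\textbf{The single structure fails.} First, the structure you want does not exist for small $\delta$. W.h.p.\ $\max_x d_x\le n^{1/(\tau-1)}\log n$, so $v_0$ has at most $n^{1/(\tau-1)+o(1)}<n^{1-\delta'}$ direct neighbors. For $\tau\ge 3$ (the case you flag), balls of bounded radius also have only $n^{1/(\tau-1)+o(1)}$ vertices, so bounded-length connections do not rescue it.

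Second, and more fundamentally, the Step~3 claim that $X$ stays above $\eta m/2$ for time $e^{\Omega(m)}$ cannot hold when every restart passes through one hub. Each $R$-sojourn of $v_0$ is $\mathrm{Exp}(\rho)$, so with probability $e^{-\rho s}$ the hub sits in $R$ for a stretch of length $s$. During that stretch nothing on your designated edges can restart a burst. Once $s$ is a large multiple of the (polynomial) burst lifetime, the quantity $X$ you can certify collapses. Such a stretch occurs within time $\exp(O(s))$. Hence one hierarchical star can only be certified for time $\exp(n^{c})$, with $c$ small and governed by the star-lifetime exponent, not by $m$. This matches the paper, which proves only $\exp(n^{\varepsilon_3/2})$ for a single hierarchical star in Proposition~\ref{hierarchicalstar_thm2}. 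Separately, conditioning on ``the hub trajectory'' to get independence across stars is problematic, since that trajectory is itself driven by the stars.

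\textbf{The fallback fails for the opposite reason.} Many disjoint single stars linked by $O(\log n)$ paths do not work either. A star is active only for polynomial time $k^{\gamma}$. One attempt to push infection along a path of length $c\log n$ is guaranteed to succeed only with probability about $c_{\mathrm{edge}}^{c\log n}=n^{-c\log(1/c_{\mathrm{edge}})}$, whose exponent is unbounded as $\lambda\to 0$. So you cannot conclude that a star reactivates even one other before dying; this is precisely the heuristic behind the physics conjecture.

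\textbf{The missing idea is to combine the two scales.}
\begin{itemize}
\item Use an $(n^{\varepsilon},n^{\varepsilon})$ hierarchical star as a \emph{unit}. On an event measurable with respect to its own Harris marks, of probability $1-\exp(-n^{\varepsilon_4/10})$, it stays HS-lit up to time $\exp(n^{\varepsilon_3/2})$ and supplies $\exp(n^{\varepsilon_4/8})$ well-separated infection times of its center.
\item Find $\lceil n^{1-\delta/2}\rceil$ vertex-disjoint copies in $G(n,\tau)$, built from many vertices of degree $\ge n^{2\tilde\varepsilon}$, each with $n^{\tilde\varepsilon}$ fresh neighbors of degree about $n^{\bar\varepsilon}$, and so on.
\item The exponentially many trials make the $n^{-C}$-probability path transmissions succeed w.h.p.
\item An exponential-supermartingale drift on the number of HS-lit copies, whose rare failures are independent across disjoint copies, then yields $\exp(n^{1-\delta})$.
\end{itemize}

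\textbf{On your recharging obstacle.} At the inner level the paper sidesteps it in two ways. It requires only $k^{1/24}$ infected leaves, obtained from an $R\to S$ mark, a transmission mark and no recovery mark in short windows; this works whatever the leaf's prior state or outside influence. It also tracks only $n^{\varepsilon_2/4}$ lit first-layer stars, so the expected number of losses per epoch is $o(1)$, while a gain of one (via $u_{i_0}\to v\to u_j$) occurs w.h.p.
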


\noindent

We next consider partially infected initial conditions. For each $n$, 

let $\mu_n$ be an arbitrary probability distribution on the state space $\{S,I,R\}^{|V|}$. We consider an initial configuration generated as follows: sample a base configuration from $\mu_n$, select a vertex $v^* \in V$ uniformly at random, and deterministically set the state of $v^*$ to $I$ (leaving the states of all other vertices unchanged). Let $\mathbb{P}^*_{\mu_n}$ denote the joint probability measure incorporating the realization of the power-law random graph $G(n,\tau)$, the sampling from $\mu_n$, the choice of $v^*$, and the trajectory of the SIRS Markov chain.

\begin{theorem}\label{thm_main2}
Fix $\lambda, \rho>0$, $\delta \in (0,1)$, and $\tau>2$. Under the initialization framework described above, there exists a constant $p>0$ (depending only on $\lambda,\rho,\tau,$ and $\delta$) such that for all sufficiently large $n$,
\[
\inf_{\mu_n} \mathbb{P}^*_{\mu_n}\big(T\ge \exp(n^{1-\delta})\big)\;\ge\; p.
\]
\end{theorem}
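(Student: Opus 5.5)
We prove Theorem~\ref{thm_main2} by reducing it to the mechanism behind Theorem~\ref{thm_main1}. The infection must first travel from the random vertex $v^*$ to a hierarchical star of order $2$. Once it gets there, the hierarchical star keeps it alive for $\exp(n^{1-\delta})$ time with high probability.

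\textbf{Step 1 (graph structure).} We work on a high-probability event for $G(n,\tau)$, with $\tau>2$, on which several properties hold simultaneously.
\begin{enumerate}[(a)]
\item There is a hub $h$ of degree $n^{\alpha}$, for suitable $\alpha$ depending on $\tau$ and $\delta$.
\item Among the neighbors of $h$, or within bounded distance of it, there are polynomially many secondary hubs $h_1,\dots,h_m$. Each has polynomially many low-degree neighbors, so together they form a hierarchical star $\mathcal{S}$ whose total size is at least $n^{1-\delta}$.
\item The giant component contains a $1-o(1)$ fraction of the vertices, since $D\ge3$.
\item From a typical vertex $v$, a local exploration reaches vertices of ever larger degree. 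In particular, $v$ has a path of length $O(\log\log n)$ (or $O(\log n)$ when $\tau>3$) to $h$ through vertices of increasing degree.
\end{enumerate}
Because $v^*$ is uniform and independent of the graph, with probability $1-o(1)$ it is such a typical vertex. These are standard configuration-model estimates, taken from the cited results in \cite{vanDerHofstad2024rgcn2}.

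\textbf{Step 2 (escape from $v^*$, uniformly in $\mu_n$).} Since $\mu_n$ is adversarial, neighbors of $v^*$ may start in state $R$. We exploit this using the Harris construction.

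Fix the first edge $v^*\to u_1$ on the path, and consider the event that three things happen in order within a unit time window.
\begin{enumerate}[(i)]
\item $u_1$ receives an $N^{\mathrm{rec}}$ mark and then an $N^{\mathrm{sus}}$ mark, which puts it in state $S$ whatever its initial state. If $u_1$ started in $I$, that is already good.
\item $v^*$ receives no recovery mark during the window.
\item An infection mark on $v^*\to u_1$ occurs after $u_1$ has become susceptible.
\end{enumerate}
This event has probability at least some $c(\lambda,\rho)>0$ that does not depend on the states of other vertices.

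We would like to chain this along the path using local degree growth. After the first few steps the vertices have large degree, so spreading toward $h$ succeeds with probability tending to $1$. The reason is that a vertex of large degree $d$ that is infected infects many of its neighbors. Any vertex with $\Theta(d)$ infected or re-infectable neighbors gets infected within $O(1)$ time with probability $1-e^{-\Omega(d)}$, and this holds regardless of the initial states, because each vertex cycles to $S$ at rate $\rho$.

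Summing the failure probabilities gives a constant overall success probability $p$ of infecting $h$, and through $h$ a positive fraction of the $h_i$ and their leaves, within time $O(\log n)$. The bound does not depend on $\mu_n$.

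\textbf{Step 3 (survival).} Once the hierarchical star is sufficiently infected, we apply the survival mechanism from the proof of Theorem~\ref{thm_main1}. This works because that argument only tracks infections along the selected edges of $\mathcal{S}$, through the modified Harris construction. It therefore needs only a lower bound on the number of infected secondary stars, together with the fact that all vertices return to $S$ at rate $\rho$. It makes no assumption about the configuration outside $\mathcal{S}$, and so it sidesteps the lack of attractiveness.

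Applying the strong Markov property at the hitting time of a good configuration on $\mathcal{S}$, the survival probability is $1-o(1)$. Multiplying the estimates of the three steps then gives the required $p>0$.

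\textbf{Main obstacle.} The hardest part is Step 2: making the path-to-hub estimate uniform over an adversarial $\mu_n$ without attractiveness. The danger is that an infection wave creates $R$ states that block its own way forward. We would first try the "recover-then-resusceptibilize" window argument at each step. At high-degree steps we would rely on many disjoint attempts, so that failures become exponentially small in the degree.
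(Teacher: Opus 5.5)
Your skeleton matches the paper's: leave $v^*$, climb to a high-degree vertex, then run the survival mechanism of Theorem~\ref{thm_main1}. However, two load-bearing steps do not hold as stated.

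\textbf{Steps 1(b) and 3: survival from one large hierarchical star.} You rely on a single hierarchical star $\mathcal S$ of total size $n^{1-\delta}$, near one hub, surviving for time $\exp(n^{1-\delta})$. First, such a structure need not exist. With high probability $\max_x d_x\le n^{1/(\tau-1)}\log n$, so any two-layer structure as in Definition~\ref{def:hierarchical-star} has at most $3n^{2/(\tau-1)}(\log n)^2$ vertices. This is $o(n^{1-\delta})$ whenever $\tau>3$ and $\delta<(\tau-3)/(\tau-1)$.

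More fundamentally, size is not what controls the lifetime. The single-hierarchical-star analysis (Lemma~\ref{lemma_hierarchicalstar_supermartingale}, Proposition~\ref{hierarchicalstar_thm2}) gives only $\exp(n^{\varepsilon_3/2})$, with $\varepsilon_3$ a tiny multiple of the leaf exponent. The reason is that each lit first-layer star fails in an epoch with polynomial probability $n^{-\varepsilon_2}$, inherited from polynomial extinction on stars. Consequently $\exp(-\tilde W_r)$ is a supermartingale only while at most $n^{\varepsilon_3}$ first-layer stars are tracked.

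In the paper, the exponent $1-\delta$ comes from the \emph{number} $\lceil n^{1-\delta'}\rceil$ of vertex-disjoint $(n^{\varepsilon},n^{\varepsilon})$ hierarchical stars scattered through the graph (Proposition~\ref{prop:existsmanyhierarchicalstar}). These stars re-seed one another along paths of length at most $c\log n$, and a second-level supermartingale runs on the count of HS-lit stars.

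You also cannot simply ``apply'' the proof of Theorem~\ref{thm_main1} after lighting one star, because that proof starts with all centers HS-lit. The missing step is Lemma~\ref{lemma_infectionspreadthroughhierarchicalstars}. One HS-lit hierarchical star supplies about $\exp(n^{\varepsilon_4/8})$ reinfection times of its center, spaced by $n^{\varepsilon_2}$. That is enough for the $n^{-C}$-probability transmissions along $O(\log n)$ paths to succeed into every other hierarchical star.

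\textbf{Step 2: the high-degree part of the path.} This part is not justified. The next vertex on your path does not have $\Theta(d)$ infected neighbors: only its predecessor is infected, and only intermittently. ``Re-infectable'' neighbors transmit nothing. So one pass along a path of length $\ell$ succeeds only with probability exponentially small in $\ell$, since each sender may recover before transmitting. That is polynomially small for $\ell\asymp\log n$, and still $o(1)$ for $\ell\asymp\log\log n$.

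Summable failure probabilities require each rung to be re-infected many times, giving many independent attempts. A plain SIRS star of degree $d$ gives only about $d^{\min\{\rho,\lambda,1\}/400}$ reinfection times (Proposition~\ref{prop:star_independent}). Meanwhile the next rung is at distance of order $\log d$. Each attempt then succeeds with probability $d^{-C}$, where $C\approx 4\tau\log(1/c_{\mathrm{edge}})$ may exceed that exponent.

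The paper handles this in three moves:
\begin{itemize}
\item It starts from $\mathbb P(\deg(v^*)\ge M)\ge p_0>0$ for a large constant $M$, rather than escaping from a low-degree vertex.
\item It climbs the ladder of Lemma~\ref{lemma_logMline}, with $d(v_i)\ge M^i$ and $\mathrm{dist}(v_i,v_{i+1})\le 4\tau(i+1)\log M$.
\item It makes every rung the center of a hierarchical star (Lemma~\ref{lemma_Mhierarchicalstar}). Hierarchical-star persistence (Proposition~\ref{hierarchicalstar_thm2} at scale $M^i$) then yields on the order of $\exp\bigl((M^i)^{c'}\bigr)$ reinfection times, which dwarfs the $M^{O(i)}$ attempts needed.
\end{itemize}
Your ``many disjoint attempts'' remark points in this direction. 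But without the hierarchical-star layer at each rung, the attempt count is too small.
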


\noindent
Theorem~\ref{thm_main2} asserts that there exists a constant $p>0$ (independent of $n$ and of the choice of $\mu_n$) such that, once a single uniformly chosen vertex is infected at time $0$, the infection survives for time at least $\exp(n^{1-\delta})$ with probability at least $p$, regardless of how the remaining $n-1$ vertices are initialized. This extends Theorem~\ref{thm_main1} beyond the fully infected start. An analogous statement is known for the SIS process \cite{ChatterjeeDurrett2009}.

\begin{remark}\normalfont 
    Note that Theorem~\ref{thm_main1} is stated here for the power-law random graph. However, the proof only requires two deterministic structural properties of the underlying graph: first, that the graph contains at least $\lceil n^{1-\delta}\rceil$ vertex-disjoint $(n^\varepsilon,n^\varepsilon)$-\emph{hierarchical-stars} in the sense of Definition~\ref{def:hierarchical-star}; and second, that the graph diameter is bounded by $c\log n$ for some constants $c,\varepsilon>0$. Therefore, the same conclusion remains valid for any graph satisfying these two properties. A precise deterministic-graph formulation is given in Proposition~\ref{prop:deterministic_many_hierarchicalstars}.
\end{remark}

\begin{remark}\normalfont
For $c>0$, let
\(
D_n(c):=\{\mathrm{diam}(G)\le c\log n\}
\).
Then the argument in the proof of Theorem~\ref{thm_main1} yields the following more explicit bound:
\[
\mathbb{P}\big(T\ge \exp(n^{1-\delta})\big)\ge \mathbb{P}(D_n(c))- c_1n^{-\min\{\tau-2,1\}}(\log n)^{\mathbf{1}\{\tau=3\}},
\]
for some constant $c_1>0$. This follows directly from the proof of Theorem~\ref{thm_main1}. Moreover, \cite[Theorem~7.19]{vanDerHofstad2024rgcn2} proves that there exists a constant $c>0$ such that $\mathbb{P}(D_n(c))\to 1$. For $\tau>3$, the proof of Lemma~1.2 in ~\cite{ChatterjeeDurrett2009} also implies that $\mathbb{P}(D_n(c))\ge 1-o(n^{-1})$, although this bound is not stated there explicitly.
\end{remark}

\begin{remark} \normalfont
The assumption in Theorem~\ref{thm_main1} that all vertices are infected at time $0$ can be substantially weakened. Inspecting the proof shows that it suffices to assume that, for any fixed $\varepsilon > 0$, the \emph{initial infection consists of a single vertex of degree at least $n^{\varepsilon}$}. Under this condition, $\mathbb{P}\bigl(T > \exp(n^{1-\delta})\bigr) \to 1$ as $n \to \infty$.
\end{remark}

\begin{remark}\label{rem:threshold} \normalfont
The conclusions of Theorems~\ref{thm_main1} and~\ref{thm_main2} also hold for the threshold-SIRS process. We do not give a separate proof, since the same arguments apply with only minor modifications: Proposition~\ref{prop:star_independent} and Proposition~\ref{hierarchicalstar_thm2} remain valid in this setting. The required modifications are discussed at the end of Sections~\ref{subsection_hierarchicalstar_2} and~\ref{subsection_hierarchicalstar_1}.
\end{remark}

\subsection{Proof Outline}
We now describe the main steps of the proof and indicate where each ingredient is established in the paper.

\paragraph{STEP 1: Local persistence on a star.} 
Our first ingredient is a local polynomial-time persistence statement for SIRS. 
Due to the loss of attractiveness, we cannot use the results from \cite{frieze2015random,LamNguyenYang2024}. The analysis also requires specific non-asymptotic bounds for the modified Harris construction. 
A vertex is called \emph{$m$-lit} if either $v$ is infected, or at least $\lceil m\rceil$ of its neighbors are infected.
Roughly speaking, if a vertex $v$ has degree at least $k$ and is initially $k^{1/24}$-lit, then with high probability it remains $k^{1/24}$-lit for time $3k^{\min\{\rho,\lambda,1\}/200}$; this is provided by Proposition~\ref{prop:star_independent}. The key feature is not only the persistence estimate itself, but also its \emph{locality}: the corresponding ``good event'' is measurable with respect to the modified Harris construction restricted to the edges incident to $v$ and its neighbors, as formalized in Definition~\ref{def:local_graphical_data}. This locality is one of the main substitutes for attractiveness. We believe these results are of independent interest as they can be used for graphs that are not locally tree-like.

\paragraph{STEP 2: A hierarchical-star mechanism.}
Because a single star only yields polynomial survival~\cite{FriedrichEtAlSIRS2024}, the second step introduces the core structural innovation of our proof: a larger localized structure we term the \emph{hierarchical-star} given in Definition~\ref{def:hierarchical-star}. A hierarchical-star is a two-layer tree structure in which the central hub has $\lceil k\rceil$ first-layer neighbors, and each first-layer neighbor in turn serves as the center of a star with $\lceil k\rceil$ distinct second-layer leaves. This hierarchical layering allows the infectious activity to be repeatedly transferred among many branches, sustaining the local outbreak for much longer times.

For a hierarchical-star, we let $(W_r)_{r\in\mathbb{N}}$ denote the number of first-layer vertices that are $k^{1/24}$-lit at time~$3rk^{\min\{\rho,\lambda,1\}/200}$. Using the local persistence result from STEP~1, we construct a locally measurable comparison process $\tilde W_r\le W_r$ satisfying $\mathbb E[\exp(-\tilde W_{r+1})\mid\mathcal F_r]\le \exp(-\tilde W_r)$ while $\tilde W_r\in [\frac{1}{4}k^{\min\{\rho,\lambda,1\}/800}+1, \lceil k^{\min\{\rho,\lambda,1\}/800}\rceil]$. The conclusion is that, with high probability, a hierarchical-star of size $k$ maintains a positive number of first-layer vertices that remain $k^{1/24}$-lit for at least $\exp(k^{\min\{\rho,\lambda,1\}/1600})$ time; this is summarized in Proposition~\ref{hierarchicalstar_thm2}. As in STEP~1, the corresponding good event is again \emph{local}: it is measurable with respect to the modified Harris construction restricted to the hierarchical-star itself, as formalized in Definition~\ref{def:hierarchical-star_sigmaalgebra}. In particular, for disjoint hierarchical-stars, the associated good events are independent.

\paragraph{STEP 3: Many hierarchical-stars inside the power-law graph.}
The next step is to show that the power-law random graph $G(n,\tau)$ typically contains many disjoint hierarchical-stars. We first establish the required degree-structure estimates through Lemma~\ref{lemma_degree_event_D}. These are then used to construct one-layer and two-layer local structures in Lemma~\ref{lemma_hierarchicalstarv11layer}, Lemma~\ref{lemma_hierarchicalstarvi1layer}, and Lemma~\ref{lemma_hierarchicalstar_second_layer}. Combining these ingredients yields Proposition~\ref{prop:existsmanyhierarchicalstar}, which states that, with high probability, the graph contains at least $\lceil n^{1-\delta}\rceil$ vertex-disjoint hierarchical-stars of size $n^{\varepsilon}$, where $\varepsilon$ is a constant depending on $\delta,\tau$.

A second geometric input is that the graph diameter is at most logarithmic (Lemma~\ref{lemma_diameter_log_n}). Together with Lemma~\ref{lemma_edge_transmission_one}, this implies that while one hierarchical-star stays active, infection has a polynomially small but still sufficient probability to travel to any other hierarchical-star within logarithmic time. The resulting spreading mechanism between hierarchical-stars is formalized in Lemma~\ref{lemma_infectionspreadthroughhierarchicalstars}.

Conditioning on this graph structure, we next lift the hierarchical-star mechanism from STEP~2 to the level of the full graph. We let $(W_r)_{r\in\mathbb{N}}$ denote the number of hierarchical-stars that satisfy the properties in STEP~2 at time $r\exp(n^{\varepsilon\min\{\rho,\lambda,1\}/1600})$. Using Proposition~\ref{hierarchicalstar_thm2} from STEP~2 together with Lemma~\ref{lemma_infectionspreadthroughhierarchicalstars} in STEP~3, we construct a lower-bounding comparison process $\tilde W_r\le W_r$ such that  again $\mathbb E[\exp(-\tilde W_{r+1})\mid\mathcal F_r]\le \exp(-\tilde W_r)$ while $\tilde{W}_r\in [1,\lceil n^{1-\delta}\rceil)$. This implies that the number of such hierarchical-stars has a strong upward drift and is therefore very unlikely to hit zero before returning to a larger macroscopic level. In this way, we obtain long survival on the whole power-law graph. This multiscale argument completes the proof of Theorem~\ref{thm_main1}.

\paragraph{STEP 4: From one infected vertex to the metastable regime.}
Finally, we consider more general initial conditions and prove Theorem~\ref{thm_main2}. The goal is to show that one uniformly chosen infected vertex already has a positive probability of triggering the large-scale mechanism from STEP~3. The proof proceeds by a multi-scale bootstrap argument. There exists a fixed degree threshold $M=M(\lambda,\rho,\tau)$ such that, once a vertex of degree at least $M$ is infected, the local results from STEP~2 and STEP~3 can be applied at this fixed scale: with positive probability, the vertex becomes the center of a local hierarchical-star and remains active long enough to infect a vertex of larger degree. Repeating this argument, the infection climbs through degree scales $M,M^2,M^3,\dots$ until it reaches a vertex of degree at least $n^\varepsilon$, where $\varepsilon$ is the exponent from Proposition~\ref{prop:existsmanyhierarchicalstar}.

Once such a vertex is infected, the process has entered the same high-degree regime used in STEP~3. At that point Theorem~\ref{thm_main1}, together with Lemma~\ref{lemma_infectionspreadthroughhierarchicalstars}, implies long survival with high probability. Since a uniformly chosen vertex in a power-law graph has degree at least $M$ with probability bounded away from $0$, this yields a uniform positive lower bound on the probability of long survival, and hence proves Theorem~\ref{thm_main2}.

\section{Analysis for Star Graphs}\label{subsection_hierarchicalstar_2}
\begin{definition}[{\normalfont Lit vertices}]\label{def:lit}
Let $G=(V,E)$ be a graph, let $v\in V$, and let $U\subseteq N(v)$ be a set of
neighbors of $v$. For any $m>0$, we say that $v$ is \emph{$m$-lit among $U$} at time $t$ if 
\[
\sigma_v(t)=I
\qquad\text{or}\qquad
\sum_{u\in U}\mathbf{1}\{\sigma_u(t)=I\}\ \ge\ m.
\]
\end{definition}
For the next definition, recall the Poisson processes introduced in the modified Harris construction from Section~\ref{ssec:harris}.
\begin{definition}[{\normalfont Local graphical data}]\label{def:local_graphical_data}
Let $G=(V,E)$ be a graph, let $v\in V$, and let $U\subseteq N(v)$ be a set of
neighbors of $v$. We define the \emph{local graphical data}
associated with $(v,U)$ to be the $\sigma$-field
\[
\mathcal{G}(v,U)
:=\sigma\!\Big(
N^{\mathrm{rec}}_{v},\,N^{\mathrm{sus}}_{v},\,
(N^{\mathrm{inf}}_{v\to u})_{u\in U},\,(N^{\mathrm{inf}}_{u\to v})_{u\in U},\,
(N^{\mathrm{rec}}_{u},\,N^{\mathrm{sus}}_{u})_{u\in U}
\Big),
\]
where $\sigma(\mathcal{X})$ denotes the smallest sigma-algebra generated by the collection of $\mathcal{X}$.
\end{definition}

\medskip
The goal of this section is to prove Proposition~\ref{prop:star_independent}, which constructs a \emph{local, high-probability}
event $A^{(v)}\in\mathcal{G}(v,U)$ ensuring that the center $v$ stays lit over a time interval whose length is polynomial in the parameter $|U|$.

\begin{proposition}\label{prop:star_independent} 
Let $G=(V,E)$ be a graph, let $v\in V$, and let $U\subseteq N(v)$ be a set of neighbors of~$v$ with $|U|\ge k$. Let $\boldsymbol{\sigma}(0)$ be an arbitrary deterministic initial configuration on $V$, subject only to the condition that $v$ is $k^{1/24}$-lit among $U$ at time 0. Then there exists an event $A^{(v)}\in \mathcal{G}(v, U)$, such that for all sufficiently large $k$, the following holds:
\begin{enumerate}
  \item[\normalfont (i)] On ${A}^{(v)}$, the vertex $v$ remains $k^{1/96}$-lit among $U$ throughout the time interval $\big[0,\tfrac{1}{48}\log k\big]$;
  \item[\normalfont (ii)] On ${A}^{(v)}$, the vertex $v$ remains $k^{1/24}$-lit among $U$ throughout the time interval $[\frac{1}{48}\log k ,3k^{\min\{\rho,\lambda,1\}/200}]$;
    \item[\normalfont (iii)] On ${A}^{(v)}$, there exist random times $T_i$ for $i=1,\dots,\lfloor k^{\min\{\rho,\lambda,1\}/400}\rfloor+1$ such that each $T_i$ is measurable with respect to $\mathcal{G}(v,U)$, $T_{i+1}-T_i\ge 3$, $T_{i}\le k^{\min\{\rho,\lambda,1\}/200}$, and $v$ is infected at time $T_i$ for every $i=1,\dots, \lfloor k^{\min\{\rho,\lambda,1\}/400}\rfloor$;
  \item[\normalfont (iv)] $\mathbb{P}({A}^{(v)}) \ge 1 - k^{-\min\{\rho,\lambda,1\}/200}$.
\end{enumerate}

\end{proposition}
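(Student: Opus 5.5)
The plan is to build $A^{(v)}$ as an intersection of a bounded number of ``good'' events, each determined by the Poisson marks in $\mathcal{G}(v,U)$. The key device replacing attractiveness is a \emph{restricted} process. Consider the SIRS dynamics on the star with centre $v$ and leaves $U$. Here $v$ receives infection only through the arrows $N^{\mathrm{inf}}_{u\to v}$ from $u\in U$. Each leaf $u$ is infected only through $N^{\mathrm{inf}}_{v\to u}$, or else is in state $S$ or $R$ and is \emph{treated pessimistically}: we only use that $u$ can be infected by $v$ when $u$ is in state $S$. The issue is that the true process may infect a leaf through edges outside the star. That can only make the leaf infected earlier and later recovered, which breaks monotonicity. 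So we will not compare processes. Instead we will state events purely in terms of marks, and they will force the desired conclusion in \emph{any} configuration:
\begin{itemize}
\item[(a)] If $v$ is infected at time $s$ and has no $N^{\mathrm{rec}}_v$ mark in $[s,s+1]$, then every leaf $u$ that is in state $S$ at some time in that window and receives an arrow from $v$ afterwards becomes infected.
\item[(b)] A leaf that is infected at time $s$ and has no recovery mark in $[s,s']$ stays infected on $[s,s']$, regardless of outside edges.
\end{itemize}
Both statements hold pathwise in the true process. I will phrase every counting event through such ``mark-pattern'' criteria: a leaf $u$ is \emph{good on a window} if it has an $N^{\mathrm{sus}}_u$ mark, then no $N^{\mathrm{rec}}_u$ mark, then an arrow $v\to u$ while $v$ is infected, and then no recovery mark for a unit of time. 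Such a leaf is guaranteed infected at the end, whatever the outside influences were. One caveat: the $R\to S$ mark must come after the last recovery, so I will require the pattern ``rec mark, then sus mark, then no rec'' or handle leaves in state $I$ or $S$ directly.

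\emph{Phase 1, times $[0,\tfrac1{48}\log k]$.} At time $0$, $v$ is infected or has at least $k^{1/24}$ infected leaves. Each of those leaves has no recovery mark in $[0,t]$ independently with probability $e^{-t}$. By Chernoff, at least $k^{1/24}e^{-t}/2\ge k^{1/96}$ of them remain infected throughout $t\le \tfrac1{48}\log k$; indeed $e^{-t}\ge k^{-1/48}$ and $1/24-1/48>1/96$. This gives (i) with failure probability $\exp(-k^{c})$. If instead $v$ itself starts infected, we use that $v$ infects many of its $k$ leaves. During $[0,\tfrac1{48}\log k]$ each leaf, whatever its initial state, has probability of order $\min(\rho,\lambda,1)$-dependent constant of ending in a good pattern, as long as $v$ stays infected or lit. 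Meanwhile, a lit-but-not-infected $v$ gets reinfected at rate at least $\lambda k^{1/96}$ once its own $R\to S$ mark occurs.

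\emph{Phase 2, the renewal cycle up to $3k^{\min\{\rho,\lambda,1\}/200}$.} Partition time into cycles of length $3$. In each cycle, $v$ either stays infected or, if it is recovered, gets a sus mark within time $1$ with probability $1-e^{-\rho}$. While lit, it is then reinfected by one of its many infected leaves within a short time, with overwhelming probability. While $v$ is infected for a unit of time, a positive fraction of leaves are in state $S$ and get re-infected. A leaf is in state $S$ with probability bounded below by roughly $\min(\rho,1)$ after waiting a unit time, and these events are independent across leaves given $v$'s marks. This keeps at least $k^{1/24}$ leaves infected in the following cycle. Choosing $T_i$ as the first infection time of $v$ in the $i$-th block of cycles gives (iii), with $T_{i+1}-T_i\ge3$ and measurability in $\mathcal{G}(v,U)$.

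\emph{The main obstacle} is the bad event that all leaves are in state $R$ simultaneously while $v$ is also recovered. This is exactly why stars die in polynomial time. To bound it per cycle, I would note that the number of infected leaves needs to drop from order $k^{c}$ to below $k^{1/24}$. A drop like this requires either $v$ failing to be reinfected for a long stretch, which is exponentially unlikely in the stretch length $\ell$, roughly $e^{-\min(\rho,\lambda)\ell}$, or a large-deviation event among the leaves, with probability $\exp(-k^{c})$. Taking $\ell\approx \tfrac{1}{100}\log k$ gives a per-cycle failure probability of about $k^{-\min\{\rho,\lambda,1\}/100}$. A union bound over $k^{\min\{\rho,\lambda,1\}/200}$ cycles then yields total failure at most $k^{-\min\{\rho,\lambda,1\}/200}$, giving (iv). The delicate point I expect is making the Chernoff counts valid despite outside infections. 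This will be handled by counting only leaves whose mark patterns \emph{force} infection, as in (b).
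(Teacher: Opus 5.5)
There is a genuine gap in how you handle the centre. Your overall architecture matches the paper's: events built from marks in $\mathcal{G}(v,U)$, leaves counted only when their mark pattern forces infection, polynomial per-cycle failure, and a union bound over polynomially many cycles. The problem is that you take $T_i$ to be ``the first infection time of $v$ in the $i$-th block'', and your Phase~2 runs on the actual moments at which $v$ is infected or recovered. These are not $\mathcal{G}(v,U)$-measurable, because $v$ can be infected through edges to $N(v)\setminus U$, whose clocks and states lie outside $\mathcal{G}(v,U)$.

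This does more than break item (iii). Your criterion (a) needs a mark-determined time $s$ at which $v$ is \emph{certified} infected in order to anchor the leaf patterns of the next cycle. You only have such an $s$ at time $0$, so the cycle-to-cycle iteration is not local either.

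The missing device (Lemmas~\ref{lemma_star_Asigmav} and~\ref{lemma_star_Av}) is a renewal scheme on $v$'s own clocks:
\begin{itemize}
\item From a certified time $T_{I,i}$, let $T_{R,i+1}$ be the first $N^{\mathrm{rec}}_v$ mark after it, $T_{S,i+1}$ the first $N^{\mathrm{sus}}_v$ mark after that, and $T_{I,i+1}:=T_{S,i+1}+k^{-1/48}$.
\item Require an arrow $N^{\mathrm{inf}}_{u\to v}$ from some forced-infected leaf $u$ in $(T_{S,i+1},T_{I,i+1}]$, and no $N^{\mathrm{rec}}_v$ mark there.
\end{itemize}
Then $v$ is infected at $T_{I,i+1}$ wherever the infection actually came from, and every time in the chain is local. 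The $T_i$ of (iii) are then selected among the $T_{I,i}$ with $T_{I,i+1}-T_{I,i}\ge 3$, which happens independently with probability at least $e^{-3}$. True first-infection times in consecutive blocks need not even be $3$ apart.

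Second, your cycle estimates rest on constant-probability events: ``$v$ infected for a unit of time'', ``sus mark within time $1$ with probability $1-e^{-\rho}$'', and ``leaf in state $S$ with probability $\gtrsim\min(\rho,1)$''. The last one also uses the true leaf state, which your own principle forbids. A per-cycle failure of order $k^{-c}$ needs a window on which $v$ stays infected with high probability, so it must have length $k^{-a}$.

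On such a window, each leaf's forcing pattern has only polynomially small probability, of order $k^{-2a}\cdot k^{-b}$. The pattern is: a sus mark, then an arrow from $v$, then no recovery over the whole $\Theta(\log k)$ excursion of $v$ through $R$. You then need $k^{1-2a-b}\gg k^{1/24}$.

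The paper takes $a=1/3$ and $b=1/4$, and uses event $A_1^{(v)}$ to force $T_S\le\tfrac16\log k<\tfrac14\log k$. It also makes each cycle self-contained with a fresh leaf set $U_v$, so no carry-over bookkeeping across fixed-length windows is needed. Your ``main obstacle'' paragraph points at this trade-off, but it never fixes the exponents or defines the forced-leaf count measurably across cycles.

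Finally, in Phase~1 a merely lit, non-infected $v$ cannot infect leaves, so ``as long as $v$ stays infected or lit'' should read ``infected''.
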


We split the proof into three steps, corresponding to the following three subsections.

In Subsection~\ref{ssec:PP}, we work directly with the Poisson clocks in the modified Harris construction and construct local measurable events
$A_1^{(v)},A_2^{(v)}$ (Lemma~\ref{lemma_star_A1v} and Lemma~\ref{lemma_star_A2v}), each of which occurs with high probability. Here and below, ``local measurable'' means measurable with respect to the local graphical $\sigma$-field $\mathcal{G}(v,U)$. We then prove that when $v$ is initially infected, the event $A_1^{(v)}\cap A_2^{(v)}$ ensures that throughout the partial transition $I\to R\to S$ at $v$, $v$ remains $k^{1/24}$-lit
(Lemma~\ref{lemma_star_lit_from_A1A2}).

In Subsection~\ref{ssec:persist-inf}, building on $A_1^{(v)}\cap A_2^{(v)}$, we construct a local measurable event $A_3^{(v)}$ which again occurs with high probability, and which provides a local measurable time $T_1$ at which $v$ is infected at $T_1$ while remaining $k^{1/24}$-lit among $U$ up to that time. Moreover, the intersection $A_1^{(v)}\cap A_2^{(v)}\cap A^{(v)}_3$ has probability at least $1-2k^{-2c}$,. On this event, $v$ remains $k^{1/24}$-lit among $U$ throughout one full $I\to R\to S\to I$ cycle (Lemma~\ref{lemma_star_Asigmav}) in time interval $[0, T_1]$. Iterating this measurable reinfection mechanism for $k^{c}$ successive rounds, we obtain that $v$ remains $k^{1/24}$-lit among $U$ for a time of order $k^{c}$ with high probability (Lemma~\ref{lemma_star_Av}), which establishes item~(ii) of Proposition~\ref{prop:star_independent} in the case $\sigma_v(0)=I$.

Subsection~\ref{ssec:persist-lit} addresses item~(i) of Proposition~\ref{prop:star_independent}. In this subsection we assume that $v$ is $k^{1/24}$-lit at time $0$
without requiring that $\sigma_v(0)= I$. We construct an event $A_5^{(v)}\in\mathcal{G}(v,U)$, with high probability,
such that on $A_5^{(v)}$ the vertex $v$ becomes infected at some local measurable time $T_0$ , while remaining $k^{1/96}$-lit up to time $T_0$
(Lemma~\ref{lemma_star_startwithvnotinfected}). Combining this with the results of Subsection~\ref{ssec:persist-inf} reduces the general initial condition to the infected case.

Putting these three parts together, all the auxiliary events are local measurable, and their intersection occurs
with probability $1-k^{-c'}$ for some constant $c'>0$, proving Proposition~\ref{prop:star_independent}.

\subsection{Auxiliary Results about Poisson Processes}
\label{ssec:PP}
In this subsection, we show that, starting from $v$ infected, the vertex $v$ becomes susceptible at some time, while remaining $k^{1/24}$-lit among $U$ up to that time. Lemma~\ref{lemma_star_A1v} is the first step: on the event $A_1^{(v)}$, if $v$ is infected at time $0$, then $v$ becomes susceptible at some time before $\frac{1}{6}\log k$.

\begin{lemma}\label{lemma_star_A1v}
Let $G=(V,E)$ be a graph and let $v\in V$. Define the event
\[
A_1^{(v)}
:=\Big\{
N_v^{\mathrm{rec}}([0,k^{-1/3}])=0,\ 
N_v^{\mathrm{rec}}\big((k^{-1/3},\,\tfrac{1}{12}\log k]\big)\ge 1,\ 
N_v^{\mathrm{sus}}\big((\tfrac{1}{12}\log k,\,\tfrac{1}{6}\log k]\big)\ge 1
\Big\},
\]
with $N_v^{\text{rec}}, N_v^{\text{sus}}$ the Poisson processes defined in the modified Harris construction. Then $A_1^{(v)}$ is \\ $\mathcal{G}(v, \{u_1,\dots, u_k\})$ measurable, with $\mathcal{G}(v, \{u_1,\dots, u_k\})$ defined in Definition~\ref{def:local_graphical_data} for all sufficiently large $k$,
\[
\mathbb{P}\!\left(A_1^{(v)}\right)\ \ge\ 1-k^{-\min\{1,\rho\}/18}.
\]
\end{lemma}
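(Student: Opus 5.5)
Measurability is immediate. The event $A_1^{(v)}$ is defined only through the restrictions of $N_v^{\mathrm{rec}}$ and $N_v^{\mathrm{sus}}$ to deterministic Borel sets. Both processes appear among the generators of $\mathcal{G}(v,\{u_1,\dots,u_k\})$ in Definition~\ref{def:local_graphical_data}, so $A_1^{(v)}\in\mathcal{G}(v,\{u_1,\dots,u_k\})$.

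For the probability bound, I would use that $N_v^{\mathrm{rec}}$ and $N_v^{\mathrm{sus}}$ are independent Poisson processes and that Poisson counts on disjoint intervals are independent. Hence
\[
\mathbb{P}\big(A_1^{(v)}\big)=e^{-k^{-1/3}}\Big(1-e^{-(\frac{1}{12}\log k-k^{-1/3})}\Big)\Big(1-e^{-\frac{\rho}{12}\log k}\Big).
\]
Each factor is then bounded separately:
\begin{itemize}
\item the first factor is at least $1-k^{-1/3}$;
\item the second factor equals $1-e^{k^{-1/3}}k^{-1/12}\ge 1-2k^{-1/12}$ for large $k$;
\item the third factor equals $1-k^{-\rho/12}$.
\end{itemize}
By a union bound, the failure probability is at most $k^{-1/3}+2k^{-1/12}+k^{-\rho/12}$.

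Write $c=\min\{1,\rho\}$. Each of the three terms is at most $2k^{-c/12}$, so the sum is at most $5k^{-c/12}$. Since $c/12>c/18$, we have $5k^{-c/12}\le k^{-c/18}$ as soon as $k^{c/36}\ge 5$, which holds for all sufficiently large $k$. This gives the claimed bound $\mathbb{P}(A_1^{(v)})\ge 1-k^{-\min\{1,\rho\}/18}$.

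There is no genuine obstacle here. The only care needed is to absorb the constants into the slack between the exponents $1/12$ and $1/18$, and to handle the case $\rho>1$ through the minimum.
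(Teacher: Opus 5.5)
Your proof is correct and follows the paper's argument: you get an exact product formula from independence of disjoint Poisson increments, bound each failure probability, and absorb constants into the gap between exponents. The only difference is that the paper uses $\tfrac{1}{12}\log k-k^{-1/3}\ge\tfrac{1}{16}\log k$ where you use $e^{k^{-1/3}}\le 2$. There is one trivial slip: three terms each at most $2k^{-c/12}$ give $6k^{-c/12}$, not $5k^{-c/12}$. The sharper count $k^{-c/12}+2k^{-c/12}+k^{-c/12}=4k^{-c/12}$ makes your stated bound true, and any constant is absorbed anyway.
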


\begin{proof}
Under the modified Harris construction, $N_v^{\mathrm{rec}}$ and $N_v^{\mathrm{sus}}$ are independent Poisson
point processes with respective rates $1$ and $\rho$. Moreover, for a Poisson point process, the counts on
disjoint time intervals are independent. Consequently,
\begin{align*}
\mathbb{P}\big(A_1^{(v)}\big)
&=\mathbb{P}\big(N_v^{\mathrm{rec}}([0,k^{-1/3}])=0\big)\,
  \mathbb{P}\big(N_v^{\mathrm{rec}}((k^{-1/3},\tfrac{1}{12}\log k])\ge 1\big)\,
  \mathbb{P}\big(N_v^{\mathrm{sus}}((\tfrac{1}{12}\log k,\tfrac{1}{6}\log k])\ge 1\big)\\
&=\exp(-k^{-1/3})\,
  \Big(1-\exp\big(-(\tfrac{1}{12}\log k-k^{-1/3})\big)\Big)\,
  \Big(1-\exp\big(-\rho(\tfrac{1}{6}-\tfrac{1}{12})\log k\big)\Big)\\
&=\exp(-k^{-1/3})\,
  \Big(1-\exp\big(-(\tfrac{1}{12}\log k-k^{-1/3})\big)\Big)\,
  \big(1-k^{-\rho/12}\big).
\end{align*}
Using $e^{-x}\ge 1-x$ for $x\ge 0$, we obtain $\exp(-k^{-1/3})\ge 1-k^{-1/3}$. For all sufficiently large $k$, we have $k^{-1/3}\le (\log k)/48$, and hence
\[
\tfrac{1}{12}\log k-k^{-1/3}\ \ge\ \Big(\tfrac{1}{12}-\tfrac{1}{48}\Big)\log k=\tfrac{1}{16}\log k,
\]
which yields
\[
1-\exp\big(-(\tfrac{1}{12}\log k-k^{-1/3})\big)\ \ge\ 1-\exp\!\big(-\tfrac{1}{16}\log k\big)\ =\ 1-k^{-1/16}.
\]
Therefore,
\[
\mathbb{P}\big(A_1^{(v)}\big)
\ge (1-k^{-1/3})(1-k^{-1/16})(1-k^{-\rho/12})
\ge 1-\big(k^{-1/3}+k^{-1/16}+k^{-\rho/12}\big),
\]
where we used $(1-a)(1-b)(1-c)\ge 1-a-b-c$ for $a,b,c\in [0,1]$.
Finally, each term on the right-hand side is at most $k^{-\min\{1,\rho\}/16}$ for $k\ge 1$, and thus
\[
k^{-1/3}+k^{-1/16}+k^{-\rho/12}
\le 3k^{-\min\{1,\rho\}/16}
\le k^{-\min\{1,\rho\}/18}
\]
for all sufficiently large $k$. This proves the claim.

\end{proof}

We recall a standard large deviation estimate (e.g., see~\cite[Lemma~5.1]{ChatterjeeDurrett2009}), which will be used repeatedly throughout the paper.
\begin{lemma}\label{lemma_largedeviation}
Let $X_1,X_2,\dots$ be i.i.d.\ nonnegative random variables with mean $\mu$. If $c_1<\mu$, then there exists a constant $c_2>0$ such that, for all $n\ge 1$,
\[
\mathbb{P}(X_1+\cdots+X_n\le c_1 n)\le e^{-c_2 n}.
\]
\end{lemma}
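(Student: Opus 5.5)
This is the standard Chernoff (Cramér) bound, and I would prove it by an exponential Markov inequality applied to $e^{-\theta S_n}$, where $S_n=X_1+\cdots+X_n$ and $\theta>0$ is to be chosen. Nonnegativity guarantees that the Laplace transform $\phi(\theta):=\mathbb{E}[e^{-\theta X_1}]$ is finite, in fact at most $1$, for every $\theta\ge 0$, so no moment assumptions beyond the mean are needed.

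\emph{Step 1 (exponential Markov).} For any $\theta>0$, Markov's inequality and independence give
\[
\mathbb{P}(S_n\le c_1 n)=\mathbb{P}\big(e^{-\theta S_n}\ge e^{-\theta c_1 n}\big)\le e^{\theta c_1 n}\,\phi(\theta)^n=\exp\big(-n(-\theta c_1-\log\phi(\theta))\big).
\]
It therefore suffices to find a single $\theta>0$ with $\log\phi(\theta)<-\theta c_1$. Then $c_2:=-\theta c_1-\log\phi(\theta)>0$ works for all $n\ge1$ simultaneously.

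\emph{Step 2 (choice of $\theta$).} This is the only point requiring care. I would use $e^{-x}\le 1-x+x^2/2$ for $x\ge0$. This gives $\phi(\theta)\le 1-\theta\mu+\tfrac{\theta^2}{2}\mathbb{E}[X_1^2]$, but only if $X_1$ has a finite second moment, which is not assumed. To avoid this, I would truncate. Fix $M$ with $\mu_M:=\mathbb{E}[X_1\wedge M]>c_1$; such an $M$ exists by monotone convergence, and this also handles the case $\mu=\infty$. Since $X_i\ge X_i\wedge M$, it is enough to bound $\mathbb{P}(\sum_i (X_i\wedge M)\le c_1n)$. For the bounded variables $Y=X_1\wedge M$ we get
\[
\log\mathbb{E}[e^{-\theta Y}]\le \log\big(1-\theta\mu_M+\theta^2M^2/2\big)\le -\theta\mu_M+\theta^2M^2/2.
\]
Choosing $\theta=(\mu_M-c_1)/M^2$ yields an exponent of at least $(\mu_M-c_1)^2/(2M^2)>0$.

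\emph{Step 3 (edge cases).} If $c_1<0$, the probability is zero and the bound is trivial. Otherwise Steps 1 and 2 combine to give the claim with $c_2=(\mu_M-c_1)^2/(2M^2)$, which depends only on the law of $X_1$ and on $c_1$, as required.
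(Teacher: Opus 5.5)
Your proof is correct. The exponential Markov step is right. The reduction to $Y_i=X_i\wedge M$ is valid because $\{S_n\le c_1 n\}\subseteq\{\sum_i Y_i\le c_1 n\}$. The inequality $e^{-x}\le 1-x+x^2/2$ holds for $x\ge 0$. The choice $\theta=(\mu_M-c_1)/M^2$ gives the exponent $(\mu_M-c_1)^2/(2M^2)>0$ uniformly in $n$. The truncation level $M$ exists by monotone convergence, even when $\mu=\infty$.

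The paper gives no proof of this lemma. It quotes it as a standard large-deviation estimate from Chatterjee and Durrett (their Lemma 5.1). So your argument is a self-contained replacement for a citation rather than an alternative to an in-paper proof.

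For comparison, the usual textbook Chernoff argument avoids truncation and the quadratic bound. For $x\ge 0$ one has $(1-e^{-\theta x})/\theta\uparrow x$ as $\theta\downarrow 0$. Monotone convergence then gives $(1-\mathbb{E}[e^{-\theta X_1}])/\theta\to\mu$. Hence for small $\theta>0$ there is a constant $c'\in(c_1,\mu)$ with $\mathbb{E}[e^{-\theta X_1}]\le 1-\theta c'$, so $\theta c_1+\log\mathbb{E}[e^{-\theta X_1}]\le-\theta(c'-c_1)<0$.

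That limiting argument is shorter and also covers $\mu=\infty$ without extra care. Your version trades it for an explicit choice of $\theta$ and an explicit rate $c_2=(\mu_M-c_1)^2/(2M^2)$, once a truncation level $M$ with $\mathbb{E}[X_1\wedge M]>c_1$ is fixed.
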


Lemma~\ref{lemma_star_A2v} constructs a good set $U_v$. On the event $A_2^{(v)}$, this set satisfies $|U_v|\ge k^{1/24}$. In Lemma~\ref{lemma_star_A2v}, we only verify through a direct calculation that this event occurs with probability tending to $1$, without yet explaining the dynamical role of $U_v$. Its role will be made clear in Lemma~\ref{lemma_star_lit_from_A1A2}: starting from $v$ infected, on the event $A_1^{(v)}\cap A_2^{(v)}$, every vertex in $U_v$ remains infected throughout the time interval $[k^{-1/3},(\log k)/4]$.

\begin{lemma}\label{lemma_star_A2v} 
Let $G_\star$ denote the star graph on vertex set $\{v,u_1,\dots, u_k\}$ with center $v$. 
Define the random set $U_v$ by
\begin{align*}
U_v
:=\Big\{ u\in \{u_1,\dots, u_k\}:
N^{\mathrm{sus}}_{u}\big((0,\tfrac12 k^{-1/3})\big)\ge 1,\ 
N^{\mathrm{inf}}_{v\to u}\big([\tfrac12 k^{-1/3},\,k^{-1/3}]\big)\ge 1, 
N^{\mathrm{rec}}_{u}\big([0,(\log k)/4)\big)=0
\Big\},
\end{align*}
and define the event
\begin{equation}\label{eq:A2v}
A_2^{(v)}:=\{|U_v|\ge k^{1/24}\},
\end{equation}
where $N^{\mathrm{sus}}_{u}$, $N^{\mathrm{inf}}_{v\to u}$, and $N^{\mathrm{rec}}_{u}$ are the Poisson clocks in the modified Harris construction.
Then, $U_v, A_2^{(v)}$ are $\mathcal{G}(v, \{u_1,\dots, u_k\})$ measurable, with $\mathcal{G}(v, \{u_1,\dots, u_k\})$ defined in Definition~\ref{def:local_graphical_data}, and for all sufficiently large $k$,
\begin{equation}
\mathbb{P}\!\left( A_{2}^{(v)}\right)
\ \ge\ 1-\exp\!\big(-k^{1/24}\big).
\end{equation}
\end{lemma}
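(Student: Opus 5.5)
The plan is to write $|U_v|$ as a sum of i.i.d.\ Bernoulli indicators and apply Lemma~\ref{lemma_largedeviation}.

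\emph{Measurability.} For each leaf $u_i$, the defining conditions of $U_v$ involve only $N^{\mathrm{sus}}_{u_i}$, $N^{\mathrm{inf}}_{v\to u_i}$ and $N^{\mathrm{rec}}_{u_i}$. All of these generate $\mathcal{G}(v,\{u_1,\dots,u_k\})$. Hence each indicator $\xi_i:=\mathbf{1}\{u_i\in U_v\}$ is measurable with respect to this $\sigma$-field, and so are $U_v$ and $A_2^{(v)}=\{\sum_i\xi_i\ge k^{1/24}\}$.

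\emph{Distribution of the indicators.} The three clocks attached to a fixed $u_i$ are independent Poisson processes. Across different $i$ the families are disjoint, so $\xi_1,\dots,\xi_k$ are i.i.d.\ Bernoulli$(p_k)$ with
\[
p_k=\bigl(1-e^{-\rho k^{-1/3}/2}\bigr)\bigl(1-e^{-\lambda k^{-1/3}/2}\bigr)e^{-(\log k)/4}.
\]
Using $1-e^{-x}\ge x/2$ for small $x$, for large $k$ this gives
\[
p_k\ge \tfrac{\rho\lambda}{16}\,k^{-2/3}k^{-1/4}=c\,k^{-11/12},\qquad c:=\rho\lambda/16.
\]
The mean of $|U_v|$ is therefore at least $c\,k^{1/12}$, which is much larger than $k^{1/24}$.

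\emph{Concentration.} A lower-tail bound is needed at polynomial scale, but Lemma~\ref{lemma_largedeviation} is stated for a fixed distribution, while here $p_k$ depends on $k$. I would sidestep this by grouping the $k$ leaves into $m:=\lfloor c k^{1/12}/2\rfloor$ disjoint blocks, each of size $\lfloor k^{11/12}\cdot 2/c\rfloor$ (which is at most $k$ in total). Let $Y_j$ be the indicator that block $j$ contains at least one element of $U_v$. The blocks are disjoint, so the $Y_j$ are i.i.d. Moreover,
\[
\mathbb{P}(Y_j=1)\ge 1-(1-ck^{-11/12})^{2k^{11/12}/c-1}\ge 1-e^{-2}\cdot e^{o(1)}\ge 0.8
\]
for large $k$, so $\mathbb{P}(Y_j=1)$ is bounded below uniformly in $k$. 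One then couples each $Y_j$ from below by a fixed Bernoulli$(0.8)$ variable and applies Lemma~\ref{lemma_largedeviation} with $c_1=1/2$. This gives
\[
\mathbb{P}\Bigl(\sum_j Y_j\le m/2\Bigr)\le e^{-c_2 m}=e^{-c_2' k^{1/12}}.
\]
Since $m/2\ge k^{1/24}$ for large $k$ and $|U_v|\ge\sum_jY_j$, we get $\mathbb{P}(A_2^{(v)})\ge 1-e^{-c_2'k^{1/12}}\ge 1-\exp(-k^{1/24})$ for all sufficiently large $k$. Alternatively, a direct Chernoff bound for the binomial yields the same estimate, with $\mathbb{P}(\mathrm{Bin}(k,p_k)\le \mu/2)\le e^{-\mu/8}$ and $\mu\ge ck^{1/12}$.

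\emph{Main obstacle.} The only delicate point is this $k$-dependence of the success probability, which prevents a literal application of Lemma~\ref{lemma_largedeviation}. The blocking and coupling trick, or the direct Chernoff bound, handles it. Everything else is a routine Poisson computation.
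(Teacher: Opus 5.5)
Your proof is correct and follows the same route as the paper. Both write $|U_v|$ as a sum of i.i.d.\ Bernoulli indicators with $p_k\ge c\,k^{-11/12}$, obtained from the product of three independent Poisson probabilities, so the mean is of order $k^{1/12}$, and then use a lower-tail bound. The only difference is the last step: the paper applies Lemma~\ref{lemma_largedeviation} directly to $\mathrm{Binomial}(k,p_k)$ without addressing the $k$-dependence of $p_k$, and your blocking/coupling argument (or the multiplicative Chernoff bound $e^{-kp_k/8}$) supplies exactly the justification that step needs.
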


\begin{proof}
Under the modified Harris construction, for each fixed $i$ the point processes
$N^{\mathrm{sus}}_{u_i}$, $N^{\mathrm{inf}}_{v\to u_i}$, and $N^{\mathrm{rec}}_{u_i}$
are mutually independent. Moreover, as $i$ varies, the collections
\(
(N^{\mathrm{sus}}_{u_i},\,N^{\mathrm{inf}}_{v\to u_i},\,N^{\mathrm{rec}}_{u_i})_{i=1,\dots,k}
\)
are independent and identically distributed. Consequently, the indicators
$\mathbf{1}\{u_1\in U_v\},\dots,\mathbf{1}\{u_k\in U_v\}$ are i.i.d.
Let $p_k:=\mathbb{P}(u_1\in U_v)$. By independence,
\begin{align*}
p_k
&=\mathbb{P}\!\left(N^{\mathrm{sus}}_{u_1}\big((0,\tfrac12 k^{-1/3})\big)\ge 1\right)
 \mathbb{P}\!\left(N^{\mathrm{inf}}_{v\to u_1}\big([\tfrac12 k^{-1/3},k^{-1/3}]\big)\ge 1\right)
 \mathbb{P}\!\left(N^{\mathrm{rec}}_{u_1}\big([0,(\log k)/4)\big)=0\right) \\
&=\Big(1-\exp\big({-(\rho/2)k^{-1/3}}\big)\Big)\Big(1-\exp\big({-(\lambda/2)k^{-1/3}}\big)\Big)\,\exp\big({-(\log k)/4}\big)\\
&=\Big(1-\exp\big({-(\rho/2)k^{-1/3}}\big)\Big)\Big(1-\exp\big({-(\lambda/2)k^{-1/3}}\big)\Big)\,k^{-1/4}.
\end{align*}
Using $1-e^{-x}\ge x/3$ for $x\in[0,1]$, and noting that
$(\rho/2)k^{-1/3},(\lambda/2)k^{-1/3}\in[0,1]$ for all sufficiently large $k$, we obtain
\[
p_k\ \ge\ \frac{1}{9}\cdot\frac{\rho}{2}k^{-1/3}\cdot\frac{\lambda}{2}k^{-1/3}\cdot k^{-1/4}
\ =\ \frac{\rho\lambda}{36}\,k^{-11/12}.
\]
Therefore,
\[
\sum_{i=1}^k \mathbf{1}\{u_i\in U_v\}\ \sim\ \mathrm{Binomial}(k,p_k),
\qquad
\mathbb{E}\Big[\sum_{i=1}^k \mathbf{1}\{u_i\in U_v\}\Big]=kp_k\ \ge\ \frac{\rho\lambda}{36}\,k^{1/12}.
\]
In particular, for all sufficiently large $k$ we have $kp_k\ge 2k^{1/24}$.
Hence by Lemma~\ref{lemma_largedeviation},
\[
\mathbb{P}\Big(\sum_{i=1}^k \mathbf{1}\{u_i\in U_v\} < k^{1/24}\Big)
\le
\mathbb{P}\Big(\sum_{i=1}^k \mathbf{1}\{u_i\in U_v\} < \tfrac12\, kp_k\Big)
\le \exp\!\big(-c\,kp_k\big)
\le \exp\!\big(-k^{1/24}\big)
\]
for all sufficiently large $k$, where $c>0$ is a constant.
Therefore,
\[
\mathbb{P}\!\left(A_2^{(v)}\right)
=\mathbb{P}\Big(\sum_{i=1}^k \mathbf{1}\{u_i\in U_v\}\ge k^{1/24}\Big)
\ge 1-\exp\!\big(-k^{1/24}\big).
\]
\end{proof}

\medskip
Lemma~\ref{lemma_star_lit_from_A1A2} is the deterministic bridge between the local measurable events $A_1^{(v)}$ and $A_2^{(v)}$
and the \emph{lit} property.

\begin{lemma}\label{lemma_star_lit_from_A1A2}
Fix a graph $G=(V,E)$ and let $G_\star$ denote the star subgraph on vertex set $\{v,u_1,\dots,u_k\}$ with center $v$ (so $G_\star\subseteq G$). Assume that the initial configuration $\boldsymbol{\sigma}(0)$ be arbitrary on $V$ satisfying $\sigma_v(0)=I$.  Let $A_1^{(v)}$ and $A_2^{(v)}$ be the events defined in
Lemma~\ref{lemma_star_A1v} and Lemma~\ref{lemma_star_A2v}, respectively.
Then on $A_1^{(v)}\cap A_2^{(v)}$, the center $v$ is $k^{1/24}$-lit among $\{u_1,\dots, u_k\}$ throughout the time interval
$[0,(\log k)/4]$, for all sufficiently large $k$.
\end{lemma}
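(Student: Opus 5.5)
We argue pathwise on $A_1^{(v)}\cap A_2^{(v)}$ by splitting $[0,(\log k)/4]$ into two subintervals. The first part, where $v$ itself is infected, is handled directly. The second part is handled through the vertices of $U_v$, each of which we show is infected throughout $[k^{-1/3},(\log k)/4]$. Since the two intervals overlap and $|U_v|\ge k^{1/24}$ on $A_2^{(v)}$, this gives the $k^{1/24}$-lit property on the whole interval.

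\emph{Step 1 (the center on $[0,k^{-1/3}]$).} On $A_1^{(v)}$ we have $N_v^{\mathrm{rec}}([0,k^{-1/3}])=0$, and $\sigma_v(0)=I$. In the modified Harris construction, an infected vertex can only leave state $I$ at a mark of $N_v^{\mathrm{rec}}$. Hence $\sigma_v(t)=I$ for all $t\in[0,k^{-1/3}]$, so $v$ is trivially lit there.

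\emph{Step 2 (each $u\in U_v$ becomes and stays infected).} Fix $u\in U_v$ and let $s\in(0,\tfrac12k^{-1/3})$ be a mark of $N_u^{\mathrm{sus}}$.
\begin{itemize}
\item Since $N_u^{\mathrm{rec}}([0,(\log k)/4))=0$, no $I\to R$ transition of $u$ occurs on this interval.
\item At time $s$ the vertex $u$ is in $S$ or $I$. If $\sigma_u(0)=R$, then $u$ is still $R$ just before the first sus-mark, so it becomes $S$ there (or earlier it could not leave $R$). If $\sigma_u(0)\in\{S,I\}$, it cannot reach $R$ at all, because there are no recovery marks.
\item From $s$ on, $u$ is never in $R$: the only route into $R$ is through a recovery mark, and there are none before $(\log k)/4$.
\end{itemize}
Now take an infection mark $t'\in N^{\mathrm{inf}}_{v\to u}\cap[\tfrac12k^{-1/3},k^{-1/3}]$. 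By Step 1, $\sigma_v(t'^-)=I$. So either $u$ is already $I$ at $t'^-$, or it is $S$ and rule (iii) makes it $I$ at $t'$. In both cases $\sigma_u(t')=I$. Since $u$ has no recovery mark before $(\log k)/4$, it stays infected on $[t',(\log k)/4)$, which contains $[k^{-1/3},(\log k)/4)$. The endpoint $(\log k)/4$ is also covered: the recovery-free window is half-open, but a.s.\ no mark falls exactly at $(\log k)/4$, and the right-closed statement follows by right-continuity, or by noting the interval in $U_v$'s definition can be adjusted harmlessly.

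\emph{Step 3 (combine).} On $A_2^{(v)}$, at every $t\in[k^{-1/3},(\log k)/4]$ at least $|U_v|\ge k^{1/24}$ neighbors are infected. Together with Step 1, $v$ is $k^{1/24}$-lit among $\{u_1,\dots,u_k\}$ throughout $[0,(\log k)/4]$.

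The main subtlety is Step 2's handling of the arbitrary initial state of $u$, in particular showing that $u$ is not in $R$ at the infection mark. The sus-mark before $\tfrac12k^{-1/3}$, combined with the absence of recovery marks, settles this. The event $A_1^{(v)}$ is used only through its first clause; the others matter later.
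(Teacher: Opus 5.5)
Your proof is correct and matches the paper's argument. As in the paper, $v$ stays infected on $[0,k^{-1/3}]$ by the first clause of $A_1^{(v)}$, and every $u\in U_v$ is infected by time $k^{-1/3}$ and stays infected until $(\log k)/4$. The only difference is that you fold the paper's three cases $\sigma_u(0)\in\{I,S,R\}$ into a single argument via the susceptibility mark, which is a harmless repackaging.

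One correction on the endpoint: right-continuity does not help, since a recovery mark exactly at $(\log k)/4$ would put $u$ in state $R$ at that instant. Your null-set remark is enough to cover it, and the paper itself simply treats the recovery-free window as closed.
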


\begin{proof}
We assume that $k$ is sufficiently large so that $(\log k)/4 > k^{-1/3}$, and work on the event $A_1^{(v)}\cap A_2^{(v)}$.
On the event $A_1^{(v)}$, we have $N_v^{\mathrm{rec}}([0,k^{-1/3}])=0$. Since $\sigma_v(0)=I$, no recovery mark occurs at $v$ before time $k^{-1/3}$, and hence $v$ remains infected throughout the interval $[0,k^{-1/3}]$.

For each $u\in U_v$ defined in Lemma~\ref{lemma_star_A2v}, we have
\[
N_u^{\mathrm{rec}}\big([0,(\log k)/4]\big)=0,
\]
so once $u$ becomes infected at any time $t\le k^{-1/3}$, it stays infected on the entire interval
$[t,(\log k)/4]$.

We claim that every $u$ satisfying the conditions in \eqref{eq:A2v} is infected by time $k^{-1/3}$,
regardless of its initial state. Consider three cases.
\begin{enumerate}[(i)]
    \item $\sigma_u(0)=I$.
Then $u$ is infected at time $0$, and since $N_u^{\mathrm{rec}}([0,(\log k)/4])=0$, it remains infected on $[0,(\log k)/4]$.
    \item $\sigma_u(0)=S$. In this case, the infection time of $u$ is not necessarily measurable with respect to $\mathcal{G}(v,\{u_1,\dots,u_k\})$. Nevertheless, there exists an event in this $\sigma$-field that guarantees that $u$ becomes infected before time $k^{-1/3}$. Indeed, since $v$ is infected on $[0,k^{-1/3}]$ and
\(
N_{v\to u}^{\mathrm{inf}}\big([\tfrac12 k^{-1/3},\,k^{-1/3}]\big)\ge 1
\),
there exists an infection transmission from $v$ to $u$ at some time in $[\tfrac12 k^{-1/3}, k^{-1/3}]$. If $u$ has not already been infected prior to that infection transmission time, then this infects $u$ at that moment, so in any case $u$ becomes infected by time $k^{-1/3}$. By the no-recovery condition above, once $u$ is infected it remains infected up to time $(\log k)/4$.
\item $\sigma_u(0)=R$.
The condition
\(
N_u^{\mathrm{sus}}\big((0,\tfrac12 k^{-1/3})\big)\ge 1
\)
implies that $u$ becomes susceptible at some time before $\tfrac12 k^{-1/3}$.
After that time, $v$ is still infected on $[\tfrac12 k^{-1/3},k^{-1/3}]$, and again
\(
N_{v\to u}^{\mathrm{inf}}\big([\tfrac12 k^{-1/3},\,k^{-1/3}]\big)\ge 1
\)
forces $u$ to be infected by time $k^{-1/3}$. By $N_u^{\mathrm{rec}}([0,(\log k)/4])=0$,
it remains infected until time $(\log k)/4$.
\end{enumerate}

Therefore, on $A_1^{(v)}\cap A_2^{(v)}$, at least $k^{1/24}$ vertices in $\{u_1,\dots, u_k\}$ are infected throughout the time interval $[k^{-1/3},(\log k)/4]$. Together with the fact that $v$ is infected on $[0,k^{-1/3}]$,
this shows that $v$ is $k^{1/24}$-lit among $\{u_1,\dots, u_k\}$ on the entire interval $[0,(\log k)/4]$.
\end{proof}

\subsection{Persistence of Infected Starting from Infected Central Vertex}
\label{ssec:persist-inf}

Subsection~\ref{ssec:PP} shows that, starting from $v$ infected, the vertex $v$ remains $k^{1/24}$-lit among $U$ up to time $(\log k)/4$ with high probability. In this subsection, we first prove the existence of a local measurable time
\(
T_{I,1}^{(v)}\le \frac14\log k
\)
such that $v$ is infected at time $T_{I,1}^{(v)}$; this is the content of Lemma~\ref{lemma_star_Asigmav}. We then restart the same argument from time $T_{I,1}^{(v)}$ and iterate Lemma~\ref{lemma_star_Asigmav} to construct times
\(
T_{I,2}^{(v)},\dots,T_{I,M}^{(v)}
\).
All these times are local measurable, the vertex $v$ is infected at each time $T_{I,i}^{(v)}$, and $v$ remains $k^{1/24}$-lit among $U$ throughout the interval $[0,T_{I,M}^{(v)}]$; this is formalized in Lemma~\ref{lemma_star_Av}. Finally, we choose $M$ so that the $M$ successive iterations occur with high probability, while
\(
T_{I,M}^{(v)}\ge 3k^{\min\{\rho,\lambda,1\}/200}
\),
as required in part~(ii) of Proposition~\ref{prop:star_independent}.

\begin{lemma}\label{lemma_star_Asigmav}
Fix a graph $G=(V,E)$ and let $G_\star$ denote the star subgraph on vertex set $\{v,u_1,\dots,u_k\}$ with center $v$ (so $G_\star\subseteq G$). Assume that the initial configuration $\boldsymbol{\sigma}(0)$ is arbitrary on $V$ and satisfies $\sigma_v(0)=I$. Let $T_S^{(v)}$ be the first time at which $v$ is susceptible. Then $T_S^{(v)}$ is measurable with respect to $\mathcal{G}(v,\{u_1,\dots,u_k\})$, where $\mathcal{G}(v,\{u_1,\dots,u_k\})$ is defined in Definition~\ref{def:local_graphical_data}. Then there exists an event
\(
A_3^{(v)} \in \mathcal{G}(v,\{u_1,\dots,u_k\})
\)
such that, for all sufficiently large $k$, the following hold:
\begin{enumerate}
    \item[\normalfont (i)] On $A_3^{(v)}$, $T_S^{(v)}\le \frac{1}{6}\log k$, and the vertex $v$ is infected at time $T_S^{(v)}+k^{-1/48}$;
    \item[\normalfont (ii)] On $A_3^{(v)}$, the vertex $v$ is $k^{1/24}$-lit among $\{u_1,\dots,u_k\}$ throughout the time interval  $[0, T_S^{(v)}+k^{-1/48}]$;
    \item[\normalfont (iii)] 
    \(
    \mathbb{P}\big(A_3^{(v)}\big)\ \ge\ 1-2k^{-\min\{1,\rho,\lambda\}/48}.
    \)
\end{enumerate}
\end{lemma}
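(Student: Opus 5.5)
We will take $A_3^{(v)}:=A_1^{(v)}\cap A_2^{(v)}\cap B^{(v)}$, where $B^{(v)}$ is a local event ensuring that $v$ is reinfected within a short window right after it becomes susceptible. First we check measurability of $T_S^{(v)}$. Since $\sigma_v(0)=I$, the vertex $v$ can only leave state $I$ at a mark of $N_v^{\mathrm{rec}}$. It can then only become susceptible at the first mark of $N_v^{\mathrm{sus}}$ after that, and no infection event can intervene before $v$ first reaches $S$. Hence $T_S^{(v)}=\inf\{t\in N_v^{\mathrm{sus}}: t>\tau_1\}$ with $\tau_1:=\min N_v^{\mathrm{rec}}$, which is a function of $(N_v^{\mathrm{rec}},N_v^{\mathrm{sus}})$ only and so lies in $\mathcal{G}(v,\{u_1,\dots,u_k\})$. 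On $A_1^{(v)}$ we have $\tau_1\in(k^{-1/3},\tfrac1{12}\log k]$. We also have $T_S^{(v)}\le \tfrac16\log k$, because $A_1^{(v)}$ guarantees a susceptibility mark in $(\tfrac1{12}\log k,\tfrac16\log k]$, which is after $\tau_1$. This gives the first half of (i).

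For the reinfection we use the set $U_v$ of Lemma~\ref{lemma_star_A2v}. By the proof of Lemma~\ref{lemma_star_lit_from_A1A2}, every $u\in U_v$ is infected throughout $[k^{-1/3},(\log k)/4]$ on $A_1^{(v)}\cap A_2^{(v)}$. Since $T_S^{(v)}+k^{-1/48}\le \tfrac16\log k+1<\tfrac14\log k$ for large $k$, the window $J:=[T_S^{(v)},T_S^{(v)}+k^{-1/48}]$ lies inside that interval. Define
\[
B^{(v)}:=\Big\{\exists\,u\in U_v:\ N_{u\to v}^{\mathrm{inf}}(J)\ge 1\Big\}\cap\Big\{N_v^{\mathrm{rec}}(J)=0\Big\},
\]
which is local measurable because $U_v$ and $T_S^{(v)}$ are. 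On $A_1^{(v)}\cap A_2^{(v)}\cap B^{(v)}$, $v$ is susceptible at $T_S^{(v)}$ and stays so until an infection arrives. A mark $u\to v$ with $u\in U_v$ infected then infects $v$ at some time in $J$. Since $v$ receives no recovery mark in $J$, it remains infected at $T_S^{(v)}+k^{-1/48}$, which completes (i). For (ii), Lemma~\ref{lemma_star_lit_from_A1A2} gives $k^{1/24}$-litness on $[0,(\log k)/4]\supseteq[0,T_S^{(v)}+k^{-1/48}]$.

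For (iii) we bound $\mathbb P(A_1^c)+\mathbb P(A_2^c)+\mathbb P(A_1\cap A_2\cap (B^{(v)})^c)$. The first two terms are at most $k^{-\min\{1,\rho\}/18}+e^{-k^{1/24}}$. For the third, we condition on the processes $N_v^{\mathrm{rec}},N_v^{\mathrm{sus}},(N_u^{\mathrm{rec}},N_u^{\mathrm{sus}})_u$ and on the restrictions of $N_{v\to u}^{\mathrm{inf}}$ to $[0,k^{-1/3}]$, which together determine $T_S^{(v)}$ and $U_v$.

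\begin{itemize}
\item The processes $N_{u\to v}^{\mathrm{inf}}$ are independent of everything that was conditioned on. So, given $|U_v|\ge k^{1/24}$, the number of marks from $U_v$ to $v$ in $J$ is Poisson with mean at least $\lambda k^{1/24}k^{-1/48}=\lambda k^{1/48}$. The probability of no such mark is therefore at most $e^{-\lambda k^{1/48}}$.
\item The event $N_v^{\mathrm{rec}}(J)\ge1$ needs care, because $J$ is defined through $N_v^{\mathrm{rec}}$ itself. However, $T_S^{(v)}$ is a stopping time of the filtration generated by $(N_v^{\mathrm{rec}},N_v^{\mathrm{sus}})$. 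By the strong Markov property the recovery marks after it are a fresh Poisson process, so this probability is $1-e^{-k^{-1/48}}\le k^{-1/48}$.
\end{itemize}

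Summing the terms gives at most $2k^{-\min\{1,\rho,\lambda\}/48}$ for large $k$. The main obstacle is exactly this conditioning step, i.e. justifying independence when the window $J$ and the set $U_v$ are themselves random. We handle it by using the strong Markov property at the stopping time $T_S^{(v)}$ for the clocks at $v$, and by noting that the $u\to v$ clocks are not involved in defining $U_v$ or $T_S^{(v)}$.
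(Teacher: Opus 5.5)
Your proposal is correct and matches the paper's proof essentially step for step. You use the same event $A_1^{(v)}\cap A_2^{(v)}\cap\{\exists\,u\in U_v:\ N^{\mathrm{inf}}_{u\to v}(J)\ge 1\}\cap\{N^{\mathrm{rec}}_v(J)=0\}$ with $J$ the window of length $k^{-1/48}$ after $T_S^{(v)}$, the same appeal to Lemma~\ref{lemma_star_lit_from_A1A2} for (i) and (ii), and the same union bound for (iii). The one difference is minor: you control the recovery term through the strong Markov property at the stopping time $T_S^{(v)}$, whereas the paper conditions on $A_1^{(v)}\cap A_2^{(v)}$, and your version is, if anything, a slightly cleaner justification of the needed independence.
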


\begin{proof}
Let $T_R^{(v)}:=\inf\{t> 0:\ t\in N_v^{\mathrm{rec}}\}$ be the first recovery time of $v$. Then $T_S^{(v)}=\inf\{t> T_R^{(v)}:\ t\in N_v^{\mathrm{sus}}\}$, so $T_S^{(v)}$ is measurable with respect to $\{N_v^{\mathrm{rec}},N_v^{\mathrm{sus}}\}$, and hence measurable with respect to $\mathcal{G}(v,\{u_1,\dots,u_k\})$.
Let $A_1^{(v)}$ and $A_2^{(v)}$ (together with $U_v$) be as defined in Lemma~\ref{lemma_star_A1v} and Lemma~\ref{lemma_star_A2v}, respectively. Define $$A_{3,1}^{(v)}:=\Big\{\exists\,u\in U_v:\ N^{\mathrm{inf}}_{u\to v}\big((T_S^{(v)},\,T_S^{(v)}+k^{-1/48}]\big)\ge 1\Big\}, \quad A_{3,2}^{(v)}:=\Big\{N_v^{\mathrm{rec}}\big((T_S^{(v)}, T_S^{(v)}+k^{-1/48}]\big)=0 \Big\},$$ and set
\[
A_3^{(v)}:= A_1^{(v)}\cap A_2^{(v)}\cap A_{3,1}^{(v)} \cap A_{3,2}^{(v)} .
\]
Since both $T_S^{(v)}$ and $U_v$ are measurable with respect to $\mathcal{G}(v,\{u_1,\dots,u_k\})$, it follows that $A_3^{(v)}\in \mathcal{G}(v,\{u_1,\dots,u_k\})$.
By Lemma~\ref{lemma_star_A1v}, 
$$\mathbb{P}\big(A_1^{(v)}\big)\ge 1-k^{-\min\{1,\rho\}/18}.$$ 
By Lemma~\ref{lemma_star_A2v}, 
$$\mathbb{P}\big(A_2^{(v)}\big)\ge 1-\exp(-k^{1/24}).$$

On $A_1^{(v)}$, the conditions $N_v^{\mathrm{rec}}([0,k^{-1/3}])=0$, $N_v^{\mathrm{rec}}(k^{-1/3}, \frac{1}{12}\log k]\ge 1$, and $N_v^{\mathrm{sus}}(\frac{1}{12}\log k, \frac{1}{6}\log k]\ge 1$ imply that $k^{-1/3}<T_S^{(v)}\le \frac{1}{6}\log k$. Moreover, Lemma~\ref{lemma_star_lit_from_A1A2} shows that on $A_1^{(v)}\cap A_2^{(v)}$, the vertex $v$ is $k^{1/24}$-lit among $\{u_1,\dots,u_k\}$ up to time $\frac{1}{4}\log k\ge T_S^{(v)}+k^{-1/48}$. Therefore, it suffices to show that on $A_3^{(v)}$, the vertex $v$ is infected at time $T_S^{(v)}+k^{-1/48}$.

For any $u\in U_v$, Lemma~\ref{lemma_star_lit_from_A1A2} implies that $u$ remains infected throughout the time interval $[k^{-1/3},(\log k)/4]$. Hence, on $A_1^{(v)}\cap A_2^{(v)}$, the event $A_{3,1}^{(v)}$ implies that $v$ becomes infected at some time in the interval $(T_S^{(v)},T_S^{(v)}+k^{-1/48}]$. Note that this infection time need not be measurable with respect to $\mathcal{G}(v,\{u_1,\dots,u_k\})$, since the infection may also arrive at $v$ from outside $\{u_1,\dots, u_k\}$. However, on $A_{3,2}^{(v)}$, once $v$ becomes infected at some time in $(T_S^{(v)},T_S^{(v)}+k^{-1/48}]$, it remains infected at time $T_S^{(v)}+k^{-1/48}$. Therefore, $A_{3,1}^{(v)}\cap A_{3,2}^{(v)}$ implies that $v$ is infected at time $T_S^{(v)}+k^{-1/48}$.

Finally, we estimate the probability of $A_3^{(v)}$. On the event $A_1^{(v)}\cap A_2^{(v)}$, we have $|U_v|\ge k^{1/24}$. For each $u\in U_v$, the event $N_{u\to v}^{\mathrm{inf}}((T_S^{(v)}, T_S^{(v)}+k^{-1/48}])\ge 1$ has probability $1-\exp(- \lambda k^{-1/48})$, and these events are independent, since $T_S^{(v)}$ is measurable with respect to $\{N_v^{\mathrm{rec}},N_v^{\mathrm{sus}}\}$, which is independent of all processes $N^{\mathrm{inf}}_{u\to v}$. Hence
\[
\mathbb{P}(A_{3,1}^{(v)}|A_1^{(v)}\cap A_2^{(v)})\ge \mathbb{P}(\mathrm{Binomial}(k^{1/24}, 1-\exp(-\lambda k^{-1/48}))\ge 1)=1-\exp(-\lambda k^{1/48}).
\]

On the event $A_1^{(v)}$, we have $T_S^{(v)}>k^{-1/3}$. By the independent increments property of the Poisson process, 
$$\mathbb{P}(A_{3,2}^{(v)}|A_1^{(v)}\cap A_2^{(v)})=\exp(-k^{-1/48}) \ge 1-k^{-1/48},$$ 
where we used $e^{-x}\ge 1-x$ for all $x\ge 0$.

Hence, for all sufficiently large $k$,
\begin{align*}
\mathbb{P}(A_3^{(v)})
&\ge 1-\mathbb{P}((A_1^{(v)})^{\mathrm{c}})-\mathbb{P}((A_2^{(v)})^{\mathrm{c}})-\mathbb{P}((A_{3,1}^{(v)})^{\mathrm{c}}|A_1^{(v)}\cap A_2^{(v)})-\mathbb{P}((A_{3,2}^{(v)})^{\mathrm{c}}|A_1^{(v)}\cap A_2^{(v)})\\
&\ge 1-k^{-\min\{1,\rho\}/18}-\exp(-k^{1/24})-\exp(-\lambda k^{1/48})-k^{-1/48}
\ge 1-2k^{-\min\{1,\rho\}/48}.
\end{align*}
\end{proof}

\begin{lemma}\label{lemma_star_Av}
Fix a graph $G=(V,E)$ and let $G_\star$ denote the star subgraph on vertex set $\{v,u_1,\dots,u_k\}$ with center $v$ (so $G_\star\subseteq G$). Assume that the initial configuration $\boldsymbol{\sigma}(0)$ is arbitrary on $V$ satisfying $\sigma_v(0)=I$. Let $T_{I,0}^{(v)}:=0$, and define
$$T_{R,i+1}^{(v)}:=\inf\{t>T_{I,i}^{(v)}: t\in N_v^{\mathrm{rec}} \}, \quad T_{S,i+1}^{(v)}:=\inf\{t>T_{R, i+1}^{(v)}:t\in N_v^{\mathrm{sus}} \}, \quad T_{I,i+1}^{(v)}:=T_{S,i+1}^{(v)}+k^{-1/48} $$
for all $i$; then they are all measurable with respect to $\mathcal{G}(v, \{u_1,\dots, u_k\})$, where $\mathcal{G}(v, \{u_1,\dots, u_k\})$ is defined in Definition~\ref{def:local_graphical_data}. Then there exists an event $A_4^{(v)}\in \mathcal{G}(v, \{u_1,\dots, u_k\})$ such that, for all sufficiently large $k$, the following hold:
\begin{enumerate}
    \item[\normalfont (i)] On $A_4^{(v)}$, $v$ is infected at $T_{I,i}^{(v)}$, and $T_{I,i}^{(v)}-T_{I,i-1}^{(v)}\le \frac{1}{6}\log k+k^{-1/48}$ for all $i=1,\dots,\lfloor 10ek^{\min\{\rho, \lambda,1\}/96}\rfloor$;
    \item[\normalfont (ii)] On $A_4^{(v)}$, $|\{i\le \frac{\frac{1}{2} k^{\min\{\rho,\lambda,1\}/200}}{\log k} : T_{I,i+1}^{(v)}-T_{I,i}^{(v)}\ge 3 \}|\ge \lfloor k^{\min\{\rho,\lambda,1\}/400}\rfloor +1 $;
    \item[\normalfont (iii)] On $A_4^{(v)}$, $v$ is $k^{1/24}$-lit among $\{u_1,\dots, u_k\}$ throughout the time interval $[0, 3k^{\min\{\rho,\lambda,1\}/96}]$;
    \item[\normalfont (iv)] $\mathbb{P}(A_4^{(v)})\ge 1-k^{-\min\{\rho,\lambda,1\}/100}$.
\end{enumerate}
\end{lemma}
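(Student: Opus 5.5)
The plan is to iterate Lemma~\ref{lemma_star_Asigmav} along the successive cycles defined by the times $T_{I,i}^{(v)}$, using the strong Markov property of the Poisson clocks at the local stopping times $T_{I,i}^{(v)}$. Set $M:=\lfloor 10ek^{\min\{\rho,\lambda,1\}/96}\rfloor$. For each $i\ge 0$, let $A_{3,i+1}^{(v)}$ be the event of Lemma~\ref{lemma_star_Asigmav} built from the clocks shifted by $T_{I,i}^{(v)}$; that is, $A_1,A_2,A_{3,1},A_{3,2}$ are defined with all intervals translated by $T_{I,i}^{(v)}$. Since $T_{I,i}^{(v)}$ is a stopping time for the filtration generated by the local clocks, the shifted clocks are again independent Poisson processes. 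They are also independent of $\mathcal{G}(v,\{u_1,\dots,u_k\})$ restricted to $[0,T_{I,i}^{(v)}]$.

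On $\bigcap_{j\le i}A_{3,j}^{(v)}$, we get by induction that $v$ is infected at $T_{I,i}^{(v)}$. The configuration at that time is then a legitimate initial condition for Lemma~\ref{lemma_star_Asigmav}, which is stated for an arbitrary configuration with $\sigma_v=I$. Applying the lemma, we get that $v$ is infected at $T_{I,i+1}^{(v)}$, is $k^{1/24}$-lit on $[T_{I,i}^{(v)},T_{I,i+1}^{(v)}]$, and satisfies $T_{I,i+1}^{(v)}-T_{I,i}^{(v)}\le\frac16\log k+k^{-1/48}$. Note that $T_{R,i+1}^{(v)}$ is exactly the first recovery after $T_{I,i}^{(v)}$, so the shifted $T_S^{(v)}$ matches $T_{S,i+1}^{(v)}$. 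A union bound over $i\le M$ gives failure probability at most $2Mk^{-\min\{1,\rho,\lambda\}/48}\approx 20e\,k^{-\min/96}$. This is not yet $k^{-\min/100}$.

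\textbf{The obstacle is this mismatch between the target exponent $1/100$ and the union bound.} I expect the bound in item (iii) of Lemma~\ref{lemma_star_Asigmav} to be stronger than stated: only the terms $k^{-\rho/12}$, $k^{-1/16}$ and $k^{-1/48}$ are polynomial. So I would rerun the estimate with a slightly sharper constant, or use $\exp$-small terms where possible. If that fails, I would lower $M$ to $k^{\min/96}$ times a constant only as needed. With $2Mk^{-\min/48}\le 20e\,k^{-\min/96}\le k^{-\min/100}$ for large $k$, item (iv) holds with $A_4^{(v)}:=\bigcap_{i\le M}A_{3,i}^{(v)}\cap B$, where $B$ is the event below.

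For item (iii), I need $T_{I,M}^{(v)}\ge 3k^{\min/96}$, so that the lit intervals cover $[0,3k^{\min/96}]$. Each increment dominates an independent $\mathrm{Exp}(1)+\mathrm{Exp}(\rho)$ variable, whose mean is at least $1$ (taking $\rho\le 1$ WLOG for the bound). By Lemma~\ref{lemma_largedeviation}, the sum of $M\ge 10e k^{\min/96}-1$ increments exceeds $3k^{\min/96}$ except with probability $e^{-ck^{\min/96}}$. I include this event in $B$.

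For item (ii), let $L:=\lfloor \tfrac12 k^{\min/200}/\log k\rfloor\le M$. The events $\{T_{I,i+1}^{(v)}-T_{I,i}^{(v)}\ge 3\}$ are i.i.d. with probability $q>0$ bounded below by $\mathbb{P}(\mathrm{Exp}(1)\ge3)=e^{-3}$. Lemma~\ref{lemma_largedeviation} applied to the Bernoulli indicators then shows that at least $qL/2\ge k^{\min/400}+1$ of them occur, except with probability $e^{-cL}$. I add this event to $B$ as well. All events involved are functions of the local clocks, so $A_4^{(v)}\in\mathcal{G}(v,\{u_1,\dots,u_k\})$.
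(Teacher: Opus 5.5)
Your proposal is correct and follows the paper's proof essentially step by step. You iterate Lemma~\ref{lemma_star_Asigmav} at the local stopping times $T_{I,i}^{(v)}$ for $M=\lfloor 10ek^{\min\{\rho,\lambda,1\}/96}\rfloor$ cycles with a union bound, and then apply Lemma~\ref{lemma_largedeviation} to the i.i.d.\ cycle lengths to get $T_{I,M}^{(v)}\ge 3k^{\min\{\rho,\lambda,1\}/96}$ and enough gaps of length at least $3$. Your unconditional union bound at that stage is in fact slightly cleaner than the paper's conditioning on $A_{4,1}^{(v)}$, which distorts the law of the increments.

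The ``obstacle'' you flag is not real. Since $1/96>1/100$, we have $20e\,k^{-\min\{\rho,\lambda,1\}/96}\le k^{-\min\{\rho,\lambda,1\}/100}$ for all large $k$, and this is exactly how the paper closes (iv). So there is no need to sharpen Lemma~\ref{lemma_star_Asigmav} or to reduce $M$.
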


\begin{proof}
By Lemma~\ref{lemma_star_Asigmav}, there exists an event $A_{3,1}^{(v)}\in \mathcal{G}(v,\{u_1,\dots,u_k\})$ such that $A_{3,1}^{(v)}$ implies that $v$ is $k^{1/24}$-lit among $\{u_1,\dots,u_k\}$ throughout the time interval $[T_{I,0}^{(v)},T_{I,1}^{(v)}]$, that $v$ is infected at time $T_{I,1}^{(v)}\le \frac{1}{6}\log k+k^{-1/48}$, and that $\mathbb{P}(A_{3,1}^{(v)})\ge 1-2k^{-\min\{\rho,\lambda,1\}/48}$.

Let $M:=\lfloor 10ek^{\min\{\rho,\lambda,1\}/96}\rfloor$. For each $i=1,\dots,M-1$, since $T_{I,i}^{(v)}$ is measurable with respect to $\mathcal{G}(v,\{u_1,\dots,u_k\})$, Lemma~\ref{lemma_star_Asigmav} yields an event $A_{3,i+1}^{(v)}\in \mathcal{G}(v,\{u_1,\dots,u_k\})$ such that $A_{3,i+1}^{(v)}$ implies that $v$ is $k^{1/24}$-lit among $\{u_1,\dots,u_k\}$ throughout the time interval $[T_{I,i}^{(v)},T_{I,i+1}^{(v)}]$, that $v$ is infected at time $T_{I,i+1}^{(v)}$, and that $T_{I,i+1}^{(v)}\le T_{I,i}^{(v)}+\frac{1}{6}\log k+k^{-1/48}$. Moreover,
\[
\mathbb{P}(A_{3,i+1}^{(v)}\mid \mathcal{F}_{T_{I,i}^{(v)}})\ge 1-2k^{-\min\{\rho,\lambda,1\}/48}.
\]

Thus, by iteration, we may define $A_{3,i}^{(v)}$ for all $i=1,\dots,M$. Let
$$A_{4,1}^{(v)}:=\bigcap_{i=1}^M A_{3,i}^{(v)}.$$
Consequently,
\[
\mathbb{P}(A_{4,1}^{(v)})\ge 1-2Mk^{-\min\{\rho,\lambda,1\}/48}\ge 1-20ek^{-\min\{\rho,\lambda,1\}/96}.
\]

The event $A_{4,1}^{(v)}$ implies that $v$ is $k^{1/24}$-lit among $\{u_1,\dots,u_k\}$ throughout the time interval $[0,T_{I,M}^{(v)}]$, that $v$ is infected at $T_{I,i}^{(v)}$, and that $T_{I,i}^{(v)}-T_{I,i-1}^{(v)}\le \frac{1}{6}\log k+k^{-1/48}$ for all $i=1,\dots,M$. It therefore remains to estimate the probabilities of the events
$$A_{4,2}^{(v)}:=\{T_{I,M}^{(v)}\ge 3k^{\min\{\rho,\lambda,1\}/96}\}$$
and
$$A_{4,3}^{(v)}:=\left\{\left|\left\{i\le \frac{\frac{1}{2} k^{\min\{\rho,\lambda,1\}/200}}{\log k} : T_{I,i+1}^{(v)}-T_{I,i}^{(v)}\ge 3 \right\}\right|\ge \lfloor k^{\min\{\rho,\lambda,1\}/400}\rfloor+1 \right\}.$$

Now we work on the event $A_{4,1}^{(v)}$. For each $i=1,\dots,M$, since $v$ is infected at time $T_{I,i-1}^{(v)}$, the waiting time until the next recovery mark of $v$ has the $\mathrm{Exp}(1)$ distribution. Moreover, by the strong Markov property and the independent increments of the Poisson process $N_v^{\mathrm{rec}}, N_v^{\mathrm{sus}}$, the random variables $\{T_{S,i}^{(v)}-T_{I,i-1}^{(v)}\}_{i=1}^{M}$ are independent, and $A_{4,2}^{(v)}, A_{4,3}^{(v)}\in \mathcal{G}(v,\{u_1,\dots, u_k\})$. In particular,
\[
\mathbb{P}\big(T_{S,i}^{(v)}-T_{I,i-1}^{(v)}\ge 1\big)\ge e^{-1}, \quad \mathbb{P}\big(T_{S,i}^{(v)}-T_{I,i-1}^{(v)}\ge 3\big)\ge e^{-3}, \qquad i=1,\dots,M.
\]
Therefore, by Lemma~\ref{lemma_largedeviation}, there exists a constant $c>0$ such that
\begin{eqnarray*}
   &&\mathbb{P}(T_{I,M}^{(v)}\ge 3k^{\min\{\rho,\lambda,1\}/96}\mid A_{4,1}^{(v)})\\
   &\ge& \mathbb{P}\Big(|\{i\in\{1,\dots,M\}: T_{S,i}^{(v)}-T_{I,i-1}^{(v)}\ge 1\}|\ge 3k^{\min\{\rho,\lambda,1\}/96}\mid A_{4,1}^{(v)}\Big) \\
   &\ge& \mathbb{P}\Big(\mathrm{Binomial}(\lfloor 10e\,k^{\min\{\rho,\lambda,1\}/96}\rfloor,\,e^{-1})\ge 3k^{\min\{\rho,\lambda,1\}/96}\Big) \\
   &\ge& 1-\exp\!\big(-c\,k^{\min\{\rho,\lambda,1\}/96}\big),
\end{eqnarray*}
and similarly by Lemma~\ref{lemma_largedeviation}, there exists a constant $c'>0$ such that for all sufficiently large $k$,
\begin{eqnarray*}
   &&\mathbb{P}(A_{4,3}^{(v)}\mid A_{4,1}^{(v)})\\
   &\ge& \mathbb{P}\left(\mathrm{Binomial}\left(\left\lfloor\frac{\frac{1}{2} k^{\min\{\rho,\lambda,1\}/200}}{\log k}\right\rfloor,e^{-3}\right)\ge k^{\min\{\rho,\lambda,1\}/400}+1\right) \\
   &\ge& 1-\exp\!\big(-c'k^{\min\{\rho,\lambda,1\}/400}\big).
\end{eqnarray*}
Let
$$A_4^{(v)}:=A_{4,1}^{(v)}\cap A_{4,2}^{(v)}\cap A_{4,3}^{(v)}.$$
Then, on $A_4^{(v)}$, item~(i) follows from the definition of $A_{4,1}^{(v)}$, item~(iii) follows from the definition of $A_{4,2}^{(v)}$, and item~(ii) follows since on $A_{4,3}^{(v)}$ there are at least $\lceil k^{\min\{\rho,\lambda,1\}/400}\rceil $ indices $i$ such that $T_{S,i+1}^{(v)}-T_{I,i}^{(v)}\ge 3$, which implies $T_{I,i+1}^{(v)}-T_{I,i}^{(v)}\ge 3$. We choose the first $\lceil k^{\min\{\rho,\lambda,1\}/400}\rceil $ such indices.

Finally, for all sufficiently large $k$,
\begin{align*}
\mathbb{P}(A_4^{(v)})
&\ge 1-\mathbb{P}\big((A_{4,1}^{(v)})^{\mathrm{c}}\big)-\mathbb{P}\big((A_{4,2}^{(v)})^{\mathrm{c}}\mid A_{4,1}^{(v)}\big)-\mathbb{P}\big((A_{4,3}^{(v)})^{\mathrm{c}}\mid A_{4,1}^{(v)}\big)\\
&\ge 1-20ek^{-\min\{\rho,\lambda,1\}/96}-\exp(-ck^{\min\{\rho,\lambda,1\}/96})-\exp(-c'k^{\min\{\rho,\lambda,1\}/400})\\
&\ge 1-k^{-\min\{\rho,\lambda,1\}/100}.
\end{align*}
\end{proof}

\subsection{Persistence of Infected Starting from Lit Central Vertex}
\label{ssec:persist-lit}

If Proposition~\ref{prop:star_independent} is started from $\sigma_v(0)=I$, then Lemma~\ref{lemma_star_Av} already gives the desired conclusion.

We next consider the case where $v$ is initially $k^{1/24}$-lit but not infected. In the worst-case configuration, $v$ has exactly $k^{1/24}$ infected neighbors and all other vertices in the graph are uninfected. In such a situation, with high probability, one of these infected neighbors recovers before any new infection reaches $v$, so $v$ cannot be expected to maintain the full $k^{1/24}$-lit level throughout the initial waiting period. Nevertheless, Lemma~\ref{lemma_star_startwithvnotinfected} shows that, for every initial configuration, with high probability $v$ becomes infected within a short time window, while up to that time it remains at least $k^{1/96}$-lit. Thus, although $v$ may lose some infected neighbors before becoming infected, the loss is controlled and the lit level does not drop too far.

\begin{lemma}\label{lemma_star_startwithvnotinfected} 
Fix a graph $G=(V,E)$ and let $G_\star$ denote the star subgraph on vertex set $\{v,u_1,\dots,u_k\}$ with center $v$ (so $G_\star\subseteq G$). Assume that the initial configuration $\boldsymbol{\sigma}(0)$ is arbitrary on $V$, satisfies $\sigma_v(0)\neq I$ and $v$ is $k^{1/24}$-lit among $\{u_1,\dots,u_k\}$ at time $0$. Let $T_S^{(v)}$ be the first time at which $v$ is susceptible. Then $T_S^{(v)}$ is measurable with respect to $\mathcal{G}(v,\{u_1,\dots,u_k\})$, where $\mathcal{G}(v,\{u_1,\dots,u_k\})$ is defined in Definition~\ref{def:local_graphical_data}. Then there exists an event
\(
A_5^{(v)} \in \mathcal{G}(v,\{u_1,\dots,u_k\})
\)
, such that, for all sufficiently large $k$, the following hold:
\begin{enumerate}
    \item[\normalfont (i)] On $A_5^{(v)}$, the vertex $v$ is infected at time $T_S^{(v)}+k^{-1/100}\le \frac{1}{48}\log k$;
    \item[\normalfont (ii)] On $A_5^{(v)}$, the vertex $v$ is $k^{1/96}$-lit among $\{u_1,\dots,u_k\}$ throughout the time interval $[0,T_S^{(v)}+k^{-1/100}]$;
    \item[\normalfont (iii)] $\mathbb{P}\big(A_5^{(v)}\big)\ge 1-k^{-\min\{\rho,\lambda,1\}/192}$.
\end{enumerate}
\end{lemma}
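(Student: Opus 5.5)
The plan is to build $A_5^{(v)}$ from three ingredients, all measurable with respect to $\mathcal{G}(v,\{u_1,\dots,u_k\})$:
\begin{enumerate}
\item[(a)] a large set of initially infected neighbours with no recovery mark up to time $\tfrac1{48}\log k$;
\item[(b)] a bound on $T_S^{(v)}$;
\item[(c)] a transmission from one of those neighbours to $v$ shortly after $T_S^{(v)}$, with no recovery mark at $v$ in that window.
\end{enumerate}
The argument parallels Lemma~\ref{lemma_star_Asigmav}, but the roles are reversed: the persistent neighbours are chosen from the initial configuration rather than created by $v$.

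\emph{Step 1 (the persistent neighbours).} Since $\sigma_v(0)\neq I$ and $v$ is $k^{1/24}$-lit, at least $\lceil k^{1/24}\rceil$ of the $u_i$ are infected at time $0$. Fix, deterministically from $\boldsymbol{\sigma}(0)$, a subset $J_0$ of exactly $\lceil k^{1/24}\rceil$ of them. Define
\[
J':=\{u\in J_0:\ N_u^{\mathrm{rec}}([0,\tfrac1{48}\log k])=0\}.
\]
The indicators $\mathbf 1\{u\in J'\}$ are i.i.d.\ Bernoulli with parameter $k^{-1/48}$, so $\mathbb E|J'|\ge k^{1/48}\ge 2k^{1/96}$. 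Lemma~\ref{lemma_largedeviation} then gives $\mathbb P(|J'|<k^{1/96})\le \exp(-ck^{1/48})$.

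Each $u\in J'$ is infected at time $0$ and has no recovery mark up to time $\tfrac1{48}\log k$. Under the modified Harris construction, no other dynamics can move it out of state $I$, so it stays infected throughout $[0,\tfrac1{48}\log k]$. Hence on $\{|J'|\ge k^{1/96}\}$ the vertex $v$ is $k^{1/96}$-lit throughout this interval, whatever happens elsewhere in the graph.

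\emph{Step 2 (bounding $T_S^{(v)}$).} If $\sigma_v(0)=S$, then $T_S^{(v)}=0$. If $\sigma_v(0)=R$, then $T_S^{(v)}$ is the first point of $N_v^{\mathrm{sus}}$, which is measurable with respect to $N_v^{\mathrm{sus}}$. We require
\[
N_v^{\mathrm{sus}}\bigl((0,\tfrac1{48}\log k-k^{-1/100}]\bigr)\ge1,
\]
which fails with probability at most $\exp(-\rho(\tfrac1{48}\log k-k^{-1/100}))\le 2k^{-\rho/48}$. On this event $T_S^{(v)}+k^{-1/100}\le\tfrac1{48}\log k$.

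\emph{Step 3 (reinfection).} Define the events
\[
B_1:=\{\exists u\in J':\ N^{\mathrm{inf}}_{u\to v}((T_S^{(v)},T_S^{(v)}+k^{-1/100}])\ge1\},
\qquad
B_2:=\{N_v^{\mathrm{rec}}((T_S^{(v)},T_S^{(v)}+k^{-1/100}])=0\}.
\]
At time $T_S^{(v)}$ the vertex $v$ is susceptible. On $B_1$ it is infected by some time in the window, either by the mark from $J'$ or earlier from some other source. On $B_2$ it cannot recover before $T_S^{(v)}+k^{-1/100}$, so it is infected at that time.

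Now $J'$ depends only on $(N_u^{\mathrm{rec}})_u$, and $T_S^{(v)}$ depends only on $v$'s own clocks; both are independent of the marks $N^{\mathrm{inf}}_{u\to v}$. Hence
\[
\mathbb P\bigl(B_1^c\mid |J'|\ge k^{1/96}\bigr)\le \exp\bigl(-\lambda k^{1/96-1/100}\bigr).
\]
For $B_2$, the strong Markov property and independent increments of $N_v^{\mathrm{rec}}$ give $\mathbb P(B_2^c)\le k^{-1/100}$.

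\emph{Assembling the event.} Let $A_5^{(v)}$ be the intersection of $\{|J'|\ge k^{1/96}\}$, the event of Step~2, $B_1$ and $B_2$. It lies in $\mathcal{G}(v,\{u_1,\dots,u_k\})$, and items (i)--(ii) follow from Steps~1--3. A union bound gives
\[
\mathbb P\bigl((A_5^{(v)})^c\bigr)\le e^{-ck^{1/48}}+2k^{-\rho/48}+e^{-\lambda k^{1/2400}}+k^{-1/100}\le k^{-\min\{\rho,\lambda,1\}/192}
\]
for large $k$.

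\emph{Main obstacle.} The delicate point is the lack of attractiveness. The actual infection time of $v$ may come from outside the star, so it is not measurable with respect to $\mathcal G(v,\{u_1,\dots,u_k\})$. This is handled exactly as in Lemma~\ref{lemma_star_Asigmav}: we evaluate the state of $v$ at the local time $T_S^{(v)}+k^{-1/100}$ and use the no-recovery window $B_2$. The lit property is secured purely through recovery marks of $J'$, which no external event can undo.
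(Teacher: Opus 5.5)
Your proposal is correct and matches the paper's proof essentially step for step. Your $J'$, your Step~2 event, $B_1$ and $B_2$ are the paper's $A_{5,1}^{(v)}$ (via $\bar U_I$), $A_{5,2}^{(v)}$, $A_{5,3}^{(v)}$ and $A_{5,4}^{(v)}$, with the same independence arguments and the same rate $\exp(-\lambda k^{1/2400})$. The only cosmetic differences are that you fix a subset $J_0$ of exact size $\lceil k^{1/24}\rceil$, and that you bound $T_S^{(v)}$ using the window $(0,\tfrac1{48}\log k-k^{-1/100}]$ rather than the paper's $[0,\tfrac1{96}\log k]$; neither changes the conclusion.
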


\begin{proof}
If $\sigma_v(0)=R$, then $T_S^{(v)}$ is $N_v^{\mathrm{sus}}$-measurable, hence $\mathcal{G}(v,\{u_1,\dots,u_k\})$-measurable. If $\sigma_v(0)=S$, then $T_S^{(v)}=0$.

Let $U_I:=\{u\in \{u_1,\dots,u_k\}:\sigma_u(0)=I\}$ denote the set of infected vertices in $\{u_1,\dots,u_k\}$ at time $0$. Then $|U_I|\ge k^{1/24}$. Define
$$\bar{U}_I:=\{u\in U_I: N_u^{\mathrm{rec}}([0,\tfrac{1}{48}\log k])=0\}, \qquad A_{5,1}^{(v)}:=\{|\bar{U}_I|\ge k^{1/96}\},$$
$$A_{5,2}^{(v)}:=\{N_v^{\mathrm{sus}}([0,\tfrac{1}{96}\log k])\ge 1\},$$
$$A_{5,3}^{(v)}:=\{\exists u\in \bar{U}_I: N_{u\to v}^{\mathrm{inf}}((T_S^{(v)}, T_S^{(v)}+k^{-1/100}])\ge 1\}, \qquad A_{5,4}^{(v)}:=\{N_v^{\mathrm{rec}}((T_S^{(v)}, T_S^{(v)}+k^{-1/100}])=0\},$$
and
$$A_5^{(v)}:= A_{5,1}^{(v)}\cap A_{5,2}^{(v)}\cap A_{5,3}^{(v)}\cap A_{5,4}^{(v)}.$$
Then $A_{5,1}^{(v)}, A_{5,2}^{(v)}, A_{5,3}^{(v)}, A_{5,4}^{(v)}$, and $A_5^{(v)}$ are all $\mathcal{G}(v,\{u_1,\dots,u_k\})$-measurable.

The event $A_{5,1}^{(v)}$ implies that $v$ is $k^{1/96}$-lit among $\{u_1,\dots,u_k\}$ throughout the time interval $[0,\tfrac{1}{48}\log k]$. The event $A_{5,2}^{(v)}$ implies that $T_S^{(v)}\le \tfrac{1}{96}\log k$. Hence, on $A_{5,1}^{(v)}\cap A_{5,2}^{(v)}$, the vertex $v$ is $k^{1/96}$-lit among $\{u_1,\dots,u_k\}$ throughout the time interval $[0, T_S^{(v)}+k^{-1/100}]$. It therefore remains to show that $v$ is infected at time $T_S^{(v)}+k^{-1/100}$ on the event $A_5^{(v)}$.

The event $A_{5,3}^{(v)}$ implies that $v$ becomes infected at some time in the interval $(T_S^{(v)}, T_S^{(v)}+k^{-1/100}]$. Note that this infection time need not be measurable with respect to $\mathcal{G}(v,\{u_1,\dots,u_k\})$, since the infection may also arrive at $v$ from outside $\{u_1,\dots,u_k\}$. However, on $A_{5,4}^{(v)}$, once $v$ becomes infected at some time in $(T_S^{(v)}, T_S^{(v)}+k^{-1/100}]$, it remains infected at time $T_S^{(v)}+k^{-1/100}$. Therefore, $A_{5,3}^{(v)}\cap A_{5,4}^{(v)}$ implies that $v$ is infected at time $T_S^{(v)}+k^{-1/100}$. 

Finally, we estimate the probability of $A_5^{(v)}$. First,
$$\mathbb{P}(A_{5,2}^{(v)})=1-k^{-\rho/96}.$$
The events $\{N_u^{\mathrm{rec}}([0,\tfrac{1}{48}\log k])=0\}_{u\in U_I}$ are independent, and each has probability $k^{-1/48}$. Since $|U_I|\ge k^{1/24}$, Lemma~\ref{lemma_largedeviation} yields
$$\mathbb{P}(A_{5,1}^{(v)})\ge \mathbb{P}(\mathrm{Binomial}(k^{1/24}, k^{-1/48})\ge k^{1/96} ) \ge 1-\exp(-ck^{1/96})$$
for some constant $c>0$.
Since $T_S^{(v)}$ is $N_v^{\mathrm{sus}}$-measurable and independent of all processes $N_{u\to v}^{\mathrm{inf}}$, the events $\{N_{u\to v}^{\mathrm{inf}}((T_S^{(v)}, T_S^{(v)}+k^{-1/100}])\ge 1\}_{u\in \bar{U}_I}$ are independent, and each has probability $1-\exp(-\lambda k^{-1/100})$. Hence
$$\mathbb{P}(A_{5,3}^{(v)}|A_{5,1}^{(v)})\ge \mathbb{P}(\mathrm{Binomial}(k^{1/96}, 1-\exp(-\lambda k^{-1/100}))\ge 1)=1-\exp(-\lambda k^{1/2400}).$$
Since $T_S^{(v)}$ is $N_v^{\mathrm{sus}}$-measurable and independent of $N_v^{\mathrm{rec}}$,
$$\mathbb{P}(A_{5,4}^{(v)})=\exp(-k^{-1/100})\ge 1-k^{-1/100},$$
where we used $e^{-x}\ge 1-x$ for all $x\ge 0$.

Therefore, for all sufficiently large $k$,
\begin{align*}
\mathbb{P}(A_5^{(v)})
&\ge 1-\mathbb{P}((A_{5,1}^{(v)})^{\mathrm{c}})-\mathbb{P}((A_{5,2}^{(v)})^{\mathrm{c}})-\mathbb{P}((A_{5,3}^{(v)})^{\mathrm{c}}|A_{5,1}^{(v)})-\mathbb{P}((A_{5,4}^{(v)})^{\mathrm{c}}) \\
&\ge 1-\exp(-ck^{1/96})-\exp(-\lambda k^{1/2400})-k^{-\rho/96}-k^{-1/100} 
\ge 1-k^{-\min\{\rho,1\}/192}.
\end{align*}
\end{proof}

\medskip
Finally, we prove Proposition~\ref{prop:star_independent} by combining the two cases treated above. If $\sigma_v(0)=I$, then Lemma~\ref{lemma_star_Av} directly yields the desired event. If $\sigma_v(0)\neq I$, then Lemma~\ref{lemma_star_startwithvnotinfected} first provides a short initial time interval on which $v$ remains weakly lit and becomes infected at a local measurable time, after which Lemma~\ref{lemma_star_Av} can be applied from that time onward. In either case, we obtain an event measurable with respect to $\mathcal{G}(v,U)$ on which items~(i), (ii), and (iii) hold, and the corresponding probability bound yields item~(iv).

\begin{proof}[Proof of Proposition~\ref{prop:star_independent}]
It suffices to consider the case $|U|=k$. Indeed, if $|U|>k$, then since $v$ is $k^{1/24}$-lit among $U$, we may choose a subset $U'\subseteq U$ with $|U'|=k$ such that $v$ is still $k^{1/24}$-lit among $U'$. Since $\mathcal{G}(v,U')\subseteq \mathcal{G}(v,U)$, any event constructed using $U'$ is also measurable with respect to $\mathcal{G}(v,U)$.

\medskip
\noindent\emph{Case 1: $\sigma_v(0)=I$.}
Let $A^{(v)}:=A_4^{(v)}$, where $A_4^{(v)}$ is the event constructed in Lemma~\ref{lemma_star_Av}. On $A_4^{(v)}$, the vertex $v$ remains $k^{1/24}$-lit among $U$ throughout the time interval $[0, 3k^{\min\{\rho,\lambda,1\}/96}]$. Since $3k^{\min\{\rho,\lambda,1\}/96}\ge 3k^{\min\{\rho,\lambda,1\}/200}$, $A_4^{(v)}$ implies part~(ii) of Proposition~\ref{prop:star_independent}. Since $k^{1/24}\ge k^{1/96}$ and $\frac{1}{48}\log k\le 3k^{\min\{\rho,\lambda,1\}/96}$, $A_4^{(v)}$ also implies part~(i) of Proposition~\ref{prop:star_independent}.

On $A_4^{(v)}$, we may choose the reinfection times so that
\(
T_{I,i+1}^{(v)}-T_{I,i}^{(v)}\ge 3
\)
for all
\(
i\le 
\frac{\frac12 k^{\min\{\rho,\lambda,1\}/200}}
{\log k+k^{-1/48}}
\), which is defined in Lemma~\ref{lemma_star_Av}.
Denote these times by $(T_j)_{j\le \lfloor k^{\min\{\rho,\lambda,1\}/400}\rfloor+1}$. Then the sequence $(T_j)$ satisfies the requirement in part~(iii) of Proposition~\ref{prop:star_independent} since for any $j\le \lfloor k^{\min\{\rho,\lambda,1\}/400}\rfloor+1$, $T_{j+1}-T_j\ge 3$, and
\[
T_j\le T^{(v)}_{I,\left\lfloor 
\frac{\frac12 k^{\min\{\rho,\lambda,1\}/200}}{\log k}
\right\rfloor}
\le
\left(\frac16\log k+k^{-1/48}\right)
\frac{\frac12 k^{\min\{\rho,\lambda,1\}/200}}
{\log k+k^{-1/48}}
\le
\frac12 k^{\min\{\rho,\lambda,1\}/200},
\]
for all sufficiently large $k$. 

Moreover,
\[
\mathbb{P}\big(A_4^{(v)}\big)\ge 1-k^{-\min\{\rho,\lambda,1\}/100}\ge 1-k^{-\min\{\rho,\lambda,1\}/200}
\]
for all sufficiently large $k$.

\medskip
\noindent\emph{Case 2: $\sigma_v(0)\neq I$.}
Let $A_5^{(v)}$ and $T_S^{(v)}$ be as defined in Lemma~\ref{lemma_star_startwithvnotinfected}; both are measurable with respect to $\mathcal{G}(v,U)$. Then
\[
\mathbb{P}\big(A_5^{(v)}\big)\ge 1-k^{-\min\{\rho,\lambda,1\}/192}.
\]
On $A_5^{(v)}$, we have $T_S^{(v)}\le \frac{1}{96}\log k$, the vertex $v$ is infected at time $T_S^{(v)}+k^{-1/100}$, and $v$ is $k^{1/96}$-lit among $U$ throughout the time interval $[0,T_S^{(v)}+k^{-1/100}]$.

Set $\tau:=T_S^{(v)}+k^{-1/100}\le \frac{1}{48}\log k$. Then $\tau$ is also measurable with respect to $\mathcal{G}(v,U)$. By the strong Markov property with respect to the natural filtration $\{\mathcal{F}_t\}$ generated by the modified Harris construction, and applying the time-shifted version of Lemma~\ref{lemma_star_Av} from time $\tau$ onward, we may construct an event $A_{4,\tau}^{(v)}\in \mathcal{G}(v,U)$ such that on $A_{4,\tau}^{(v)}$, the vertex $v$ remains $k^{1/24}$-lit among $U$ on $[\tau,\tau+3k^{\min\{\rho,\lambda,1\}/96}]$, and
\[
\mathbb{P}\big(A_{4,\tau}^{(v)}\mid \mathcal{F}_\tau \big)\ge 1-k^{-\min\{\rho,\lambda,1\}/100}.
\]

Define
\[
A^{(v)}:=A_5^{(v)}\cap A_{4,\tau}^{(v)}\in\mathcal{G}(v,U).
\]
Then, on $A^{(v)}$, the vertex $v$ is $k^{1/96}$-lit among $U$ throughout the time interval $[0,\tau]$, and $v$ is $k^{1/24}$-lit among $U$ throughout the time interval $[\tau,\tau+3k^{\min\{\rho,\lambda,1\}/96}]$. Since $\tau\le \frac{1}{48}\log k$, $k^{1/24}\ge k^{1/96}$, and $\tau+3k^{\min\{\rho,\lambda,1\}/96}\ge 3k^{\min\{\rho,\lambda,1\}/200}$, it follows that part~(i) and part~(ii) hold. Part~(iii) holds by construction of $A_{4,\tau}^{(v)}$, since, for any $j\le \lfloor k^{\min\{\rho,\lambda,1\}/400}\rfloor+1$, $T_{j+1}-T_j\ge 3$ still holds, and
\[
T_j+\tau \le \frac{1}{2}k^{\min\{\rho,\lambda,1\}/200}+\tau \le k^{\min\{\rho,\lambda,1\}/200}.
\]
Finally,
\[
\mathbb{P}\big(A^{(v)}\big)\ge 1-\mathbb{P}\big((A_5^{(v)})^{\mathrm{c}}\big)-\mathbb{E}\Big[\mathbb{P}\big((A_{4,\tau}^{(v)})^{\mathrm{c}}\mid \mathcal{F}_\tau\big)\mathbf{1}_{A_5^{(v)}}\Big]\ge 1-k^{-\min\{\rho,\lambda,1\}/200}
\]
for all sufficiently large $k$. This completes the proof of Proposition~\ref{prop:star_independent}.
\end{proof}

Note that in the statements of Proposition~\ref{prop:star_independent}, Lemma~\ref{lemma_star_Asigmav}, Lemma~\ref{lemma_star_Av}, and Lemma~\ref{lemma_star_startwithvnotinfected}, the term $\min\{\lambda,\rho,1\}$ can in fact be replaced by $\min\{\rho,1\}$, yielding a formally stronger result. We nevertheless keep the term $\min\{\lambda,\rho,1\}$ in order to formulate these arguments in a way that also applies to the threshold-SIRS process.

One can define the modified Harris construction for the threshold-SIRS process in a completely analogous way. The only difference is that the infection clocks are attached to vertices rather than to oriented edges. Thus the graphical construction becomes
\(
\mathcal{H}:=(N_v^{\mathrm{rec}},N_v^{\mathrm{sus}},N_v^{\mathrm{inf}})_{v\in V}
\),
and the process $\boldsymbol{\sigma}(t)$ is defined from the initial configuration $\boldsymbol{\sigma}(0)$ and the realization of $\mathcal{H}$ in the same manner as before.

Accordingly, the local graphical data for the threshold-SIRS process is defined by
\(
\mathcal{G}(v,U):=(N_x^{\mathrm{rec}},N_x^{\mathrm{sus}},N_x^{\mathrm{inf}})_{x\in \{v\}\cup U}
\).

With this modification, Lemma~\ref{lemma_star_A2v} continues to hold after replacing each term $N_{v\to u}^{\mathrm{inf}}$ by $N_u^{\mathrm{inf}}$, with probability at least $1-k^{-\lambda/24}$. Similarly, Lemma~\ref{lemma_star_A1v}, Lemma~\ref{lemma_star_lit_from_A1A2}, Lemma~\ref{lemma_star_Asigmav}, Lemma~\ref{lemma_star_Av}, and Lemma~\ref{lemma_star_startwithvnotinfected} remain valid with the same probability bounds, and therefore yield the same conclusion as Proposition~\ref{prop:star_independent}. The only difference is that the constructions of the events $A_3^{(v)}$, $A_4^{(v)}$, and $A_5^{(v)}$ must be adjusted to reflect the modified Harris construction of the threshold-SIRS.

\section{Hierarchical-Stars and Their Exponential Survival}\label{subsection_hierarchicalstar_1}
In this section we analyze the SIRS dynamics on a graph containing a single designated hierarchical-star, as defined in Definition~\ref{def:hierarchical-star}, and prove the main persistence result stated in Proposition~\ref{hierarchicalstar_thm2}.

\begin{definition}[\normalfont Hierarchical-star structure]\label{def:hierarchical-star}
 We say that a graph $G=(V,E)$ \emph{contains a $( k_1 ,k_2)$--hierarchical-star} if there
exist distinct vertices
\[
v,\; u_1,\ldots,u_{\lceil k_1\rceil},\; w_{i,1},\ldots,w_{i,\lceil k_2\rceil}\qquad (i=1,\ldots,\lceil k_1\rceil)
\]
such that the following edges are present:
\begin{enumerate}[{\normalfont (a)}]
\item $v$ is adjacent to $u_i$ for every $i=1,\ldots,\lceil k_1\rceil$;
\item for each $i=1,\ldots,\lceil k_1\rceil$, the vertex $u_i$ is adjacent to $w_{i,1},\ldots,w_{i,\lceil k_2\rceil}$.
\end{enumerate}
The spanning subgraph on these vertices consisting \emph{only} of the edges
\[
\big\{\{v,u_i\}: i=1,\ldots,\lceil k_1\rceil\big\}\ \cup\ \big\{\{u_i,w_{i,j}\}: i=1,\ldots,\lceil k_1\rceil,\ j=1,\ldots,\lceil k_2\rceil\big\}
\]
is called a \emph{$(k_1,k_2)$--hierarchical-star}, with center $v$, first-layer vertices $\{u_i\}_{i\le \lceil k_1\rceil}$, and
second-layer vertices $\{w_{i,j}\}_{i\le \lceil k_1\rceil,\,j\le \lceil k_2\rceil}$.
We denote this designated subgraph by $\mathrm{HStar}(v;\{u_i\}_{i\le \lceil k_1\rceil};\{w_{i,j}\}_{i\le \lceil k_1\rceil,\,j\le \lceil k_2\rceil})$.
\end{definition}

\medskip
\noindent
In Definition~\ref{def:hierarchical-star}, the graph $G$ may contain additional edges among the vertices
$\{v\}\cup\{u_i\}_{i\le \lceil k_1\rceil}\cup\{w_{i,j}\}_{i\le \lceil k_1\rceil,\,j\le \lceil k_2\rceil}$; the term ``hierarchical-star'' refers only to the
designated subgraph specified above.

\begin{definition}[\normalfont HS-lit]\label{def:HS-lit}
Suppose that the graph $G=(V,E)$ contains a hierarchical-star structure
$\mathrm{HStar}(v;\{u_i\}_{i\le \lceil k_1\rceil};\{w_{i,j}\}_{i\le \lceil k_1\rceil, j\le \lceil k_2\rceil})$ as in Definition~\ref{def:hierarchical-star}.
We say that $v$ is \emph{$(k_3,k_4)$-HS-lit} with respect to this
hierarchical-star at time $t$ if at least $\lceil k_3 \rceil$ of the vertices $u_1,\ldots,u_{\lceil k_1\rceil}$ are $k_4$-lit among their
corresponding second-layer neighbors, i.e.
\[
\big|\big\{\, i\in\{1,\ldots,\lceil k_1\rceil\}:\ u_i \text{ is $k_4$-lit among } \{w_{i,1},\ldots,w_{i,\lceil k_2\rceil}\} \text{ at time } t\,\big\}\big|
\ \ge\ \lceil k_3\rceil .
\]
\end{definition}

\begin{definition}[\normalfont Local graphical $\sigma$-field on a hierarchical-star]\label{def:hierarchical-star_sigmaalgebra}
Define the local graphical $\sigma$-field on the hierarchical-star by
\begin{align*}
 \mathcal{G}(&\mathrm{HStar}(v;\{u_i\}_{i\le \lceil k_1\rceil};\{w_{i,j}\}_{i\le \lceil k_1\rceil, j\le \lceil k_2\rceil})) \\
&:=   \sigma\!\Big(
N_v^{\mathrm{rec}},N_v^{\mathrm{sus}}, (N_{u_i}^{\mathrm{rec}},N_{u_i}^{\mathrm{sus}}, N_{v\to u_i}^{\mathrm{inf}}, N_{u_i\to v}^{\mathrm{inf}})_{i\le \lceil k_1\rceil}, (N_{w_{i,j}}^{\mathrm{rec}},N_{w_{i,j}}^{\mathrm{sus}}, N_{u_i\to w_{i,j}}^{\mathrm{inf}}, N_{w_{i,j}\to u_i}^{\mathrm{inf}})_{i\le \lceil k_1\rceil, j\le \lceil k_2\rceil}
\Big).  
\end{align*}
\end{definition}

\medskip
Proposition~\ref{hierarchicalstar_thm2} is the main result of this section. Starting from a ``moderately HS-lit'' configuration at the center, it shows that the center remains HS-lit for a stretched-exponential amount of time. More precisely, the center first maintains a weaker level of HS-lit state and subsequently a stronger level, both with high probability. Moreover, the corresponding good event is measurable with respect to the local graphical $\sigma$-field associated with the hierarchical-star.

\begin{proposition}\label{hierarchicalstar_thm2}
Let $G=(V,E)$ be a graph that contains the designated hierarchical-star
\[
HS(v):=\mathrm{HStar}\!\Big(v;\{u_i\}_{i\le \lceil n^{\varepsilon}\rceil};\{w_{i,j}\}_{i\le  \lceil n^{\varepsilon}\rceil,\,j\le \lceil n^{\varepsilon_1}\rceil }\Big)
\]
as in Definition~\ref{def:hierarchical-star}, for some $\varepsilon\ge \varepsilon_1>0$. Let the initial configuration $\boldsymbol{\sigma}(0)$ be arbitrary on $V$,
subject only to the condition that $v$ is $(k,n^{\varepsilon_2})$-HS-lit with respect to $HS(v)$ for some
\(
k\in[\tfrac14 n^{\varepsilon_3},\, n^{\varepsilon_3}],
\
\varepsilon_2=\tfrac{1}{200}\min\{\rho,\lambda,1\}\varepsilon_1,\
\varepsilon_3=\tfrac14\varepsilon_2,\
\varepsilon_4=\tfrac{1}{16\cdot 200}\min\{\rho, \lambda, 1\}\cdot24\varepsilon_2.
\)

Then, for all sufficiently large $n$, there exists an event $B^{(v)}\in \mathcal{G}(HS(v))$ such that
\begin{enumerate}[{\normalfont (i)}]
\item on $B^{(v)}$, the vertex $v$ remains $(n^{\varepsilon_4},n^{\varepsilon_4})$-HS-lit with respect to $HS(v)$
throughout the time interval $[0,\exp(n^{\varepsilon_4})]$;
\item on $B^{(v)}$, the vertex $v$ remains $(\tfrac14 n^{\varepsilon_3},n^{\varepsilon_2})$-HS-lit with respect to $HS(v)$
throughout the time interval $[\exp(n^{\varepsilon_4}),\exp(n^{\varepsilon_3/2})]$;
\item on $B^{(v)}$, there exist random times $T_i$ for $i=1,\dots, \lceil\exp(n^{\varepsilon_4/8})\rceil+1$, such that each $T_i$ is measurable with respect to $\mathcal{G}(HS(v))$, $T_{i+1}-T_i\ge n^{\varepsilon_2}, T_i\le \frac{1}{2}\exp(n^{\varepsilon_3/2})$, and $v$ is infected at time $T_i$ for every $i=1,\dots, \lceil\exp(n^{\varepsilon_4/8})\rceil$;
\item $\mathbb{P}\big(B^{(v)}\big)\ge 1-\exp(-n^{\varepsilon_4/10})$.
\end{enumerate}
\end{proposition}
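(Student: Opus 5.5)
The plan is a round-by-round supermartingale argument at the level of the first-layer stars, with Proposition~\ref{prop:star_independent} as the one-round engine. Set $k':=n^{\varepsilon_1}$ (the second-layer size), $c:=\min\{\rho,\lambda,1\}$, and a round length $L:=3k'^{c/200}=3n^{\varepsilon_2}$. Note that $n^{\varepsilon_2}=k'^{c/200}$, while the lit threshold $k'^{1/24}$ of Proposition~\ref{prop:star_independent} is much larger than $n^{\varepsilon_2}$. Before applying it, I will therefore first bridge the initial phase with an "$n^{\varepsilon_2}$-lit implies infected-center soon" step, modelled on Lemma~\ref{lemma_star_startwithvnotinfected} but with threshold $n^{\varepsilon_2}$. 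Once some $u_i$ is infected, Lemma~\ref{lemma_star_lit_from_A1A2} pumps it to $k'^{1/24}$-lit; this is where the weaker levels $n^{\varepsilon_4}$ in item~(i) come from.

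Then I will define, for round $r$ (times $[rL,(r+1)L]$), $W_r$ as the number of first-layer $u_i$ that are $k'^{1/24}$-lit among their leaves at time $rL$. For each such $u_i$, Proposition~\ref{prop:star_independent} gives an event $A^{(u_i)}_r\in\mathcal{G}(u_i,\{w_{i,j}\}_j)$ of conditional probability at least $1-k'^{-c/200}$. On this event, $u_i$ stays lit through the round and is infected at $\gtrsim k'^{c/400}$ times $T_\ell$ spaced by at least $3$. These events are conditionally independent across $i$ given $\mathcal{F}_{rL}$, because the local $\sigma$-fields are built on disjoint clocks; this is the substitute for attractiveness.

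To make the count \emph{grow}, I will use the center $v$. At each time $T_\ell$ at which a surviving $u_i$ is infected, look at the marks $N^{\mathrm{inf}}_{u_i\to v}$ in $(T_\ell,T_\ell+1]$ together with $v$'s own cycle. This gives, with probability bounded below, an infection of $v$ at a $\mathcal{G}(HS(v))$-measurable time, as in the proof of Lemma~\ref{lemma_star_Asigmav}. Then $v$ transmits, via $N^{\mathrm{inf}}_{v\to u_{i'}}$ during its infectious period, to currently non-lit $u_{i'}$. Each $u_{i'}$ has its susceptibility clock and a no-recovery window, as in $U_v$ of Lemma~\ref{lemma_star_A2v}. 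Once infected, Lemma~\ref{lemma_star_lit_from_A1A2} makes it $k'^{1/24}$-lit by the next round.

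Since there are many reinfections of $v$ per round, I expect that, while $\tilde W_r$ lies in $[\frac14 n^{\varepsilon_3}+1,\lceil n^{\varepsilon_3}\rceil]$, the number of newly lit first-layer vertices dominates a binomial with mean $\gg 1$. Meanwhile the losses are $\mathrm{Bin}(\tilde W_r,k'^{-c/200})$, with mean $n^{\varepsilon_3-\varepsilon_2}\ll 1$. I will define a comparison process $\tilde W_r\le W_r$, capped at $\lceil n^{\varepsilon_3}\rceil$, that counts only locally certified lit vertices, and verify that
\[
\mathbb{E}\bigl[e^{-\tilde W_{r+1}}\mid\mathcal{F}_{rL}\bigr]\le e^{-\tilde W_r}.
\]
By optional stopping, the probability that $\tilde W$ drops below $\frac14 n^{\varepsilon_3}$ before $\exp(n^{\varepsilon_3/2})/L$ rounds is at most $\exp(n^{\varepsilon_3/2})\,e^{-\Omega(n^{\varepsilon_3})}$, which gives item~(ii). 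The failures of the initial phase cost polynomially small probability, but item~(iv) demands $1-\exp(-n^{\varepsilon_4/10})$. So I will instead run the weaker-level version with $n^{\varepsilon_4}$ at the start, using the many initial lit branches independently, so that failure needs all of them to fail; this gives the exponentially small bound. The times in item~(iii) come from the center reinfection times, thinned to gaps of at least $n^{\varepsilon_2}$.

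The main obstacle is the growth step. The infection of $v$ and the subsequent transmissions to $u_{i'}$ must be certified by $\mathcal{G}(HS(v))$-measurable events despite external infections. External infection can only make $v$ or $u_{i'}$ infected or immune earlier, which breaks monotonicity. I will handle this exactly as in Lemma~\ref{lemma_star_Asigmav}: condition only on $v$'s own rec/sus clocks to locate a susceptible window, and require a no-recovery mark after it. This forces "infected at a fixed local time" regardless of where the infection came from. The same treatment applies to each $u_{i'}$, where I require its own sus mark before $v$'s transmission, so the $R$ state is handled.
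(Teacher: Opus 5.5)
Your plan is correct in outline, and its engine is the paper's:
\begin{itemize}
\item Proposition~\ref{prop:star_independent} controls per-round losses of certified first-layer stars. Rounds have length $3n^{\varepsilon_2}$ and losses are $\mathrm{Bin}(\tilde W_r,n^{-\varepsilon_2})$.
\item Infections of $v$ and of new first-layer vertices are certified by local events in the spirit of Lemma~\ref{lemma_star_Asigmav}.
\item $e^{-\tilde W_r}$ is a supermartingale on $[\tfrac14 n^{\varepsilon_3}+1,\lceil n^{\varepsilon_3}\rceil]$, followed by optional stopping and a union bound over restarts (Lemmas~\ref{lemma_hierarchicalstar_supermartingale} and~\ref{lemma_hierarchicalstar_expsurvivaltime}).
\end{itemize}

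The real difference is the initial phase. The paper builds no per-branch bridge. It passes to a sub-hierarchical-star of $HS(v)$ whose second layer has only $\lceil n^{24\varepsilon_2}\rceil$ leaves, so that $n^{\varepsilon_2}$ becomes exactly the $1/24$-power threshold. Lemma~\ref{lemma_hierarchicalstar_expsurvivaltime} then applies verbatim at this auxiliary scale, which gives item~(i) at once with failure probability at most $\exp(-n^{2\varepsilon_4})$.

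The strong level is then reached in one shot. At each of the roughly $n^{8\varepsilon_4}$ certified infection times of a surviving branch, with probability at least $\tfrac12 c_{\mathrm{edge}}$ the center $v$ is infected. By Lemma~\ref{lemma_hierarchicalstar_infectedtoHSlit}, $n^{\varepsilon/24}\gg n^{\varepsilon_3}$ first-layer vertices are then infected within one time unit. Lemma~\ref{lemma_hierarchicalstar_expsurvivaltime} is reapplied for (ii), and (iii) comes from repeating this in blocks of length $\exp(n^{\varepsilon_4})$.

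Your per-branch bridge (Lemma~\ref{lemma_star_startwithvnotinfected} rescaled to threshold $n^{\varepsilon_2}$), plus independence across branches, also works. It avoids a second run of the hierarchical machinery, at the cost of re-deriving what the rescaling trick gives for free. Your many-new-per-round growth is unnecessary inside the window. A gain of $+1$ per round with probability $1-o(1)$ (Lemma~\ref{lemma_edge_transmission_k}) already beats losses of mean $n^{\varepsilon_3-\varepsilon_2}=o(1)$.

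Three things need fixing as written.

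First, Lemma~\ref{lemma_star_lit_from_A1A2} keeps a newly infected $u_{i'}$ lit only for time $\tfrac14\log k'$, not until the next checkpoint. You need Proposition~\ref{prop:star_independent} (Case~1), started at a $\mathcal G(HS(v))$-measurable time at which $u_{i'}$ is certified infected. Take that time at the end of the no-recovery window used for the certification, so the window does not conflict with $A_1^{(u_{i'})}$.

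Second, a process capped at $\lceil n^{\varepsilon_3}\rceil$ violates your drift inequality at the cap. Instead, stop at the upper exit and restart from a fixed interior level, as in Lemma~\ref{lemma_hierarchicalstar_expsurvivaltime}.

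Third, the window argument cannot be started directly. The hypothesis allows $k=\tfrac14 n^{\varepsilon_3}$, and the bridge loses each branch with only polynomially small probability, which for small $\rho$ typically means polynomially many losses. So you must first make the jump via a certified infection of $v$ (Lemma~\ref{lemma_star_A2v} at $v$), retried over many infection times so the failure probability is exponentially small. For item~(i) you also need at least $n^{\varepsilon_4}$ branches, not just one, to stay $n^{\varepsilon_4}$-lit until then. That follows from a Chernoff bound over the independent branches, not from ``failure needs all of them to fail''.
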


Define $(W_t)_{t\ge 0}$ to be the number of first-layer vertices that are $n^{\varepsilon_1/24}$-lit at time $t$, then $W_t$ is not measurable with respect to $\mathcal{G}(HS(v))$. All stretched-exponential time scales appearing in Proposition~\ref{hierarchicalstar_thm2} ultimately stem from Lemma~\ref{lemma_hierarchicalstar_supermartingale}. In that lemma, we construct a comparison process $(\tilde W_r)_{r\in \mathbb{N}}$ such that $\tilde{W}_r\le W_{3n^{\varepsilon_2}r}$ for all $r\in\mathbb{N}$, $\tilde{W}_r$  is measurable with respect to $\mathcal{G}(HS(v))$, and such that $\mathbb{E}[\exp(-\tilde W_{r+1})\mid\mathcal{F}_{3n^{\varepsilon_2}r} ]\le \exp(-\tilde{W}_r)$  whenever $\tilde{W}_r\in [\frac14 n^{\varepsilon_3}+1,\,\lceil n^{\varepsilon_3}\rceil]$. Hence, starting from $\tilde W_0\ge \frac12 n^{\varepsilon_3}$, the probability that $\tilde W_r$ exits this interval through the lower endpoint is exponentially small. Lemma~\ref{lemma_hierarchicalstar_expsurvivaltime} then shows that, with high probability, the process either returns to the level $n^{\varepsilon_3}$ repeatedly or never drops below the lower level before time $\exp(n^{\varepsilon_3/2})$. This yields the persistence estimate used in part~(ii) of Proposition~\ref{hierarchicalstar_thm2}.

For part~(i), one has to start from a weaker initial condition. A worst-case scenario is that initially exactly $\lceil\frac14 n^{\varepsilon_3}\rceil$ first-layer vertices are $n^{\varepsilon_2}$-lit, while all other vertices, including the center $v$, are not infected, so that $W_0=\lceil\frac14 n^{\varepsilon_3}\rceil$. In this regime, the center can no longer remain $(\frac14 n^{\varepsilon_3},n^{\varepsilon_2})$-HS-lit until reinfection occurs. Nevertheless, another application of Lemma~\ref{lemma_hierarchicalstar_expsurvivaltime} at a smaller scale, shows that the center still remains $(n^{\varepsilon_4},n^{\varepsilon_4})$-HS-lit for a time of order $\exp(n^{\varepsilon_4})$, where $\varepsilon_4$ is much smaller compared to $\varepsilon_3$. This provides the weaker persistence estimate needed for part~(i).

Lemma~\ref{lemma_edge_transmission_one} gives a rigorous formulation of what it means for infection to spread across a single edge, and shows that this occurs with a fixed positive probability within one unit of time. Since the graph distance between the center and any vertex in the same hierarchical-star is at most $4$, it follows that, as long as infection is present somewhere in the hierarchical-star, every vertex in that hierarchical-star has a fixed positive probability of becoming infected within time $4$. Consequently, if infection persists for a time of order $n^{\varepsilon_2}$, then every vertex in the hierarchical-star is infected at least once before time $n^{\varepsilon_2}$ with high probability. This is formalized in Lemma~\ref{lemma_edge_transmission_k}.

Combining Lemma~\ref{lemma_edge_transmission_k} with Lemma~\ref{lemma_hierarchicalstar_expsurvivaltime}, we deduce that the center reaches the state $(n^{\varepsilon_3},n^{\varepsilon_2})$-HS-lit before time $\exp(n^{\varepsilon_4})$ with high probability; this is precisely the content of Lemma~\ref{lemma_hierarchicalstar_infectedtoHSlit}. Once this stronger HS-lit level has been reached, another application of Lemma~\ref{lemma_hierarchicalstar_expsurvivaltime} yields part~(ii) of Proposition~\ref{hierarchicalstar_thm2}, with arbitrary $k\in[\frac{1}{4}n^{\varepsilon_3}, n^{\varepsilon_3}]$.

\begin{lemma}\label{lemma_edge_transmission_one}
Let $G=(V,E)$ be a graph and suppose $\{u,v\}\in E$. Assume $\sigma_u(0)=I$. Then there exists a positive constant (depending only on $\lambda$ and $\rho$)
\[
c_{\mathrm{edge}}:=e^{-2}\bigl(1-e^{-\lambda/2}\bigr)\bigl(1-e^{-\rho/2}\bigr)>0
\]
and an event $E\in \mathcal{G}(u,\{v\})$ (as in Definition~\ref{def:local_graphical_data}) such that $E\subseteq\{\sigma_v(1)=I\}$ and
\[
\mathbb{P}(E)\ \ge\ c_{\mathrm{edge}}.
\]
\end{lemma}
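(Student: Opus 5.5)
The plan is to build $E$ explicitly from the Poisson clocks in $\mathcal{G}(u,\{v\})$, namely $N_u^{\mathrm{rec}},N_u^{\mathrm{sus}},N_v^{\mathrm{rec}},N_v^{\mathrm{sus}},N^{\mathrm{inf}}_{u\to v},N^{\mathrm{inf}}_{v\to u}$. The constant $c_{\mathrm{edge}}=e^{-2}(1-e^{-\lambda/2})(1-e^{-\rho/2})$ suggests the following decomposition of $[0,1]$:
\begin{itemize}
\item $u$ never recovers during $[0,1/2]$;
\item $v$ has a susceptibility mark in $(0,1/2)$, which forces $v$ out of state $R$ if it starts there;
\item $u$ transmits to $v$ during $[1/2,1]$;
\item $v$ has no recovery mark in $[0,1]$.
\end{itemize}
Concretely, set
\[
E:=\bigl\{N_u^{\mathrm{rec}}([0,1])=0,\ N_v^{\mathrm{rec}}([0,1])=0,\ N_v^{\mathrm{sus}}((0,\tfrac12))\ge 1,\ N^{\mathrm{inf}}_{u\to v}([\tfrac12,1])\ge 1\bigr\}.
\]
These four conditions involve independent Poisson processes, or disjoint pieces of them. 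Hence $\mathbb{P}(E)=e^{-1}\cdot e^{-1}\cdot(1-e^{-\rho/2})(1-e^{-\lambda/2})=c_{\mathrm{edge}}$. Measurability in $\mathcal{G}(u,\{v\})$ is immediate.

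The deterministic step is to show $E\subseteq\{\sigma_v(1)=I\}$. We mimic the case analysis of Lemma~\ref{lemma_star_lit_from_A1A2}. On $E$, $u$ is infected throughout $[0,1]$, because $\sigma_u(0)=I$ and no recovery mark of $u$ occurs. For $v$, we split into three cases.
\begin{itemize}
\item If $\sigma_v(0)=I$, then $v$ stays infected on $[0,1]$, since it has no recovery mark.
\item If $\sigma_v(0)=S$, then $v$ can only leave $S$ by becoming infected. It cannot reach $R$ without a recovery mark. So at the first mark of $N^{\mathrm{inf}}_{u\to v}$ in $[1/2,1]$, $v$ is either already infected or is susceptible and gets infected, since $u$ is infected then. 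After that it stays infected.
\item If $\sigma_v(0)=R$, the susceptibility mark in $(0,1/2)$ makes $v$ susceptible before time $1/2$. From then on $v$ is in $S$ or $I$ and cannot return to $R$, so the previous case's argument applies.
\end{itemize}
A key point is that infections of $v$ arriving from outside $\{u\}$ never hurt. Without a recovery mark, $v$ stays in $I$ once it is there, so the conclusion holds regardless of the rest of the graph. This is exactly why no attractiveness is needed.

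There is one subtlety. If $u$ is infected by outside neighbors it does not matter, because $u$ is already infected and, with no recovery mark, remains so. The only real care point is this: if $\sigma_v(0)=S$ and $v$ gets infected externally before $1/2$, the susceptibility mark is irrelevant, since it only acts on state $R$ and $v$ never reaches $R$. This case analysis is the main step, though it is routine. The probability computation is exact and needs no estimates.
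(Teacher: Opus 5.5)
Your proof is correct and is essentially the paper's argument. It uses the same four-clock event: no recovery mark at $u$ or $v$ on $[0,1]$, an $R\to S$ mark at $v$ in the first half-interval, and a $u\to v$ transmission mark in the second half. It also has the same exact product computation giving $c_{\mathrm{edge}}$ and the same case analysis on $\sigma_v(0)$.

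The only blemish is your first bullet, which says $u$ does not recover on $[0,1/2]$. Your formal definition of $E$ correctly requires $N_u^{\mathrm{rec}}([0,1])=0$, and that is what is needed, since the transmission mark may fall anywhere in $[1/2,1]$.
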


\begin{proof}
Consider the event
\[
E:=\Big\{N^{\mathrm{rec}}_{u}([0,1])=0\Big\}\ \cap\
\Big\{N^{\mathrm{sus}}_{v}\big([0,\tfrac{1}{2}]\big)\ge 1\Big\}\ \cap\
\Big\{N^{\mathrm{inf}}_{u\to v}\big((\tfrac{1}{2},1]\big)\ge 1\Big\}\ \cap\
\Big\{N^{\mathrm{rec}}_{v}([0,1])=0\Big\}.
\]
On $E$, the condition $N^{\mathrm{rec}}_{u}([0,1])=0$ implies that the vertex $u$ remains infected throughout $[0,1]$. Likewise, the condition $N^{\mathrm{rec}}_{v}([0,1])=0$ implies that once $v$ becomes infected before time $1$, it is still infected at time $1$. Therefore, without loss of generality, we may assume that $\sigma_v(0)\in\{S,R\}$.

Moreover, the condition
\(
N^{\mathrm{sus}}_{v}\big([0,\tfrac{1}{2}]\big)\ge 1
\)
implies that $v$ is in state $S$ at some time $t\le \tfrac12$ (trivially if $\sigma_v(0)=S$, and otherwise at the first $R\to S$ mark). Since there is at least one infection transmission from $u$ to $v$ during $(\tfrac{1}{2},1]$, it follows that $v$ becomes infected before time $1$. Hence $E\subseteq\{\sigma_v(1)=I\}$.

Finally, by independence in the modified Harris construction,
\[
\mathbb{P}(E)= e^{-1}\bigl(1-e^{-\rho/2}\bigr)\bigl(1-e^{-\lambda/2}\bigr)e^{-1}=c_{\mathrm{edge}}.
\]
\end{proof}

\begin{lemma}\label{lemma_edge_transmission_k}
Let $G=(V,E)$ be a graph that contains the designated hierarchical-star 
\(
HS(v):=\mathrm{HStar}\!(v;\{u_i\}_{i\le \lceil n^{\varepsilon}\rceil};\{w_{i,j}\}_{i\le  \lceil n^{\varepsilon}\rceil,\,j\le \lceil n^{\varepsilon_1}\rceil })
\)
as in Definition~\ref{def:hierarchical-star}, for some $\varepsilon\ge \varepsilon_1>0$. Let $\varepsilon_2=\frac{1}{200}\min\{\rho,\lambda,1\}\varepsilon_1$. Fix any $i_0\le n^{\varepsilon}$. Suppose that $N\in \mathcal{G}(u_{i_0}, \{w_{{i_0},1},\dots, w_{{i_0},\lceil n^{\varepsilon_1}\rceil} \})$ is an event implying that there exist random times $T_\ell$ for $\ell =1,\dots,\lfloor n^{\varepsilon_2/2}\rfloor$ such that each $T_\ell$ is measurable with respect to $\mathcal{G}(u_{i_0}, \{w_{{i_0},1},\dots, w_{{i_0},\lceil n^{\varepsilon_1}\rceil} \})$, $T_{\ell +1}-T_\ell\ge 3$, and $u_{i_0}$ is infected at time $T_\ell$. Then, for any $x\in \{v\}\cup\{u_i\}_{i\le \lceil n^{\varepsilon}\rceil,\, i\ne i_0}$, there exists an event $A_x\in \mathcal{G}(HS(v))$ such that, for all sufficiently large $n$, the following hold:
\begin{enumerate}
    \item[\normalfont (i)] On $A_x\cap N$, there exists a random time $T\le T_{\lfloor n^{\varepsilon_2/2}\rfloor}+3$ such that $T$ is measurable with respect to $\mathcal{G}(HS(v))$, and the vertex $x$ is infected at time $T$;
    \item[\normalfont (ii)] $\mathbb{P}(A_x)\ \ge\ 1-\exp(-n^{\varepsilon_2/3})$.
\end{enumerate}
\end{lemma}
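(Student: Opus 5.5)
We plan to chain Lemma~\ref{lemma_edge_transmission_one} along the path from $u_{i_0}$ to $x$ inside $HS(v)$, attempting this at each of the $\lfloor n^{\varepsilon_2/2}\rfloor$ times $T_\ell$. If $x=v$ the path is the single edge $u_{i_0}\to v$. If $x=u_i$ with $i\ne i_0$, the path is $u_{i_0}\to v\to u_i$. In either case it has length $m\le 2$. All edges involved belong to $HS(v)$, so all the relevant clocks lie in $\mathcal{G}(HS(v))$.

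For each $\ell$ we define a transmission event $E_\ell$ along this path, started at time $T_\ell$. For the first edge we take the event $E$ of Lemma~\ref{lemma_edge_transmission_one}, shifted to start at $T_\ell$ and with $(u,v)=(u_{i_0},y_1)$:
\[
\{N^{\mathrm{rec}}_{u_{i_0}}([T_\ell,T_\ell+1])=0\}\cap\{N^{\mathrm{sus}}_{y_1}([T_\ell,T_\ell+\tfrac12])\ge1\}\cap\{N^{\mathrm{inf}}_{u_{i_0}\to y_1}((T_\ell+\tfrac12,T_\ell+1])\ge1\}\cap\{N^{\mathrm{rec}}_{y_1}([T_\ell,T_\ell+1])=0\}.
\]
If the path has a second edge, we add the analogous shifted event on $[T_\ell+1,T_\ell+2]$ with $(u,v)=(y_1,x)$.

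On $N\cap E_\ell$ the following holds. The vertex $u_{i_0}$ is infected at $T_\ell$, so the proof of Lemma~\ref{lemma_edge_transmission_one} gives $\sigma_{y_1}(T_\ell+1)=I$, whatever the states of the other vertices. Applying the lemma again gives $\sigma_x(T_\ell+m)=I$. We set $T:=T_{\ell^*}+m$, where $\ell^*$ is the smallest $\ell$ with $E_\ell$ occurring. Then $T\le T_{\lfloor n^{\varepsilon_2/2}\rfloor}+3$. Moreover $T$ is $\mathcal{G}(HS(v))$-measurable, because the $T_\ell$ are measurable in $\mathcal{G}(u_{i_0},\cdot)\subseteq\mathcal{G}(HS(v))$ and the $E_\ell$ are built from clocks in $HS(v)$. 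We then set $A_x:=\bigcup_\ell E_\ell$, intersected with whatever is needed so that $A_x$ is itself defined without reference to $N$. One option is to define the $E_\ell$ only at times $T_\ell$ that are well defined, and to default to the empty event otherwise.

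The main obstacle is the probability bound, since the $T_\ell$ are random and depend on the clocks $N^{\mathrm{rec}}_{u_{i_0}}$ and $N^{\mathrm{sus}}_{u_{i_0}}$, which also enter $E_\ell$. First, $T_{\ell+1}-T_\ell\ge3$, so the windows $[T_\ell,T_\ell+2]$ are disjoint. Second, the clocks of $y_1$ and $x$, and the infection clocks from $u_{i_0}$ to $y_1$ and from $y_1$ to $x$, are independent of $\mathcal{G}(u_{i_0},\{w_{i_0,j}\})$. Here we use that $v$ and $u_i$ are not among the $w_{i_0,j}$, because the vertices of the hierarchical-star are distinct.

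For the recovery clock of $u_{i_0}$ on $[T_\ell,T_\ell+1]$, we argue as follows. The times $T_\ell$ are stopping times for the filtration generated by $\mathcal{G}(u_{i_0},\cdot)$ together with the remaining clocks, so the strong Markov property applies. Conditional on $\mathcal{F}_{T_\ell}$, each $E_\ell$ therefore has probability at least $c_{\mathrm{edge}}^2$. Because the windows are disjoint, the conditional failures multiply, giving
\[
\mathbb{P}\Bigl(\bigcap_\ell E_\ell^c\Bigr)\le(1-c_{\mathrm{edge}}^2)^{\lfloor n^{\varepsilon_2/2}\rfloor}\le\exp(-n^{\varepsilon_2/3})
\]
for large $n$.

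A subtlety remains: $T_{\ell+1}$ may depend on the clock of $u_{i_0}$ inside the window of $E_\ell$. We handle this by conditioning sequentially. We write the failure indicators iteratively and bound $\mathbb{P}(E_\ell^c\mid\mathcal{F}_{T_\ell})\le 1-c_{\mathrm{edge}}^2$. Since $E_\ell$ is $\mathcal{F}_{T_\ell+2}\subseteq\mathcal{F}_{T_{\ell+1}}$-measurable, the tower property gives the product bound. If this stopping-time formulation is delicate, we can instead drop the requirement on $u_{i_0}$'s recovery clock and use that $u_{i_0}$ is infected at $T_\ell$. It then only needs to avoid recovery for one unit, which is again a fresh $\mathrm{Exp}(1)$ event after the stopping time.
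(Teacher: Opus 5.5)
Your construction is the paper's proof. You use shifted copies of the event of Lemma~\ref{lemma_edge_transmission_one} along $u_{i_0}\to v$ (resp.\ $u_{i_0}\to v\to u_i$), started at each $T_\ell$, take $A_x:=\bigcup_\ell E_\ell$ and $T:=T_{\ell^*}+1$ (resp.\ $+2$) for the first successful index, and bound the failure probability by $(1-c_{\mathrm{edge}}^{m})^{\lfloor n^{\varepsilon_2/2}\rfloor}$. The paper gets this product simply by asserting that the $E_\ell$ are independent because their windows are disjoint.

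Two steps of your execution do not work as written. First, defaulting $E_\ell$ to $\emptyset$ where $T_\ell$ is undefined makes $A_x$ a subset of the event on which the $T_\ell$ exist, which may be just $N$. Then (ii) fails whenever $\mathbb{P}(N)<1-\exp(-n^{\varepsilon_2/3})$; in the applications $N$ is only guaranteed polynomially high probability. Your conditional bound on $\mathbb{P}(E_\ell^{\mathrm c}\mid\mathcal{F}_{T_\ell})$ also cannot hold off $N$. The fix is to extend the $T_\ell$ off $N$ by deterministic times such as $3\ell$, or to put $N^{\mathrm c}$ into $A_x$. Both are legitimate because $N\in\mathcal{G}(HS(v))$ and (i) only concerns $A_x\cap N$.

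Second, your strong Markov/tower step needs the $T_\ell$ to be stopping times, and the lemma does not assume this; it assumes only $\mathcal{G}(u_{i_0},\cdot)$-measurability. For merely measurable times the step fails outright. In the notation of Lemma~\ref{lemma_star_Av} for the star centred at $u_{i_0}$, take $T_\ell:=\max\{T_{I,i},\,T_{R,i+1}-\tfrac12\}$ along a suitable subsequence of $i$. These times are measurable and $u_{i_0}$ is infected at them, yet $u_{i_0}$ always has a recovery mark in $(T_\ell,T_\ell+1]$, so $\mathbb{P}(E_\ell)=0$.

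Even the times supplied by Proposition~\ref{prop:star_independent}(iii) are not stopping times. An index $i$ is kept only if the \emph{next} gap $T_{I,i+1}-T_{I,i}$ is at least $3$. That depends on exactly the recovery mark of $u_{i_0}$ after $T_{I,i}$ that $E_\ell$ constrains.

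To close this you have two options:
\begin{enumerate}
\item Add a stopping-time hypothesis and arrange it in the application, e.g.\ by selecting reinfection times greedily, $\tau_{j+1}:=\min\{T_{I,i}:T_{I,i}\ge \tau_j+3\}$.
\item Keep the paper's selection and use positive correlation. Given $\mathcal{F}_{T_{I,i}}$, both the selection event and $\{\text{no recovery in }(T_{I,i},T_{I,i}+1]\}$ are increasing in the fresh $\mathrm{Exp}(1)$ recovery delay. By Harris' inequality the selection only helps, and a supermartingale over the index $i$ then gives the product bound.
\end{enumerate}
The paper's own proof has this same gap, so it is not a defect of your route relative to the paper. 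But the fix you propose does not close it as stated.
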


\begin{proof}
We first treat the case $x=v$. For each $\ell=1,\dots,\lfloor n^{\varepsilon_2/2}\rfloor$, define
\begin{eqnarray*}
E_\ell^{(v)}:=\Big\{N^{\mathrm{rec}}_{u_{i_0}}((T_\ell,T_\ell+1])=0\Big\}\ \cap\
\Big\{N^{\mathrm{sus}}_{v}\big((T_\ell,T_\ell+\tfrac{1}{2}]\big)\ge 1\Big\}\ \cap \\ \
\Big\{N^{\mathrm{inf}}_{u_{i_0}\to v}\big((T_\ell+\tfrac{1}{2},T_\ell+1]\big)\ge 1\Big\}\ \cap\
\Big\{N^{\mathrm{rec}}_{v}((T_\ell,T_\ell+1])=0\Big\}.
\end{eqnarray*}
Since $T_{\ell+1}-T_\ell\ge 3$, the time intervals involved in the events $\{E_\ell^{(v)}\}_{\ell=1}^{\lfloor n^{\varepsilon_2/2}\rfloor}$ are disjoint. Hence, by the independent increments of the Poisson processes in the modified Harris construction, the events $(E_\ell^{(v)})_{\ell=1}^{\lfloor n^{\varepsilon_2/2}\rfloor}$ are independent. Moreover, each of them has probability $c_{\mathrm{edge}}$, where $c_{\mathrm{edge}}$ is the constant from Lemma~\ref{lemma_edge_transmission_one}, and each implies that the vertex $v$ is infected at time $T_\ell+1$. Define
$$
A_v:=\bigcup_{\ell=1}^{\lfloor n^{\varepsilon_2/2}\rfloor}E_\ell^{(v)}.
$$
Then $A_v\in\mathcal{G}(HS(v))$. On the event $A_v\cap N$, let
$
\ell_*:=\inf\{\ell\in\{1,\dots,\lfloor n^{\varepsilon_2/2}\rfloor\}: E_\ell^{(v)} \text{ occurs}\}
$.
Since each $E_\ell^{(v)}$ is measurable with respect to $\mathcal{G}(HS(v))$, the index $\ell_*$ is measurable with respect to $\mathcal{G}(HS(v))$. Therefore the random time
$
T:=T_{\ell_*}+1
$
is also measurable with respect to $\mathcal{G}(HS(v))$, satisfies $T\le T_{\lfloor n^{\varepsilon_2/2}\rfloor}+1$, and the vertex $v$ is infected at time $T$. Thus $A_v\cap N$ implies item~(i). Furthermore, for all sufficiently large $n$,
$$
\mathbb{P}(A_v)=1-(1-c_{\mathrm{edge}})^{\lfloor n^{\varepsilon_2/2}\rfloor}\ge 1-\exp(-n^{\varepsilon_2/3}).
$$

Next consider the case $x=u_i$ for some $i\ne i_0$. For each $\ell=1,\dots,\lfloor n^{\varepsilon_2/2}\rfloor$, define
\begin{eqnarray*}
E_\ell^{(u_i)}:= E_{\ell}^{(v)}\cap \Big\{N^{\mathrm{rec}}_{v}((T_\ell+1,T_\ell+2])=0\Big\}\ \cap\
\Big\{N^{\mathrm{sus}}_{u_i}\big((T_\ell+1,T_\ell+\tfrac{3}{2}]\big)\ge 1\Big\}\ \cap \\ \
\Big\{N^{\mathrm{inf}}_{v\to u_i}\big((T_\ell+\tfrac{3}{2},T_\ell+2]\big)\ge 1\Big\}\ \cap\
\Big\{N^{\mathrm{rec}}_{u_i}((T_\ell+1,T_\ell+2])=0\Big\}.
\end{eqnarray*}
Again, since $T_{\ell+1}-T_\ell\ge 3$, these events are independent. Each has probability $c_{\mathrm{edge}}^2$, and each implies that the vertex $u_i$ is infected at time $T_\ell+2$. Define
$$
A_{u_i}:=\bigcup_{\ell=1}^{\lfloor n^{\varepsilon_2/2}\rfloor}E_\ell^{(u_i)}.
$$
Then $A_{u_i}\in\mathcal{G}(HS(v))$. On the event $A_{u_i}\cap N$, let
$
\ell_*:=\inf\{\ell\in\{1,\dots,\lfloor n^{\varepsilon_2/2}\rfloor\}: E_\ell^{(u_i)} \text{ occurs}\}
$.
Since each $E_\ell^{(u_i)}$ is measurable with respect to $\mathcal{G}(HS(v))$, the index $\ell_*$ is measurable with respect to $\mathcal{G}(HS(v))$. Therefore the random time
$
T:=T_{\ell_*}+2
$
is also measurable with respect to $\mathcal{G}(HS(v))$, satisfies $T\le T_{\lfloor n^{\varepsilon_2/2}\rfloor}+2$, and the vertex $u_i$ is infected at time $T$. Thus $A_{u_i}\cap N$ implies item~(i). Moreover, for all sufficiently large $n$,
$$
\mathbb{P}(A_{u_i})=1-(1-c_{\mathrm{edge}}^2)^{\lfloor n^{\varepsilon_2/2}\rfloor}\ge 1-\exp(-n^{\varepsilon_2/3}).
$$

\end{proof}

Lemma~\ref{lemma_hierarchicalstar_supermartingale} introduces an embedded process $(\tilde W_r)_{r\in\mathbb{N}}$, 
which tracks the number of first-layer vertices $u_i$ that are $n^{\varepsilon_1/24}$-lit among $\{w_{i,1},\dots,w_{i,\lceil n^{\varepsilon_1}\rceil}\}$ at time $t_r:=3rn^{\varepsilon_2}$. The key observation is that, by Proposition~\ref{prop:star_independent}, if a given $u_i$ is $n^{\varepsilon_1/24}$-lit at time $t$, then it remains $n^{\varepsilon_1/24}$-lit at time $t+3n^{\varepsilon_2}$ with probability at least $1-n^{-\varepsilon_2}$, and these good events can be realized on disjoint local graphical data. If $\tilde{W}_r\le n^{\varepsilon_3}$, with some $\varepsilon_3<\varepsilon_2$, then with probability at least $1-n^{-(\varepsilon_2-\varepsilon_3)}$, all currently lit first-layer vertices remain lit over one epoch. On the other hand, Lemma~\ref{lemma_edge_transmission_k} implies that the remaining first-layer vertices are infected during the same time interval with high probability, and hence become $n^{\varepsilon_1/24}$-lit. Combining these two effects yields that $\mathbb{E}[\exp(-\tilde W_{r+1})\mid\mathcal{F}_{3n^{\varepsilon_2}r} ]\le \exp(-\tilde{W}_r)$  whenever $\tilde{W}_r\in [\frac14 n^{\varepsilon_3}+1,\,\lceil n^{\varepsilon_3}\rceil]$. 

It is worth emphasizing that if we denote by $(W_t)_{t\ge 0}$ the number of vertices $u_i$ that are $n^{\varepsilon_1/24}$-lit among $\{w_{i,1},\dots,w_{i,\lceil n^{\varepsilon_1}\rceil}\}$ at time $t$, then $W_t, W_{t_r}$ are not measurable with respect to the local graphical $\sigma$-field $\mathcal G(HS(v))$. The reason is that the event that a given $u_i$ is lit among $\{w_{i,1},\dots,w_{i,n^{\varepsilon_1}}\}$ at time $t$ may depend on infection paths entering the hierarchical-star from outside. In Lemma~\ref{lemma_hierarchicalstar_supermartingale}, we therefore construct instead an embedded comparison process $\tilde W_r$ that is measurable with respect to $\mathcal G(HS(v))$ and that always provides a lower bound for $W_{t_r}$.

\begin{lemma}\label{lemma_hierarchicalstar_supermartingale}
Let $G=(V,E)$ be a graph containing 
\(
HS(v):=\mathrm{HStar}\!(v;\{u_i\}_{i\le \lceil n^{\varepsilon}\rceil};\{w_{i,j}\}_{i\le \lceil n^{\varepsilon}\rceil,\,j\le \lceil n^{\varepsilon_1}\rceil})
\)
as in Definition~\ref{def:hierarchical-star}, for some $\varepsilon\ge \varepsilon_1>0$. Let the initial configuration $\boldsymbol{\sigma}(0)$
be arbitrary on $V$, subject only to the condition that $v$ is $(k,n^{\varepsilon_1/24})$-HS-lit with respect to $HS(v)$
for some
\(
 k\in[\tfrac12 n^{\varepsilon_3}+1,\, n^{\varepsilon_3}], \ \varepsilon_2=\tfrac{1}{200}\min\{\rho,\lambda,1\}\varepsilon_1, \ \varepsilon_3=\tfrac14\varepsilon_2.
\)
Then, for all sufficiently large $n$, there exists an event $C^{(v)}\in\mathcal{G}(HS(v))$ such that
\[
\mathbb{P}\big(C^{(v)}\big)\ \ge\ 1-\exp(1-n^{\varepsilon_3}/4),
\]
and on $C^{(v)}$, one of the following two alternatives holds:
\begin{enumerate}[\normalfont (i)]
    \item the vertex $v$ becomes $(n^{\varepsilon_3},n^{\varepsilon_1/24})$-HS-lit with respect to $HS(v)$ at some time before it ceases to be $(\tfrac14 n^{\varepsilon_3},n^{\varepsilon_2})$-HS-lit with respect to $HS(v)$;
    \item the vertex $v$ never ceases to be $(\tfrac14 n^{\varepsilon_3},n^{\varepsilon_2})$-HS-lit with respect to $HS(v)$.
\end{enumerate}
\end{lemma}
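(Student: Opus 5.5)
The plan is to discretize time into epochs $t_r:=3rn^{\varepsilon_2}$ and to build, epoch by epoch, a $\mathcal{G}(HS(v))$-measurable index set $\tilde{\mathcal{W}}_r\subseteq\{1,\dots,\lceil n^{\varepsilon}\rceil\}$ with $\tilde W_r:=|\tilde{\mathcal{W}}_r|$. Each $u_i$ with $i\in\tilde{\mathcal{W}}_r$ must be $n^{\varepsilon_1/24}$-lit among its second-layer leaves at time $t_r$, and must stay at least $n^{\varepsilon_2}$-lit throughout $[t_r,t_{r+1}]$. Here I use $n^{\varepsilon_2}\le n^{\varepsilon_1/96}$, which follows from $\varepsilon_2=\min\{\rho,\lambda,1\}\varepsilon_1/200$. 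Start with $\tilde{\mathcal{W}}_0$ equal to a set of $\lceil k\rceil$ indices that are lit at time $0$. Given $\tilde{\mathcal{W}}_r$, proceed in two steps.

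\textbf{Survival.} For each $i\in\tilde{\mathcal{W}}_r$, apply Proposition~\ref{prop:star_independent} from time $t_r$, with $v$ replaced by $u_i$, $U=\{w_{i,j}\}_j$ and $k=n^{\varepsilon_1}$. Use the strong Markov property and time shift, which is legitimate because $t_r$ is deterministic and all earlier choices are $\mathcal{G}(HS(v))$-measurable. This gives events $A^{(u_i)}_r\in\mathcal{G}(u_i,\{w_{i,j}\}_j)$ restricted to $[t_r,t_{r+1}]$. On $A^{(u_i)}_r$, items (i)–(ii) give that $u_i$ is $n^{\varepsilon_1/96}$-lit, hence $n^{\varepsilon_2}$-lit, on all of $[t_r,t_{r+1}]$ and $n^{\varepsilon_1/24}$-lit at $t_{r+1}$. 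Item (iii) also supplies $\mathcal{G}$-measurable infection times of $u_i$ spaced by at least $3$. These events are conditionally independent across $i$, and each fails with probability at most $n^{-\varepsilon_1\min\{\rho,\lambda,1\}/200}=n^{-\varepsilon_2}$.

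\textbf{Recruitment.} If some $A^{(u_{i_0})}_r$ occurs, say for the smallest such $i_0$, feed its reinfection times into Lemma~\ref{lemma_edge_transmission_k}. This produces, for each $i\notin\tilde{\mathcal{W}}_r$, an event $A_{u_i}$ of probability at least $1-\exp(-n^{\varepsilon_2/3})$ on which $u_i$ is infected at a $\mathcal{G}(HS(v))$-measurable time $T\le t_r+n^{\varepsilon_2}$. Restarting Proposition~\ref{prop:star_independent} at $T$ (now $u_i$ is infected, so the lit hypothesis holds) keeps $u_i$ lit up to $t_{r+1}$ with probability at least $1-n^{-\varepsilon_2}$. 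The windows have length $3n^{\varepsilon_2}$ and each restart covers $3n^{\varepsilon_2}$, so this suffices. Call these recruited indices; $\tilde{\mathcal{W}}_{r+1}$ consists of the survivors together with the recruited ones, truncated to at most $\lceil n^{\varepsilon_3}\rceil$ elements.

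\textbf{Drift.} With $\tilde W_r=w\in[\tfrac14 n^{\varepsilon_3}+1,\lceil n^{\varepsilon_3}\rceil]$, the number of survivor failures is stochastically dominated by $\mathrm{Bin}(w,n^{-\varepsilon_2})$. Moreover $\lceil n^{\varepsilon}\rceil-w\gg n^{\varepsilon_3}$ candidates are each recruited with probability at least $1-2n^{-\varepsilon_2}$. Hence, unless $\tilde W_{r+1}$ is already capped at $\lceil n^{\varepsilon_3}\rceil$, it gains many units. A direct computation then gives $\E[e^{-\tilde W_{r+1}}\mid\mathcal{F}_{t_r}]\le e^{-w}$. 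Indeed, $\E[e^{-X}]$ for $X$ the net change is at most $\E[e^{Y}]\cdot$(tiny), where $Y\sim\mathrm{Bin}(w,n^{-\varepsilon_2})$ satisfies $\E e^{Y}\le\exp(wen^{-\varepsilon_2})=1+o(1)$ because $w\le n^{\varepsilon_3}\ll n^{\varepsilon_2}$, while the recruitment term forces $\tilde W_{r+1}$ up to the cap except on an event of probability $\exp(-n^{\varepsilon_2/3})$. At the cap we need only $\E e^{-(\tilde W_{r+1}-w)}\le 1$, which uses the same bound with failures compensated by recruits.

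\textbf{Optional stopping.} The process $e^{-\tilde W_{r}}$ is a supermartingale until $\tilde W$ exits $[\tfrac14n^{\varepsilon_3}+1,\lceil n^{\varepsilon_3}\rceil]$. Starting from $\tilde W_0\ge\tfrac12 n^{\varepsilon_3}+1$ and stopping at the exit time $\theta$, we get $\mathbb{P}(\tilde W_\theta<\tfrac14n^{\varepsilon_3}+1)\,e^{-\tfrac14 n^{\varepsilon_3}-1}\le e^{-\tfrac12 n^{\varepsilon_3}}$. So downward exit has probability at most $\exp(1-n^{\varepsilon_3}/4)$. Let $C^{(v)}$ be the complement of downward exit, which is $\mathcal{G}(HS(v))$-measurable since everything was built from local clocks. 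On $C^{(v)}$, either $\tilde W$ reaches $\lceil n^{\varepsilon_3}\rceil$, giving alternative (i), or it stays in the interval forever. During each window $[t_r,t_{r+1}]$, the $\tilde W_r\ge\tfrac14 n^{\varepsilon_3}$ tracked vertices are $n^{\varepsilon_2}$-lit throughout, and $\tilde W_r\le W_{t_r}$ pointwise. This gives $(\tfrac14 n^{\varepsilon_3},n^{\varepsilon_2})$-HS-lit continuously until (i) occurs, or forever, which is alternative (ii). The upward exit may also need $\theta<\infty$ a.s., which is harmless because a constant recruitment probability gives geometric exit.

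\textbf{Main obstacle.} The hard step is the conditional drift inequality, which requires exact independence bookkeeping. The survival events for $i\in\tilde{\mathcal{W}}_r$, the recruitment events, and the restart events for recruited $u_i$ all use clocks on $[t_r,t_{r+1}]$. The recruitment events also reuse the clocks of $u_{i_0}$ and $v$, and the edge $v\to u_i$ clocks. I would handle this by conditioning first on the star data of the survivors, and bounding the failures by a union bound rather than insisting on full independence. The remaining care is to verify that the constants, namely $n^{\varepsilon_3}\cdot n^{-\varepsilon_2}\to0$ and $\exp(-n^{\varepsilon_2/3})\ll e^{-n^{\varepsilon_3}}$, close the inequality.
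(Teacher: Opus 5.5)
Your architecture matches the paper's: epochs $t_r=3rn^{\varepsilon_2}$, survival of tracked first-layer vertices via Proposition~\ref{prop:star_independent}, recruitment via Lemma~\ref{lemma_edge_transmission_k} followed by a restart, the supermartingale $e^{-\tilde W_r}$, and optional stopping. Your update rule, however (survivors plus \emph{all} recruits, truncated at $\lceil n^{\varepsilon_3}\rceil$), breaks two steps.

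First, the drift inequality is false at the cap. If $\tilde W_r=\lceil n^{\varepsilon_3}\rceil$, then $\tilde W_{r+1}\le\tilde W_r$ pointwise, so $\E[e^{-(\tilde W_{r+1}-\tilde W_r)}\mid\mathcal F_{t_r}]\ge 1$. The inequality is strict, because with positive probability a tracked vertex fails and no recruit replaces it; after truncation nothing can compensate. Your process sits at the cap from the first epoch on with high probability, so optional stopping on the closed interval is unavailable. You could instead stop when $\tilde W$ first hits the cap, which is alternative (i), but only if the lower HS-lit level held up to then.

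That is the second, more serious problem. Only the survivors, i.e.\ the $i\in\tilde{\mathcal W}_r$ for which $A^{(u_i)}_r$ occurs, are certified $n^{\varepsilon_2}$-lit throughout $[t_r,t_{r+1}]$. A recruit is only known to be infected from some time $T\in(t_r,t_{r+1})$ onward. Because you add recruits no matter how many tracked vertices failed, an epoch with many failures refilled by recruits keeps $\tilde W$ inside the interval. Meanwhile the number of first-layer vertices lit throughout that window can drop below $\tfrac14 n^{\varepsilon_3}$. So your claim that the $\tilde W_r$ tracked vertices are lit throughout each window is unjustified, and the complement of a downward exit of $\tilde W$ implies neither alternative. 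The optional-stopping bound says nothing about survivor counts, so a separate estimate would be needed.

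The paper's update rule is designed for exactly this. It adds a single vertex $u_j$ only on $\{\tilde N_r=0\}$ and otherwise sets $\tilde L_{r+1}$ equal to the survivors. Hence at least $\min\{\tilde W_r,\tilde W_{r+1}\}$ tracked vertices are lit throughout $[t_r,t_{r+1}]$ (part~(b) of its proof). Also, $\tilde W$ is uncapped and moves up by at most one, so exit through the top means $\tilde W=\lceil n^{\varepsilon_3}\rceil+1$, and the drift inequality holds on the whole closed interval.

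Alternatively, you can keep your mass recruitment if you drop the martingale altogether, since one epoch already suffices. Let $C^{(v)}$ be the event that at least $\tfrac14 n^{\varepsilon_3}+1$ of the initial $\lceil k\rceil$ tracked vertices survive $[0,t_1]$, and that survivors plus recruits (drawn from, say, $2\lceil n^{\varepsilon_3}\rceil$ outsiders) give at least $\lceil n^{\varepsilon_3}\rceil$ certified lit vertices at $t_1$. Its complement has probability at most a binomial tail of order $(4e\,n^{-\varepsilon_2})^{n^{\varepsilon_3}/4}$ plus terms of order at most $n^{\varepsilon_3}e^{-n^{\varepsilon_2/3}}$. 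This is far below $e^{1-n^{\varepsilon_3}/4}$, and on $C^{(v)}$ alternative (i) holds at time $t_1$.
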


\begin{proof}
Let $(\mathcal{F}_t)_{t\ge 0}$ be the natural filtration of the modified Harris construction, as defined in Subsection~\ref{ssec:harris}. For each $r\in\mathbb{N}$, set $t_r:=3rn^{\varepsilon_2}$, and let $\bar{\mathcal{F}}_r:=\sigma(\mathcal{H}|_{[0,t_r]})$ be the $\sigma$-field generated by the restriction of the modified Harris construction to the time interval $[0,t_r]$. Define
\[
L_r:=\Big\{u_i\in \{u_1,\dots, u_{\lceil n^{\varepsilon}\rceil}\}:\ u_i \text{ is $n^{\varepsilon_1/24}$-lit among }\{w_{i,1},\dots, w_{i,\lceil n^{\varepsilon_1}\rceil}\}\text{ at time }t_r\Big\}, \qquad W_r:=|L_r|.
\]
Then $W_0\ge k$.

Suppose that we can construct a process $(\tilde W_r)_{r\in\mathbb{N}}$ adapted to the filtration $(\bar{\mathcal{F}}_r)_{r\in \mathbb{N}}$ such that:
\begin{enumerate}
\item[(a)] $\tilde W_r$ is measurable with respect to $\mathcal{G}(HS(v))$;
\item[(b)] For every $r$ before the first time at which $\tilde W_r<\frac14 n^{\varepsilon_3}+1$, $\tilde W_r\le W_r$, and $v$ remains $(\tfrac14 n^{\varepsilon_3},n^{\varepsilon_2})$-HS-lit with respect to $HS(v)$ throughout time interval $[0, t_{r}]$;
\item[(c)] For every $r\ge0$, on the event
\(
\tilde W_r\in [\tfrac14 n^{\varepsilon_3}+1,\,\lceil n^{\varepsilon_3}\rceil]
\), 
we have
\[
\mathbb E\!\left[\exp(-\tilde W_{r+1})\mid \bar{\mathcal F}_r\right]
\le
\exp(-\tilde W_r).
\]
\end{enumerate}
Assuming such a process has been constructed, we first explain why it yields the conclusion of the lemma. Define
\[
\tau_*:=\inf\left\{r\ge0:\tilde W_r\notin
\left[\tfrac14 n^{\varepsilon_3}+1,\lceil n^{\varepsilon_3}\rceil\right]\right\},
\]
and let
\[
D_-:=\left\{\tau_*<\infty,\ \tilde W_{\tau_*}<\tfrac14 n^{\varepsilon_3}+1\right\}, \qquad C^{(v)}:=D_-^{\mathrm c}.
\]
Since each $\tilde W_r$ is measurable with respect to $\mathcal G(HS(v))$, the event $C^{(v)}$ also belongs to $\mathcal G(HS(v))$.

On $C^{(v)}$, the process does not exit the interval
\(
\left[\tfrac14 n^{\varepsilon_3}+1,\lceil n^{\varepsilon_3}\rceil\right]
\) through the lower endpoint. Hence either $\tau_*<\infty$ and the exit is through the upper endpoint, or $\tau_*=\infty$. In the first case, $W_{\tau_*}\ge \tilde W_{\tau_*}>\lceil n^{\varepsilon_3}\rceil$, and therefore $v$ becomes $(n^{\varepsilon_3},n^{\varepsilon_1/24})$-HS-lit before it ceases to be $(\tfrac14 n^{\varepsilon_3},n^{\varepsilon_2})$-HS-lit. In the second case, $\tilde W_r\ge \tfrac14 n^{\varepsilon_3}+1$ for all $r\ge0$, and hence part~(b) implies that $v$ never ceases to be $(\tfrac14 n^{\varepsilon_3},n^{\varepsilon_2})$-HS-lit. Thus $C^{(v)}$ implies one of the two alternatives in the statement of the lemma. The probability estimate in the statement will follow from part~(c), together with a standard optional-stopping argument for the stopped process $\exp(-\tilde W_{r\wedge\tau_*})$, as shown at the end of the proof.

We now proceed with the construction.

Define $\tilde{L}_0$ to be any subset of $L_0$ such that $|\tilde{L}_0|=\lfloor k\rfloor$, and set $\tilde{W}_0=|\tilde{L}_0|=\lfloor k\rfloor$. We then define $\tilde{L}_{r+1}$ inductively, given $\tilde{L}_r$, and set $\tilde{W}_{r+1}=|\tilde{L}_{r+1}|$.

Fix an epoch index $r\in\mathbb{N}$, and assume that $\tilde{L}_r$ has already been defined. For each $u_i\in \tilde{L}_r$, let $N_r^{(i)}$ be the event that $u_i$ is not $n^{\varepsilon_1/24}$-lit among $\{w_{i,1},\dots, w_{i,\lceil n^{\varepsilon_1}\rceil}\}$ at time $t_{r+1}$.

By Proposition~\ref{prop:star_independent}, for each $u_i\in \tilde{L}_r$ there exists an event
\(
\tilde{A}_r^{(u_i)}\in \mathcal{G}\big(u_i,\{w_{i,1},\dots, w_{i,\lceil n^{\varepsilon_1}\rceil}\}\big)
\)
such that, for all sufficiently large $n$, the following hold:
\begin{enumerate}
  \item[\normalfont (i)] On $\tilde{A}_r^{(u_i)}$, the vertex $u_i$ remains $n^{\varepsilon_1/96}$-lit among $\{w_{i,1},\dots, w_{i,\lceil n^{\varepsilon_1}\rceil}\}$ throughout the time interval $\big[t_r,t_r+\tfrac{\varepsilon_1}{48}\log n\big]$;
  \item[\normalfont (ii)] On $\tilde{A}_r^{(u_i)}$, the vertex $u_i$ remains $n^{\varepsilon_1/24}$-lit among $\{w_{i,1},\dots, w_{i,\lceil n^{\varepsilon_1}\rceil}\}$ throughout the time interval $[t_r+\frac{\varepsilon_1}{48}\log n,t_r+3n^{\varepsilon_2}]$;
  \item[\normalfont (iii)] On $\tilde{A}_r^{(u_i)}$, there exist random times $T_{\ell}^{(u_i)}\in [t_r,t_r+n^{\varepsilon_2}]$, $\ell=1,\dots,\lfloor n^{\varepsilon_2/2}\rfloor$, such that each $T_{\ell}^{(u_i)}$ is measurable with respect to $\mathcal{G}(u_i,\{w_{i,1},\dots, w_{i,\lceil n^{\varepsilon_1}\rceil}\})$, $T_{\ell+1}^{(u_i)}-T_{\ell}^{(u_i)}\ge 3$, and $u_i$ is infected at time $T_{\ell}^{(u_i)}$ for every $\ell=1,\dots,\lfloor n^{\varepsilon_2/2}\rfloor$;
  \item[\normalfont (iv)] $\mathbb{P}(\tilde{A}_r^{(u_i)}\mid \bar{\mathcal{F}}_r) \ge 1 - n^{-\varepsilon_2}$.
\end{enumerate}
Thus,
\[
N_r^{(i)}\subseteq (\tilde{A}_r^{(u_i)})^{\mathrm{c}}.
\]
Let
\[
N_r=\sum_{i\in \tilde{L}_r}\mathbf{1}\{N_r^{(i)}\},
\qquad
\tilde{N}_r=\sum_{i\in \tilde{L}_r}\mathbf{1}\{(\tilde{A}_r^{(u_i)})^{\mathrm{c}}\}.
\]
For $i\ne j$, the $\sigma$-fields $\mathcal{G}\big(u_i,\{w_{i,1},\dots, w_{i,\lceil n^{\varepsilon_1}\rceil}\}\big)$ and $\mathcal{G}\big(u_j,\{w_{j,1},\dots, w_{j,\lceil n^{\varepsilon_1}\rceil}\}\big)$ are independent. Hence the events $(\tilde A_r^{(u_i)})^{\mathrm{c}}$ and $(\tilde A_r^{(u_j)})^{\mathrm{c}}$ are conditionally independent given $\bar{\mathcal{F}}_r$. Therefore,
\[
N_r\le \tilde N_r \le_{\mathrm{st}} \mathrm{Binomial}(\tilde W_r,\,n^{-\varepsilon_2}).
\]
In particular, on the event $\{ \tilde{W}_r\le \lceil n^{\varepsilon_3}\rceil\}$, since $1+x\le \exp(x)$ for $x\ge 0$ and $\varepsilon_3=\frac{1}{4}\varepsilon_2$,
\begin{equation}\label{eq:supermartingaleNr}
  \mathbb{E}\!\left[e^{\tilde N_r}\mid \bar{\mathcal{F}}_r\right]
\le \big(1-n^{-\varepsilon_2}+e\,n^{-\varepsilon_2}\big)^{ \tilde{W}_r}
\le \exp\!\big((e-1)\lceil n^{\varepsilon_3}\rceil n^{-\varepsilon_2}\big)
\le 1+o(1),
\end{equation}
and
\begin{equation}\label{eq:supermartingaleNr2}
    \mathbb{P}(\tilde{N}_r=0\mid \bar{\mathcal{F}}_r) \ge (1-n^{-\varepsilon_2})^{\lceil n^{\varepsilon_3}\rceil}\ge 1-{\lceil n^{\varepsilon_3}\rceil}n^{-\varepsilon_2}\ge 1-o(1).
\end{equation}

Whenever $\tilde{W}_r\le \lceil n^{\varepsilon_3}\rceil$,  choose a vertex $u_j\notin \tilde{L}_r$, according to a fixed deterministic rule. By Lemma~\ref{lemma_edge_transmission_k}, there exists an event $\tilde{B}_r^{(1)}\in\mathcal{G}(HS(v))$ such that, for all sufficiently large $n$, the following hold:
\begin{enumerate}
    \item[\normalfont (i)] On $\tilde{B}_r^{(1)}\cap \{\tilde{N}_r=0\} \cap \{\tilde W_r\ge \frac{1}{4}n^{\varepsilon_3}+1\}$, there exists a random time $T_r\in [t_r,t_r+2n^{\varepsilon_2}]$ such that $T_r$ is measurable with respect to $\mathcal{G}(HS(v))$, and the vertex $u_j$ is infected at time $T_r$;
    \item[\normalfont (ii)]
    \begin{equation}\label{eq:supermartingaleAr1}
    \mathbb{P}(\tilde{B}_r^{(1)}\mid\bar{\mathcal{F}}_r)\ \ge\ 1-\exp(-n^{\varepsilon_2/3})\ge 1-o(1).
\end{equation}
\end{enumerate}

Since $T_r$ is measurable with respect to $\mathcal{G}(HS(v))$ and $u_j$ is infected at time $T_r$, Proposition~\ref{prop:star_independent}, applied to the star $(u_j,\{w_{j,1},\dots,w_{j,\lceil n^{\varepsilon_1}\rceil}\})$ at time $T_r$, yields an event $\tilde{B}_r^{(2)}\in \mathcal{G}(HS(v))$ such that
\(
\tilde{B}_r^{(2)} \cap \tilde{B}_r^{(1)} \cap \{\tilde{N}_r=0\} \cap \{\tilde W_r\ge \frac{1}{4}n^{\varepsilon_3}+1\}
\)
implies that $u_j$ is $n^{\varepsilon_1/24}$-lit among $\{w_{j,1},\dots,w_{j,\lceil n^{\varepsilon_1}\rceil}\}$ at time $t_{r+1}\in [T_r+\frac{\varepsilon_1}{48}\log n, T_r+3n^{\varepsilon_2}]$, and
\begin{equation}\label{eq:supermartingaleAr2}
    \mathbb{P}\big(\tilde B_r^{(2)}\mid {\mathcal{F}}_{T_r}\big)\ \ge \ 1-n^{-\varepsilon_2}\ge 1-o(1).
\end{equation}

Define
\[
\tilde L_{r+1}:=
\begin{cases}
 \tilde{L}_r \backslash \bigcup_{u_i\in \tilde{L}_r: (\tilde A_r^{(u_i)})^{\mathrm{c}}\text{ occurs}}\{u_i\} , & \text{if } \{\tilde N_r>0\} \text{ occurs},\\
 \tilde{L}_{r}\cup \{u_j\} , & \text{if } \tilde B_r^{(1)}\cap \tilde{B}_r^{(2)} \cap \{\tilde{N}_r=0 \}  \text{ occurs},\\
 \tilde L_r, & \text{otherwise}.
\end{cases}\ .
\]
We claim that if $\tilde L_r\subseteq L_r$ and $\tilde W_r\ge \frac{1}{4}n^{\varepsilon_3}+1$, then $\tilde L_{r+1}\subseteq L_{r+1}$.
\begin{itemize}
    \item If $\tilde N_r>0$ occurs, then for any $u_i\in \tilde{L}_r$ such that $(\tilde A_r^{(u_i)})^{\mathrm{c}}$ does not occur, the vertex $u_i$ is $n^{\varepsilon_1/24}$-lit among $\{w_{i,1},\dots, w_{i,\lceil n^{\varepsilon_1}\rceil}\}$ at time $t_{r+1}$, and hence $u_i\in L_{r+1}$. Therefore $\tilde{L}_{r+1}\subseteq L_{r+1}$.
    \item If $\tilde B_r^{(1)}\cap \tilde{B}_r^{(2)} \cap \{\tilde{N}_r=0 \}$ occurs, then $u_j$ is $n^{\varepsilon_1/24}$-lit among $\{w_{j,1},\dots,w_{j,\lceil n^{\varepsilon_1}\rceil}\}$ at time $t_{r+1}$, and hence $u_j\in L_{r+1}$. Since also $\tilde N_r=0$, every vertex of $\tilde L_r$ remains in $L_{r+1}$. Therefore $\tilde{L}_{r+1}\subseteq L_{r+1}$.
    \item Otherwise, we have $\tilde N_r=0$ and $\tilde B_r^{(1)}\cap \tilde{B}_r^{(2)}$ does not occur. In this case, every vertex $u_i\in \tilde L_r$ remains $n^{\varepsilon_1/24}$-lit among $\{w_{i,1},\dots,w_{i,\lceil n^{\varepsilon_1}\rceil}\}$ at time $t_{r+1}$, and hence belongs to $L_{r+1}$. Therefore $\tilde L_{r+1}=\tilde L_r\subseteq L_{r+1}$.
\end{itemize}
Since $\tilde L_0\subseteq L_0$, it follows by induction that $\tilde{L}_r\subseteq L_r$ for every $r$ before the first time at which $\tilde W_r<\frac14 n^{\varepsilon_3}+1$.

Now let
\[
\tilde W_{r+1}:=|\tilde{L}_{r+1}|=
\begin{cases}
 \tilde{W}_r -\tilde{N}_r , & \text{if } \{\tilde N_r>0\} \text{ occurs},\\
 \tilde{W}_{r}+1 , & \text{if } \tilde B_r^{(1)}\cap \tilde{B}_r^{(2)} \cap \{\tilde{N}_r=0 \}  \text{ occurs},\\
\tilde W_r, & \text{otherwise}.
\end{cases}\ .
\]
Then $\tilde{W}_r$ is measurable with respect to $\mathcal{G}(HS(v))$, establishing part~(a).

Moreover, $\tilde{W}_r\le W_r$ for every $r$ before the first time at which $\tilde W_r<\frac14 n^{\varepsilon_3}+1$. Furthermore, for each $u_i\in \tilde{L}_r$, the occurrence of $\tilde A_r^{(u_i)}$ implies that $u_i$ is $n^{\varepsilon_2}$-lit among $\{w_{i,1},\dots,w_{i,\lceil n^{\varepsilon_1}\rceil}\}$ throughout the time interval $[t_r,t_{r+1}]$, since $n^{\varepsilon_1/96}>n^{\varepsilon_2}$. We claim that for every $r$ before the first time at which $\tilde W_r<\frac14 n^{\varepsilon_3}+1$, the vertex $v$ remains $(\frac{1}{4}n^{\varepsilon_3},n^{\varepsilon_2})$-HS-lit with respect to $HS(v)$ throughout the time interval $[0,t_{r}]$, establishing part~(b). We prove this by induction on $r$. The case $r=0$ is trivial. Suppose the claim holds up to time $t_{r-1}$. It remains to prove it on $[t_{r-1},t_r]$.
\begin{itemize}
    \item If $\tilde{W}_{r-1}\ge \tilde{W}_{r}$, then all vertices $u_i\in \tilde{L}_{r}$ satisfy $\tilde{A}_{r-1}^{(u_i)}$, and hence $v$ remains $(\tilde{W}_{r},n^{\varepsilon_2})$-HS-lit with respect to $HS(v)$ throughout the time interval $[t_{r-1},t_{r}]$.
    \item If $\tilde{W}_{r-1}=\tilde{W}_{r}-1$, then all vertices $u_i\in \tilde{L}_{r-1}$ satisfy $\tilde{A}_{r-1}^{(u_i)}$, and hence $v$ remains $(\tilde{W}_{r-1},n^{\varepsilon_2})$-HS-lit with respect to $HS(v)$ throughout the time interval $[t_{r-1},t_{r}]$.
\end{itemize}

It remains to prove part~(c). Let $X_r:=\tilde{W}_{r+1}-\tilde{W}_r$. Conditioning on $\bar{\mathcal{F}}_r$, on the event
\(
\tilde W_r\in [\tfrac14 n^{\varepsilon_3}+1,\,\lceil n^{\varepsilon_3}\rceil]
\), we obtain
\begin{align*}
\mathbb{E}\!\left[e^{-X_r}\mid \bar{\mathcal{F}}_r\right]
&\le \sum_{m=1}^{\tilde W_r} e^{m}\,\mathbb{P}(\tilde{N}_r=m\mid \bar{\mathcal{F}}_r)
  +1\cdot \mathbb{P}((\tilde B_r^{(1)}\cap \tilde{B}_r^{(2)} \cap \{\tilde{N}_r=0 \})^{\mathsf c}\mid \bar{\mathcal{F}}_r) \\
&\qquad
  +e^{-1}\cdot \mathbb{P}(\tilde B_r^{(1)}\cap \tilde{B}_r^{(2)} \cap \{\tilde{N}_r=0 \}\mid \bar{\mathcal{F}}_r)\\
&=\mathbb{E}\!\left[e^{\tilde{N}_r}\mid \bar{\mathcal{F}}_r\right]
  -(1-e^{-1})\,\mathbb{P}(\tilde B_r^{(1)}\cap \tilde{B}_r^{(2)} \cap \{\tilde{N}_r=0 \}\mid \bar{\mathcal{F}}_r).
\end{align*}
Together with \eqref{eq:supermartingaleNr}, \eqref{eq:supermartingaleNr2}, \eqref{eq:supermartingaleAr1}, and \eqref{eq:supermartingaleAr2}, this shows that, for all sufficiently large $n$, on the event $\tilde W_r\in [\tfrac14 n^{\varepsilon_3}+1,\,\lceil n^{\varepsilon_3}\rceil]$
\[
\mathbb{E}\!\left[e^{-X_r}\mid \bar{\mathcal{F}}_r\right] < 1,
\]
This proves part~(c).

Define
\(
\tau_*:=\inf\{r\ge0:\tilde W_r\notin [\tfrac14 n^{\varepsilon_3}+1,\,\lceil n^{\varepsilon_3}\rceil]\},  D_-:=\{\tau_*<\infty,\ \tilde W_{\tau_*}<\tfrac14 n^{\varepsilon_3}+1\}, C^{(v)}:=D_-^{\mathrm c}
\). Then
\(
M_r:=\exp(-\tilde W_{r\wedge\tau_*})
\)
satisfies
\(
\mathbb E[M_{r+1}\mid \bar{\mathcal F}_r]\le M_r
\).
Indeed, on $\{r<\tau_*\}$ this follows from the above estimate, while on $\{r\ge\tau_*\}$ the process is constant. Hence, for every $R\ge1$,
\[
\mathbb E[M_R]\le M_0=\exp(-\lfloor k\rfloor).
\]
On the event $D_-\cap\{\tau_*\le R\}$, since $\tilde W_r$ is integer-valued and $\tilde W_{\tau_*}<\tfrac14 n^{\varepsilon_3}+1$, we have
\(
\tilde W_{\tau_*}\le \tfrac14 n^{\varepsilon_3}
\).
Therefore,
\[
M_R
=
\exp(-\tilde W_{\tau_*})
\ge
\exp(-n^{\varepsilon_3}/4)
\qquad\text{on }D_-\cap\{\tau_*\le R\}.
\]
It follows that
\[
\exp(-\lfloor k\rfloor)
\ge
\mathbb E[M_R]
\ge
\exp(-n^{\varepsilon_3}/4)\,
\mathbb P(D_-\cap\{\tau_*\le R\}).
\]
Letting $R\to\infty$ gives
\[
\mathbb P(D_-)
\le
\exp\!\left(\tfrac14 n^{\varepsilon_3}-\lfloor k\rfloor\right)
\le
\exp\!\left(1-\tfrac14 n^{\varepsilon_3}\right),\qquad \mathbb{P}(C^{(v)})\ge 1-\exp\!\left(1-\tfrac14 n^{\varepsilon_3}\right)
\]
where we used $k\ge \tfrac12 n^{\varepsilon_3}+1$.

\end{proof}

\begin{lemma}\label{lemma_hierarchicalstar_expsurvivaltime}
Let $G=(V,E)$ be a graph containing
\(
HS(v):=\mathrm{HStar}\!(v;\{u_i\}_{i\le \lceil n^{\varepsilon}\rceil};\{w_{i,j}\}_{i\le \lceil n^{\varepsilon}\rceil,\,j\le \lceil n^{\varepsilon_1}\rceil})
\)
as in Definition~\ref{def:hierarchical-star}, for some $\varepsilon\ge \varepsilon_1>0$. Let the initial configuration $\boldsymbol{\sigma}(0)$
be arbitrary on $V$, subject only to the condition that $v$ is $(n^{\varepsilon_3},n^{\varepsilon_1/24})$-HS-lit with respect to $HS(v)$
where
\(
\varepsilon_2=\tfrac{1}{200}\min\{\rho,\lambda,1\}\varepsilon_1, \ \varepsilon_3=\tfrac14\varepsilon_2.
\)
Then, for all sufficiently large $n$, there exists an event $\tilde{C}^{(v)}\in \mathcal{G}(HS(v))$ such that
\begin{enumerate}[\normalfont (i)]
    \item on $\tilde{C}^{(v)}$, the vertex $v$ remains $(\tfrac14 n^{\varepsilon_3},\,n^{\varepsilon_2})$-HS-lit with respect to $HS(v)$ throughout the time interval $[0, \exp(n^{\varepsilon_3/2})]$;
    \item $\mathbb{P}(\tilde{C}^{(v)})\ge 1-\exp(-n^{\varepsilon_3/2})$.
\end{enumerate}
\end{lemma}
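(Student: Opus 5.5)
The plan is to iterate Lemma~\ref{lemma_hierarchicalstar_supermartingale} through a sequence of local measurable restart times. Starting from the $(n^{\varepsilon_3},n^{\varepsilon_1/24})$-HS-lit configuration at time $0$, I apply the lemma with $k=n^{\varepsilon_3}\in[\tfrac12 n^{\varepsilon_3}+1,n^{\varepsilon_3}]$. On $C^{(v)}_1:=C^{(v)}$ one of two things happens. Either $v$ stays $(\tfrac14 n^{\varepsilon_3},n^{\varepsilon_2})$-HS-lit forever, in which case we are done. Or, at some epoch time $t_{\tau_*}$, the comparison set $\tilde L_{\tau_*}$ has more than $\lceil n^{\varepsilon_3}\rceil$ elements, all $n^{\varepsilon_1/24}$-lit, while HS-litness at level $(\tfrac14 n^{\varepsilon_3},n^{\varepsilon_2})$ has been maintained on $[0,t_{\tau_*}]$. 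Because $\tau_*$ is defined through $\tilde W_r$, the time $S_1:=t_{\tau_*}$ is measurable with respect to $\mathcal G(HS(v))$. It is also a stopping time for $(\bar{\mathcal F}_r)$. At $S_1$ the center is again $(n^{\varepsilon_3},n^{\varepsilon_1/24})$-HS-lit, witnessed by a $\mathcal G(HS(v))$-measurable set $\tilde L_{\tau_*}$. So I restart: by the strong Markov property of the Harris construction, I apply the time-shifted lemma from $S_1$, using a subset of $\tilde L_{\tau_*}$ of size $\lfloor n^{\varepsilon_3}\rfloor$ as initial $\tilde L_0$. This gives $C^{(v)}_2$, a stopping time $S_2$, and so on. The construction inside that proof depends only on which $u_i$ are designated lit and on Harris marks after the restart time, so each $C^{(v)}_j$ lies in $\mathcal G(HS(v))$ and satisfies $\mathbb P(C^{(v)}_j\mid\mathcal F_{S_{j-1}})\ge 1-\exp(1-n^{\varepsilon_3}/4)$.

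Next I need the restarts to cover $[0,\exp(n^{\varepsilon_3/2})]$ with only subexponentially many of them. Each cycle lasts at least one epoch, $S_j-S_{j-1}\ge t_1=3n^{\varepsilon_2}\ge 1$, since $\tau_*\ge1$ when $\tilde W_0=\lfloor n^{\varepsilon_3}\rfloor$ lies inside the interval. It is cleaner to note that, when the upper exit occurs, $\tau_*\ge 1$ holds automatically. Hence $J:=\lceil \exp(n^{\varepsilon_3/2})\rceil$ restarts suffice to reach time $\exp(n^{\varepsilon_3/2})$, unless some cycle ends with the ``never ceases'' alternative, which is also fine. Define $\tilde C^{(v)}:=\bigcap_{j\le J} C^{(v)}_j$, where $C^{(v)}_j$ is interpreted as the whole space once some earlier cycle never terminates. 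On this event the HS-lit property at level $(\tfrac14 n^{\varepsilon_3},n^{\varepsilon_2})$ holds on each $[S_{j-1},S_j]$, and hence on all of $[0,\exp(n^{\varepsilon_3/2})]$. This proves (i), and the event is local-measurable as a countable combination of local events.

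For (ii), a union bound over conditional probabilities gives $\mathbb P((\tilde C^{(v)})^{c})\le J\exp(1-n^{\varepsilon_3}/4)\le \exp(n^{\varepsilon_3/2}+2-n^{\varepsilon_3}/4)\le\exp(-n^{\varepsilon_3/2})$ for large $n$, because $n^{\varepsilon_3}/4\gg 2n^{\varepsilon_3/2}$.

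The main obstacle is justifying the restart rigorously. The hypothesis of Lemma~\ref{lemma_hierarchicalstar_supermartingale} is a deterministic initial configuration, and we need a conditional version at a random local time. I plan to handle this by observing that the lemma's proof only uses the designated set $\tilde L_0\subseteq L_0$ and the Harris marks after time $0$. Its conclusion therefore holds conditionally on $\mathcal F_{S_{j-1}}$ with the same bound, uniformly over the configuration. Measurability of $S_j$ and $\tilde L$ in $\mathcal G(HS(v))$ is inherited by induction.
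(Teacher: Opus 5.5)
Your proposal is correct and follows essentially the same route as the paper: iterate Lemma~\ref{lemma_hierarchicalstar_supermartingale} at the upper-exit epochs, use that each cycle lasts at least one epoch so that $\lceil\exp(n^{\varepsilon_3/2})\rceil$ cycles cover $[0,\exp(n^{\varepsilon_3/2})]$, and union-bound the lower-exit probabilities. The differences are cosmetic: you restart with $\lfloor n^{\varepsilon_3}\rfloor$ designated vertices instead of $\lceil\frac12 n^{\varepsilon_3}\rceil+1$, and you draw the restart set from the comparison set $\tilde L_{\tau_*}$ rather than from the true lit set $L$, which is, if anything, the cleaner way to keep the construction $\mathcal G(HS(v))$-measurable.
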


\begin{proof}
The proof is based on iterating the one-step mechanism from Lemma~\ref{lemma_hierarchicalstar_supermartingale}. Starting from a configuration in which $v$ is $(n^{\varepsilon_3},n^{\varepsilon_1/24})$-HS-lit, we first choose a subset of size $\lceil\frac12 n^{\varepsilon_3}+1\rceil$ among the currently good first-layer vertices according to a fixed deterministic rule, and then run the comparison process $\tilde W_r$ constructed in Lemma~\ref{lemma_hierarchicalstar_supermartingale} on this subset. With high probability, the process $\tilde W_r$ does not exit the interval $[\tfrac14 n^{\varepsilon_3}+1,\lceil n^{\varepsilon_3}\rceil]$ through its lower endpoint. If it exits through the upper endpoint, then $v$ becomes $(n^{\varepsilon_3},n^{\varepsilon_1/24})$-HS-lit before it ceases to be $(\tfrac14 n^{\varepsilon_3},n^{\varepsilon_2})$-HS-lit, and we restart the same construction from that time. If it never exits the interval, then $v$ never ceases to be $(\tfrac14 n^{\varepsilon_3},n^{\varepsilon_2})$-HS-lit, and the desired conclusion follows immediately.

For $r\in \mathbb{N}$, let $L_{t_r}$ denote the set of vertices $u_i$ that are $n^{\varepsilon_1/24}$-lit among $\{w_{i,1},\dots, w_{i,\lceil n^{\varepsilon_1}\rceil}\}$ at time $t_r:=3rn^{\varepsilon_2}$, and let $W_{t_r}:=|L_{t_r}|$. Then $W_0\ge \lceil n^{\varepsilon_3}\rceil$.

Choose arbitrarily a subset $\tilde{L}^{(1)}_0\subseteq L_0$ such that $|\tilde{L}^{(1)}_0|=\lceil\frac{1}{2} n^{\varepsilon_3}\rceil+1$, according to a fixed deterministic rule, and let $\tilde{W}^{(1)}_0:=|\tilde{L}^{(1)}_0|$. By the same inductive construction as in Lemma~\ref{lemma_hierarchicalstar_supermartingale}, we obtain a process $\tilde{W}^{(1)}_r$ measurable with respect to $\mathcal{G}(HS(v))$. Let
$$
\tau^{(1)}:=\inf\{r\ge 0:\tilde{W}^{(1)}_r\notin [\tfrac{1}{4}n^{\varepsilon_3}+1,\,\lceil n^{\varepsilon_3}\rceil]\}.
$$
Then $\tau^{(1)}$ is also measurable with respect to $\mathcal{G}(HS(v))$. Define
$
D_-^{(1)}:=\{\tau^{(1)}<\infty,\tilde W^{(1)}_{\tau^{(1)}}<\tfrac14 n^{\varepsilon_3}+1\}
$
and
$
C^{(1)}:=(D_-^{(1)})^{\mathrm c}.
$
By Lemma~\ref{lemma_hierarchicalstar_supermartingale},
$$
\mathbb{P}(C^{(1)})\ge 1-\exp\!\Big(1-\frac{1}{4}n^{\varepsilon_3}\Big).
$$
On $C^{(1)}$, either $\tau^{(1)}=\infty$, in which case $v$ never ceases to be $(\tfrac14 n^{\varepsilon_3},n^{\varepsilon_2})$-HS-lit and the desired conclusion follows, or $\tau^{(1)}<\infty$ and the exit is through the upper endpoint. In the latter case,
$$
W_{t_{\tau^{(1)}}}\ge \tilde{W}^{(1)}_{\tau^{(1)}}>\lceil n^{\varepsilon_3}\rceil,
$$
and $v$ remains $(\tfrac{1}{4}n^{\varepsilon_3},\,n^{\varepsilon_2})$-HS-lit with respect to $HS(v)$ throughout the time interval $[0,t_{\tau^{(1)}}]$.

Let $M:=\lceil\exp(n^{\varepsilon_3/2})\rceil$. For each $i=1,\dots,M$, we define $\tilde{L}^{(i)}$, $\tilde{W}^{(i)}$, $\tau^{(i)}$, $D_-^{(i)}$, and $C^{(i)}$ inductively, stopping the construction if some $\tau^{(i)}=\infty$.

Assume that $\tilde{L}^{(j)}$, $\tilde{W}^{(j)}$, $\tau^{(j)}$, $D_-^{(j)}$, and $C^{(j)}$ have been defined for all $j=1,\dots,i$, and assume that $\cap_{j=1}^i C^{(j)}$ occurs and $\tau^{(j)}<\infty$ for all $j=1,\dots,i$. Then the preceding paragraph implies
$$
W_{t_{\sum_{j=1}^i\tau^{(j)}}}\ge \lceil n^{\varepsilon_3}\rceil.
$$
Choose $\tilde{L}^{(i+1)}_0$ to be a subset of $L_{t_{\sum_{j=1}^i\tau^{(j)}}}$ such that $|\tilde{L}^{(i+1)}_0|=\lceil \frac{1}{2}n^{\varepsilon_3}\rceil+1$, according to a fixed deterministic rule, and let $\tilde{W}^{(i+1)}_0:=|\tilde{L}^{(i+1)}_0|$. Define $\tilde{L}^{(i+1)}_r$ and $\tilde{W}^{(i+1)}_r$ by the same inductive construction as in Lemma~\ref{lemma_hierarchicalstar_supermartingale}. Since $\tau^{(1)},\dots,\tau^{(i)}$ are measurable with respect to $\mathcal{G}(HS(v))$, it follows that $\tilde{W}^{(i+1)}_r$ is also measurable with respect to $\mathcal{G}(HS(v))$.

Next let
$$
\tau^{(i+1)}:=\inf\{r\ge 0:\tilde{W}^{(i+1)}_r\notin [\tfrac{1}{4}n^{\varepsilon_3}+1,\,\lceil n^{\varepsilon_3}\rceil]\}.
$$
Then $\tau^{(i+1)}$ is measurable with respect to $\mathcal{G}(HS(v))$. Define
$
D_-^{(i+1)}:=\{\tau^{(i+1)}<\infty,\tilde W^{(i+1)}_{\tau^{(i+1)}}<\tfrac14 n^{\varepsilon_3}+1\}
$
and
$
C^{(i+1)}:=(D_-^{(i+1)})^{\mathrm c}.
$
On $\cap_{j=1}^{i+1}C^{(j)}$, if $\tau^{(i+1)}=\infty$, then $v$ never ceases to be $(\tfrac14 n^{\varepsilon_3},n^{\varepsilon_2})$-HS-lit from time $t_{\sum_{j=1}^i\tau^{(j)}}$ onward, and the desired conclusion follows. If $\tau^{(i+1)}<\infty$, then the exit is through the upper endpoint, and hence
$$
W_{t_{\sum_{j=1}^{i+1}\tau^{(j)}}}\ge \tilde{W}^{(i+1)}_{\tau^{(i+1)}}>\lceil n^{\varepsilon_3}\rceil,
$$
and $v$ remains $(\tfrac{1}{4}n^{\varepsilon_3},\,n^{\varepsilon_2})$-HS-lit with respect to $HS(v)$ throughout the time interval
$
[t_{\sum_{j=1}^{i}\tau^{(j)}},\,t_{\sum_{j=1}^{i+1}\tau^{(j)}}]
$.
Moreover, on $\bigcap_{j=1}^i C^{(j)}\cap\{\tau^{(1)}<\infty,\dots,\tau^{(i)}<\infty\}$,
$$
\mathbb{P}(C^{(i+1)}\mid \mathcal{F}_{t_{\sum_{j=1}^i \tau^{(j)}}})\ge 1-\exp\!\Big(1-\frac{1}{4}n^{\varepsilon_3}\Big).
$$

Now define
$$
\tilde{C}^{(v)}:=\bigcap_{i=1}^M C^{(i)}.
$$
Then $\tilde{C}^{(v)}$ is measurable with respect to $\mathcal{G}(HS(v))$. Furthermore, on $\tilde{C}^{(v)}$, either some $\tau^{(i)}=\infty$, in which case $v$ never ceases to be $(\tfrac14 n^{\varepsilon_3},\,n^{\varepsilon_2})$-HS-lit after the beginning of the $i$th trial, or else $\tau^{(i)}<\infty$ for all $i=1,\dots,M$, in which case $v$ remains $(\tfrac{1}{4}n^{\varepsilon_3},\,n^{\varepsilon_2})$-HS-lit with respect to $HS(v)$ throughout the time interval
$
[0,\,t_{\sum_{j=1}^{M}\tau^{(j)}}]
$.
In either case, using the conditional bounds above and a union bound, we obtain, for all sufficiently large $n$,
$$
\mathbb{P}(\tilde{C}^{(v)})\ge 1-M\exp\!\Big(1-\frac{1}{4}n^{\varepsilon_3}\Big)\ge 1-\exp(-n^{\varepsilon_3/2}).
$$

Finally, if $\tau^{(i)}<\infty$ for all $i=1,\dots,M$, then $\tau^{(i)}\ge 1$ for each $i$, since $\tilde{W}^{(i)}_0=\lceil \frac{1}{2}n^{\varepsilon_3}+1\rceil\in [\frac{1}{4}n^{\varepsilon_3}+1,\,\lceil n^{\varepsilon_3}\rceil]$. Therefore,
$$
t_{\sum_{j=1}^{M}\tau^{(j)}}\ge t_M=3Mn^{\varepsilon_2}\ge M\ge \exp(n^{\varepsilon_3/2}).
$$
We conclude that, on $\tilde{C}^{(v)}$, the vertex $v$ remains $(\tfrac{1}{4}n^{\varepsilon_3},\,n^{\varepsilon_2})$-HS-lit with respect to $HS(v)$ throughout the time interval $[0,\exp(n^{\varepsilon_3/2})]$.
\end{proof}

\medskip
Lemma~\ref{lemma_hierarchicalstar_expsurvivaltime} treats the stronger initial condition in which the center is already $(n^{\varepsilon_3},n^{\varepsilon_1/24})$-HS-lit, and under this assumption it yields survival for a time of order $\exp(n^{\varepsilon_3/2})$. By contrast, Proposition~\ref{hierarchicalstar_thm2} assumes only the weaker $(k,n^{\varepsilon_2})$-HS-lit condition, where $k\in[\frac14 n^{\varepsilon_3},\,n^{\varepsilon_3})$. Thus, in order to apply Lemma~\ref{lemma_hierarchicalstar_expsurvivaltime}, it remains to show that, starting from this weaker level, the process reaches the stronger $(n^{\varepsilon_3},n^{\varepsilon_1/24})$-HS-lit configuration with high probability.

Lemma~\ref{lemma_hierarchicalstar_infectedtoHSlit} below provides precisely this upgrade mechanism.

\begin{lemma}\label{lemma_hierarchicalstar_infectedtoHSlit}
Let $G=(V,E)$ be a graph containing 
$
HS(v):=\mathrm{HStar}\!(v;\{u_i\}_{i\le \lceil n^{\varepsilon}\rceil};\{w_{i,j}\}_{i\le \lceil n^{\varepsilon}\rceil,\,j\le \lceil n^{\varepsilon_1}\rceil})
$
as in Definition~\ref{def:hierarchical-star}, for some $\varepsilon\ge \varepsilon_1>0$.
If $\sigma_v(0)=I$, then for all sufficiently large $n$, there exists an event $C^{(v)}_{I\to \mathrm{HSlit}}\in \mathcal{G}(HS(v))$ such that
\[
\mathbb{P}(C^{(v)}_{I\to \mathrm{HSlit}})\ge 1-n^{-\varepsilon/4},
\]
and on $C^{(v)}_{I\to \mathrm{HSlit}}$ the vertex $v$ is $(n^{\varepsilon_3}, n^{\varepsilon_1/24})$-HS-lit with respect to $HS(v)$ at time $1$, where $\varepsilon_2=\tfrac{1}{200}\min\{\rho,\lambda,1\}\varepsilon_1$ and $\varepsilon_3=\tfrac14\varepsilon_2$.
\end{lemma}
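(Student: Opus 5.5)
We will build $C^{(v)}_{I\to\mathrm{HSlit}}$ by combining a short infection burst from the center to many first-layer vertices with a short burst from each infected first-layer vertex to its own leaves. This mirrors Lemmas~\ref{lemma_star_A2v} and~\ref{lemma_star_lit_from_A1A2}, compressed into time $1$.

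\textbf{Stage 1: the center stays infected and infects many first-layer vertices.} We ask that $N_v^{\mathrm{rec}}([0,\tfrac12])=0$, which has probability $e^{-1/2}$. This is a constant rather than a high probability, so on its own it is not enough. To get $1-n^{-\varepsilon/4}$ we instead shorten the window: require $N_v^{\mathrm{rec}}([0,n^{-\varepsilon/4}])=0$, which has probability $\ge 1-n^{-\varepsilon/4}$ up to lower-order terms.

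For each $i$, let $u_i$ be \emph{good} if the following hold:
\[
N^{\mathrm{sus}}_{u_i}\big((0,\tfrac12 n^{-\varepsilon/4})\big)\ge1,\qquad N^{\mathrm{inf}}_{v\to u_i}\big([\tfrac12 n^{-\varepsilon/4},n^{-\varepsilon/4}]\big)\ge1,\qquad N^{\mathrm{rec}}_{u_i}([0,1])=0 .
\]
As in Lemma~\ref{lemma_star_lit_from_A1A2}, whatever its initial state, a good $u_i$ is infected from time $n^{-\varepsilon/4}$ until time $1$. These indicators are i.i.d. with success probability of order $\rho\lambda e^{-1}n^{-\varepsilon/2}$. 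Their number is therefore Binomial$(\lceil n^\varepsilon\rceil, cn^{-\varepsilon/2})$, with mean of order $n^{\varepsilon/2}$. By Lemma~\ref{lemma_largedeviation}, it exceeds $2n^{\varepsilon_3}$ except with probability $\exp(-cn^{\varepsilon/2})$, since $\varepsilon_3<\varepsilon/2$.

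\textbf{Stage 2: each good first-layer vertex lights up its own star.} For each good $u_i$, call a leaf $w_{i,j}$ good if it has an $R\to S$ mark in $(n^{-\varepsilon/4},\tfrac12+\tfrac12 n^{-\varepsilon/4})$, an infection mark $u_i\to w_{i,j}$ in the subsequent interval of length $\asymp 1/2$ before time $1$, and no recovery mark on $[0,1]$. Each leaf is good independently with constant probability $c_{\mathrm{edge}}'>0$. Hence $u_i$ has at least $n^{\varepsilon_1/24}$ infected leaves at time $1$ except with probability $\exp(-cn^{\varepsilon_1})$; the fact that $u_i$ itself is infected at time $1$ is not needed. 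Let $A_i$ be this event.

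Given the Stage 1 data, the events $A_i$ are independent of each other, since they involve the disjoint clocks of the leaves and the edges $u_i\to w_{i,j}$. A union bound over at most $n^\varepsilon$ indices costs only $n^\varepsilon e^{-cn^{\varepsilon_1}}$. Consequently, at time $1$ at least $2n^{\varepsilon_3}\ge n^{\varepsilon_3}$ first-layer vertices are $n^{\varepsilon_1/24}$-lit, which is exactly $(n^{\varepsilon_3},n^{\varepsilon_1/24})$-HS-lit.

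\textbf{Measurability and the main obstacle.} Every event above involves only clocks of $v$, the $u_i$, the $w_{i,j}$, and infection clocks along edges of $HS(v)$, so the intersection lies in $\mathcal G(HS(v))$. The only non-exponentially-small error term is $P(N_v^{\mathrm{rec}}([0,n^{-\varepsilon/4}])\ge1)\le n^{-\varepsilon/4}$. Stages 1 and 2 contribute only stretched-exponentially small errors, so the total failure probability is at most $n^{-\varepsilon/4}+o(n^{-\varepsilon/4})$.

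The delicate point, which I expect to be the main obstacle, is balancing the window $\delta=n^{-\varepsilon/4}$. It must be short enough that $v$ stays infected with probability $1-n^{-\varepsilon/4}$. It must also be long enough that $\lceil n^\varepsilon\rceil\delta^2\gg n^{\varepsilon_3}$, since a good $u_i$ needs both an $R\to S$ mark and an infection mark in time $\delta$. With $\delta=n^{-\varepsilon/4}$ the mean count is $n^{\varepsilon/2}$, which is comfortably larger than $n^{\varepsilon_3}$.

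To land exactly on the stated bound, I would take the window slightly smaller, say $\tfrac12 n^{-\varepsilon/4}$, so that the leading error is at most $\tfrac12 n^{-\varepsilon/4}$. The remaining terms are then absorbed for large $n$.
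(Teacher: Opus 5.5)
Your Stage 1 is essentially the paper's proof, which takes $E_0=\{N_v^{\mathrm{rec}}([0,k^{-1/3}])=0\}$ with $k=\lceil n^{\varepsilon}\rceil$ and applies Lemma~\ref{lemma_star_A2v} directly. Stage 1 already suffices: a good $u_i$ is infected at time $1$, hence $n^{\varepsilon_1/24}$-lit by Definition~\ref{def:lit}, so your Stage 2 is correct but unnecessable---and the window is not delicate either, since $k^{-1/3}\le n^{-\varepsilon/3}$ leaves error well below $n^{-\varepsilon/4}$ while still yielding $k^{1/24}\ge \lceil n^{\varepsilon_3}\rceil$ good first-layer vertices.
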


\begin{proof}
Let $k:=\lceil n^{\varepsilon}\rceil$, so that $v$ has the $k$ neighbors $u_1,\dots,u_k$ inside $HS(v)$. Define
\[
E_0:=\{N_v^{\mathrm{rec}}([0,k^{-1/3}])=0\}.
\]
Then $\mathbb{P}(E_0)=\exp(-k^{-1/3})\ge 1-n^{-\varepsilon/3}$, and on $E_0$ we have $\sigma_v(t)=I$ for all $t\in[0,k^{-1/3}]$.

Apply Lemma~\ref{lemma_star_A2v} to the star $(v,\{u_1,\dots,u_k\})$ inside $HS(v)$. It yields an event $E_1\in \mathcal{G}(HS(v))$ such that
\[
\mathbb{P}(E_1)\ \ge\ 1-\exp(-k^{1/24})\ge 1-\exp\!\big(-n^{\varepsilon/24}\big),
\]
and on $E_1$ there are at least $k^{1/24}\ge n^{\varepsilon/24}$ indices $i$ with $u_i$ infected by time $k^{-1/3}$ and not recovering before
time $(\log k)/4\ge (\varepsilon/4)\log n$. In particular, for $n$ large, these $u_i$ are infected at time $t=1$.
Set $C^{(v)}_{I\to \mathrm{HSlit}}:=E_0\cap E_1$. Then $C^{(v)}_{I\to \mathrm{HSlit}}\in \mathcal{G}(HS(v))$ and
\[
\mathbb{P}\!\left(C^{(v)}_{I\to \mathrm{HSlit}}\right)
\ge 1-n^{-\varepsilon/3}-\exp\!\big(-n^{\varepsilon/24}\big)
\ge 1-n^{-\varepsilon/4}
\]
for all sufficiently large $n$. On $C^{(v)}_{I\to \mathrm{HSlit}}$, at time $t=1$ at least $n^{\varepsilon/24}$ vertices $u_i$ are infected, hence
each such $u_i$ is $n^{\varepsilon_1/24}$-lit among $\{w_{i,1},\dots,w_{i,\lceil n^{\varepsilon_1}\rceil}\}$, and therefore $v$ is
$(n^{\varepsilon/24},n^{\varepsilon_1/24})$-HS-lit at time $t=1$. Since $n^{\varepsilon/24}\ge \lceil n^{\varepsilon_3}\rceil$ for all sufficiently large $n$,
this implies that $v$ is $(n^{\varepsilon_3},n^{\varepsilon_1/24})$-HS-lit at time $1$, completing the proof.
\end{proof}

We now turn to the proof of Proposition~\ref{hierarchicalstar_thm2}. By the discussion above, it remains to establish the following. Starting only from a weaker $(k,n^{\varepsilon_2})$-HS-lit condition, where
$
k\in[\frac14 n^{\varepsilon_3},\,n^{\varepsilon_3}),
$
with high probability the center $v$ reaches the stronger $(n^{\varepsilon_3},n^{\varepsilon_1/24})$-HS-lit state before time $\exp(n^{\varepsilon_4})$, while remaining $(n^{\varepsilon_4},n^{\varepsilon_4})$-HS-lit throughout this time interval. Once this upgrade is established, Proposition~\ref{hierarchicalstar_thm2} follows from Lemma~\ref{lemma_hierarchicalstar_expsurvivaltime}.

The proof of this upgrade has two parts. The first part uses Lemma~\ref{lemma_hierarchicalstar_expsurvivaltime} again, but at a smaller parameter scale, to guarantee that the center does not lose $(n^{\varepsilon_4}, n^{\varepsilon_4})$-HS-lit. The second part combines Lemma~\ref{lemma_edge_transmission_k} with Lemma~\ref{lemma_hierarchicalstar_infectedtoHSlit} to show that, during this period of weaker persistence, enough first-layer vertices are reinfected so that the process reaches the stronger $(n^{\varepsilon_3},n^{\varepsilon_1/24})$-HS-lit configuration.

\begin{proof}[Proof of Proposition~\ref{hierarchicalstar_thm2}]
We introduce a smaller auxiliary scale, denoted by $\epsilon=\varepsilon$, $\epsilon_1=24\varepsilon_2$, $\epsilon_2=\frac{1}{200}\min\{\rho,\lambda,1\}\epsilon_1$, and $\epsilon_3=\frac{1}{4}\epsilon_2$, only for the first application of Lemma~\ref{lemma_hierarchicalstar_expsurvivaltime}. Then $\varepsilon_4=\frac{1}{4}\epsilon_3$.

Since $v$ is $(\frac{1}{4}n^{\varepsilon_3}, n^{\varepsilon_2})$-HS-lit with respect to $HS(v)$, and since $n^{\epsilon_3}\le \frac{1}{4}n^{\varepsilon_3}$ and $n^{\epsilon_1/24}= n^{\varepsilon_2}$, there exists a hierarchical-star
$\widetilde{HS}(v):=\mathrm{HStar}(v;\{{u}_i\}_{i\le \lceil n^{\epsilon}\rceil}, \{\tilde{w}_{i,j}\}_{i\le \lceil n^{\epsilon}\rceil, j\le \lceil n^{\epsilon_1}\rceil})\subseteq HS(v)$
such that $v$ is $(n^{\epsilon_3}, n^{\epsilon_1/24})$-HS-lit with respect to $\widetilde{HS}(v)$. By Lemma~\ref{lemma_hierarchicalstar_expsurvivaltime}, there exists an event $B_1^{(v)}\in\mathcal{G}(\widetilde{HS}(v))\subseteq \mathcal{G}(HS(v))$ such that
\[
\mathbb{P}(B_1^{(v)})\ge 1-\exp(-n^{\epsilon_3/2})\ge 1-\exp(-n^{\varepsilon_4}),
\]
and, on $B_1^{(v)}$, the vertex $v$ remains $(\frac{1}{4}n^{\epsilon_3}, n^{\epsilon_2})$-HS-lit with respect to $\widetilde{HS}(v)$ up to time $\exp(n^{\epsilon_3/2})\ge \exp(n^{\varepsilon_4})$. Since $\widetilde{HS}(v)\subseteq HS(v)$, $\frac{1}{4}n^{\epsilon_3}\ge n^{\varepsilon_4}$, and $n^{\epsilon_2}\ge n^{\varepsilon_4}$, the event $B_1^{(v)}$ also implies that the vertex $v$ remains $(n^{\varepsilon_4},n^{\varepsilon_4})$-HS-lit with respect to $HS(v)$ throughout the time interval $[0,\exp(n^{\varepsilon_4})]$, which proves part~(i) in Proposition~\ref{hierarchicalstar_thm2}.

On the event $B_1^{(v)}$, there exists a first-layer vertex $u_i$, with $i\le \lceil n^{\epsilon}\rceil$, and random times $T_1,\dots,T_{\lceil n^{\epsilon_2/2}\rceil}$ such that each $T_\ell$ is measurable with respect to $\mathcal{G}(u_i,\{\tilde{w}_{i,1},\dots,\tilde{w}_{i,\lceil n^{\epsilon_1}\rceil}\})$, $T_{\ell+1}-T_\ell\ge 3$, $T_{\lceil n^{\epsilon_2/2}\rceil}\le \lceil n^{\epsilon_2}\rceil$, and $u_i$ is infected at time $T_\ell$ for every $\ell=1,\dots,\lceil n^{\epsilon_2/2}\rceil$.

By Lemma~\ref{lemma_edge_transmission_k} and Lemma~\ref{lemma_hierarchicalstar_infectedtoHSlit}, for each $\ell=1,\dots,\lceil n^{\epsilon_2/2}\rceil$ there exist events
$$
\tilde{E}_\ell^{(v)}:=E_\ell^{(v)}\cap C_{I\to \mathrm{HSlit},\ell}^{(v)}\in\mathcal{G}(HS(v)),
$$
where $E_\ell^{(v)}$ is the shifted edge-transmission event from Lemma~\ref{lemma_edge_transmission_k}, and $C_{I\to \mathrm{HSlit},\ell}^{(v)}$ is the event from Lemma~\ref{lemma_hierarchicalstar_infectedtoHSlit} shifted to start at time $T_\ell+1$. On $E_\ell^{(v)}$, the vertex $v$ is infected at time $T_\ell+1$. On $\tilde{E}_\ell^{(v)}$, the vertex $v$ is $(n^{\varepsilon_3},n^{\varepsilon_1/24})$-HS-lit with respect to $HS(v)$ at time $T_\ell+2$.

Since $T_{\ell+1}-T_\ell\ge 3$, the time intervals supporting the events $\tilde{E}_\ell^{(v)}$ are pairwise disjoint. Conditional on $B_1^{(v)}$ and on the times $T_1,\dots,T_{\lceil n^{\epsilon_2/2}\rceil}$, these events depend on disjoint increments of the modified Harris construction. Hence they are conditionally independent. Moreover, since $E_\ell^{(v)}$ implies that $v$ is infected at time $T_\ell+1$, the strong Markov property at time $T_\ell+1$ gives
$$
\mathbb{P}\big(C_{I\to \mathrm{HSlit},\ell}^{(v)}\mid \mathcal{F}_{T_\ell+1}\big)\ge 1-n^{-\varepsilon/4}.
$$
Therefore, for all sufficiently large $n$,
$$
\mathbb{P}\big(\tilde{E}_\ell^{(v)}\mid B_1^{(v)},T_1,\dots,T_{\lceil n^{\epsilon_2/2}\rceil}\big)\ge \frac12 c_{\mathrm{edge}}.
$$

Define
$$
B_2^{(v)}:=\bigcup_{\ell=1}^{\lceil n^{\epsilon_2/2}\rceil}\tilde{E}_\ell^{(v)}.
$$
Then $B_2^{(v)}\in\mathcal{G}(HS(v))$. On $B_1^{(v)}\cap B_2^{(v)}$, let $\ell_*$ be the smallest index such that $\tilde{E}_{\ell_*}^{(v)}$ occurs, and define
$$
T_v:=T_{\ell_*}+2.
$$
Then $T_v$ is measurable with respect to $\mathcal{G}(HS(v))$, satisfies
$$
T_v\le T_{\lceil n^{\epsilon_2/2}\rceil}+2\le \lceil n^{\epsilon_2}\rceil+2\le \exp(n^{\varepsilon_4})
$$
for all sufficiently large $n$, and on $B_1^{(v)}\cap B_2^{(v)}$ the vertex $v$ is $(n^{\varepsilon_3},n^{\varepsilon_1/24})$-HS-lit with respect to $HS(v)$ at time $T_v$.

Using the conditional independence and the above lower bound, we obtain
$$
\mathbb{P}\big(B_2^{(v)}\mid B_1^{(v)},T_1,\dots,T_{\lceil n^{\epsilon_2/2}\rceil}\big)\ge 1-\Bigl(1-\frac12 c_{\mathrm{edge}}\Bigr)^{\lceil n^{\epsilon_2/2}\rceil}\ge 1-\exp(-n^{\varepsilon_4})
$$
for all sufficiently large $n$.

Applying the time-shifted version of Lemma~\ref{lemma_hierarchicalstar_expsurvivaltime} from time $T_v$, and using the strong Markov property at time $T_v$, we obtain an event $B_3^{(v)}\in\mathcal{G}(HS(v))$ such that
\begin{equation*}
\mathbb{P}\big(B_3^{(v)}\mid \mathcal{F}_{T_v}\big)\ge 1-\exp(-n^{\varepsilon_4})
\end{equation*}
for all sufficiently large $n$, and on $B_1^{(v)}\cap B_2^{(v)}\cap B_3^{(v)}$ the vertex $v$ remains $(\frac14 n^{\varepsilon_3},n^{\varepsilon_2})$-HS-lit with respect to $HS(v)$ throughout the time interval $[T_v,\; T_v+\exp(n^{\varepsilon_3/2})]$. Since $T_v\le \exp(n^{\varepsilon_4})$, it follows that part~(ii) in Proposition~\ref{hierarchicalstar_thm2} holds on $B_1^{(v)}\cap B_2^{(v)}\cap B_3^{(v)}$. Moreover, for all sufficiently large $n$,
$$\mathbb{P}(B_1^{(v)}\cap B_2^{(v)}\cap B_3^{(v)})\ge 1-3\exp(-n^{\varepsilon_4}).$$

Set
\(
L:=\exp(n^{\varepsilon_4}),
m:=2\left\lceil \exp(n^{\varepsilon_4/8})\right\rceil+3 
\), then $mL\le \frac{1}{2}\exp(n^{\varepsilon_3/2})$. For each $i=1,\dots,m$, define $\tilde{B}_{2,i}^{(v)}$ be the event that there exists a random time $\tilde{T}_i\in ((i-1)L,\; iL)$, such that $\tilde{T}_i$ is $\mathcal{G}(HS(v))$ measurable and $v$ is infected at $\tilde{T}_i$. 

Starting from $v$ being $(\frac14 n^{\varepsilon_3},n^{\varepsilon_2})$-HS-lit with respect to $HS(v)$, the construction of $B_1^{(v)}\cap B_2^{(v)}$ implies that $v$ is infected at some $\mathcal{G}(HS(v))$ measurable time $T_v$ before $L$. On the event $B_1^{(v)}\cap B_2^{(v)}\cap B_3^{(v)}$, the vertex $v$ remains $(\frac14 n^{\varepsilon_3},n^{\varepsilon_2})$-HS-lit with respect to $HS(v)$ throughout the time interval $[L,mL]$. Hence, by the same argument as in the construction of $B_1^{(v)}\cap B_2^{(v)}$, for every $i=1,\dots,m$, there exists an event $B_{2,i}^{(v)}\in \mathcal{G}(HS(v))$ such that $B_{2,i}^{(v)}\subseteq \tilde B_{2,i}^{(v)}$ and

\[
\mathbb P\bigl((B_{2,i}^{(v)})^c \mid \mathcal F_{(i-1)L}\bigr)
\le
2\exp(-n^{\varepsilon_4}),\quad \text{on the event $B_1^{(v)}\cap B_2^{(v)}\cap B_3^{(v)}$.}
\]
Consequently, by a union bound,
\[
\mathbb P\Bigl(
\bigcap_{i=1}^m B_{2,i}^{(v)}
\;\Big|\;
B_1^{(v)}\cap B_2^{(v)}\cap B_3^{(v)}
\Bigr)
\ge
1-2m\exp(-n^{\varepsilon_4}).
\]
Hence, defining
\[
B^{(v)}
:=
B_1^{(v)}\cap B_2^{(v)}\cap B_3^{(v)}
\cap
\bigcap_{i=1}^m B_{2,i}^{(v)}\in \mathcal{G}(HS(v)) ,
\]
we obtain
\[
\mathbb P(B^{(v)})
\ge
1-3\exp(-n^{\varepsilon_4})-2m\exp(-n^{\varepsilon_4})
\ge
1-\exp(-n^{\varepsilon_4/10})
\]
for all sufficiently large $n$.

On the event $B^{(v)}$, denote the corresponding $\tilde{T}_i$ for each $i=1,\dots,m$. Define
\[
T_i:=\tilde{T}_{2i+1},
\qquad
i=1,\dots,\left\lceil \exp(n^{\varepsilon_4/8})\right\rceil+1 .
\]
Since $\tilde{T}_{2i+1}\in (2iL,(2i+1)L)$ and $\tilde{T}_{2i-1}\in ((2i-2)L,(2i-1)L)$, we have
\[
 T_{i+1}- T_{i}\ge L=\exp(n^{\varepsilon_4}), T_i\le (2\lceil\exp(n^{\varepsilon_4/8})\rceil+3)L\le \frac{1}{2}\exp(n^{\varepsilon_3/2})
\]
for every $i=1,\dots,\lceil \exp(n^{\varepsilon_4/8})\rceil$, for all sufficiently large $n$. Moreover, $T_i$ is $\mathcal{G}(HS(v))$-measurable for every $i=1,\dots,\lceil \exp(n^{\varepsilon_4/8})\rceil+1$, since each corresponding $\tilde T_{2i+1}$ is $\mathcal{G}(HS(v))$-measurable. Hence part~(iii) of Proposition~\ref{hierarchicalstar_thm2} holds on $B^{(v)}$.

\end{proof}

We now argue that Proposition~\ref{hierarchicalstar_thm2} remains valid for the threshold-SIRS process. In this setting, the local graphical $\sigma$-field on a hierarchical-star $\mathrm{HStar}(v;\{u_i\}_{i\le k_1};\{w_{i,j}\}_{i\le k_1, j\le k_2})$ is defined by
$$
\mathcal{G}(\mathrm{HStar}(v;\{u_i\}_{i\le k_1};\{w_{i,j}\}_{i\le k_1, j\le k_2}))
:=
\sigma\bigl((N_x^{\mathrm{rec}},N_x^{\mathrm{sus}},N_x^{\mathrm{inf}})_{x\in \{v\}\cup\{u_i:i\le k_1\}\cup\{w_{i,j}:i\le k_1,\ j\le k_2\}}\bigr).
$$

After replacing the edge infection clock $N_{u\to v}^{\mathrm{inf}}$ by the vertex infection clock $N_v^{\mathrm{inf}}$, Lemma~\ref{lemma_edge_transmission_one} continues to hold for the threshold-SIRS process. Consequently, Lemma~\ref{lemma_edge_transmission_k} also remains valid. In Section~\ref{subsection_hierarchicalstar_2}, we have already explained that Proposition~\ref{prop:star_independent} holds for the threshold-SIRS process as well. Therefore, every ingredient used in the proof of Lemma~\ref{lemma_hierarchicalstar_supermartingale} remains available in the threshold setting, and so Lemma~\ref{lemma_hierarchicalstar_supermartingale} also holds for the threshold-SIRS process. The same conclusion then follows for Lemma~\ref{lemma_hierarchicalstar_expsurvivaltime}, Lemma~\ref{lemma_hierarchicalstar_infectedtoHSlit}, and finally Proposition~\ref{hierarchicalstar_thm2}.

\section{Hierarchical-Stars in Power Law Random Graph}\label{section_hierarchicalstarinpowerlaw}
The purpose of this section is twofold. First, we prove Proposition~\ref{prop:existsmanyhierarchicalstar}, which shows that, with high probability, the power-law random graph contains at least $\lceil n^{1-\delta}\rceil$ vertex-disjoint $(n^\varepsilon,n^\varepsilon)$ hierarchical-stars. Heuristically, fix $\delta\in(0,1)$. With high probability, there are at least $\lceil n^{1-\delta}\rceil$ vertices $v_1,\dots,v_{\lceil n^{1-\delta}\rceil}$ with degree at least $n^{2\tilde{\varepsilon}}$ for a suitable small $\tilde{\varepsilon}>0$. Starting from one such vertex $v_1$, if we expose its half-edges one by one, then a newly paired vertex $u$ falls into a degree window above $n^{\bar\varepsilon}$ with probability of order
\[
\frac{\sum_{u\in V:\,d_u\ge n^{\bar\varepsilon}}d_u}{\sum_{u\in V}d_u}\asymp n^{-\bar\varepsilon(\tau-2)}.
\]
Although this probability changes slightly during the sequential exposure, it remains of the same order as long as only a negligible fraction of the relevant half-edges has been used. Thus, by choosing $\tilde{\varepsilon}$ sufficiently larger than $\bar\varepsilon$, one expects to find at least $n^{\tilde{\varepsilon}}$ distinct neighbors of $v_1$ whose degrees are at least $n^{\bar\varepsilon}$, with exponentially high probability. Repeating this exposure for $v_2,\dots,v_{\lceil n^{1-\delta}\rceil}$ still works because the half-edges used in earlier rounds form only a negligible portion of the available pool. Hence, heuristically, all these vertices simultaneously acquire many disjoint first-layer neighbors of degree at least $n^{\bar\varepsilon}$. Finally, applying the same idea once more to the first-layer vertices produces the second layer and hence the desired collection of disjoint hierarchical-stars, with $\varepsilon\ll \bar\varepsilon\ll \tilde{\varepsilon}$. Lemmas~\ref{lemma_degree_event_D}, \ref{lemma_hierarchicalstarv11layer}, \ref{lemma_hierarchicalstarvi1layer}, \ref{lemma_hierarchicalstar_second_layer} make this informal argument precise.

For the proof of Theorem~\ref{thm_main1}, we will need two geometric properties of the graph. The first is precisely the existence of many disjoint hierarchical-stars, established here. The second is the logarithmic diameter bound, namely that $\mathrm{diam}(G)\le c\log n$ for some constant $c$, which will be introduced in Section~\ref{section_proofofmain}.

Second, we prove Lemmas~\ref{lemma_Mhierarchicalstar} and~\ref{lemma_logMline}, which will be used in the proof of Theorem~\ref{thm_main2}. Roughly speaking, Lemma~\ref{lemma_Mhierarchicalstar} shows that any vertex of sufficiently large constant degree is, with high probability, the center of a hierarchical-star whose size is polynomial in its degree, while Lemma~\ref{lemma_logMline} shows that, starting from such a large constant degree vertex $v_1$, one can find a path leading to a vertex $v_k$ whose degree is at least $n^{\min\{1/\tau, (\tau-2)/(\tau-1)\}/32}$. In the proof of Theorem~\ref{thm_main2}, we will show that the infection spreads from $v_1$ to $v_k$ with probability bounded below by a positive constant.

\begin{proposition}\label{prop:existsmanyhierarchicalstar}
Fix $\delta\in(0,1)$ and set
$
\varepsilon:=\frac{\delta}{400\tau^3}.
$
Let $G=G(n,\tau)$ be a power-law random graph on $n$ vertices with exponent $\tau>2$. Then there exists a constant $c>0$ such that, for all sufficiently large $n$,
\begin{equation}
\mathbb P\Bigl(
\text{$G$ contains at least $\lceil n^{1-\delta}\rceil$ vertex-disjoint $(n^{\varepsilon},n^{\varepsilon})$-hierarchical-stars}
\Bigr)
\ge
1-c(\log n)^{\mathbf{1}\{\tau=3\}}\,n^{-\min\{\tau-2,1\}}.
\end{equation}
\end{proposition}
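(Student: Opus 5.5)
We first condition on a good degree sequence and then build the hierarchical-stars by sequentially exposing half-edges of the uniform matching. Fix exponents $\varepsilon\ll\bar\varepsilon\ll\tilde\varepsilon$; for concreteness take $\bar\varepsilon=20\tau\varepsilon$ and $\tilde\varepsilon=20\tau\bar\varepsilon$, so that $\tilde\varepsilon\le\delta/(2\tau)$. Let $\ell_n=\sum_i d_i$.

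\textbf{Step 1: the degree event.} Let $\mathcal D_n$ be the intersection of three events:
\begin{itemize}
\item $\ell_n\le Cn$;
\item the number of vertices with $d_i\ge n^{2\tilde\varepsilon}$ is at least $2n^{1-\delta}$;
\item for each relevant window $a\in\{\bar\varepsilon,\varepsilon\}$, the half-edge mass satisfies $\sum_{i:d_i\ge n^{a}}d_i\ge c\,n^{1-a(\tau-2)}$.
\end{itemize}
The counts in the last two items are binomial with means of order $n^{1-2\tilde\varepsilon(\tau-1)}\gg n^{1-\delta}$ and $n^{1-a(\tau-1)}$. Hence Chernoff bounds make them fail only with stretched-exponentially small probability. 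The polynomial error $c(\log n)^{\mathbf 1\{\tau=3\}}n^{-\min\{\tau-2,1\}}$ comes from $\ell_n\le Cn$ (equivalently $|\ell_n-n\E D|$ control). We bound it by Chebyshev when $\tau>3$ and by truncating at $n$ and using the first or second moment of $D\mathbf 1\{D\le n\}$ when $2<\tau\le3$; this produces the $\log n$ at $\tau=3$. A union bound with $\mathbb P(\max_i d_i>n)\le n\cdot n^{1-\tau}$ completes the step. This is Lemma~\ref{lemma_degree_event_D}.

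\textbf{Step 2: first layer.} On $\mathcal D_n$, take centers $v_1,\dots,v_{M}$ with $M=\lceil n^{1-\delta}\rceil$ and degrees $\ge n^{2\tilde\varepsilon}$, and process them in order. For $v_m$, pair its first $n^{2\tilde\varepsilon}$ half-edges one at a time. Call a pairing a success if the partner is a vertex not yet used, with degree in $[n^{\bar\varepsilon},n^{2\bar\varepsilon}]$. (The upper cap keeps the total usage small.) The total number of half-edges consumed over the whole construction is at most
\[
M\,n^{2\tilde\varepsilon}\,(1+n^{\varepsilon}n^{2\bar\varepsilon}) \;\le\; n^{1-\delta+4\tilde\varepsilon}\;=\;o\bigl(n^{1-2\bar\varepsilon(\tau-2)}\bigr).
\]
Hence, uniformly over the history, each pairing is a success with conditional probability at least $p:=c'n^{-\bar\varepsilon(\tau-2)}$, since the exposed and used mass in the window is negligible compared with its total. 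So the number of successes for $v_m$ stochastically dominates $\mathrm{Bin}(n^{2\tilde\varepsilon},p)$, whose mean is $\gg n^{\tilde\varepsilon}\ge n^{\varepsilon}$. By Lemma~\ref{lemma_largedeviation}, $v_m$ fails to get $\lceil n^\varepsilon\rceil$ first-layer neighbors only with probability $\exp(-n^{\tilde\varepsilon})$. A union bound over the $M$ centers is still superpolynomially small. This is Lemmas~\ref{lemma_hierarchicalstarv11layer} and~\ref{lemma_hierarchicalstarvi1layer}.

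\textbf{Step 3: second layer.} For each chosen first-layer vertex $u$, which has degree $\ge n^{\bar\varepsilon}$, pair $n^{\bar\varepsilon}$ of its unexposed half-edges. Here a success is a partner that is unused, not among the already chosen centers or first-layer vertices, and has degree $\le n^{\varepsilon}$. The last condition can be replaced by simply requiring a fresh vertex. Such partners carry a constant fraction of all half-edges, so the success probability is at least $1/2$ uniformly. Lemma~\ref{lemma_largedeviation} then gives $\lceil n^\varepsilon\rceil$ distinct fresh leaves except with probability $e^{-cn^{\bar\varepsilon}}$. We union bound over the at most $Mn^{\varepsilon}$ first-layer vertices. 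Because every chosen vertex is marked as used, the resulting stars are vertex-disjoint by construction; this is Lemma~\ref{lemma_hierarchicalstar_second_layer}. Multiple edges or self-loops only occur in failed pairings, which we discard, so the designated edges exist in $G^{\mathrm s}$.

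\textbf{Main obstacle.} The delicate point is the uniform lower bound on the conditional success probability under sequential exposure of the configuration model. Every pairing must be compared against the remaining half-edge pool after deleting everything exposed so far, and this must hold simultaneously across all $\sim n^{1-\delta+2\tilde\varepsilon+\varepsilon}$ steps. This is exactly where the exponent bookkeeping is needed: $\varepsilon=\delta/(400\tau^3)$ has to make the consumed mass $n^{1-\delta+O(\tilde\varepsilon)}$ negligible against the window mass $n^{1-\bar\varepsilon(\tau-2)}$. Once that inequality is verified, every remaining estimate is a binomial large deviation.
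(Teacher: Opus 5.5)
Your route is the paper's: a degree-regularity event (Lemma~\ref{lemma_degree_event_D}, with the same truncation/Chebyshev origin of the $(\log n)^{\mathbf 1\{\tau=3\}}n^{-\min\{\tau-2,1\}}$ error), then sequential exposure of $\lceil n^{2\tilde\varepsilon}\rceil$ half-edges per center with successes restricted to fresh vertices in a capped degree window, then a second exposure round from each first-layer vertex. Your differences are cosmetic. You use binomial domination plus Chernoff where the paper uses Freedman, and your second-layer success criterion (fresh, low degree, constant probability) is cheaper than the paper's window $(n^{\varepsilon},n^{3\varepsilon})$.

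However, the exponent check you yourself call the crux is never carried out, and your concrete constants fail it. With $\bar\varepsilon=20\tau\varepsilon$ and $\tilde\varepsilon=20\tau\bar\varepsilon$ you get $\tilde\varepsilon=400\tau^2\varepsilon=\delta/\tau$, not $\le\delta/(2\tau)$. The expected number of vertices of degree $\ge n^{2\tilde\varepsilon}$ is then of order $n^{1-2\delta(\tau-1)/\tau}=o(n^{1-\delta})$ for every $\tau>2$. So the second item of $\mathcal D_n$ fails with probability tending to one, and there are not enough centers. Your consumption inequality $4\tilde\varepsilon+2\bar\varepsilon(\tau-2)<\delta$ also fails for all $\tau\le 4$.

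The repair is easy. For example, take $\bar\varepsilon=10\tau\varepsilon$ and $\tilde\varepsilon=20\tau\bar\varepsilon=\delta/(2\tau)$. Then $2\tilde\varepsilon(\tau-1)=\delta(\tau-1)/\tau<\delta$, and $4\tilde\varepsilon+2\bar\varepsilon(\tau-2)=2\delta/\tau+\delta(\tau-2)/(20\tau^2)<\delta$. The per-center mean $n^{2\tilde\varepsilon-\bar\varepsilon(\tau-2)}$ still exceeds $n^{\tilde\varepsilon}$. Also, your third item must lower-bound the mass in the window $[n^{\bar\varepsilon},n^{2\bar\varepsilon}]$, not the tail above $n^{\bar\varepsilon}$. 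This follows the same way from the binomial count of window vertices.

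Second, do not drop the degree cap in Step~3 as you suggest. Without it, the used set contains the centers, of arbitrarily large degree, and earlier leaves, which are size-biased and uncapped. $\mathcal D_n$ gives no control of their total degree, so the claim that fresh partners carry a constant fraction of the half-edges is unjustified at the required polynomial error level. Keep the cap instead, say degree $\le n^{\varepsilon}$:
\begin{itemize}
\item candidate leaves can then never be centers or first-layer vertices;
\item previously used leaves carry total mass at most $n^{\varepsilon}\cdot n^{1-\delta+O(\tilde\varepsilon)}=o(n)$;
\item the mass on degree-$3$ vertices alone is $\ge cn$, up to an exponentially small failure probability.
\end{itemize}
This gives a uniformly positive, though not necessarily $\tfrac12$, success probability, which is all your Chernoff step needs.
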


Note that the probability bound in Proposition~\ref{prop:existsmanyhierarchicalstar} depends on the value of $\tau$. This difference ultimately comes from the estimate on the event
\(
\left\{\sum_{v\in V} d_v \in \left[\frac{1}{2}n\mu,\frac{3}{2}n\mu\right]\right\}\), with \(\mu=\mathbb{E}[d_v]
\),
which is established in Lemma~\ref{lemma_degree_event_D}. When $\tau>3$, the degree distribution has finite variance, and the desired bound follows directly from Chebyshev's inequality. In the borderline case $\tau=3$ and in the infinite-variance regime $\tau\in(2,3)$, one must use different arguments to control this event, and this is the reason the final probability estimates take different forms in the three regimes.\\

We begin with a regularity statement for the degree sequence. Lemma~\ref{lemma_degree_event_D} collects the quantitative degree estimates that will be used throughout this subsection.

\begin{lemma}\label{lemma_degree_event_D}
Let $\bm d :=(d_i)_{i=1}^n$ be i.i.d.\ random variables with distribution
$
\mathbb{P}(d_i=k)=C_\tau\,k^{-\tau}\mathbf{1}\{k\ge 3\}
$, and let $F$ denote the corresponding distribution function. Write
$
d^{(1)}\ge d^{(2)}\ge \cdots \ge d^{(n)}
$
for the order statistics of $d_1,\dots,d_n$. Fix any
$
\varepsilon\in \left(0,\frac{1}{2\tau-1}\right),
$
and define
$c_1:=\left(\frac{C_\tau}{\tau-1}\right)^{\!\frac{1}{\tau-1}}5^{-\frac{1}{\tau-1}}$
, $c_2:=\left(\frac{C_\tau}{\tau-1}\right)^{\!\frac{1}{\tau-1}}8^{\frac{1}{\tau-1}}
$.
Let
$
\mu:=\mathbb E[d_1]
$,
and define the event
\begin{equation}\label{eq_D}
D_\varepsilon:=
\biggl\{
\forall i\in[\lceil n^\varepsilon\rceil ,n],\
c_1\left(\frac{n}{i}\right)^{\!\frac{1}{\tau-1}}
\le d^{(i)}\le
c_2\left(\frac{n}{i}\right)^{\!\frac{1}{\tau-1}}
\biggr\}\cap
\biggl\{
\biggl|\sum_{i=1}^n d^{(i)}-n\mu\biggr|\le \frac12 n\mu
\biggr\}.
\end{equation}
Then there exists a constant $c'>0$, depending only on $\tau$, such that, for all sufficiently large $n$,
\begin{equation}\label{eq_probofD}
\mathbb P(D_\varepsilon)\ge
\begin{cases}
1-c'n^{-(\tau-2)}, & \tau\in(2,3),\\[1mm]
1-c'(\log n)\,n^{-1}, & \tau=3,\\[1mm]
1-c'n^{-1}, & \tau>3.
\end{cases}
\end{equation}
Moreover, on the event $D_\varepsilon$, for every
$
i\in \bigl[\lfloor n^{1-2\varepsilon(\tau-1)}\rfloor,\,\lfloor n^{1-1.1\varepsilon(\tau-1)}\rfloor\bigr]
$,
one has
\begin{equation}\label{eq_d_i_between}
n^\varepsilon\le d^{(i)}\le n^{3\varepsilon}
\end{equation}
and there exist constants $C_1,C_2>0$, depending only on $\tau$, such that
\begin{equation}\label{eq_sumH_lower}
C_1 n^{1-1.1\varepsilon(\tau-2)}
\le
\sum_{i=\lfloor n^{1-2\varepsilon(\tau-1)}\rfloor}^{\lfloor n^{1-1.1\varepsilon(\tau-1)}\rfloor} d^{(i)}
\le
C_2 n^{1-1.1\varepsilon(\tau-2)}.
\end{equation}
\end{lemma}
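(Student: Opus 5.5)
The plan splits $D_\varepsilon$ into its two constituent events, the order-statistic event and the sum event, and bounds the failure probability of each separately. Then I verify the two deterministic consequences \eqref{eq_d_i_between} and \eqref{eq_sumH_lower} on $D_\varepsilon$.

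\emph{Order statistics.} The tail satisfies $\bar F(x):=\mathbb P(d_1\ge x)=\frac{C_\tau}{\tau-1}x^{-(\tau-1)}(1+O(1/x))$ by comparing the sum with an integral. The event $d^{(i)}<c_1(n/i)^{1/(\tau-1)}$ says that $N(x_i):=\#\{j:d_j\ge x_i\}<i$ with $x_i=c_1(n/i)^{1/(\tau-1)}$. Here $N(x_i)\sim\mathrm{Bin}(n,\bar F(x_i))$ with mean about $5i$. Similarly, $d^{(i)}>c_2(n/i)^{1/(\tau-1)}$ forces $N(y_i)\ge i$ while the mean of $N(y_i)$ is about $i/8$. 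By Chernoff each failure has probability at most $e^{-ci}$, and $i\ge n^\varepsilon$ makes this $e^{-cn^\varepsilon}$. A union bound over $i\le n$ then gives a failure probability of $ne^{-cn^{\varepsilon}}$, which is negligible. The $1+O(1/x)$ correction is harmless because $x_i\ge c_1$ is bounded below; for $i$ near $n$, where $x_i$ is of constant order, I would shrink $c_1$ or use the floor at $3$, since $d\ge3$ always. The constants $5$ and $8$ leave enough slack for this.

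\emph{Sum.} This is the main obstacle, and the three regimes arise here. For $\tau>3$ the variance is finite, and Chebyshev gives $O(1/n)$. For $\tau\le3$ I truncate at level $n$: $\mathbb P(\max_j d_j>n)\le n\bar F(n)=O(n^{-(\tau-2)})$. On the complement, the truncated variables $d_j\wedge n$ have variance $\mathbb E[d^2\mathbf 1\{d\le n\}]=O(n^{3-\tau})$ for $\tau<3$, and $O(\log n)$ for $\tau=3$. Their mean differs from $\mu$ by $O(n^{2-\tau})=o(1)$. Chebyshev with deviation $\tfrac12 n\mu$ then gives $O(n\cdot n^{3-\tau}/n^2)=O(n^{-(\tau-2)})$ for $\tau<3$, and $O(\log n/n)$ for $\tau=3$. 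Together these yield \eqref{eq_probofD}.

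\emph{Consequences.} For $i$ in the stated window, $n/i$ lies between $n^{1.1\varepsilon(\tau-1)}$ and roughly $n^{2\varepsilon(\tau-1)}$. Hence $d^{(i)}\ge c_1n^{1.1\varepsilon}\ge n^\varepsilon$ and $d^{(i)}\le c_2n^{2\varepsilon}\le n^{3\varepsilon}$ for large $n$. The window also satisfies $i\ge n^\varepsilon$, because $1-2\varepsilon(\tau-1)>\varepsilon$ by the hypothesis $\varepsilon<1/(2\tau-1)$, so the order-statistic bounds apply throughout. For \eqref{eq_sumH_lower}, I sum $c_{1,2}(n/i)^{1/(\tau-1)}$ over the window and compare with an integral: $\sum_{i\le m}i^{-1/(\tau-1)}\asymp m^{(\tau-2)/(\tau-1)}$. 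The sum is dominated by the upper endpoint $m=n^{1-1.1\varepsilon(\tau-1)}$, giving $n^{1/(\tau-1)}m^{(\tau-2)/(\tau-1)}=n^{1-1.1\varepsilon(\tau-2)}$. The lower endpoint contributes a strictly smaller power, so it can be subtracted at the cost of halving the constant for large $n$.
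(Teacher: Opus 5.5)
Your treatment of the degree sum (Chebyshev for $\tau>3$; truncation at $n$ plus Chebyshev for $\tau\le 3$) is essentially the paper's argument, and so is your treatment of the two window consequences. Counting exceedances $N(x)=\#\{j:d_j\ge x\}\sim\mathrm{Bin}(n,\bar F(x))$ and applying Chernoff with a union bound is a sound, more elementary substitute for the paper's route via $d^{(i)}\overset{d}{=}(1-F)^{-1}(\Gamma_i/\Gamma_{n+1})$ and concentration of the Gamma partial sums.

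The gap is the step you wave through for $i$ comparable to $n$. There $x_i=c_1(n/i)^{1/(\tau-1)}$ and $y_i=c_2(n/i)^{1/(\tau-1)}$ are of constant order, and the factor $1+O(1/x)$ is not harmless. Because $\bar F$ is a step function, its ratio to $\frac{C_\tau}{\tau-1}x^{-(\tau-1)}$ is roughly $\frac{\tau-1}{4}(\frac34)^{\tau-1}$ just above $x=3$ and roughly $\frac{\tau-1}{4}$ just below $x=4$. So for large $\tau$, neither the slack $5$ built into $c_1$ nor the slack $8$ built into $c_2$ absorbs it. You may not shrink $c_1$, since it is fixed in the statement. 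The floor $d\ge 3$ helps the lower bound only when $x_i\le 3$, and for the upper bound it works against you.

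In fact the statement is false as written for large $\tau$. Since $C_\tau<3^\tau$, one has $c_2^{\tau-1}=8C_\tau/(\tau-1)<3^{\tau-1}\cdot 24/(\tau-1)$, so $c_2<3$ for $\tau\ge 25$. Then the requirement $d^{(n)}\le c_2$ at $i=n$ is impossible, because all degrees are at least $3$, and $\mathbb P(D_\varepsilon)=0$. The lower bound breaks even earlier. For $\tau=10$, $\mathbb P(d_1\ge 4)\approx 0.0598<(c_1/3)^{9}\approx 0.0627$. So at $i\approx 0.061\,n$, where $x_i$ is just above $3$, the requirement $d^{(i)}\ge x_i$ (i.e.\ $d^{(i)}\ge 4$) fails with high probability.

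The paper's proof has the same hole. It claims that $(\frac14)^{1/(\tau-1)}(\frac{n+1}{i})^{1/(\tau-1)}-3\ge 5^{-1/(\tau-1)}(\frac ni)^{1/(\tau-1)}$ for all $i\in[n^\varepsilon,n]$, but this fails whenever $n/i$ is bounded; at $i=n$ the left side is negative. So this gap cannot be closed for the statement as written.

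What is true, and is all that is used later, is the order-statistic bound restricted to $\lceil n^\varepsilon\rceil\le i\le n/K$. The later uses are the window $[\lfloor n^{1-2\varepsilon(\tau-1)}\rfloor,\lfloor n^{1-1.1\varepsilon(\tau-1)}\rfloor]$ and the index $i=\lceil n^{1-\delta}\rceil$. Choose the constant $K=K(\tau)$ large enough that $x_i,y_i\ge 100\tau$. On that range, $(1+1/x)^{-(\tau-1)}$ and $(1-1/y)^{-(\tau-1)}$ are within $2\%$ of $1$, so $\mathbb E N(x_i)\ge 4i$ and $\mathbb E N(y_i)\le i/7$. Your Chernoff-plus-union-bound argument then gives failure probability at most $2ne^{-cn^{\varepsilon}}$, exactly as you intended. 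The sum bound and the window consequences go through unchanged.
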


\begin{proof}

The proof consists of three steps. We first derive uniform bounds on the order statistics $(d^{(i)})_{i=1}^n$ from the quantile representation and the power-law tail asymptotics. We then estimate the deviation of the total degree sum from its mean. The argument is split into the cases $\tau>3$, $\tau=3$, and $\tau\in(2,3)$, reflecting the fact that the second moment is finite only in the first regime and is critical at $\tau=3$. Finally, we combine these estimates to obtain the event $D_\varepsilon$, and then deduce \eqref{eq_d_i_between} and \eqref{eq_sumH_lower} from the bounds defining $D_\varepsilon$.

By  \cite[Eq.~(2.19)]{DharaEtAl2019}, there exists an i.i.d.\ sequence $(E_i)_{i\ge 1}$ of unit-rate exponential random variables such that, defining
$
\Gamma_i:=\sum_{j=1}^i E_j,\ \bar d_i:=(1-F)^{-1}\!\left(\frac{\Gamma_i}{\Gamma_{n+1}}\right)$ for $i=1,\dots,n
$,
we have
\begin{equation}\label{eq_orderstat_representation}
(d^{(1)},d^{(2)},\dots,d^{(n)})\overset{d}= (\bar d_1,\bar d_2,\dots,\bar d_n).
\end{equation}
We first derive bounds on the tail of the degree distribution. For any $t\ge 0$,
\begin{align*}
\mathbb P(d_1>t)
&=
\sum_{k=\lfloor t\rfloor+1}^{\infty}\mathbb P(d_1=k)
=
C_\tau\sum_{k=\lfloor t\rfloor+1}^{\infty}k^{-\tau}.
\end{align*}
Using the integral test, we obtain
\begin{align*}
\mathbb P(d_1>t)
&\ge
C_\tau\int_{\lfloor t\rfloor+1}^{\infty}(x+1)^{-\tau}\,dx
=
\frac{C_\tau}{\tau-1}(\lfloor t\rfloor+2)^{1-\tau},
\\
\mathbb P(d_1>t)
&\le
C_\tau\int_{\lfloor t\rfloor+1}^{\infty}x^{-\tau}\,dx
=
\frac{C_\tau}{\tau-1}(\lfloor t\rfloor+1)^{1-\tau}.
\end{align*}
It follows that, for every $x\in(0,1)$,
\begin{equation}\label{eq_tail_inverse}
\left(\frac{C_\tau}{\tau-1}\right)^{\!\frac{1}{\tau-1}}x^{-\frac{1}{\tau-1}}-3
\le
(1-F)^{-1}(x)
\le
\left(\frac{C_\tau}{\tau-1}\right)^{\!\frac{1}{\tau-1}}x^{-\frac{1}{\tau-1}}.
\end{equation}
Next, fix $i\in[n^\varepsilon,n+1]$. A standard Chernoff bound yields, for all sufficiently large $n$,
\begin{equation}\label{eq_gamma_chernoff}
\mathbb P\bigl(|\Gamma_i-i|\ge i\bigr)\le e^{-i/2}\le e^{-n^\varepsilon/2}.
\end{equation}
Hence, by a union bound,
\begin{equation}\label{eq_gamma_union}
\mathbb P\Bigl(\exists\, i\in[n^\varepsilon,n+1]: |\Gamma_i-i|\ge i\Bigr)
\le e^{-n^{\varepsilon/3}}
\end{equation}
for all sufficiently large $n$.

On the event that $|\Gamma_i-i|<i$ for all $i\in[n^\varepsilon,n+1]$, we have
$
\frac{i}{2}\le \Gamma_i\le 2i,\ \frac{n+1}{2}\le \Gamma_{n+1}\le 2(n+1)
$,
so that
\begin{equation}\label{eq_ratio_bounds}
\frac{1}{4}\frac{i}{n+1}
\le
\frac{\Gamma_i}{\Gamma_{n+1}}
\le
4\frac{i}{n+1}
\end{equation}
for every $i\in[n^\varepsilon,n+1]$.
Combining \eqref{eq_tail_inverse}, \eqref{eq_gamma_union}, and \eqref{eq_ratio_bounds}, we obtain that, for all sufficiently large $n$, with probability at least $1-\exp(-n^{\varepsilon/3})$, for every $i\in[n^\varepsilon,n]$,
\begin{equation}\label{eq_quantile_bounds}
\left(\frac{C_\tau}{\tau-1}\right)^{\!\frac{1}{\tau-1}}
\left(\frac{1}{4}\right)^{\!\frac{1}{\tau-1}}
\left(\frac{n+1}{i}\right)^{\!\frac{1}{\tau-1}}
-3
\le
(1-F)^{-1}\!\left(\frac{\Gamma_i}{\Gamma_{n+1}}\right)
\le
\left(\frac{C_\tau}{\tau-1}\right)^{\!\frac{1}{\tau-1}}
4^{\frac{1}{\tau-1}}
\left(\frac{n+1}{i}\right)^{\!\frac{1}{\tau-1}}.
\end{equation}
For all sufficiently large $n$ and all $i\in[n^\varepsilon,n]$, we have
$
\left(\frac{1}{4}\right)^{\!\frac{1}{\tau-1}}
\left(\frac{n+1}{i}\right)^{\!\frac{1}{\tau-1}}
-3
\ge
5^{-\frac{1}{\tau-1}}\left(\frac{n}{i}\right)^{\!\frac{1}{\tau-1}}
$,
and
$
4^{\frac{1}{\tau-1}}
\left(\frac{n+1}{i}\right)^{\!\frac{1}{\tau-1}}
\le
8^{\frac{1}{\tau-1}}
\left(\frac{n}{i}\right)^{\!\frac{1}{\tau-1}}
$.
Therefore, by \eqref{eq_orderstat_representation} and \eqref{eq_quantile_bounds},
\begin{equation}\label{eq_orderstat_bounds}
\mathbb P\biggl(
\forall i\in[n^\varepsilon,n],\
c_1\left(\frac{n}{i}\right)^{\!\frac{1}{\tau-1}}
\le d^{(i)}\le
c_2\left(\frac{n}{i}\right)^{\!\frac{1}{\tau-1}}
\biggr)
\ge
1-\exp(-n^{\varepsilon/3}).
\end{equation}
When $\tau>3$, the second moment of $d_1$ is finite; write $\sigma^2:=\mathrm{Var}(d_1)$. By Chebyshev's inequality,
\begin{equation}\label{eq_chebyshev_sum}
\mathbb P\biggl(
\biggl|\sum_{i=1}^n d^{(i)}-n\mu\biggr|>\frac12 n\mu
\biggr)
\le
\frac{\mathrm{Var}\!\left(\sum_{i=1}^n d^{(i)}\right)}{(n\mu/2)^2}
=
\frac{4\sigma^2}{\mu^2 n}.
\end{equation}
When $\tau\in(2,3]$, define
$
Y_i:=d_i\mathbf 1\{d_i\le n\},\ T_n:=\sum_{i=1}^n Y_i
$.
Then
\begin{align*}
\mathbb P\biggl(
\biggl|\sum_{i=1}^n d^{(i)}-n\mu\biggr|>\frac12 n\mu
\biggr)
&\le
\mathbb P(d^{(1)}>n)
+
\mathbb P\bigl(|T_n-n\mu|>\tfrac12 n\mu\bigr).
\end{align*}
By the upper tail estimate above,
\begin{equation}\label{eq_dmax_n_bound_heavytail}
\mathbb P(d^{(1)}>n)
\le
n\,\mathbb P(d_1>n)
\le
\frac{C_\tau}{\tau-1}n^{-(\tau-2)}.
\end{equation}
Moreover,
\begin{align*}
\mu-\mathbb E[Y_1]
&=
\mathbb E[d_1\mathbf 1\{d_1>n\}]
=
C_\tau\sum_{k=n+1}^{\infty}k^{-(\tau-1)}
\le
2C_\tau n^{-(\tau-2)}.
\end{align*}
Hence, for all sufficiently large $n$,
\begin{equation}\label{eq_mean_shift_heavytail}
n\bigl(\mu-\mathbb E[Y_1]\bigr)\le \frac14 n\mu,
\end{equation}
so that
\begin{equation}\label{eq_centering_heavytail}
\bigl\{|T_n-n\mu|>\tfrac12 n\mu\bigr\}
\subseteq
\bigl\{|T_n-n\mathbb E[Y_1]|>\tfrac14 n\mu\bigr\}.
\end{equation}
We now estimate the variance term separately in the cases $\tau\in(2,3)$ and $\tau=3$.
If $\tau\in(2,3)$, then
\begin{align*}
\mathrm{Var}(Y_1)
&\le
\mathbb E[Y_1^2]
=
C_\tau\sum_{k=3}^{n}k^{2-\tau}
\le
2C_\tau n^{3-\tau},
\end{align*}
for all sufficiently large $n$, and thus
\begin{equation}\label{eq_var_Tn_subcritical}
\mathrm{Var}(T_n)=n\,\mathrm{Var}(Y_1)\le 2C_\tau n^{4-\tau}.
\end{equation}
By Chebyshev's inequality,
\begin{align}
\mathbb P\bigl(|T_n-n\mathbb E[Y_1]|>\tfrac14 n\mu\bigr)
&\le
\frac{16\,\mathrm{Var}(T_n)}{\mu^2 n^2}
\le
2C_\tau n^{2-\tau}
=
2C_\tau n^{-(\tau-2)}.
\label{eq_sum_bound_tau_in_23}
\end{align}
Combining \eqref{eq_dmax_n_bound_heavytail}, \eqref{eq_centering_heavytail}, and \eqref{eq_sum_bound_tau_in_23}, we obtain
\begin{equation}\label{eq_sum_bound_heavytail_subcritical}
\mathbb P\biggl(
\biggl|\sum_{i=1}^n d^{(i)}-n\mu\biggr|>\frac12 n\mu
\biggr)
\le
2C_\tau n^{-(\tau-2)}.
\end{equation}
If $\tau=3$, then
\begin{align*}
\mathrm{Var}(Y_1)
&\le
\mathbb E[Y_1^2]
=
C_\tau\sum_{k=3}^{n}k^{-1}
\le
2C_\tau\log n,
\end{align*}
for all sufficiently large $n$, and thus
\begin{equation}\label{eq_var_Tn_tau3}
\mathrm{Var}(T_n)=n\,\mathrm{Var}(Y_1)\le 2C_\tau n\log n.
\end{equation}
By Chebyshev's inequality,
\begin{align}
\mathbb P\bigl(|T_n-n\mathbb E[Y_1]|>\tfrac14 n\mu\bigr)
&\le
\frac{16\,\mathrm{Var}(T_n)}{\mu^2 n^2}
\le
2C_\tau\frac{\log n}{n}.
\label{eq_sum_bound_tau3_centered}
\end{align}
Combining \eqref{eq_dmax_n_bound_heavytail}, \eqref{eq_centering_heavytail}, and \eqref{eq_sum_bound_tau3_centered}, we obtain
\begin{equation}\label{eq_sum_bound_tau3}
\mathbb P\biggl(
\biggl|\sum_{i=1}^n d^{(i)}-n\mu\biggr|>\frac12 n\mu
\biggr)
\le
2C_\tau\frac{\log n}{n}.
\end{equation}
The bound \eqref{eq_probofD} now follows from \eqref{eq_orderstat_bounds}, \eqref{eq_chebyshev_sum}, \eqref{eq_sum_bound_heavytail_subcritical}, and \eqref{eq_sum_bound_tau3}. Indeed, for all sufficiently large~$n$,
\begin{align*}
\mathbb P(D_\varepsilon^{\mathrm{c}})
&\le
\exp(-n^{\varepsilon/3})
+
\begin{cases}
2C_\tau n^{-(\tau-2)}, & \tau\in(2,3),\\[1mm]
2C_\tau(\log n)\,n^{-1}, & \tau=3,\\[1mm]
\frac{4\sigma^2}{\mu^2 n}, & \tau>3.
\end{cases}
\end{align*}
Choosing $c'$ sufficiently large yields \eqref{eq_probofD}.

We now derive the remaining consequences on the event $D_\varepsilon$. Let
$$
a_n:=\lfloor n^{1-2\varepsilon(\tau-1)}\rfloor,
\qquad
b_n:=\lfloor n^{1-1.1\varepsilon(\tau-1)}\rfloor.
$$
If
$i\in \bigl[n^{1-2\varepsilon(\tau-1)},\,n^{1-1.1\varepsilon(\tau-1)}\bigr],$
then $1-2\varepsilon(\tau-1)>\varepsilon$, and hence, by the definition of $D_\varepsilon$,
\begin{equation}\label{eq_di_window}
c_1\left(\frac{n}{i}\right)^{\!\frac{1}{\tau-1}}
\le d^{(i)}\le
c_2\left(\frac{n}{i}\right)^{\!\frac{1}{\tau-1}}.
\end{equation}
Summing the lower bound in \eqref{eq_di_window} from $a_n$ to $b_n$, we obtain
\begin{align*}
\sum_{i=a_n}^{b_n} d^{(i)}
&\ge
c_1\sum_{i=a_n}^{b_n}\left(\frac{n}{i}\right)^{\!\frac{1}{\tau-1}}
\ge
c_1 n^{\frac{1}{\tau-1}}
\int_{a_n}^{b_n+1} x^{-\frac{1}{\tau-1}}\,dx.
\end{align*}
Since $\tau>2$,
\begin{align*}
\int_{a_n}^{b_n+1} x^{-\frac{1}{\tau-1}}\,dx
&=
\frac{\tau-1}{\tau-2}
\Bigl((b_n+1)^{1-\frac{1}{\tau-1}}-a_n^{1-\frac{1}{\tau-1}}\Bigr).
\end{align*}
Moreover,
\begin{align*}
\frac{a_n^{1-\frac{1}{\tau-1}}}{(b_n+1)^{1-\frac{1}{\tau-1}}}
\le
\frac{\bigl(n^{1-2\varepsilon(\tau-1)}\bigr)^{1-\frac{1}{\tau-1}}}
{\bigl(n^{1-1.1\varepsilon(\tau-1)}\bigr)^{1-\frac{1}{\tau-1}}}
=
n^{-0.9\varepsilon(\tau-2)},
\end{align*}
which tends to $0$ as $n\to\infty$. Therefore, for all sufficiently large $n$,
\begin{equation}\label{eq_integral_lower}
\int_{a_n}^{b_n+1} x^{-\frac{1}{\tau-1}}\,dx
\ge
c\, (b_n+1)^{1-\frac{1}{\tau-1}}
\end{equation}
for some constant $c>0$ depending only on $\tau$. Consequently,
\begin{align*}
\sum_{i=a_n}^{b_n} d^{(i)}
\ge
C_1 n^{\frac{1}{\tau-1}}
\left(n^{1-1.1\varepsilon(\tau-1)}\right)^{1-\frac{1}{\tau-1}}
=
C_1 n^{1-1.1\varepsilon(\tau-2)}
\end{align*}
for some constant $C_1>0$ depending only on $\tau$.
Similarly, using the upper bound in \eqref{eq_di_window},
\begin{align*}
\sum_{i=a_n}^{b_n} d^{(i)}
\le
c_2\sum_{i=a_n}^{b_n}\left(\frac{n}{i}\right)^{\!\frac{1}{\tau-1}}
\le
c_2 n^{\frac{1}{\tau-1}}
\int_{a_n-1}^{b_n} x^{-\frac{1}{\tau-1}}\,dx
\le
C_2 n^{1-1.1\varepsilon(\tau-2)}
\end{align*}
for some constant $C_2>0$ depending only on $\tau$. This proves \eqref{eq_sumH_lower}.
Finally, for every
$
i\in [a_n, b_n]
$,
we have
\begin{align*}
d^{(i)}
&\ge
d^{(b_n)}
\ge
c_1\left(\frac{n}{n^{1-1.1\varepsilon(\tau-1)}}\right)^{\!\frac{1}{\tau-1}}
=
c_1 n^{1.1\varepsilon}
>
n^\varepsilon,
\\
d^{(i)}
&\le
d^{(a_n)}
\le
c_2\left(\frac{n}{n^{1-2\varepsilon(\tau-1)}}\right)^{\!\frac{1}{\tau-1}}
=
c_2 n^{2\varepsilon}
<
n^{3\varepsilon}
\end{align*}
for all sufficiently large $n$. This proves \eqref{eq_d_i_between}.
\end{proof}

\begin{lemma}\label{lemma_hierarchicalstarv11layer}
Fix any $\tilde{\varepsilon}\in \left(0,\frac{1}{4\tau}\right)$, and set
$
\bar\varepsilon=\frac{\tilde{\varepsilon}}{4.4\tau}
$.
Let $G=G(n,\tau)$ be a power-law random graph on $n$ vertices with exponent $\tau>2$, and let
$
\bm d=(d_1,\dots,d_n)
$
denote its degree sequence. Suppose that $v$ is a vertex of degree at least $n^{2\tilde{\varepsilon}}$. Let $A_v$ denote the event that $v$ has at least $\lceil n^{\tilde{\varepsilon}}\rceil$ distinct neighbors whose degrees all lie in the interval $(n^{\bar\varepsilon},n^{3\bar\varepsilon})$. Then, for all sufficiently large $n$,
\begin{equation}\label{eq_lemma_hierarchicalstarv11layer}
\mathbb{P}(A_v\mid \bm d)\ge 1-\exp(-n^{\bar\varepsilon(\tau-2)}), \qquad \text{on the event $D_{\bar\varepsilon}$ defined in \eqref{eq_D}. }
\end{equation}
\end{lemma}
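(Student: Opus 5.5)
We will prove the statement by sequentially exposing the pairing of the half-edges of $v$ and comparing the number of "good" neighbors with a binomial variable. Condition on $\bm d$ and work on $D_{\bar\varepsilon}$. Let $H$ be the set of vertices $u\neq v$ with $d_u\in(n^{\bar\varepsilon},n^{3\bar\varepsilon})$. By \eqref{eq_d_i_between} applied with $\bar\varepsilon$ in place of $\varepsilon$, every order statistic $d^{(i)}$ with $i\in[\lfloor n^{1-2\bar\varepsilon(\tau-1)}\rfloor,\lfloor n^{1-1.1\bar\varepsilon(\tau-1)}\rfloor]$ lies in $[n^{\bar\varepsilon},n^{3\bar\varepsilon}]$. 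We will shrink the window slightly (or use the strict inequalities $c_1n^{1.1\bar\varepsilon}>n^{\bar\varepsilon}$ and $c_2n^{2\bar\varepsilon}<n^{3\bar\varepsilon}$ from the proof) to get the open interval. Then \eqref{eq_sumH_lower} gives
\[
\sum_{u\in H}d_u\ \ge\ C_1n^{1-1.1\bar\varepsilon(\tau-2)}-n^{3\bar\varepsilon},
\]
while the total number of half-edges is $\ell_n:=\sum_u d_u\le \tfrac32 n\mu$.

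Fix $m:=\lceil n^{2\tilde\varepsilon}\rceil$ half-edges of $v$ and pair them one at a time, each with a uniformly chosen unpaired half-edge. At step $s\le m$, at most $2m$ half-edges have been used. Call step $s$ a success if the chosen half-edge belongs to a vertex $u\in H$ none of whose half-edges has been used before. Previously hit vertices of $H$ carry at most $m\,n^{3\bar\varepsilon}$ half-edges in total. Since $\tilde\varepsilon<1/(4\tau)$ and $\bar\varepsilon=\tilde\varepsilon/(4.4\tau)$, we have $2\tilde\varepsilon+3\bar\varepsilon<1-1.1\bar\varepsilon(\tau-2)$. Hence, uniformly over the history, the conditional success probability is at least
\[
p:=\frac{\tfrac12 C_1n^{1-1.1\bar\varepsilon(\tau-2)}}{\tfrac32 n\mu}= c\,n^{-1.1\bar\varepsilon(\tau-2)}.
\]
Therefore the number of successes stochastically dominates $\mathrm{Binomial}(m,p)$. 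Distinct successes give distinct neighbors, which are also distinct from $v$. Self-loops and multi-edges do not matter, since the process runs on $G^{\mathrm s}$ and we only count distinct neighbors.

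The mean of this binomial is $mp\ge c\,n^{2\tilde\varepsilon-1.1\bar\varepsilon(\tau-2)}$, and $1.1\bar\varepsilon(\tau-2)<\tilde\varepsilon/4$, so $mp\ge 2n^{\tilde\varepsilon}$ for large $n$. A Chernoff bound, or Lemma~\ref{lemma_largedeviation}, then gives
\[
\mathbb P\bigl(\mathrm{Bin}(m,p)<n^{\tilde\varepsilon}\bigr)\le \exp(-c'mp)\le\exp\bigl(-n^{\bar\varepsilon(\tau-2)}\bigr),
\]
because $mp\gg n^{\bar\varepsilon(\tau-2)}$.

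The main obstacle is making the stochastic domination rigorous despite the dependence created by sequential pairing. This is handled by the uniform-in-history lower bound on the conditional success probability above: each step is a Bernoulli trial with success probability at least $p$ given the past, and a standard coupling then yields the binomial domination.
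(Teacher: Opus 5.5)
Your proposal is correct and follows the paper's proof in all essentials: you use the same sequential exposure of $\lceil n^{2\tilde\varepsilon}\rceil$ half-edges of $v$, the same order-statistic window from $D_{\bar\varepsilon}$, and the same uniform-in-history lower bound of order $n^{-1.1\bar\varepsilon(\tau-2)}$ on the probability of hitting a fresh window vertex. The only difference is the concentration step: the paper applies Freedman's inequality to the martingale $\sum_i(\mathbf 1_{B_i}-p_i)$, which also needs an upper bound on $p_i$ for the variance, whereas your coupling with $\mathrm{Binomial}(m,p)$ followed by a Chernoff bound needs only the lower bound and is slightly more elementary.
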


\begin{proof}
We work throughout conditional on the degree sequence $\bm d$, and on the event $D_{\bar\varepsilon}$. Conditional on $\bm d$, the remaining randomness is the uniform pairing of the half-edges in the configuration model. We will expose the first $\lceil n^{2\tilde\varepsilon}\rceil$ half-edges of $v$, namely
$
(v,1),(v,2),\dots,(v,\lceil n^{2\tilde\varepsilon}\rceil)
$,
one by one, and show that with high probability at least $n^{\tilde\varepsilon}$ of them are paired to half-edges attached to distinct vertices whose degrees lie in $(n^{\bar\varepsilon},n^{3\bar\varepsilon})$.

For each vertex $u\in V$, let
$
\Omega_u:=\{(u,1),\dots,(u,d_u)\}
$
be the set of half-edges incident to $u$. Fix a deterministic tie-breaking rule and write $v^{(1)},\dots,v^{(n)}$ for the vertices ordered so that $d_{v^{(1)}}\ge d_{v^{(2)}}\ge \cdots \ge d_{v^{(n)}}$. Define
$$
\mathcal{K}_{\bar\varepsilon}:=\Bigl\{v^{(i)}:\ i\in
\bigl[\lfloor n^{1-2\bar\varepsilon(\tau-1)}\rfloor,\,
\lfloor n^{1-1.1\bar\varepsilon(\tau-1)}\rfloor\bigr]
\Bigr\}.
$$

By Lemma~\ref{lemma_degree_event_D}, on the event $D_{\bar\varepsilon}$, every vertex in $\mathcal{K}_{\bar\varepsilon}$ has degree in $(n^{\bar\varepsilon},n^{3\bar\varepsilon})$, hence $v\notin \mathcal{K}_{\bar\varepsilon}$, and
\begin{equation}\label{eq_sumK_hierarchicalstar}
C_1 n^{1-1.1\bar\varepsilon(\tau-2)}
\le
\sum_{u\in\mathcal{K}_{\bar\varepsilon}} d_u
\le
C_2 n^{1-1.1\bar\varepsilon(\tau-2)}
\end{equation}
for some constants $C_1,C_2>0$ depending only on $\tau$.

For $1\le i\le \lceil n^{2\tilde\varepsilon}\rceil$, let $e_i$ denote the partner half-edge paired to $(v,i)$, and let $u_i$ be the vertex incident to $e_i$, that is, $e_i\in \Omega_{u_i}$. Since we are working in the multigraph configuration model, the same vertex may appear several times among $u_1,u_2,\dots$, and we do not forbid this. We only require that the half-edges paired at different steps are distinct, which is automatic under the pairing procedure.
Let
$U_{i-1}:=\{u_1,\dots,u_{i-1}\}$ be the set of distinct vertices already hit after the first $i-1$ exposures. Let $\bar{\mathcal{F}}_0$ be the trivial $\sigma$-algebra at time $0$ with no pairing information and let
$$
\bar{\mathcal F}_i:=\sigma(e_1,\dots,e_i).
$$
For $i=1,\dots,\lceil n^{2\tilde\varepsilon}\rceil$, define
$$
B_i:=
\{u_i\in \mathcal{K}_{\bar\varepsilon}\setminus U_{i-1}\}.
$$
Thus $B_i$ is the event that, at step $i$, the newly exposed half-edge of $v$ is paired to a half-edge incident to a vertex in $\mathcal{K}_{\bar\varepsilon}$ that has not been seen before. Therefore,
\begin{equation}\label{eq_Av_contains_Bi_new}
A_v\supseteq
\biggl\{
\sum_{i=1}^{\lceil n^{2\tilde\varepsilon}\rceil}\mathbf 1_{B_i}\ge n^{\tilde\varepsilon}
\biggr\}.
\end{equation}
We first compute the probability of $B_1$. For the first pairing, the admissible half-edge pool is all half-edges except $(v,1)$. Hence,
\begin{equation}\label{eq_probofB1_multigraph}
\mathbb P(B_1 )
=
\frac{\sum_{u\in\mathcal{K}_{\bar\varepsilon}} d_u}{\sum_{u\in V} d_u-1}.
\end{equation}
Since on $D_{\bar\varepsilon}$,
$
\frac12 \mu n\le \sum_{u\in V} d_u\le \frac32 \mu n
$, it follows from \eqref{eq_sumK_hierarchicalstar} that for all sufficiently large $n$,
\begin{equation}\label{eq_probofB1_bounds_multigraph}
c_3 n^{-1.1\bar\varepsilon(\tau-2)}
\le
\mathbb P(B_1)
\le
c_4 n^{-1.1\bar\varepsilon(\tau-2)},
\end{equation}
where
$
c_3:=\frac{2C_1}{3\mu}, c_4:=\frac{4C_2}{\mu}
$ are constant.

For $2\le i\le \lceil n^{2\tilde\varepsilon}\rceil$, after the first $i-1$ pairings, exactly $2(i-1)$ half-edges have been removed from the pool, namely
$$
(v,1),\dots,(v,i-1),\ e_1,\dots,e_{i-1}.
$$
Therefore the partner of $(v,i)$ is uniformly distributed over a set of cardinality
$
\sum_{u\in V}d_u-(2i-1)
$.
Moreover, the event $B_i$ occurs precisely when this partner belongs to a half-edge attached to some vertex in $\mathcal{K}_{\bar\varepsilon}\setminus U_{i-1}$. Hence
\begin{equation}\label{eq_condprob_Bi_multigraph}
\mathbb P(B_i\mid \bar{\mathcal F}_{i-1})
=
\frac{\sum_{u\in \mathcal{K}_{\bar\varepsilon}\setminus U_{i-1}} d_u}{\sum_{u\in V} d_u-(2i-1)},
\qquad \text{a.s.}
\end{equation}
We now bound these conditional probabilities from below and above. On $D_{\bar\varepsilon}$, every vertex in $\mathcal{K}_{\bar\varepsilon}$ has degree at most $n^{3\bar\varepsilon}$, so
$$
\sum_{u\in U_{i-1}\cap\mathcal{K}_{\bar\varepsilon}} d_u
\le
(i-1)n^{3\bar\varepsilon}.
$$
Using \eqref{eq_sumK_hierarchicalstar}, we obtain
\begin{align}
\mathbb P(B_i\mid \bar{\mathcal F}_{i-1})
&\ge
\frac{\sum_{u\in\mathcal{K}_{\bar\varepsilon}} d_u-\sum_{u\in U_{i-1}\cap\mathcal{K}_{\bar\varepsilon}} d_u}
{\sum_{u\in V} d_u}
\ge
\frac{C_1 n^{1-1.1\bar\varepsilon(\tau-2)}-(i-1)n^{3\bar\varepsilon}}
{\frac32\mu n},
\qquad \text{a.s.}
\label{eq_condprob_lower_multigraph}
\end{align}
Similarly,
\begin{align}
\mathbb P(B_i\mid \bar{\mathcal F}_{i-1})
&\le
\frac{\sum_{u\in\mathcal{K}_{\bar\varepsilon}} d_u}{\sum_{u\in V} d_u-(2i-1)}
\le
\frac{C_2 n^{1-1.1\bar\varepsilon(\tau-2)}}{\frac12\mu n-(2i-1)},
\qquad \text{a.s.}
\label{eq_condprob_upper_multigraph}
\end{align}
Since $i\le \lceil n^{2\tilde\varepsilon}\rceil$ and
$
2\tilde\varepsilon+3\bar\varepsilon
<
1-1.1\bar\varepsilon(\tau-2)
$, there exist constants $c_5,c_6>0$, depending only on $\tau$, such that for all sufficiently large $n$,
\begin{equation}\label{eq_condBi_bounds_multigraph}
c_5 n^{-1.1\bar\varepsilon(\tau-2)}
\le
\mathbb P(B_i\mid \bar{\mathcal F}_{i-1})
\le
c_6 n^{-1.1\bar\varepsilon(\tau-2)},
\qquad \text{a.s.}
\end{equation}
Now define
$$
p_i:=\mathbb P(B_i\mid \bar{\mathcal F}_{i-1}),
\quad
\xi_i:=\mathbf 1_{B_i}-p_i,
\quad
M_k:=\sum_{i=1}^k \xi_i, \quad V_k:=\sum_{i=1}^k \mathbb E(\xi_i^2\mid \bar{\mathcal F}_{i-1}),
\quad
1\le k\le \lceil n^{2\tilde\varepsilon}\rceil.
$$
Then $(M_k)_{k\ge 0}$ is a martingale with respect to $(\bar{\mathcal F}_k)$, and 
$
V_k
$ is its predictable quadratic variation. Since
$
\mathbb E(\xi_i^2\mid \bar{\mathcal F}_{i-1})
=
p_i(1-p_i)\le p_i
$,
we have
$$
V_k\le \sum_{i=1}^k p_i
\le
c_6 \lceil n^{2\tilde\varepsilon}\rceil n^{-1.1\bar\varepsilon(\tau-2)}
\qquad\text{a.s.}
$$
for every $1\le k\le \lceil n^{2\tilde\varepsilon}\rceil$, and also $|\xi_i|\le 1$ a.s.
We apply Freedman's inequality to the martingale $(-M_k)$; see \cite[Theorem~1.6]{Freedman1975} or \cite[Theorem~1.1]{Tropp2011Freedman}. For any $t,\sigma^2>0$,
\begin{equation}\label{eq_freedman_general_multigraph}
\mathbb P\Bigl(\exists\,k\le \lceil n^{2\tilde\varepsilon}\rceil: -M_k\ge t,\ V_k\le \sigma^2\Bigr)
\le
\exp\biggl(-\frac{t^2/2}{\sigma^2+t/3}\biggr).
\end{equation}
Taking
$
t=n^{\tilde\varepsilon},\sigma^2=c_6 \lceil n^{2\tilde\varepsilon}\rceil n^{-1.1\bar\varepsilon(\tau-2)},
$
we obtain, for all sufficiently large $n$,
\begin{equation}\label{eq_freedman_multigraph}
\mathbb P\bigl(M_{\lceil n^{2\tilde\varepsilon}\rceil}\le -n^{\tilde\varepsilon}\bigr)
\le
\exp\biggl(
-\frac{n^{2\tilde\varepsilon}/2}
{c_6 \lceil n^{2\tilde\varepsilon}\rceil n^{-1.1\bar\varepsilon(\tau-2)}+n^{\tilde\varepsilon}/3}
\biggr)
\le
\exp(-n^{\bar\varepsilon(\tau-2)}).
\end{equation}
On the event $\{M_{\lceil n^{2\tilde\varepsilon}\rceil}>-n^{\tilde\varepsilon}\}$,
\begin{align*}
\sum_{i=1}^{\lceil n^{2\tilde\varepsilon}\rceil}\mathbf 1_{B_i}
=
\sum_{i=1}^{\lceil n^{2\tilde\varepsilon}\rceil} p_i + M_{\lceil n^{2\tilde\varepsilon}\rceil}
\ge
\sum_{i=1}^{\lceil n^{2\tilde\varepsilon}\rceil} p_i - n^{\tilde\varepsilon}
\ge
c_5 \lceil n^{2\tilde\varepsilon}\rceil n^{-1.1\bar\varepsilon(\tau-2)}-n^{\tilde\varepsilon}.
\end{align*}
Because
$
2\tilde\varepsilon-1.1\bar\varepsilon(\tau-2)>\tilde\varepsilon
$, by the choice $\bar\varepsilon=\tilde\varepsilon/(4.4\tau)$, the right-hand side is at least $n^{\tilde\varepsilon}$ for all sufficiently large $n$. Therefore,
$$
\mathbb P\bigl(
\sum_{i=1}^{\lceil n^{2\tilde\varepsilon}\rceil}\mathbf 1_{B_i}\ge n^{\tilde\varepsilon}
\bigr)
\ge
1-\exp(-n^{\bar\varepsilon(\tau-2)}).
$$
Since we work throughout conditional on the degree sequence $\bm d$, and on the event $D_{\bar\varepsilon}$, combining this with \eqref{eq_Av_contains_Bi_new}, we conclude that
$$
\mathbb P(A_v\mid \bm d)\ge 1-\exp(-n^{\bar\varepsilon (\tau-2)})
$$
on the event $D_{\bar\varepsilon}$, which proves \eqref{eq_lemma_hierarchicalstarv11layer}.
\end{proof}

\begin{lemma}\label{lemma_hierarchicalstarvi1layer}
Fix any $\delta\in (0,1)$, and set
$
\tilde{\varepsilon}=\frac{\delta}{20\tau},
\bar\varepsilon=\frac{\tilde{\varepsilon}}{4.4\tau}
$.
Let $G=G(n,\tau)$ be a power-law random graph on $n$ vertices with exponent $\tau>2$, and let
$
\bm d=(d_1,\dots,d_n)
$
denote its degree sequence. Suppose that $v_1,v_2,\dots,v_{\lceil n^{1-\delta}\rceil}$ are vertices with degrees greater than $n^{2\tilde{\varepsilon}}$. Let $A$ denote the event that, for each $j=1,\dots,\lceil n^{1-\delta}\rceil$, the vertex $v_j$ has at least $\lceil n^{\tilde{\varepsilon}}\rceil$ distinct neighbors
$
u_1^{(j)},\dots,u_{\lceil n^{\tilde{\varepsilon}}\rceil}^{(j)}
$
such that all vertices in the collection
$
\{
u_i^{(j)}:
1\le j\le \lceil n^{1-\delta}\rceil,\,
1\le i\le \lceil n^{\tilde{\varepsilon}}\rceil
\}$
are distinct and have degrees between $n^{\bar\varepsilon}$ and $n^{3\bar\varepsilon}$. Then, for all sufficiently large $n$,
\begin{equation}
\mathbb{P}(A\mid \bm d)\ge 1-\exp(-n^{\bar\varepsilon (\tau-2)/2}), \qquad \text{on the event $D_{\bar\varepsilon}$ defined in \eqref{eq_D}. }
\end{equation}
\end{lemma}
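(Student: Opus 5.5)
We extend the sequential-exposure argument of Lemma~\ref{lemma_hierarchicalstarv11layer} to all the hubs at once. Throughout, condition on $\bm d$ and work on $D_{\bar\varepsilon}$. We expose half-edges in rounds $j=1,\dots,\lceil n^{1-\delta}\rceil$. In round $j$ we expose the first $\lceil n^{2\tilde\varepsilon}\rceil$ half-edges of $v_j$ one at a time, skipping any half-edge already paired in an earlier round (at most $\lceil n^{2\tilde\varepsilon}\rceil$ per earlier round, so this is harmless once we also use $d_{v_j}>n^{2\tilde\varepsilon}$; alternatively we expose only unpaired half-edges, of which there are enough). Let $U$ be the set of vertices of $\mathcal K_{\bar\varepsilon}$ hit so far across all rounds, together with $\{v_1,\dots,v_{\lceil n^{1-\delta}\rceil}\}$. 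The step-$i$ event $B^{(j)}_i$ is that the partner half-edge lies on a vertex of $\mathcal K_{\bar\varepsilon}\setminus U$. Successes in round $j$ then give distinct neighbors of $v_j$ with degrees in $(n^{\bar\varepsilon},n^{3\bar\varepsilon})$, and these are disjoint from those produced in every other round. Hence $A$ contains the event that every round produces at least $n^{\tilde\varepsilon}$ successes.

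The key estimate is a uniform bound on the conditional success probability at every step of every round. The total number of exposures is at most
\[
N:=\lceil n^{1-\delta}\rceil\lceil n^{2\tilde\varepsilon}\rceil\le 2n^{1-\delta+2\tilde\varepsilon}.
\]
The half-edge mass removed from $\mathcal K_{\bar\varepsilon}$ is therefore at most $N n^{3\bar\varepsilon}\le 2n^{1-\delta+2\tilde\varepsilon+3\bar\varepsilon}$. Marking the $v_j$'s as used costs nothing: their degrees exceed $n^{2\tilde\varepsilon}>n^{3\bar\varepsilon}$, so they lie outside $\mathcal K_{\bar\varepsilon}$. The pool of free half-edges is still at least $\tfrac12\mu n-2N\ge \tfrac14\mu n$. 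With $\tilde\varepsilon=\delta/(20\tau)$ and $\bar\varepsilon=\tilde\varepsilon/(4.4\tau)$, we have
\[
1-\delta+2\tilde\varepsilon+3\bar\varepsilon<1-1.1\bar\varepsilon(\tau-2),
\]
since $\delta$ dominates $2\tilde\varepsilon+3\bar\varepsilon+1.1\bar\varepsilon\tau$. Using \eqref{eq_sumK_hierarchicalstar} exactly as in \eqref{eq_condprob_lower_multigraph}--\eqref{eq_condBi_bounds_multigraph}, we then get
\[
c_5n^{-1.1\bar\varepsilon(\tau-2)}\le \mathbb P\bigl(B^{(j)}_i\mid\text{past}\bigr)\le c_6 n^{-1.1\bar\varepsilon(\tau-2)}
\]
a.s., uniformly over all rounds and steps. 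Checking this exponent inequality is the main obstacle: it is exactly where the interplay of $\delta,\tilde\varepsilon,\bar\varepsilon$ enters, and I would verify it carefully.

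Given the uniform bounds, Freedman's inequality \eqref{eq_freedman_general_multigraph}, applied to the martingale of round $j$ alone with the filtration of the whole exposure sequence, gives
\[
\mathbb P\bigl(\text{round } j \text{ has fewer than } n^{\tilde\varepsilon} \text{ successes}\,\big|\,\bm d\bigr)\le \exp\bigl(-n^{\bar\varepsilon(\tau-2)}\bigr).
\]
The computation is identical to \eqref{eq_freedman_multigraph}, using $2\tilde\varepsilon-1.1\bar\varepsilon(\tau-2)>\tilde\varepsilon$. A union bound over the $\lceil n^{1-\delta}\rceil$ rounds then yields
\[
\mathbb P(A^{c}\mid\bm d)\le 2n\exp\bigl(-n^{\bar\varepsilon(\tau-2)}\bigr)\le \exp\bigl(-n^{\bar\varepsilon(\tau-2)/2}\bigr)
\]
for large $n$ on $D_{\bar\varepsilon}$, which is the claim.
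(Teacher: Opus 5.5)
Your proposal is essentially the paper's proof. It uses hub-by-hub sequential exposure of $\lceil n^{2\tilde\varepsilon}\rceil$ half-edges, excludes previously hit vertices of $\mathcal K_{\bar\varepsilon}$, and gets uniform conditional bounds $c_5n^{-1.1\bar\varepsilon(\tau-2)}\le p\le c_6n^{-1.1\bar\varepsilon(\tau-2)}$ from $1-\delta+2\tilde\varepsilon+3\bar\varepsilon<1-1.1\bar\varepsilon(\tau-2)$ (which does hold for the stated $\tilde\varepsilon,\bar\varepsilon$), followed by Freedman per round and a union bound over rounds.

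The one flaw is your parenthetical on half-edges of $v_j$ already paired in earlier rounds. Those rounds can consume up to $(j-1)\lceil n^{2\tilde\varepsilon}\rceil$ of them, possibly all of $v_j$'s half-edges, so the per-round bound does not suffice and you need an extra step. For example, the number consumed is dominated by $\mathrm{Binomial}(N,4d_{v_j}/(\mu n))$, hence is at most $d_{v_j}/2$ outside an event of probability $\exp(-n^{2\tilde\varepsilon})$. After that, exposing $\lceil n^{2\tilde\varepsilon}/4\rceil$ fresh half-edges per hub still gives $\gg n^{\tilde\varepsilon}$ expected successes. The paper's own proof passes over this point silently.
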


\begin{proof}
We work throughout conditional on the degree sequence $\bm d$, and on the event $D_{\bar\varepsilon}$. Conditional on $\bm d$, the remaining randomness comes from the uniform pairing of half-edges in the configuration model.

For each vertex $u\in V$, let $\Omega_u:=\{(u,1),\dots,(u,d_u)\}$ be the set of half-edges incident to $u$. Fix a deterministic tie-breaking rule and write $x^{(1)},\dots,x^{(n)}$ for the vertices ordered so that $d_{x^{(1)}}\ge d_{x^{(2)}}\ge \cdots \ge d_{x^{(n)}}$. Define
$$
\mathcal K_{\bar\varepsilon}:=\Bigl\{x^{(i)}:\ i\in\bigl[\lfloor n^{1-2\bar\varepsilon(\tau-1)}\rfloor,\lfloor n^{1-1.1\bar\varepsilon(\tau-1)}\rfloor\bigr]\Bigr\}.
$$
By Lemma~\ref{lemma_degree_event_D}, on the event $D_{\bar\varepsilon}$, every vertex in $\mathcal K_{\bar\varepsilon}$ has degree in $(n^{\bar\varepsilon},n^{3\bar\varepsilon})$, and
\begin{equation}\label{eq_sumKeps_vi1layer_new}
C_1 n^{1-1.1\bar\varepsilon(\tau-2)}
\le
\sum_{u\in\mathcal K_{\bar\varepsilon}}d_u
\le
C_2 n^{1-1.1\bar\varepsilon(\tau-2)}
\end{equation}
for some constants $C_1,C_2>0$ depending only on $\tau$. Since each $v_\ell$ has degree greater than $n^{2\tilde\varepsilon}$ and $2\tilde\varepsilon>3\bar\varepsilon$, we also have $v_\ell\notin\mathcal K_{\bar\varepsilon}$ for every $\ell=1,\dots,\lceil n^{1-\delta}\rceil$, for all sufficiently large $n$.

For each $\ell=1,\dots,\lceil n^{1-\delta}\rceil$, we expose the first $\lceil n^{2\tilde\varepsilon}\rceil$ half-edges of $v_\ell$, namely 
$
(v_\ell,1),\dots,(v_\ell,\lceil n^{2\tilde\varepsilon}\rceil)
$.
Since conditional on $\bm d$ the pairing is uniform, the order in which we expose the half-edges is irrelevant. Hence we may first expose the first $\lceil n^{2\tilde\varepsilon}\rceil$ half-edges of $v_1$, then the first $\lceil n^{2\tilde\varepsilon}\rceil$ half-edges of $v_2$, and so on, up to $v_{\lceil n^{1-\delta}\rceil}$.

For each $\ell=1,\dots,\lceil n^{1-\delta}\rceil$, let $e_1^{(\ell)},\dots,e_{\lceil n^{2\tilde\varepsilon}\rceil}^{(\ell)}$ denote the partner half-edges paired to \\
$(v_\ell,1),\dots,(v_\ell,\lceil n^{2\tilde\varepsilon}\rceil)$, and let $u_i^{(\ell)}$ be the vertex incident to $e_i^{(\ell)}$, that is, $e_i^{(\ell)}\in\Omega_{u_i^{(\ell)}}$. Define
$$
U_{i-1}^{(\ell)}:=\{u_1^{(\ell)},\dots,u_{i-1}^{(\ell)}\},\qquad 1\le i\le \lceil n^{2\tilde\varepsilon}\rceil.
$$
For each $\ell\ge2$, let
$$
\mathcal K_{\bar\varepsilon}^{(\ell)}:=\mathcal K_{\bar\varepsilon}\setminus\Bigl\{u_i^{(k)}:1\le k\le \ell-1,\ 1\le i\le \lceil n^{2\tilde\varepsilon}\rceil\Bigr\},
$$
and set $\mathcal K_{\bar\varepsilon}^{(1)}:=\mathcal K_{\bar\varepsilon}$.
Let
$$
E^{(<\ell)}:=\bigcup_{k=1}^{\ell-1}\Bigl(\{(v_k,1),\dots,(v_k,\lceil n^{2\tilde\varepsilon}\rceil)\}\cup\{e_1^{(k)},\dots,e_{\lceil n^{2\tilde\varepsilon}\rceil}^{(k)}\}\Bigr)
$$
be the set of all half-edges already used before stage $\ell$. Then
\begin{equation}\label{eq_size_Eless_ell_multigraph_new}
|E^{(<\ell)}|=2(\ell-1)\lceil n^{2\tilde\varepsilon}\rceil.
\end{equation}
For $i=0,1,\dots,\lceil n^{2\tilde\varepsilon}\rceil$, define the filtration
$$
\bar{\mathcal F}_0^{(\ell)}:=\sigma(E^{(<\ell)}),\qquad \bar{\mathcal F}_i^{(\ell)}:=\sigma(E^{(<\ell)},e_1^{(\ell)},\dots,e_i^{(\ell)}).
$$
For $1\le i\le \lceil n^{2\tilde\varepsilon}\rceil$, let
$$
B_i^{(\ell)}:=\{u_i^{(\ell)}\in\mathcal K_{\bar\varepsilon}^{(\ell)}\setminus U_{i-1}^{(\ell)}\}.
$$
Then
\begin{equation}\label{eq_Avell_contains_Bi_multigraph_new}
 \bigcap_{\ell=1}^{\lceil n^{1-\delta}\rceil}\biggl\{\sum_{i=1}^{\lceil n^{2\tilde\varepsilon}\rceil}\mathbf 1_{B_i^{(\ell)}}\ge n^{\tilde\varepsilon}\biggr\}\subseteq A .
\end{equation}
Fix $\ell\in\{1,\dots,\lceil n^{1-\delta}\rceil\}$. Conditional on $\bar{\mathcal F}_0^{(\ell)}$, the half-edge $(v_\ell,i)$ is paired uniformly to one of the remaining half-edges at step $i$. Since before step $i$ at stage $\ell$ exactly $|E^{(<\ell)}|+2(i-1)$ half-edges have already been used, we have
\begin{equation}\label{eq_condprob_Biell_multigraph_new}
\mathbb P\bigl(B_i^{(\ell)}\mid \bar{\mathcal F}_{i-1}^{(\ell)}\bigr)
=
\frac{\sum_{u\in \mathcal K_{\bar\varepsilon}^{(\ell)}\setminus U_{i-1}^{(\ell)}} d_u}
{\sum_{u\in V}d_u-|E^{(<\ell)}|-(2i-1)},
\qquad \text{a.s.}
\end{equation}
We now derive uniform lower and upper bounds on these conditional probabilities. Since all vertices in $\mathcal K_{\bar\varepsilon}$ have degree at most $n^{3\bar\varepsilon}$ on $D_{\bar\varepsilon}$, we have
\begin{equation}\label{eq_removed_degree_mass_multigraph_new}
\sum_{u\in\mathcal K_{\bar\varepsilon}\setminus\mathcal K_{\bar\varepsilon}^{(\ell)}}d_u\le(\ell-1)\lceil n^{2\tilde\varepsilon}\rceil n^{3\bar\varepsilon},\qquad \sum_{u\in U_{i-1}^{(\ell)}\cap\mathcal K_{\bar\varepsilon}^{(\ell)}}d_u\le(i-1)n^{3\bar\varepsilon}.
\end{equation}
Hence, by \eqref{eq_sumKeps_vi1layer_new},
\begin{align}
\sum_{u\in \mathcal K_{\bar\varepsilon}^{(\ell)}\setminus U_{i-1}^{(\ell)}}d_u
&\ge C_1 n^{1-1.1\bar\varepsilon(\tau-2)}-(\ell-1)\lceil n^{2\tilde\varepsilon}\rceil n^{3\bar\varepsilon}-(i-1)n^{3\bar\varepsilon}.
\label{eq_numerator_lower_multigraph_new}
\end{align}
Also, on $D_{\bar\varepsilon}$,
$
\frac12\mu n\le \sum_{u\in V}d_u\le \frac32\mu n
$.
Combining this with \eqref{eq_size_Eless_ell_multigraph_new}, we obtain
\begin{equation}\label{eq_denominator_bounds_multigraph_new}
\frac12\mu n-2(\ell-1)\lceil n^{2\tilde\varepsilon}\rceil-(2i-1)\le \sum_{u\in V}d_u-|E^{(<\ell)}|-(2i-1)\le \frac32\mu n.
\end{equation}
Therefore, by \eqref{eq_condprob_Biell_multigraph_new}, \eqref{eq_numerator_lower_multigraph_new}, and \eqref{eq_denominator_bounds_multigraph_new},
\begin{align}
\mathbb P\bigl(B_i^{(\ell)}\mid \bar{\mathcal F}_{i-1}^{(\ell)}\bigr)
&\ge
\frac{C_1 n^{1-1.1\bar\varepsilon(\tau-2)}-(\ell-1)\lceil n^{2\tilde\varepsilon}\rceil n^{3\bar\varepsilon}-(i-1)n^{3\bar\varepsilon}}{\frac32\mu n},
\qquad \text{a.s.},
\label{eq_condprob_lower_Biell_multigraph_new}
\end{align}
and similarly,
\begin{align}
\mathbb P\bigl(B_i^{(\ell)}\mid \bar{\mathcal F}_{i-1}^{(\ell)}\bigr)
&\le
\frac{C_2 n^{1-1.1\bar\varepsilon(\tau-2)}}{\frac12\mu n-2(\ell-1)\lceil n^{2\tilde\varepsilon}\rceil-(2i-1)},
\qquad \text{a.s.}
\label{eq_condprob_upper_Biell_multigraph_new}
\end{align}
Since $\ell\le \lceil n^{1-\delta}\rceil$, $i\le \lceil n^{2\tilde\varepsilon}\rceil$, $1-\delta+2\tilde\varepsilon+3\bar\varepsilon<1-1.1\bar\varepsilon(\tau-2)$ and $2\tilde\varepsilon<1$, there exist constants $c_5,c_6>0$, depending only on $\tau$, such that for all sufficiently large $n$,
\begin{equation}\label{eq_condprob_uniform_Biell_multigraph_new}
c_5 n^{-1.1\bar\varepsilon(\tau-2)}\le \mathbb P\bigl(B_i^{(\ell)}\mid \bar{\mathcal F}_{i-1}^{(\ell)}\bigr)\le c_6 n^{-1.1\bar\varepsilon(\tau-2)},\qquad \text{a.s.}
\end{equation}
Define
$$
p_i^{(\ell)}:=\mathbb P\bigl(B_i^{(\ell)}\mid \bar{\mathcal F}_{i-1}^{(\ell)}\bigr),\qquad \xi_i^{(\ell)}:=\mathbf 1_{B_i^{(\ell)}}-p_i^{(\ell)},
$$
and
$$
M_k^{(\ell)}:=\sum_{i=1}^k\xi_i^{(\ell)},\qquad V_k^{(\ell)}:=\sum_{i=1}^k\mathbb E\bigl((\xi_i^{(\ell)})^2\mid \bar{\mathcal F}_{i-1}^{(\ell)}\bigr),\qquad 1\le k\le \lceil n^{2\tilde\varepsilon}\rceil.
$$
Then $(M_k^{(\ell)})$ is a martingale with respect to $(\bar{\mathcal F}_k^{(\ell)})$, and
$
\mathbb E\bigl((\xi_i^{(\ell)})^2\mid \bar{\mathcal F}_{i-1}^{(\ell)}\bigr)=p_i^{(\ell)}(1-p_i^{(\ell)})\le p_i^{(\ell)}
$.
Hence,
$$
V_k^{(\ell)}\le \sum_{i=1}^k p_i^{(\ell)}\le c_6\lceil n^{2\tilde\varepsilon}\rceil n^{-1.1\bar\varepsilon(\tau-2)}\qquad\text{a.s.}
$$
for every $1\le k\le \lceil n^{2\tilde\varepsilon}\rceil$, and also $|\xi_i^{(\ell)}|\le 1$ a.s.
Applying Freedman's inequality to $(-M_k^{(\ell)})$ with $t=n^{\tilde\varepsilon}$ and $\sigma^2=c_6\lceil n^{2\tilde\varepsilon}\rceil n^{-1.1\bar\varepsilon(\tau-2)}$, see \cite[Theorem~1.6]{Freedman1975} or \cite[Theorem~1.1]{Tropp2011Freedman}, we obtain
\begin{equation}\label{eq_freedman_Biell_multigraph_new}
\mathbb P\Bigl(M_{\lceil n^{2\tilde\varepsilon}\rceil}^{(\ell)}\le -n^{\tilde\varepsilon}\Bigm| \bar{\mathcal F}_0^{(\ell)}\Bigr)\le \exp\biggl(-\frac{t^2/2}{\sigma^2+t/3}\biggr)  \le  \exp(-n^{\bar\varepsilon(\tau-2)})
\end{equation}
for all sufficiently large $n$.
On the event $\{M_{\lceil n^{2\tilde\varepsilon}\rceil}^{(\ell)}>-n^{\tilde\varepsilon}\}$,
\begin{align*}
\sum_{i=1}^{\lceil n^{2\tilde\varepsilon}\rceil}\mathbf 1_{B_i^{(\ell)}}=\sum_{i=1}^{\lceil n^{2\tilde\varepsilon}\rceil}p_i^{(\ell)}+M_{\lceil n^{2\tilde\varepsilon}\rceil}^{(\ell)}
\ge \sum_{i=1}^{\lceil n^{2\tilde\varepsilon}\rceil}p_i^{(\ell)}-n^{\tilde\varepsilon}
\ge c_5\lceil n^{2\tilde\varepsilon}\rceil n^{-1.1\bar\varepsilon(\tau-2)}-n^{\tilde\varepsilon}.
\end{align*}
Since $2\tilde\varepsilon-1.1\bar\varepsilon(\tau-2)>\tilde\varepsilon$, the right-hand side is at least $n^{\tilde\varepsilon}$ for all sufficiently large $n$. Therefore,
\begin{equation}\label{eq_stage_success_multigraph_new}
\mathbb P\biggl(\sum_{i=1}^{\lceil n^{2\tilde\varepsilon}\rceil}\mathbf 1_{B_i^{(\ell)}}\ge n^{\tilde\varepsilon}\Bigm|\bar{\mathcal F}_0^{(\ell)}\biggr)\ge 1-\exp(-n^{\bar\varepsilon(\tau-2)}).
\end{equation}
Finally, combining \eqref{eq_Avell_contains_Bi_multigraph_new} and \eqref{eq_stage_success_multigraph_new}, and recalling that we work conditional on $\bm d$ and on $D_{\bar\varepsilon}$, we obtain
\[
\mathbb P(A\mid \bm d)\ge 1-\lceil n^{1-\delta}\rceil\exp(-n^{\bar\varepsilon(\tau-2)})\ge 1-\exp(-n^{\bar\varepsilon(\tau-2)/2})
\]
for all sufficiently large $n$. This completes the proof.
\end{proof}

Lemma~\ref{lemma_hierarchicalstar_second_layer} upgrades the first-layer construction to the second-layer. This yields the full two-layer geometry needed for Proposition~\ref{prop:existsmanyhierarchicalstar}.

\begin{lemma}\label{lemma_hierarchicalstar_second_layer}
Fix any $\delta\in(0,1)$, and set
$
\tilde{\varepsilon}:=\frac{\delta}{20\tau},
\bar\varepsilon:=\frac{\tilde{\varepsilon}}{4.4\tau},
\varepsilon:=\frac{\bar\varepsilon}{4\tau}
$.
Let $G=G(n,\tau)$ be a power-law random graph on $n$ vertices with exponent $\tau>2$, and let
$
\bm d=(d_1,\dots,d_n)
$
denote its degree sequence. Suppose that $v_1,\dots,v_{\lceil n^{1-\delta}\rceil}$ are vertices with degrees greater than $n^{2\tilde\varepsilon}$. Let $\tilde A$ denote the event that, for each $a=1,\dots,\lceil n^{1-\delta}\rceil$, the vertex $v_a$ has at least $\lceil n^{\tilde\varepsilon}\rceil$ pairwise distinct neighbors
$
u_1^{(a)},\dots,u_{\lceil n^{\tilde\varepsilon}\rceil}^{(a)}
$
such that all vertices in the collection
$
\{u_b^{(a)}:1\le a\le \lceil n^{1-\delta}\rceil ,\ 1\le b\le \lceil n^{\tilde\varepsilon}\rceil\}
$
are distinct, and have degrees between $n^{\bar\varepsilon}$ and $n^{3\bar\varepsilon}
$. Moreover, for each $a=1,\dots,\lceil n^{1-\delta}\rceil$ and each $b=1,\dots, \lceil n^{\tilde\varepsilon}\rceil$, the vertex $u_b^{(a)}$ has at least $ \lceil n^{\varepsilon}\rceil$ distinct neighbors
$
w_{a,b,1},\dots,w_{a,b,\lceil n^{\varepsilon}\rceil}
$
such that all vertices in the collection
$
\{w_{a,b,m}:1\le a\le\lceil n^{1-\delta}\rceil,\ 1\le b\le \lceil n^{\tilde\varepsilon}\rceil,\ 1\le m\le\lceil n^{\varepsilon}\rceil\}
$
are distinct and have degrees between
$n^{\varepsilon}$ and $n^{3\varepsilon}$. Then, for all sufficiently large $n$,
\begin{equation}
\mathbb P(\tilde A\mid \bm d)\ge 1-\exp(-n^{\bar\varepsilon(\tau-2)/2})-\exp(-n^{\varepsilon/2}), \qquad \text{on the event $D_{\varepsilon}$ defined in \eqref{eq_D}. }
\end{equation}
\end{lemma}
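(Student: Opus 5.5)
The plan is to run the same sequential half-edge exposure twice. The first round reproduces Lemma~\ref{lemma_hierarchicalstarvi1layer}. The second round applies the analogous argument one scale down: the centers are now the first-layer vertices $u_b^{(a)}$, and the targets are vertices of degree in $(n^{\varepsilon},n^{3\varepsilon})$. Throughout we condition on $\bm d$ and work on $D_{\varepsilon}$. Since $\varepsilon<\bar\varepsilon$, the event $D_\varepsilon$ contains the order-statistic bounds for all $i\ge \lceil n^{\varepsilon}\rceil$, and in particular those needed for $D_{\bar\varepsilon}$; the sum condition is common to both. So $D_\varepsilon\subseteq D_{\bar\varepsilon}$, and Lemma~\ref{lemma_hierarchicalstarvi1layer} applies directly.

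\textbf{Step 1 (first layer).} By Lemma~\ref{lemma_hierarchicalstarvi1layer}, with conditional probability at least $1-\exp(-n^{\bar\varepsilon(\tau-2)/2})$ we obtain the first layer. For each $a$ there are distinct neighbors $u_b^{(a)}$, $b\le\lceil n^{\tilde\varepsilon}\rceil$, all globally distinct and with degrees in $(n^{\bar\varepsilon},n^{3\bar\varepsilon})$. This uses at most $2\lceil n^{1-\delta}\rceil\lceil n^{2\tilde\varepsilon}\rceil$ half-edges.

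\textbf{Step 2 (second layer).} Let $\mathcal K_\varepsilon$ be the window of order statistics from Lemma~\ref{lemma_degree_event_D} at scale $\varepsilon$. Its vertices have degree in $(n^\varepsilon,n^{3\varepsilon})$ and total degree at least $C_1n^{1-1.1\varepsilon(\tau-2)}$. Since $3\varepsilon<\bar\varepsilon<2\tilde\varepsilon$, neither the $v_a$ nor the $u_b^{(a)}$ lie in $\mathcal K_\varepsilon$.

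For each $u=u_b^{(a)}$ in turn, expose $\lceil n^{2\varepsilon}\rceil\le n^{\bar\varepsilon}<d_u$ of its half-edges that have not yet been paired. Fewer than $\lceil n^{1-\delta}\rceil\lceil n^{2\tilde\varepsilon}\rceil$ such half-edges can already have been used in Step 1, which is a problem only if that number is comparable to $d_u$. To avoid this, first discard from $u$'s count any half-edges paired in Step 1. By the construction of $B_i^{(\ell)}$, each $u$ was hit exactly once by design, but it could also have been hit by other $v$'s outside the counted events. To keep this clean, expose $d_u-\#\{\text{used}\}$ fresh half-edges only if at least $n^{2\varepsilon}$ remain. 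A crude bound settles this: the total number of used half-edges is $N_1:=O(n^{1-\delta+2\tilde\varepsilon})$, and a fixed $u$ receives more than $d_u/2$ of them with probability at most $\binom{N_1}{d_u/2}(n^{3\bar\varepsilon}/n)^{d_u/2}$, which is superpolynomially small. A union bound over $u$ then handles all of them.

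At each step, the conditional probability of hitting a fresh vertex of $\mathcal K_\varepsilon$, not among the previously chosen $w$'s, is $\asymp n^{-1.1\varepsilon(\tau-2)}$, uniformly. The removed mass is at most
\[
\lceil n^{1-\delta}\rceil\lceil n^{\tilde\varepsilon}\rceil\lceil n^{2\varepsilon}\rceil n^{3\varepsilon}+N_1,
\]
which is $o(n^{1-1.1\varepsilon(\tau-2)})$ because $\tilde\varepsilon+5\varepsilon+1.1\varepsilon(\tau-2)<\delta$ and $2\tilde\varepsilon<\delta$. This is where the choices $\tilde\varepsilon=\delta/(20\tau)$ and $\varepsilon=\bar\varepsilon/(4\tau)$ come in. The denominator stays $\ge\tfrac12\mu n-o(n)$.

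Freedman's inequality then gives at least $n^{\varepsilon}$ successes for each $u$, with failure probability $\exp(-c\,n^{2\varepsilon-1.1\varepsilon(\tau-2)})\le\exp(-n^{\varepsilon\cdot 0.9})$, say, because $2-1.1(\tau-2)/(4\tau)\cdot 4\ldots$. More carefully, $1.1\varepsilon(\tau-2)<\varepsilon$ requires $\tau<3$, which fails in general. Here I would adjust by exposing $\lceil n^{2\varepsilon+1.1\varepsilon(\tau-2)}\rceil\le n^{\bar\varepsilon}$ half-edges instead. This is allowed since $\varepsilon(2+1.1\tau)<\bar\varepsilon$ by $\varepsilon=\bar\varepsilon/(4\tau)$. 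The expected count is then $\asymp n^{2\varepsilon}$, and the deviation bound is $\exp(-c n^{\varepsilon})$.

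A union bound over the $\lceil n^{1-\delta}\rceil\lceil n^{\tilde\varepsilon}\rceil$ centers gives total failure at most $n\exp(-cn^{\varepsilon})\le\exp(-n^{\varepsilon/2})$. Adding the Step 1 failure probability yields the stated bound.

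\textbf{Main obstacle.} The main difficulty is the bookkeeping of exponents. We must ensure simultaneously that the number of exposed half-edges per first-layer vertex fits inside $d_u\ge n^{\bar\varepsilon}$ after removing those already used, that the depleted mass of $\mathcal K_\varepsilon$ and of the global pool stays negligible, and that the success probability beats the union bound. I expect the cleanest fix is to reserve, before Step 1, a designated set of half-edges of each potential $u$ that Step 1 is forbidden to use. However, the uniform-pairing argument makes the crude ``at most half used'' bound sufficient.
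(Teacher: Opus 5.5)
Your proposal is correct and follows the paper's proof. On $D_\varepsilon\subseteq D_{\bar\varepsilon}$ you invoke Lemma~\ref{lemma_hierarchicalstarvi1layer} for the first layer, run a second sequential half-edge exposure from each first-layer vertex into the window $\mathcal K_\varepsilon$, apply Freedman's inequality stage by stage, and take a union bound over the $\lceil n^{1-\delta}\rceil\lceil n^{\tilde\varepsilon}\rceil$ stages. The differences are only technical: the paper exposes $\lfloor n^{\bar\varepsilon}\rfloor$ half-edges per first-layer vertex, so the mean $\asymp n^{\bar\varepsilon-1.1\varepsilon(\tau-2)}$ already exceeds $n^{\varepsilon}$ and needs no adjustment, while your smaller budget $n^{2\varepsilon+1.1\varepsilon(\tau-2)}$, together with your binomial bound on the half-edges of $u$ (and of $\mathcal K_\varepsilon$) used earlier, justifies something the paper only asserts, namely that enough unused half-edges of each $\hat u_r$ remain.
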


\begin{proof}
We work throughout conditional on the degree sequence $\bm d$ and on the event $D_{\varepsilon}$. Since $\varepsilon<\bar\varepsilon$, the event $D_{\varepsilon}$ implies the corresponding degree regularity estimates needed at scale $\bar\varepsilon$. Hence Lemma~\ref{lemma_hierarchicalstarvi1layer} applies. Let $A$ be the first-layer event from Lemma~\ref{lemma_hierarchicalstarvi1layer}, namely the event that, for each $a=1,\dots,\lceil n^{1-\delta}\rceil$, the vertex $v_a$ has at least $\lceil n^{\tilde\varepsilon}\rceil$ pairwise distinct neighbors $u_1^{(a)},\dots,u_{\lceil n^{\tilde\varepsilon}\rceil}^{(a)}$ such that all vertices in the collection $\{u_b^{(a)}:1\le a\le \lceil n^{1-\delta}\rceil,\ 1\le b\le \lceil n^{\tilde\varepsilon}\rceil\}$ are distinct and have degrees between $n^{\bar\varepsilon}$ and $n^{3\bar\varepsilon}$. By Lemma~\ref{lemma_hierarchicalstarvi1layer},
\begin{equation}\label{eq_first_layer_success_second}
\mathbb P(A\mid \bm d)\ge 1-\exp(-n^{\bar\varepsilon(\tau-2)/2})
\end{equation}
on the event $D_{\varepsilon}$.

On $A$, choose the vertices $u_b^{(a)}$ according to a fixed deterministic rule. We now expose the second layer. Let $F^{(1)}$ denote the set of half-edges already used in the first-layer exploration. Since in the proof of Lemma~\ref{lemma_hierarchicalstarvi1layer} we expose $\lceil n^{2\tilde\varepsilon}\rceil$ pairings for each of the $\lceil n^{1-\delta}\rceil$ vertices $v_a$, we have
\begin{equation}\label{eq_first_layer_used_second_new}
|F^{(1)}|=2\lceil n^{1-\delta}\rceil\lceil n^{2\tilde\varepsilon}\rceil.
\end{equation}

Fix a deterministic tie-breaking rule and write $x^{(1)},\dots,x^{(n)}$ for the vertices ordered so that $d_{x^{(1)}}\ge d_{x^{(2)}}\ge \cdots \ge d_{x^{(n)}}$. Define
$$
\mathcal K_{\varepsilon}:=\Bigl\{x^{(i)}:\ i\in\bigl[\lfloor n^{1-2\varepsilon(\tau-1)}\rfloor,\lfloor n^{1-1.1\varepsilon(\tau-1)}\rfloor\bigr]\Bigr\}.
$$
By Lemma~\ref{lemma_degree_event_D}, on the event $D_{\varepsilon}$, every vertex in $\mathcal K_{\varepsilon}$ has degree in $(n^{\varepsilon},n^{3\varepsilon})$. Since each $v_a$ has degree greater than $n^{2\tilde\varepsilon}$ and each $u_b^{(a)}$ has degree greater than $n^{\bar\varepsilon}$, while $2\tilde\varepsilon>3\varepsilon$ and $\bar\varepsilon>3\varepsilon$, we have $v_a\notin\mathcal K_{\varepsilon}$ and $u_b^{(a)}\notin\mathcal K_{\varepsilon}$ for all relevant $a,b$, for all sufficiently large $n$. Moreover,
\begin{equation}\label{eq_sumKbar_new}
C_1 n^{1-1.1\varepsilon(\tau-2)}
\le
\sum_{x\in\mathcal K_{\varepsilon}}d_x
\le
C_2 n^{1-1.1\varepsilon(\tau-2)}
\end{equation}
for some constants $C_1,C_2>0$ depending only on $\tau$.

Order the pairs $(a,b)$ lexicographically and write
$$
(a_1,b_1),\dots,(a_N,b_N),\qquad N:=\lceil n^{1-\delta}\rceil\lceil n^{\tilde\varepsilon}\rceil.
$$
For each $r=1,\dots,N$, let $\hat u_r:=u_{b_r}^{(a_r)}$. Since $\hat u_r$ has degree greater than $n^{\bar\varepsilon}$ and one of its half-edges has already been paired to $v_{a_r}$, there are at least $\lfloor n^{\bar\varepsilon}\rfloor$ remaining half-edges of $\hat u_r$. We expose $\lfloor n^{\bar\varepsilon}\rfloor$ such half-edges, in a fixed deterministic order, and denote them by $(\hat u_r,1),\dots,(\hat u_r,\lfloor n^{\bar\varepsilon}\rfloor)$. For stage $r$, let $e_s^{(r)}$ be the partner half-edge paired to $(\hat u_r,s)$, and let $y_s^{(r)}$ be the vertex incident to $e_s^{(r)}$, that is, $e_s^{(r)}\in\Omega_{y_s^{(r)}}$. Define
$$
U_{s-1}^{(r)}:=\{y_1^{(r)},\dots,y_{s-1}^{(r)}\},\qquad 1\le s\le \lfloor n^{\bar\varepsilon}\rfloor.
$$
Also let
$$
Z_{r-1}:=\{y_t^{(q)}:1\le q\le r-1,\ 1\le t\le \lfloor n^{\bar\varepsilon}\rfloor,\ y_t^{(q)}\in\mathcal K_{\varepsilon}\}
$$
be the set of vertices in $\mathcal K_{\varepsilon}$ already hit in previous second-layer stages. Let
$$
E^{(<r)}:=\bigcup_{q=1}^{r-1}\Bigl(\{(\hat u_q,1),\dots,(\hat u_q,\lfloor n^{\bar\varepsilon}\rfloor)\}\cup\{e_1^{(q)},\dots,e_{\lfloor n^{\bar\varepsilon}\rfloor}^{(q)}\}\Bigr)
$$
be the set of all half-edges already used before stage $r$. Then
\begin{equation}\label{eq_prev_second_halfedges_new}
|E^{(<r)}|=2(r-1)\lfloor n^{\bar\varepsilon}\rfloor.
\end{equation}
Define the filtration
$$
\bar{\mathcal F}_0^{(r)}:=\sigma(F^{(1)},E^{(<r)}),\qquad \bar{\mathcal F}_s^{(r)}:=\sigma(F^{(1)},E^{(<r)},e_1^{(r)},\dots,e_s^{(r)}),\qquad 1\le s\le \lfloor n^{\bar\varepsilon}\rfloor.
$$
For each $s=1,\dots,\lfloor n^{\bar\varepsilon}\rfloor$, define
$$
B_s^{(r)}:=\bigl\{y_s^{(r)}\in\mathcal K_{\varepsilon}\setminus(Z_{r-1}\cup U_{s-1}^{(r)})\bigr\}.
$$
Thus $B_s^{(r)}$ is the event that, at step $s$ of stage $r$, the newly exposed partner half-edge belongs to a fresh vertex in $\mathcal K_{\varepsilon}$. Therefore,
\begin{equation}\label{eq_tildeA_contains_B}
A\cap\bigcap_{r=1}^{N}\biggl\{\sum_{s=1}^{\lfloor n^{\bar\varepsilon}\rfloor}\mathbf 1\{B_s^{(r)}\}\ge n^{\varepsilon}\biggr\}\subseteq \tilde A.
\end{equation}
Conditional on $\bar{\mathcal F}_{s-1}^{(r)}$, the half-edge $(\hat u_r,s)$ is paired uniformly to one of the remaining half-edges. Therefore,
\begin{equation}\label{eq_second_layer_condprob_new}
\mathbb P(B_s^{(r)}\mid\bar{\mathcal F}_{s-1}^{(r)})
=
\frac{\sum_{x\in\mathcal K_{\varepsilon}\setminus(Z_{r-1}\cup U_{s-1}^{(r)})}d_x}
{\sum_{x\in V}d_x-|F^{(1)}|-|E^{(<r)}|-(2s-1)},
\qquad \text{a.s.}
\end{equation}
We now bound the numerator from below. Since every vertex in $\mathcal K_{\varepsilon}$ has degree at most $n^{3\varepsilon}$ on $D_{\varepsilon}$, we have
$$
\sum_{x\in Z_{r-1}}d_x\le (r-1)\lfloor n^{\bar\varepsilon}\rfloor n^{3\varepsilon},\qquad \sum_{x\in U_{s-1}^{(r)}\cap\mathcal K_{\varepsilon}}d_x\le (s-1)n^{3\varepsilon}.
$$
Hence, by \eqref{eq_sumKbar_new},
\begin{align}
\sum_{x\in\mathcal K_{\varepsilon}\setminus(Z_{r-1}\cup U_{s-1}^{(r)})}d_x
&\ge C_1 n^{1-1.1\varepsilon(\tau-2)}-(r-1)\lfloor n^{\bar\varepsilon}\rfloor n^{3\varepsilon}-(s-1)n^{3\varepsilon},
\label{eq_second_num_lower_new}
\end{align}
and also this numerator is at most $C_2 n^{1-1.1\varepsilon(\tau-2)}$. On $D_{\varepsilon}$, $\frac12\mu n\le \sum_{x\in V}d_x\le \frac32\mu n$. Combining this with \eqref{eq_first_layer_used_second_new} and \eqref{eq_prev_second_halfedges_new}, we get
\begin{equation}\label{eq_second_den_bounds_new}
\frac12\mu n-2\lceil n^{1-\delta}\rceil\lceil n^{2\tilde\varepsilon}\rceil-2(r-1)\lfloor n^{\bar\varepsilon}\rfloor-(2s-1)
\le
\sum_{x\in V}d_x-|F^{(1)}|-|E^{(<r)}|-(2s-1)
\le
\frac32\mu n.
\end{equation}
Since $r\le N=\lceil n^{1-\delta}\rceil\lceil n^{\tilde\varepsilon}\rceil$, $s\le \lfloor n^{\bar\varepsilon}\rfloor$, and
$
1-\delta+\tilde\varepsilon+\bar\varepsilon+3\varepsilon<1-1.1\varepsilon(\tau-2), 1-\delta+2\tilde\varepsilon<1
$,
there exist constants $c_5,c_6>0$, depending only on $\tau$, such that for all sufficiently large $n$,
\begin{equation}\label{eq_second_uniform_p_new}
c_5 n^{-1.1\varepsilon(\tau-2)}
\le
\mathbb P(B_s^{(r)}\mid \bar{\mathcal F}_{s-1}^{(r)})
\le
c_6 n^{-1.1\varepsilon(\tau-2)},
\qquad \text{a.s.}
\end{equation}
Now define
$$
p_s^{(r)}:=\mathbb P(B_s^{(r)}\mid\bar{\mathcal F}_{s-1}^{(r)}),\qquad \xi_s^{(r)}:=\mathbf 1_{B_s^{(r)}}-p_s^{(r)},
$$
and
$$
M_k^{(r)}:=\sum_{s=1}^k\xi_s^{(r)},\qquad V_k^{(r)}:=\sum_{s=1}^k\mathbb E\bigl((\xi_s^{(r)})^2\mid\bar{\mathcal F}_{s-1}^{(r)}\bigr),\qquad 1\le k\le \lfloor n^{\bar\varepsilon}\rfloor.
$$
Then $(M_k^{(r)})$ is a martingale with respect to $(\bar{\mathcal F}_k^{(r)})$, and
$
V_k^{(r)}\le \sum_{s=1}^k p_s^{(r)}\le c_6\lfloor n^{\bar\varepsilon}\rfloor n^{-1.1\varepsilon(\tau-2)}
$a.s.
Let
$
\mu_r:=\sum_{s=1}^{\lfloor n^{\bar\varepsilon}\rfloor}p_s^{(r)}
$.
By \eqref{eq_second_uniform_p_new},
$
\mu_r\in\Big(c_5\lfloor n^{\bar\varepsilon}\rfloor n^{-1.1\varepsilon(\tau-2)},\ c_6\lfloor n^{\bar\varepsilon}\rfloor n^{-1.1\varepsilon(\tau-2)}\Big)
$.
We apply Freedman's inequality to the martingale $(-M_k^{(r)})$, see \cite[Theorem~1.6]{Freedman1975} or \cite[Theorem~1.1]{Tropp2011Freedman}. Since $|\xi_s^{(r)}|\le1$ and $V_{\lfloor n^{\bar\varepsilon}\rfloor}^{(r)}\le c_6\lfloor n^{\bar\varepsilon}\rfloor n^{-1.1\varepsilon(\tau-2)}$ a.s., taking
$
t:=\frac12\mu_r, \sigma^2:=c_6\lfloor n^{\bar\varepsilon}\rfloor n^{-1.1\varepsilon(\tau-2)}
$,
gives
\begin{align}
\mathbb P\Bigl(M_{\lfloor n^{\bar\varepsilon}\rfloor}^{(r)}\le -\tfrac12\mu_r\Bigm|\bar{\mathcal F}_0^{(r)}\Bigr)
&\le
\exp\biggl(-\frac{t^2/2}{\sigma^2+t/3}\biggr)
\le \exp(-n^{\varepsilon})
\label{eq_second_stage_fail_new}
\end{align}
for all sufficiently large $n$, because $\bar\varepsilon-1.1\varepsilon(\tau-2)>\varepsilon$. On the complement of the event in \eqref{eq_second_stage_fail_new},
\begin{align*}
\sum_{s=1}^{\lfloor n^{\bar\varepsilon}\rfloor}\mathbf 1\{B_s^{(r)}\}
&=
\mu_r+M_{\lfloor n^{\bar\varepsilon}\rfloor}^{(r)}
\ge
\frac12\mu_r
\ge
n^{\varepsilon}
\end{align*}
for all sufficiently large $n$. Therefore, for each stage $r$,
\begin{equation}\label{eq_second_stage_success_new}
\mathbb P\Bigl(\sum_{s=1}^{\lfloor n^{\bar\varepsilon}\rfloor}\mathbf 1\{B_s^{(r)}\}\ge n^{\varepsilon}\ \Bigm|\bar{\mathcal F}_0^{(r)}\Bigr)\ge 1-\exp(-n^{\varepsilon}).
\end{equation}
Finally, taking a union bound over the $N=\lceil n^{1-\delta}\rceil\lceil n^{\tilde\varepsilon}\rceil$ stages, and recalling that we work conditional on $\bm d$ and on $D_{\bar\varepsilon}$, we obtain
\begin{align}
\mathbb P(\tilde A\mid A,\bm d)
&\ge
\mathbb P\Bigl(\bigcap_{r=1}^{N}\Bigl\{\sum_{s=1}^{\lfloor n^{\bar\varepsilon}\rfloor}\mathbf 1\{B_s^{(r)}\}\ge n^{\varepsilon}\Bigr\}\ \Bigm|\ A,\bm d\Bigr)
\notag\\
&\ge
1-N\exp(-n^{\varepsilon})
\ge
1-\exp(-n^{\varepsilon/2})
\label{eq_second_all_success_new}
\end{align}
for all sufficiently large $n$. Combining \eqref{eq_first_layer_success_second} with \eqref{eq_second_all_success_new}, and recalling that we work throughout conditional on $\bm d$ and on $D_{\varepsilon}$, we conclude that
$$
\mathbb P(\tilde A\mid\bm d)\ge 1-\exp(-n^{\bar\varepsilon(\tau-2)/2})-\exp(-n^{\varepsilon/2})
$$
for all sufficiently large $n$. This completes the proof.
\end{proof}

\begin{proof}[Proof of Proposition \ref{prop:existsmanyhierarchicalstar}]
Define
$
\tilde{\varepsilon}:=\frac{\delta}{20\tau}, \bar{\varepsilon}:=\frac{\tilde{\varepsilon}}{4.4\tau}$. 
Then $
\varepsilon<\frac{\bar{\varepsilon}}{4\tau}
$.
It therefore suffices to prove the proposition with the above choice of
$\tilde{\varepsilon},\bar{\varepsilon},\varepsilon$, since any $(n^{\bar\varepsilon},n^{\bar{\varepsilon}/(4\tau)})$ hierarchical-star is in particular an $(n^\varepsilon,n^\varepsilon)$ hierarchical-star.
Moreover, with these choices of parameters, all assumptions of Lemma~\ref{lemma_hierarchicalstar_second_layer} are satisfied. We now work on the event $D_{\varepsilon}$ from Lemma~\ref{lemma_degree_event_D}.

We first show that, on the event $D_{\varepsilon}$, there exist at least $\lceil n^{1-\delta}\rceil$ vertices of degree greater than $n^{2\tilde\varepsilon}$. Since $\lceil n^{1-\delta}\rceil\in [n^\varepsilon,n]$, by Lemma~\ref{lemma_degree_event_D}, $d^{(\lceil n^{1-\delta}\rceil)}
\ge
c_1\left(\frac{n}{n^{1-\delta}}\right)^{\frac{1}{\tau-1}}
=
c_1 n^{\frac{\delta}{\tau-1}}$
Since
$
\frac{\delta}{\tau-1}>2\tilde{\varepsilon}
$,
it follows that
$$
d^{(\lceil n^{1-\delta}\rceil)}>n^{2\tilde{\varepsilon}}
$$
for all sufficiently large $n$. Hence, on the event $D_{\varepsilon}$, there exist at least $\lceil n^{1-\delta}\rceil$ vertices with degree greater than $n^{2\tilde{\varepsilon}}$. Choose and fix any such collection, and denote it by
$
v_1,\dots,v_{\lceil n^{1-\delta}\rceil}
$.

Now we apply Lemma~\ref{lemma_hierarchicalstar_second_layer}. On the event $D_{\varepsilon}$, with conditional probability at least
$
1-\exp(-n^{\bar\varepsilon(\tau-2)/2})-\exp(-n^{\varepsilon/2})
$,
for each $a=1,\dots,\lceil n^{1-\delta}\rceil$, the vertex $v_a$ has at least $n^{\tilde{\varepsilon}}$ distinct neighbors
$
u_1^{(a)},\dots,u_{n^{\tilde{\varepsilon}}}^{(a)}
$,
such that all vertices in the collection
$
\{u_b^{(a)}:1\le a\le \lceil n^{1-\delta}\rceil,\ 1\le b\le \lceil n^{\tilde{\varepsilon}}\rceil\}
$
are distinct and have degrees between $n^{\bar\varepsilon}$ and $n^{3\bar\varepsilon}$. Moreover, for each $a=1,\dots,\lceil n^{1-\delta}\rceil$ and each $b=1,\dots,\lceil n^{\tilde{\varepsilon}}\rceil$, the vertex $u_b^{(a)}$ has at least $n^{\varepsilon}$ distinct neighbors
$
w_{a,b,1},\dots,w_{a,b,\lceil n^{\varepsilon}\rceil}
$,
such that all vertices in the collection
$$
\{w_{a,b,m}:1\le a\le \lceil n^{1-\delta}\rceil,\ 1\le b\le \lceil n^{\tilde{\varepsilon}}\rceil,\ 1\le m\le \lceil n^{\varepsilon}\rceil\}
$$
are distinct.

Hence, on this event, for each $a=1,\dots,\lceil n^{1-\delta}\rceil$, the vertex $v_a$ is the center of a hierarchical-star with first layer
$
\{u_1^{(a)},\dots,u_{\lceil n^{\tilde{\varepsilon}}\rceil}^{(a)}\}
$
and second layer
$
\{w_{a,b,m}:1\le b\le\lceil n^{\tilde{\varepsilon}}\rceil,\ 1\le m\le \lceil n^{\varepsilon}\rceil\}
$.
Moreover, by construction, all these hierarchical-stars are vertex-disjoint. Since $\tilde{\varepsilon}>\varepsilon$, each such hierarchical-star is in particular an $(n^\varepsilon,n^\varepsilon)$ hierarchical-star.

Therefore,
\begin{align*}
\mathbb P\bigl(\text{$G$ contains at least $\lceil n^{1-\delta}\rceil$ vertex-disjoint $(n^\varepsilon,n^\varepsilon)$ hierarchical-stars}\bigr)\\
\ge
\mathbb P(D_{\varepsilon})\Bigl(1-\exp(-n^{\bar\varepsilon (\tau-2)})-\exp(-n^{\varepsilon/2})\Bigr).
\end{align*}
By Lemma~\ref{lemma_degree_event_D},
$
\mathbb P(D_{\varepsilon})\ge 1-c'(\log n)^{\mathbf{1}\{\tau=3\}}n^{-\min\{\tau-2,1\}}
$
for all sufficiently large $n$. Hence, 
$$
\mathbb P\bigl(\text{$G$ contains at least $\lceil n^{1-\delta}\rceil$ vertex-disjoint $(n^\varepsilon,n^\varepsilon)$ hierarchical-stars}\bigr)
\ge
1-2c'(\log n)^{\mathbf{1}\{\tau=3\}}\,n^{-\min\{\tau-2,1\}}
$$
for all sufficiently large $n$. This completes the proof.
\end{proof}

We next turn to two auxiliary lemmas that will be used later in the proof of Theorem~\ref{thm_main2}.

\begin{lemma}\label{lemma_Mhierarchicalstar}
Let $G=G(n,\tau)$ be a power-law random graph on $n$ vertices with exponent $\tau>2$. Then there exist constants
$
M^{(0)},
\alpha_1,\alpha_2>0
$
depending only on $\tau$ such that the following holds. For any $M\ge M^{(0)}$, if $v$ is a vertex with degree at least $M$, then, for all sufficiently large $n$,
\[
\mathbb P\Bigl(v \text{ is the center of a hierarchical-star with parameters }(M^{\alpha_1},M^{\alpha_1})\Bigr)
\ge 1-\exp(-M^{\alpha_2}).
\]
\end{lemma}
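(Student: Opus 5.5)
We will mimic the two-layer exposure argument of Lemmas~\ref{lemma_hierarchicalstarv11layer} and~\ref{lemma_hierarchicalstar_second_layer}. The difference is that the scale is now the constant $M$ rather than a power of $n$. All error probabilities must therefore be controlled in terms of $M$, uniformly in large $n$.

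We work conditionally on the degree sequence $\bm d$ and on a high-probability regularity event, for instance $D_{\varepsilon_0}$ from Lemma~\ref{lemma_degree_event_D} with a fixed small $\varepsilon_0$. On this event $\sum_u d_u\in[\frac12\mu n,\frac32\mu n]$. Moreover, for every fixed constant level $K$, the degree mass of vertices with degree in $[K,2K)$ is at least $c\,nK^{2-\tau}$. This follows from the law of large numbers, since $\mathbb E[d\mathbf 1\{K\le d<2K\}]\asymp K^{2-\tau}$ and $K$ is fixed while $n\to\infty$. The vertex $v$ itself and any $O(M^2)$ vertices already explored remove only $O(M^{2})$ half-edges, which is negligible against $n$.

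Set $\beta$ small, say $\beta=1/(8\tau)$, and $K:=M^{\beta}$.

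\emph{First layer.} Expose the first $\lfloor M\rfloor$ half-edges of $v$ one by one. Let $B_i$ be the event that the $i$-th partner lies on a not-yet-seen vertex with degree in $[K,2K)$. Conditionally on the past, $\mathbb P(B_i\mid\bar{\mathcal F}_{i-1})\ge c K^{2-\tau}=cM^{-\beta(\tau-2)}$ for large $n$, since the removed mass is $O(MK)=o(n)$. Freedman's inequality, exactly as in \eqref{eq_freedman_multigraph}, then gives at least $\frac c2 M^{1-\beta(\tau-2)}\ge M^{\alpha_1}$ distinct such neighbors, except on an event of probability $\exp(-cM^{1-\beta(\tau-2)})$.

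\emph{Second layer.} For each selected first-layer vertex $u_b$ (we keep $\lceil M^{\alpha_1}\rceil$ of them), expose its remaining $\ge K-1$ half-edges. We require the partners to be fresh vertices of degree in $[L,2L)$, where $L:=K^{\beta}$ (any degree $\ge 1$ suffices for the hierarchical-star, so we could even take the target class to be all vertices). Freshness fails with probability $O(M^2K^2/n)$ per step, which vanishes as $n\to\infty$. Success per step has probability bounded below by a constant times $L^{2-\tau}$, so Freedman gives at least $K^{1-\beta(\tau-2)}/2$ fresh distinct neighbors for each $u_b$, except on an event of probability $\exp(-cK^{1-\beta(\tau-2)})$. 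A union bound over the $\lceil M^{\alpha_1}\rceil$ first-layer vertices costs a factor $M^{\alpha_1}$.

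Choose $\alpha_1:=\min\{1-\beta(\tau-2),\ \beta(1-\beta(\tau-2))\}/2$, so that both layers have size at least $M^{\alpha_1}$. Then take $\alpha_2$ slightly smaller than $\beta(1-\beta(\tau-2))$, which absorbs the union bound and the polynomial prefactors. The vanishing freshness and regularity errors are at most $\exp(-M^{\alpha_2})/2$ once $n$ is large.

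\emph{Main obstacle.} The delicate point is making the estimates uniform. The degree window is a constant, so Lemma~\ref{lemma_degree_event_D} does not directly give the mass bound; we will replace it by a direct LLN or Chebyshev bound for $\sum_u d_u\mathbf 1\{d_u\in[K,2K)\}$ at fixed $K$. A second point is that the statement is conditional on $d_v\ge M$ for a specified $v$, while $\bm d$ is random. We handle this by conditioning on $\bm d$ and noting that the regularity event fails with probability $o(1)$, hence eventually below $\exp(-M^{\alpha_2})/2$ for fixed $M$.
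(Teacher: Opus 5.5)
Your proposal is correct and follows essentially the same route as the paper: expose the half-edges of $v$ one by one, target a constant-size degree window, control that window's degree mass at fixed $M$ by a law-of-large-numbers/Hoeffding bound, apply Freedman's inequality for the first layer, and absorb all errors that are $o(1)$ as $n\to\infty$ into $\exp(-M^{\alpha_2})$.

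The only real difference is the second layer. There the paper imposes no degree window: it simply exposes $\lceil M^{\alpha_1}\rceil$ half-edges of each first-layer vertex. It then uses $\max_x d_x\le n^{1/(\tau-1)}\log n$ to show that no previously seen vertex is hit, with probability $1-o(1)$. This is exactly your parenthetical simplification. It also covers the possibly large degree of $v$ itself, which your freshness estimate $O(M^2K^2/n)$ leaves out.
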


\begin{proof}
We fix
\(
\alpha_1:=\frac{1}{4(\tau-1)},\alpha_2:=\frac{1}{8(\tau-1)}.
\)
Let
\[
I_M:=[2M^{\alpha_1},\,4M^{\alpha_1}],
\qquad
\mathcal K_M:=\{x\in V:d_x\in I_M\}.
\]
For each $x\in V$, define
\[
Y_M^{(x)}:=d_x\,\mathbf 1\{d_x\in I_M\},\quad  q_M:=\mathbb E[Y_M^{(x)}]
=
\sum_{k=\lceil 2M^{\alpha_1}\rceil}^{\lfloor 4M^{\alpha_1}\rfloor} ck^{1-\tau},  \quad S_M:=\sum_{x\in V} Y_M^{(x)}
=
\sum_{x\in V} d_x\,\mathbf 1\{d_x\in I_M\}.
\]
Then there exist constants $c_1,c_2>0$, depending only on $\tau$, such that for all sufficiently large~$M$,
\begin{equation}\label{eq:qM_bounds_fixed_new}
c_1 M^{-\alpha_1(\tau-2)}
\le
q_M
\le
c_2 M^{-\alpha_1(\tau-2)}.
\end{equation}
Since the random variables $(Y_M^{(x)})_{x\in V}$ are i.i.d.\ and uniformly bounded by $4M^{\alpha_1}$, for every fixed~$M$, by Hoeffding inequality,
\begin{equation}\label{eq:SM_concentration_fixed_new}
\mathbb P\Bigl(\bigl|S_M-q_M n\bigr|>\tfrac12 q_M n\Bigr)\le 2\exp(-\frac{q_M^2}{32M^{2\alpha_1}}n)\to 0
\qquad\text{as }n\to\infty.
\end{equation}
By Lemma~\ref{lemma_degree_event_D},
\begin{equation}\label{eq:degree_sum_event_fixed_new}
\mathbb P(\{\frac12 \mu n\le \sum_{x\in V} d_x\le \frac32 \mu n\})\to 1
\qquad\text{as }n\to\infty,
\end{equation}
By a union bound and the power-law tail estimate,
\begin{align}
\mathbb P\Bigl(\max_{x\in V} d_x>n^{1/(\tau-1)}\log n\Bigr)
&\le
n\,\mathbb P(d_1>n^{1/(\tau-1)}\log n)
\notag\\
&\le
C(\log n)^{-(\tau-1)}
\to 0
\qquad\text{as }n\to\infty.
\label{eq:maxdeg_fixed_new}
\end{align}
Let
\[
E_M:=
\Bigl\{
\tfrac12 q_M n\le S_M\le \tfrac32 q_M n
\Bigr\}
\cap \{\frac12 \mu n\le \sum_{x\in V} d_x\le \frac32 \mu n\}
\cap
\Bigl\{
\max_{x\in V} d_x\le n^{1/(\tau-1)}\log n
\Bigr\}.
\]
By \eqref{eq:SM_concentration_fixed_new}, \eqref{eq:degree_sum_event_fixed_new}, and \eqref{eq:maxdeg_fixed_new},
\begin{equation}\label{eq:EM_high_prob_fixed_new}
\mathbb P(E_M)\to 1
\qquad\text{as }n\to\infty
\end{equation}
for every fixed $M$. From now on, we work under the event $E_M$.

We begin by constructing the first layer. Expose the first $M$ pairings of the half-edges incident to $v$. Since $d_v\ge M$, this is always possible. For $1\le i\le M$, let $e_i$ denote the partner half-edge paired to $(v,i)$, and let $u_i$ be the vertex incident to $e_i$, that is, $e_i\in\Omega_{u_i}$. Define
\[
U_{i-1}:=\{u_1,\dots,u_{i-1}\},
\qquad
\bar{\mathcal F}_i:=\sigma(e_1,\dots,e_i).
\]
For $i=1,\dots,M$, define
\[
B_i:=\{u_i\in \mathcal K_M\setminus U_{i-1}\}.
\]
Thus $B_i$ is the event that, at step $i$, the newly exposed half-edge of $v$ is paired to a half-edge incident to a fresh vertex in $\mathcal K_M$.

On the event $E_M$, conditional on $\bar{\mathcal F}_{i-1}$, we have
\begin{equation}\label{eq:Bi_fixed_cond}
\mathbb P(B_i\mid \bar{\mathcal F}_{i-1})
=
\frac{\sum_{u\in \mathcal K_M\setminus U_{i-1}} d_u}{\sum_{u\in V} d_u-(2i-1)}.
\end{equation}
Since every vertex in $\mathcal K_M$ has degree at most $4M^{\alpha_1}$, on $E_M$,
\(
\sum_{u\in U_{i-1}\cap \mathcal K_M} d_u
\le
4(i-1)M^{\alpha_1}
\le
4M^{1+\alpha_1}.
\)
Therefore, using \eqref{eq:qM_bounds_fixed_new} and the definition of $E_M$, for all sufficiently large $n$,
\begin{align}
\mathbb P(B_i\mid \bar{\mathcal F}_{i-1},E_M)
\ge
\frac{\frac12 q_M n-4M^{1+\alpha_1}}{\frac32\mu n}
\ge
\frac{q_M}{4\mu}
\ge
c_3 M^{-\alpha_1(\tau-2)}
\label{eq:Bi_fixed_lower}
\end{align}
\begin{equation}\label{eq:Bi_fixed_upper}
\mathbb P(B_i\mid \bar{\mathcal F}_{i-1},E_M)
\le
c_4 M^{-\alpha_1(\tau-2)}
\end{equation}
for some constants $c_3,c_4>0$ depending only on $\tau$.
Now define
\[
p_i:=\mathbb P(B_i\mid \bar{\mathcal F}_{i-1},E_M),
\qquad
\xi_i:=\mathbf 1_{B_i}-p_i,
\]
and
\[
M_k:=\sum_{i=1}^k \xi_i,
\qquad
V_k:=\sum_{i=1}^k \mathbb E(\xi_i^2\mid \bar{\mathcal F}_{i-1},E_M),
\qquad
1\le k\le M.
\]
Then $(M_k)$ is a martingale with respect to $(\bar{\mathcal F}_k)$, and
\(
V_k\le \sum_{i=1}^k p_i
\le
c_4 M^{1-\alpha_1(\tau-2)}
\) a.s. Let
\(
\mu_M:=\sum_{i=1}^M p_i
\).
By \eqref{eq:Bi_fixed_lower},
\begin{equation}\label{eq:muM_fixed_lower}
\mu_M\ge c_3 M^{1-\alpha_1(\tau-2)}.
\end{equation}
We now apply Freedman's inequality to the martingale $(-M_k)$, see \cite[Theorem~1.6]{Freedman1975} or \cite[Theorem~1.1]{Tropp2011Freedman}. Take
\(
t:=\frac12\mu_M, \sigma^2:=c_4 M^{1-\alpha_1(\tau-2)}
\).
Then
\begin{align}
\mathbb P\Bigl(M_M\le -\tfrac12\mu_M\Bigm|E_M\Bigr)
=
\mathbb P\Bigl(-M_M\ge t\Bigm|E_M\Bigr)
\le
\exp\biggl(
-\frac{t^2/2}{\sigma^2+t/3}
\biggr)
\le
\frac12\exp(-M^{\alpha_2})
\label{eq:first_layer_success_fixed_new}
\end{align}
for all sufficiently large $M$, since
\(
1-\alpha_1(\tau-2)>\alpha_2
\).
On the complement of the event in \eqref{eq:first_layer_success_fixed_new},
\[
\sum_{i=1}^M \mathbf 1_{B_i}
=
\mu_M+M_M
\ge
\frac12\mu_M.
\]
Since
\(
1-\alpha_1(\tau-2)>\alpha_1
\),
it follows from \eqref{eq:muM_fixed_lower} that, for all sufficiently large $M$,
\(
\frac12\mu_M\ge M^{\alpha_1}
\).
Hence, conditional on $E_M$, with probability at least
\(
1-\frac12\exp(-M^{\alpha_2})
\),
the vertex $v$ has at least $\lceil M^{\alpha_1}\rceil$ distinct neighbors
\(
u_1,\dots,u_{\lceil M^{\alpha_1}\rceil}
\)
whose degrees all belong to $I_M$.

We now construct the second layer. Since each $u_i$ has degree at least $2M^{\alpha_1}$, after removing the unique half-edge already matched to $v$, there remain at least $M^{\alpha_1}$ unused half-edges incident to $u_i$. For each $i=1,\dots,\lceil M^{\alpha_1}\rceil$, expose the first $ \lceil M^{\alpha_1}\rceil$ such half-edges. The total number of second-layer pairings is at most
\(
\lceil M^{\alpha_1}\rceil^2.
\)
Let $\mathcal B$ denote the set of vertices that have already appeared during the second-layer construction. Then
\(
|\mathcal B|\le 1+\lceil M^{\alpha_1}\rceil+\lceil M^{\alpha_1}\rceil^2
\).
On the event $E_M$, every vertex in $\mathcal B$ has degree at most $n^{1/(\tau-1)}\log n$. Therefore the total half-edge mass attached to vertices in $\mathcal B$ is at most
\[
(1+\lceil M^{\alpha_1}\rceil+\lceil M^{\alpha_1}\rceil^2)\,n^{1/(\tau-1)}\log n.
\]
Since on $E_M$ the total half-edge mass is at least $\frac12\mu n$, it follows that, conditional on the previous exposure history and on $E_M$, the probability that the next second-layer pairing hits a previously used vertex is at most
\[
\frac{(1+\lceil M^{\alpha_1}\rceil+\lceil M^{\alpha_1}\rceil^2)\,n^{1/(\tau-1)}\log n}{\frac12\mu n}
=
\frac{2}{\mu}(1+\lceil M^{\alpha_1}\rceil+\lceil M^{\alpha_1}\rceil^2)\,n^{-\frac{\tau-2}{\tau-1}}\log n
=
o(1).
\]
Since the total number of second-layer pairings is at most $\lceil M^{\alpha_1}\rceil^2$, a union bound shows that
\begin{equation}\label{eq:collision_prob_fixed_new}
\mathbb P\bigl(\text{a previously used vertex is reused in the second layer}\mid E_M\bigr)
\to 0
\qquad\text{as }n\to\infty.
\end{equation}
Combining \eqref{eq:first_layer_success_fixed_new} and \eqref{eq:collision_prob_fixed_new}, for all sufficiently large $n$,
\[
\mathbb P\Bigl(
v \text{ is the center of a hierarchical-star with parameters }(M^{\alpha_1},M^{\alpha_1})
\Bigm| E_M
\Bigr)
\ge
1-\exp(-M^{\alpha_2}).
\]
Using \eqref{eq:EM_high_prob_fixed_new}, for all sufficiently large $n$,
\[
\mathbb P\Bigl(
v \text{ is the center of a hierarchical-star with parameters }(M^{\alpha_1},M^{\alpha_1})
\Bigr)
\ge
1-\exp(-M^{\alpha_2}).
\]
This completes the proof.
\end{proof}

Lemma~\ref{lemma_logMline} provides the graph-theoretic mechanism needed to climb from a fixed degree scale to the degree scale $n^c$. The underlying proof strategy is adapted from Lemmas~3.1 and~3.2 in Chatterjee and Durrett~\cite{ChatterjeeDurrett2009}, where the corresponding argument is carried out for the case $\tau>3$ and for the simple graph model $G(n,\tau)\mid \{G(n,\tau)\text{ is simple}\}$. Here we extend that argument to the full range $\tau>2$ and to the graph $G(n,\tau)$ itself. Although the setting is more general, the core idea of the proof is the same.

\begin{lemma}\label{lemma_logMline}
Let $G=G(n,\tau)$ be a power-law random graph on $n$ vertices with exponent $\tau>2$. Then there exists a constant
$
M^{(0)}\ge 2
$
such that the following holds. For any $M\ge M^{(0)}$, if $v_1$ is a vertex with degree at least $M$, and
\(
k:=\left\lfloor \frac{\log n}{\log M}\cdot \min \left\{\frac{1}{32\tau}, \frac{\tau-2}{32(\tau-1)} \right\} \right\rfloor
\), and if $E$ is the event that there exist vertices
$v_2,\dots,v_k$ (not necessarily distinct)
such that, for every $i=1,\dots,k-1$,
\[
d(v_i)\ge M^i,
\qquad
\mathrm{dist}(v_i,v_{i+1})\le 4\tau(i+1)\log M.
\]
Then, for all sufficiently large $n$,
\[
\mathbb P(E)\ge \frac13 .
\]
\end{lemma}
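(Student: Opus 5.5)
We will build the chain $v_1,v_2,\dots,v_k$ one step at a time, each time by a breadth-first exploration of the configuration model. The exploration has two phases: it first reaches many vertices at distance $O(i\log M)$ from $v_i$, and then checks whether one of the half-edges it has opened lands on a vertex of degree at least $M^{i+1}$. As in Lemmas~\ref{lemma_hierarchicalstarv11layer}--\ref{lemma_Mhierarchicalstar}, we condition on the degree sequence $\bm d$. We work on a good event in the spirit of $D_\varepsilon$ from Lemma~\ref{lemma_degree_event_D}:
\begin{itemize}
\item the total degree lies in $[\frac12\mu n,\frac32\mu n]$;
\item the maximum degree is at most $n^{1/(\tau-1)}\log n$;
\item for every $x\le n^{1/(4\tau)}$, the half-edge mass $S_x:=\sum_u d_u\mathbf 1\{d_u\ge x\}$ satisfies $S_x\ge c\,n\,x^{-(\tau-2)}$.
\end{itemize}
The last bound follows from the order-statistic estimates in Lemma~\ref{lemma_degree_event_D}, applied with a small exponent. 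The good event has probability $1-o(1)$.

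\emph{One step of the chain.} Given $v_i$ with $d(v_i)\ge M^i$, we explore the half-edges of $v_i$ and of the newly found vertices in breadth-first order. Every vertex has degree at least $3$, so as long as no collision occurs each generation at least doubles. Each new pairing hits the set $\{d_u\ge M^{i+1}\}$ with conditional probability at least
\[
\frac{S_{M^{i+1}}-(\text{used mass})}{\frac32\mu n}\gtrsim M^{-(i+1)(\tau-2)}.
\]
Call this the \emph{success probability}. Write $m_i\approx M^{(i+1)(\tau-2)}\,i\,\log M$. We run roughly $2\log_2(m_i)\approx 2(\tau-2)(i+1)\log_2 M$ generations, and we stop as soon as a success occurs or $m_i$ half-edges have been exposed.

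\emph{Bounding collisions.} Collisions are controlled like the reuse bound in Lemma~\ref{lemma_Mhierarchicalstar}. The total mass explored over all $k$ steps is at most $\sum_i m_i\le M^{(k+1)(\tau-1)}$ times polylogarithmic factors. The choice of $k$ gives $M^{k}\le n^{\min\{1/(32\tau),(\tau-2)/(32(\tau-1))\}}$, so this mass is at most $n^{1/32}$ up to those factors. Each exposure collides with an already seen vertex with probability at most (explored mass)$\cdot n^{1/(\tau-1)}\log n/n$. The exploration therefore behaves like a branching process with offspring at least $2$, except for $o(1)$ collisions in total; here the exponent $\frac{\tau-2}{32(\tau-1)}$ in $k$ is exactly what beats the max-degree factor $n^{1/(\tau-1)}$. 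The degree bound $d(v_{i+1})\ge M^{i+1}$ is automatic on success. The distance bound is the number of generations plus one, roughly $2(\tau-2)(i+1)\log_2M+1\le 4\tau(i+1)\log M$.

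\emph{Failure probability.} With $m_i$ exposures, each of success probability at least $cM^{-(i+1)(\tau-2)}$, the step fails with probability at most $\exp(-c\, i\log M)=M^{-ci}$; this is a sum-of-indicators/Freedman estimate as before. Since $\sum_{i\ge1}M^{-ci}\le 1/3$ for $M\ge M^{(0)}$, a union bound over $i\le k$, together with the $o(1)$ losses from collisions and from the good event, gives $\mathbb P(E)\ge 1/3$. We can also start each step from a fresh subset of $v_i$'s half-edges; the vertices need not be distinct, so reuse only matters through conditioning.

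\emph{Main obstacle.} The hardest part is the dependence across steps. Later explorations are conditioned on earlier exposed pairings, and we must keep the success probability uniformly of order $M^{-(i+1)(\tau-2)}$. We will handle this by subtracting the explored mass, which is at most $n^{1/16}$, from $S_{M^{i+1}}$; this mass is negligible because $S_{M^{i+1}}\ge n^{1-1/32}$. We also need to justify the branching growth despite the bounded minimum degree $3$. Here we will follow Chatterjee--Durrett's Lemma~3.1: with probability $1-M^{-ci}$ each generation keeps at least a constant fraction of fresh half-edges.
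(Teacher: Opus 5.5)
Your proposal is correct and is essentially the paper's argument. Both run a breadth-first exploration from each $v_i$ in which every new pairing lands on a vertex of degree at least $M^{i+1}$ with conditional probability of order $M^{-(i+1)(\tau-2)}$, take a product bound per step summed over $i$, control a single global no-collision event through the maximum degree $n^{1/(\tau-1)}\log n$, and get the distance bound from the binary growth forced by minimum degree $3$. The differences are minor:
\begin{itemize}
\item Your budget of about $M^{(i+1)(\tau-2)}\,i\log M$ exposures (versus the paper's $M^{4\tau(i+1)}$ discovered vertices) gives failure probability $M^{-ci}$ instead of $\exp(-M^i)$. In exchange it gives a depth of about $(\tau-2)(i+1)\log_2 M$, comfortably inside $4\tau(i+1)\log M$; the paper's depth $\log_2 M^{4\tau(i+1)}$ exceeds $4\tau(i+1)\log M$ by a factor $1/\log 2$ for natural $\log$.
\item In the collision count, do not round the explored mass to $n^{1/32}$. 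The resulting bound $n^{1/16}\cdot n^{1/(\tau-1)-1}$ does not vanish for $\tau\le 31/15$. Keep it as $n^{a(\tau-1)}$ up to constants and logarithms, with $a=\min\{\frac{1}{32\tau},\frac{\tau-2}{32(\tau-1)}\}$, and use $2a(\tau-1)<\frac{\tau-2}{\tau-1}$.
\end{itemize}
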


\begin{proof}
Fix $M\ge M^{(0)}$, where $M^{(0)}$ will be chosen sufficiently large at the end of the proof. For each $i\ge 1$, define
\[
I_i:=[M^{i+1},\,2M^{i+1}],
\qquad
\mathcal K_i:=\{x\in V:d_x\in I_i\},
\qquad
Y_i^{(x)}:=d_x\,\mathbf 1\{d_x\in I_i\},
\qquad
S_i:=\sum_{x\in V}Y_i^{(x)}.
\]
Let
\[
H_n:=
\Bigl\{\tfrac12\mu n\le \sum_{x\in V}d_x\le \tfrac32\mu n \Bigr\}
\cap
\Bigl\{\max_{x\in V} d_x\le n^{1/(\tau-1)}\log n\Bigr\}
\cap
\bigcap_{i=1}^k
\Bigl\{
\tfrac12\mathbb E[S_i]\le S_i\le \tfrac32\mathbb E[S_i]
\Bigr\}.
\]
By \eqref{eq:SM_concentration_fixed_new}, \eqref{eq:degree_sum_event_fixed_new}, and \eqref{eq:maxdeg_fixed_new} in Lemma~\ref{lemma_Mhierarchicalstar},
\begin{equation}\label{eq:Hn_whp_logMline}
\mathbb P(H_n)\to 1
\qquad\text{as }n\to\infty.
\end{equation}

We now work on the event $H_n$. We recursively construct vertices $v_2,\dots,v_k$.
Fix $i\in\{1,\dots,k-1\}$ and suppose that $v_i$ has already been constructed and satisfies
\(
d(v_i)\ge M^i
\).
Set
\(
L_i:=\lfloor M^{4\tau(i+1)}\rfloor.
\) Then
\begin{equation}\label{eq:Libound}
  L_i\le L_{k-1}\le M^{4\tau k}\le n^{\min\{\frac{1}{8}, \frac{1}{8}(1-\frac{1}{\tau-1}) \}}  
\end{equation}
Starting from $v_i$, we perform a breadth-first exploration as follows. We expose pairings of half-edges incident to the currently discovered vertices in breadth-first order, and we stop after discovering $L_i$ vertices. Let $A_i$ denote the event that during this exploration we discover a vertex $v_{i+1}$ of degree in $I_i=[M^{i+1},2M^{i+1}]$.

Let $y_1^{(i)},\dots,y_{L_i}^{(i)}$ denote the first $L_i$ explored vertices, listed in exploration order, starting from $v_i$. For each $\ell=1,\dots,L_i$, define
\(
B_\ell^{(i)}:=\{d(y_\ell^{(i)})\in I_i\}
\).
Then
\(
\cup_{\ell=1}^{L_i}B_\ell^{(i)}\subseteq A_i.
\)

We now estimate the probability of $A_i^{\mathrm c}$ on the event $H_n$. Conditional on the event
\(
H_n\cap \bigcap_{m=1}^{\ell-1}(B_m^{(i)})^{\mathrm c},
\)
the first $\ell-1$ explored vertices all have degrees outside the interval $I_i$. Hence the total degree mass already removed from $\mathcal K_i$ is at most $(\ell-1)\max_{x\in V}d_x\le L_in^{1/(\tau-1)}\log n $.
Since the next explored vertex is obtained by pairing a frontier half-edge to a uniformly chosen remaining half-edge outside the already exposed history, it follows that
\[
\mathbb P\Bigl(B_\ell^{(i)}\Bigm| H_n\cap \bigcap_{m=1}^{\ell-1}(B_m^{(i)})^{\mathrm c}\Bigr)
\ge
\frac{S_i-L_in^{1/(\tau-1)}\log n}{\sum_{x\in V}d_x}.
\]
By the definition of $H_n$,
\(
S_i\ge \frac12\mathbb E[S_i],\ \sum_{x\in V}d_x\le \frac32\mu n
\).
Also, there exists a constant $c_0>0$, depending only on $\tau$, such that for all sufficiently large $n$,
\(
\mathbb E[S_i]
=
n\sum_{m=\lceil M^{i+1}\rceil}^{\lfloor 2M^{i+1}\rfloor} cm^{1-\tau}
\ge
c_0 nM^{-(i+1)(\tau-2)}
\).
By \eqref{eq:Libound},
\(
L_in^{1/(\tau-1)}\log n=o(n)
\). Hence there exists a constant $c_1>0$, depending only on $\tau$, such that for all sufficiently large $n$,
\begin{equation}\label{eq:Bi_lower_logMline}
\mathbb P\Bigl(B_\ell^{(i)}\Bigm| H_n\cap \bigcap_{m=1}^{\ell-1}(B_m^{(i)})^{\mathrm c}\Bigr)
\ge
c_1 M^{-(i+1)(\tau-2)}
\qquad\text{for all }\ell\le L_i.
\end{equation}
Hence, on the event $H_n$,
\begin{align*}
\mathbb P(A_i^{\mathrm c}\mid H_n)
&\le
\mathbb P\Bigl(\bigcap_{\ell=1}^{L_i}(B_\ell^{(i)})^{\mathrm c}\Bigm|H_n\Bigr)
=
\prod_{\ell=1}^{L_i}
\mathbb P\Bigl((B_\ell^{(i)})^{\mathrm c}\Bigm|H_n\cap \bigcap_{m=1}^{\ell-1}(B_m^{(i)})^{\mathrm c}\Bigr)\\
&\le
\bigl(1-c_1 M^{-(i+1)(\tau-2)}\bigr)^{L_i}
\le
\exp\bigl(-c_1L_iM^{-(i+1)(\tau-2)}\bigr)\\
&=
\exp\bigl(-c_1M^{(i+1)(3\tau+2)}\bigr)
\le
\exp(-M^i)
\end{align*}
for all sufficiently large $M$.
Choose $M^{(0)}$ sufficiently large so that
\[
\sum_{i=1}^{\infty}\exp(-M^i)\le \frac16
\qquad\text{for all }M\ge M^{(0)}.
\]
Let
\(
A:=\bigcap_{i=1}^{k-1}A_i
\).
Then, for all sufficiently large $n$,
\begin{equation}\label{eq:Aprob_logMline}
\mathbb P(A\mid H_n)\ge \frac56.
\end{equation}
Next let $R$ denote the event that, during the entire collection of the $k-1$ breadth-first explorations above, no previously discovered vertex is ever hit again. On the event $A\cap H_n$, the total number of explored vertices is at most
\(
\sum_{i=1}^{k-1}L_i\le kM^{4\tau k}
\).
Therefore the total number of half-edges incident to explored vertices is at most
\(
kM^{4\tau k}n^{1/(\tau-1)}\log n\in o(n)
\).
At each exposure step, conditional on the past, the probability that the next paired half-edge leads back to a previously discovered vertex is at most
\(
\frac{kM^{4\tau k}n^{1/(\tau-1)}\log n}{\frac12\mu n}
\).
Since the total number of exposure steps is also at most $kM^{4\tau k}$, a union bound yields
\[
\mathbb P(R\mid A\cap H_n)
\ge
1-kM^{4\tau k}\frac{kM^{4\tau k}n^{1/(\tau-2)}\log n}{\frac12\mu n}.
\]
By \eqref{eq:Libound},
\begin{equation}\label{eq:Rprob_logMline}
\mathbb P(R\mid A\cap H_n)\to 1, \qquad\text{as }n\to\infty .
\end{equation}

On the event $A\cap R$, the $i$th breadth-first exploration reveals at most $L_i$ distinct vertices and never hits a previously discovered vertex. Since every vertex has degree at least $3$, the number of vertices within graph distance $m$ of $v_i$ is at least $2^m$ as long as no collision occurs. Hence every discovered vertex in the $i$th exploration lies within graph distance at most
\(
\log_2L_i\le 4\tau(i+1)\log M
\)
of $v_i$. In particular,
\[
\mathrm{dist}(v_i,v_{i+1})\le 4\tau(i+1)\log M
\qquad\text{for all }i=1,\dots,k-1.
\]
Since also $d(v_i)\ge M^i$ by construction, it follows that
\(
A\cap R\subseteq E
\).
Finally, combining \eqref{eq:Hn_whp_logMline}, \eqref{eq:Aprob_logMline}, and \eqref{eq:Rprob_logMline}, we obtain, for all sufficiently large $n$,
\[
\mathbb P(E)
\ge
\mathbb P(H_n)\,\mathbb P(A\mid H_n)\,\mathbb P(R\mid A\cap H_n)
\ge \frac13.
\]
This completes the proof.
\end{proof}

\section{Proof of the Main Theorems}\label{section_proofofmain}
In this section we prove the two main theorems of the paper. The argument combines the geometric results established in Section~\ref{section_hierarchicalstarinpowerlaw} with the dynamical analysis of the SIRS process on a single hierarchical-star developed in Proposition~\ref{hierarchicalstar_thm2} of Section~\ref{subsection_hierarchicalstar_1}.

Before turning to the proofs of the main theorems, we introduce two auxiliary lemmas. Lemma~\ref{lemma_diameter_log_n} shows that, with high probability, the diameter of the power-law random graph is bounded by $c\log n$ for some constant $c$. Lemma~\ref{lemma_infectionspreadthroughhierarchicalstars} shows that once at least one $(n^{\varepsilon},n^{\varepsilon})$ hierarchical-star is already in a HS-lit state, then with high probability all $(n^{\varepsilon},n^{\varepsilon})$ hierarchical-stars reach the same level of HS-lit state at some later time.

\begin{lemma}\label{lemma_diameter_log_n}
Let $G=G(n,\tau)$ be the power-law random graph with exponent $\tau>2$, and let
$
\mathrm{diam}(G):=\max_{u,v\in V} d_G(u,v)
$
denote the graph diameter, where $d_G(u,v)$ is the graph distance between $u$ and $v$. Then there exists constants $C>0$ such that
\[
\mathbb P\big(\mathrm{diam}(G)\le C\log n\big)\to 1\qquad\text{as}\quad n\to\infty.
\]
\end{lemma}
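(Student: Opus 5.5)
We plan to prove Lemma~\ref{lemma_diameter_log_n} with a two-sided neighbourhood-growth argument in the configuration model. We condition on the degree sequence $\bm d$ and work on the regularity event $\{\tfrac12\mu n\le \sum_x d_x\le \tfrac32\mu n\}\cap\{\max_x d_x\le n^{1/(\tau-1)}\log n\}$. By Lemma~\ref{lemma_degree_event_D} and \eqref{eq:maxdeg_fixed_new}, this event has probability tending to one. Because every degree is at least $3$, an exploration of a vertex's neighbourhood that encounters no collisions forms a tree whose boundary (the set of unpaired half-edges) at least doubles at each generation. The strategy is to show two things simultaneously for all vertices $u$. First, the breadth-first exploration from $u$ reaches a boundary of size $K:=n^{1/2}\log^2 n$ half-edges within $C_1\log n$ generations, except with probability $o(n^{-1})$. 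Second, two such large boundaries grown from $u$ and from $w$ are joined by an edge with overwhelming probability. A union bound over the $n^2$ pairs then yields $\mathrm{diam}(G)\le 2C_1\log n+1$.

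\textbf{Step 1: early growth despite collisions.} We expose half-edges in breadth-first order from $u$. A collision occurs when a newly paired half-edge lands on an already-discovered vertex or on an active half-edge, and each collision removes at most two half-edges from the boundary. While the boundary contains at most $K$ half-edges, the discovered half-edge mass is at most $K\max_x d_x$ in the worst case. This bound is far too crude when $\tau$ is close to $2$, so we instead count collisions directly. At each step the probability of hitting one of the currently active half-edges is at most $2K/(\mu n)$.

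\emph{Small boundary.} As long as the boundary has size at most $(\log n)^2$, the number of collisions among the first $(\log n)^2$ pairings is stochastically dominated by $\mathrm{Bin}\big((\log n)^2,O((\log n)^2/n)\big)$. Hence at least two collisions occur with probability $O((\log n)^8/n^2)=o(n^{-1})$. With at most one collision, minimum degree $3$ still forces exponential growth, so the boundary reaches size $(\log n)^2$ within $O(\log\log n)$ generations.

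\emph{Large boundary.} Once the boundary has size $Z$ with $(\log n)^2\le Z\le K$, each half-edge in the next generation contributes at least $2$ new half-edges unless it collides. The collision probability per pairing is $O(\text{(explored mass)}/n)$, and we will control the explored mass by $O(K\log n\cdot\max d)$. This is the delicate point. To handle it we only require a growth factor of $3/2$ per generation, and we bound the number of colliding pairings in a generation by a binomial random variable with mean $o(Z)$ provided the explored mass is $o(n)$.

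We expect the main obstacle to be keeping the explored mass at $o(n)$ when hubs of size up to $n^{1/(\tau-1)}$ are discovered. The remedy we would try first is to truncate: only explore half-edges of vertices with degree at most $n^{1/4}$. Such vertices still have degree at least $3$, and the total half-edge mass of vertices with degree above $n^{1/4}$ is $o(n)$, so a pairing lands on a high-degree vertex with probability $o(1)$. This gives growth factor at least $3/2$ with exponentially small failure probability in $Z$, via Chernoff bounds (Lemma~\ref{lemma_largedeviation}). After $C_1\log n$ generations the explored mass is at most $K n^{1/4}\log n=o(n)$, which closes the argument.

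\textbf{Step 2: connecting boundaries.} For a fixed pair $u,w$, we grow both explorations, each to a boundary of $K$ half-edges, obtaining disjoint sets (or an intersection, which already gives $\mathrm{dist}(u,w)\le 2C_1\log n$). We then pair the half-edges of $u$'s boundary one at a time. Each lands in $w$'s boundary with probability at least $K/(2\mu n)$, independently of the past up to negligible depletion. The probability that none of the $K$ pairings connects is therefore at most $(1-K/(2\mu n))^{K/2}\le\exp(-K^2/(4\mu n))=\exp(-c\log^4 n)$. A union bound over the $n^2$ pairs, combined with the $o(n^{-1})$ per-vertex failure from Step~1 union-bounded over the $n$ vertices, shows that the diameter is at most $C\log n$ with probability tending to one. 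Alternatively, one may simply cite \cite[Theorem~7.19]{vanDerHofstad2024rgcn2}, which states exactly this for $D\ge 3$, and present the above only as a sketch.
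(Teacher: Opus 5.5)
Your argument is correct in outline, but it takes a different route from the paper. The paper gives no argument at all: it simply invokes \cite[Theorem~7.19]{vanDerHofstad2024rgcn2}, which is the fallback you mention in your last sentence.

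Your two-sided exploration has three stages:
\begin{itemize}
\item while the boundary is below $(\log n)^2$, allow at most one collision, at cost $O((\log n)^8/n^2)$ per vertex;
\item then grow the boundary by a factor $3/2$ per generation through vertices of degree at most $n^{1/4}$ only, so the explored mass stays $O(Kn^{1/4})+\max_x d_x=o(n)$;
\item finally join two disjoint boundaries of size $K=n^{1/2}\log^2 n$, with failure probability $\exp(-c\log^4 n)$ per pair.
\end{itemize}
This buys a self-contained proof that treats all $\tau>2$ uniformly. It uses only $d_{\min}\ge 3$ and mild regularity of the degree sequence, works directly on the multigraph $G(n,\tau)$, and gives a quantitative failure probability. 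The citation buys brevity and much sharper information than the paper needs (for instance, the diameter is of order $\log\log n$ when $\tau\in(2,3)$).

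Three points need to be made explicit to turn your sketch into a proof.
\begin{itemize}
\item[(a)] The truncation step uses $\sum_{x:\,d_x>n^{1/4}}d_x=o(n)$. This is not in your stated regularity event, but it holds with high probability by Markov's inequality, since its mean is $O(n^{1-(\tau-2)/4})$.
\item[(b)] The per-vertex good event must be defined through the exploration from that vertex alone, as a fixed function of the pairing. For a pair $(u,w)$, reveal both explorations. Either they touch or share an edge, which gives $\mathrm{dist}(u,w)\le 2C_1\log n+1$. Or the two boundaries are disjoint sets of unrevealed half-edges, and the remaining pairing is uniform. If instead you grew $w$'s exploration conditionally on $u$'s, the $o(n^{-1})$ growth failure would become a per-pair quantity and could not be summed over $n^2$ pairs.
\item[(c)] Lemma~\ref{lemma_largedeviation} is stated for i.i.d.\ variables. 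Apply it only after dominating the number of failed pairings in a generation by a binomial with parameter $o(1)$. Also stop the exploration as soon as the boundary exceeds $K$: a hub hit in the small phase can make the boundary jump far above $K$, and the explored mass must still be $o(n)$ for the connection step.
\end{itemize}
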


\begin{proof}
This is a direct consequence of the diameter bounds proved for the (power-law) configuration model in
\cite[Theorem~7.19]{vanDerHofstad2024rgcn2}.
\end{proof}

\begin{lemma}\label{lemma_infectionspreadthroughhierarchicalstars}
Let $G=(V,E)$ be a graph with
$
\mathrm{diam}(G)\le c\log n
$, $\delta\in (0,1), \varepsilon\ge \varepsilon_1>0$,
and suppose that $G$ contains $\lceil n^{1-\delta'}\rceil$ pairwise vertex-disjoint hierarchical-stars
\[
HS(v^{(\ell)}):=\mathrm{HStar}\bigl(v^{(\ell)},\{u_i^{(\ell)}\}_{i\le \lceil n^\varepsilon\rceil },\{w_{i,j}^{(\ell)}\}_{i\le \lceil n^\varepsilon\rceil ,\ j\le  \lceil n^{\varepsilon_1}\rceil}\bigr),
\qquad
\ell=1,\dots,\lceil n^{1-\delta'}\rceil.
\]
Fix any non-empty set
$
\mathcal I\subseteq \{1,\dots,\lceil n^{1-\delta'}\rceil \}
$.
Assume that at time $0$, for every $\ell\in\mathcal I$, the center $v^{(\ell)}$ is $(\frac14 n^{\varepsilon_3},n^{\varepsilon_2})$-HS-lit with respect to $HS(v^{(\ell)})$, where
\(
\varepsilon_2=\frac{1}{200}\min\{\lambda,\rho,1\}\min\{\varepsilon,\varepsilon_1\},
\varepsilon_3=\frac14\varepsilon_2,
\varepsilon_4=\tfrac{1}{16\cdot 200}\min\{\rho, \lambda, 1\}\cdot24\varepsilon_2.
\)
Then there exists an event $A$ such that $A$ implies that, for every
$
\ell=1,\dots,\lceil n^{1-\delta'}\rceil,
$
the center $v^{(\ell)}$ is $(\frac14 n^{\varepsilon_3},n^{\varepsilon_2})$-HS-lit with respect to $HS(v^{(\ell)})$ at time $\exp(n^{\varepsilon_3/2})$, and, for all sufficiently large $n$,
\[
\mathbb P(A)\ge
1-\exp(-n^{\varepsilon_4/20}).
\]
\end{lemma}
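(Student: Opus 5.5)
Fix one index $\ell_0\in\mathcal I$. The plan is to use $HS(v^{(\ell_0)})$ as a persistent source, to transmit infection from it along shortest paths to every other center, and then to let each of those hierarchical-stars upgrade itself.

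\emph{The source.} Apply Proposition~\ref{hierarchicalstar_thm2} to $HS(v^{(\ell_0)})$, with $k=\tfrac14 n^{\varepsilon_3}$. This gives an event $B^{(\ell_0)}\in\mathcal G(HS(v^{(\ell_0)}))$ of probability at least $1-\exp(-n^{\varepsilon_4/10})$. Item (iii) supplies $\lceil\exp(n^{\varepsilon_4/8})\rceil$ times $T_i\le\tfrac12\exp(n^{\varepsilon_3/2})$ at which $v^{(\ell_0)}$ is infected. Each $T_i$ is $\mathcal G(HS(v^{(\ell_0)}))$-measurable, and consecutive times satisfy $T_{i+1}-T_i\ge n^{\varepsilon_2}$. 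Since $n^{\varepsilon_2}\gg c\log n$, each window $[T_i,T_i+c\log n]$ is disjoint from the next one.

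\emph{Transmission to each center.} Fix $\ell$ and a shortest path $v^{(\ell_0)}=x_0,x_1,\dots,x_m=v^{(\ell)}$ with $m\le c\log n$. Chain Lemma~\ref{lemma_edge_transmission_one} along consecutive unit intervals $[T_i+s-1,T_i+s]$, in the same way as in the proof of Lemma~\ref{lemma_edge_transmission_k}. This produces an event $E_i^{(\ell)}$, measurable with respect to the Harris marks on the path during $[T_i,T_i+m]$, of probability at least $c_{\mathrm{edge}}^{m}\ge n^{-c\log(1/c_{\mathrm{edge}})}$, on which $v^{(\ell)}$ is infected at time $T_i+m$. Because the $T_i$ are measurable with respect to the source's local data, which is independent of the marks on the path after $T_i$, and the windows are disjoint, the events $E_i^{(\ell)}$ are conditionally independent given $B^{(\ell_0)}$ and the $T_i$. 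Hence
\[
\mathbb P\Big(\textstyle\bigcap_i (E_i^{(\ell)})^c\Big)\le (1-n^{-C'})^{\exp(n^{\varepsilon_4/8})}\le \exp(-\exp(n^{\varepsilon_4/9})).
\]
A union bound over the $n^{1-\delta'}$ indices $\ell$ costs almost nothing. One point needs care: the path may pass through vertices of the source hierarchical-star itself. In that case I would consult only the marks on path edges and path vertices and treat them as part of the conditioning. The cleaner choice is to discard any path step that lies in $HS(v^{(\ell_0)})$, or to start the path at the last vertex of $HS(v^{(\ell_0)})$ it visits. I expect this independence bookkeeping to be the main obstacle, since the lack of attractiveness means every step must be realized by explicit marks.

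\emph{Upgrade at each target star.} Let $\tau_\ell$ be the first success time $T_{i_*}+m$. It is measurable with respect to the marks used, and $v^{(\ell)}$ is infected at $\tau_\ell$. By the strong Markov property at $\tau_\ell$:
\begin{itemize}
\item Lemma~\ref{lemma_hierarchicalstar_infectedtoHSlit}, with failure probability $n^{-\varepsilon/4}$, makes $v^{(\ell)}$ $(n^{\varepsilon_3},n^{\varepsilon_1/24})$-HS-lit at $\tau_\ell+1$.
\item Lemma~\ref{lemma_hierarchicalstar_expsurvivaltime} then keeps it $(\tfrac14 n^{\varepsilon_3},n^{\varepsilon_2})$-HS-lit on $[\tau_\ell+1,\tau_\ell+1+\exp(n^{\varepsilon_3/2})]$. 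This interval contains $\exp(n^{\varepsilon_3/2})$ because $\tau_\ell\le\tfrac12\exp(n^{\varepsilon_3/2})+c\log n$.
\end{itemize}
The failure probability $n^{-\varepsilon/4}$ of the first step is only polynomial, so I would not accept a single attempt. Instead I would make roughly $n$ successive attempts: on failure at trial $i$, pass to the next successful transmission time $T_{i'}$ with $i'>i$. Enough transmission trials remain, because there are $\exp(n^{\varepsilon_4/8})$ of them and each succeeds with polynomial probability. This pushes the failure probability below $\exp(-n^{\varepsilon_4/20})/n$. For $\ell\in\mathcal I$ one can alternatively apply Proposition~\ref{hierarchicalstar_thm2}(ii) directly.

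\emph{Conclusion.} Let $A$ be the intersection of $B^{(\ell_0)}$ with all these per-$\ell$ events. A union bound gives
\[
\mathbb P(A)\ge 1-\exp(-n^{\varepsilon_4/10})-n\exp(-n^{\varepsilon_4/15})\ge 1-\exp(-n^{\varepsilon_4/20}).
\]
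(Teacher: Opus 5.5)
Your proposal is correct and follows essentially the paper's route. You fix a source $\ell_0\in\mathcal I$ whose infection times come from Proposition~\ref{hierarchicalstar_thm2}(iii), chain edge transmissions along a shortest path in disjoint windows of length $O(\log n)$, upgrade the target via Lemma~\ref{lemma_hierarchicalstar_infectedtoHSlit}, keep it lit for a long time, and finish with a union bound over $\ell$. The differences are cosmetic: the paper bundles transmission and upgrade into a single trial $G_r^{(\ell)}=E_r^{(\ell)}\cap F_r^{(\ell)}$ of success probability at least $\tfrac12 n^{-c\log(1/c_{\mathrm{edge}})}$ rather than retrying, it uses the time-shifted Proposition~\ref{hierarchicalstar_thm2} from the first strong HS-lit time $S_\ell$ instead of Lemma~\ref{lemma_hierarchicalstar_expsurvivaltime}, and it handles $\ell\in\mathcal I$ directly by Proposition~\ref{hierarchicalstar_thm2}(ii). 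The paper also simply asserts the conditional independence of the trials given $B(v^{(\ell_0)})$ and the $T_r$, which is exactly the point you flag as delicate.
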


\begin{proof}
Fix an index $\ell_0\in\mathcal I$. For each $\ell\in\mathcal I$, Proposition~\ref{hierarchicalstar_thm2} yields an event $B(v^{(\ell)})\in\mathcal G(HS(v^{(\ell)}))$ such that, on $B(v^{(\ell)})$, the center $v^{(\ell)}$ remains $(\frac14 n^{\varepsilon_3},n^{\varepsilon_2})$-HS-lit with respect to $HS(v^{(\ell)})$ throughout the time interval $[\exp(n^{\varepsilon_4}),\exp(n^{\varepsilon_3/2})]$, and there exist random times $T_1,\dots,T_M$, with $M:=\lceil\exp(n^{\varepsilon_4/8})\rceil+1$, such that $T_{r+1}-T_r\ge n^{\varepsilon_2}$, $T_r\le \frac12\exp(n^{\varepsilon_3/2})$, and $v^{(\ell)}$ is infected at time $T_r$ for every $r=1,\dots,M-1$. Moreover,
\[
\mathbb P(B(v^{(\ell)}))\ge 1-\exp(-n^{\varepsilon_4/10}).
\]
By a union bound,
\begin{equation}\label{eq:initial_centers_survive_new}
\mathbb P\Big(\bigcap_{\ell\in\mathcal I}B(v^{(\ell)})\Big)\ge 1-\lceil n^{1-\delta'}\rceil\exp(-n^{\varepsilon_4/10}).
\end{equation}

Now fix $\ell\notin\mathcal I$. Since $\mathrm{diam}(G)\le c\log n$, choose a path
\(\gamma^{(\ell)}:=(z_0^{(\ell)},z_1^{(\ell)},\dots,z_{q_\ell}^{(\ell)})
\)
from $z_0^{(\ell)}=v^{(\ell_0)}$ to $z_{q_\ell}^{(\ell)}=v^{(\ell)}$ with $q_\ell\le c\log n$. On the event $B(v^{(\ell_0)})$, the vertex $v^{(\ell_0)}$ is infected at each time $T_r$, $r=1,\dots,M-1$. For each such $r$, define $E_r^{(\ell)}$ to be the event that the infection is transmitted successively along the path $\gamma^{(\ell)}$ during the time intervals $[T_r,T_r+q_\ell]$, so that $v^{(\ell)}$ is infected at time $T_r+q_\ell$. More explicitly, $E_r^{(\ell)}$ is the intersection of the edge-transmission events from Lemma~\ref{lemma_edge_transmission_one} along the oriented edges $z_{a-1}^{(\ell)}\to z_a^{(\ell)}$, $a=1,\dots,q_\ell$, shifted to the intervals $[T_r+a-1,T_r+a]$. Since $q_\ell\le c\log n$ and $T_{r+1}-T_r\ge n^{\varepsilon_2}$, these time intervals are pairwise disjoint for distinct $r$, for all sufficiently large $n$. Therefore, conditional on $B(v^{(\ell_0)})$ and on the times $T_1,\dots,T_M$, the events $E_r^{(\ell)}$ are independent. Moreover,
\[
\mathbb P(E_r^{(\ell)}\mid B(v^{(\ell_0)}),T_1,\dots,T_M)\ge c_{\mathrm{edge}}^{q_\ell}\ge n^{-c\log(1/c_{\mathrm{edge}})}.
\]

On $E_r^{(\ell)}$, the center $v^{(\ell)}$ is infected at time $T_r+q_\ell$. Applying the time-shifted version of Lemma~\ref{lemma_hierarchicalstar_infectedtoHSlit} from time $T_r+q_\ell$, and using the strong Markov property at that time, we obtain an event $F_r^{(\ell)}$ such that, on $E_r^{(\ell)}\cap F_r^{(\ell)}$, the center $v^{(\ell)}$ becomes $(\frac14 n^{\varepsilon_3},n^{\varepsilon_2})$-HS-lit with respect to $HS(v^{(\ell)})$ by time $T_r+q_\ell+1$, and
\[
\mathbb P(F_r^{(\ell)}\mid \mathcal F_{T_r+q_\ell})\ge 1-n^{-\varepsilon/4}\ge \frac12
\]
for all sufficiently large $n$. Hence, setting $G_r^{(\ell)}:=E_r^{(\ell)}\cap F_r^{(\ell)}$, we have
\[
\mathbb P(G_r^{(\ell)}\mid B(v^{(\ell_0)}),T_1,\dots,T_M)\ge \frac12 n^{-c\log(1/c_{\mathrm{edge}})}.
\]
The events $G_r^{(\ell)}$ are conditionally independent over $r$ after conditioning on $B(v^{(\ell_0)})$ and the times $T_1,\dots,T_M$, because they are supported on disjoint time intervals of the modified Harris construction. Therefore,
\begin{align}
\mathbb P\Big(\bigcap_{r=1}^{M-1}(G_r^{(\ell)})^c\ \Bigm|\ B(v^{(\ell_0)}),T_1,\dots,T_M\Big)
&\le \left(1-\frac12 n^{-c\log(1/c_{\mathrm{edge}})}\right)^{M-1}\notag\\
&\le \exp\left(-\frac14\exp(n^{\varepsilon_4/8})\,n^{-c\log(1/c_{\mathrm{edge}})}\right)
\label{eq:dormant_activation_failure_new}
\end{align}
for all sufficiently large $n$.

Let $S_\ell$ be the first time $v^{(\ell)}$ is $( n^{\varepsilon_3},n^{\varepsilon_1/24})$-HS-lit with respect to $HS(v^{(\ell)})$. If some $G_r^{(\ell)}$ occurs , then $S_\ell \le 
 T_{M-1}+c\log n+1\le \frac12\exp(n^{\varepsilon_3/2})+c\log n+1\le \frac23\exp(n^{\varepsilon_3/2})$
for all sufficiently large $n$. Starting from time $S_\ell$, apply the time-shifted version of Proposition~\ref{hierarchicalstar_thm2}. There exists an event $B(S_\ell, v^{(\ell)})$ such that $\mathbb{P}(B(S_\ell, v^{(\ell)})\mid \mathcal{F}_{S_\ell}, B(v^{(\ell_0)}), T_1,\dots, T_M) \ge 1-\exp(-n^{\varepsilon_4/10})$, and on event $B(S_\ell, v^{(\ell)})$ the center $v^{(\ell)}$ remains $(\frac14 n^{\varepsilon_3},n^{\varepsilon_2})$-HS-lit throughout time interval $[S_\ell+\exp(n^{\varepsilon_4}), S_\ell+\exp(n^{\varepsilon_3/2})]$. Since $S_\ell+\exp(n^{\varepsilon_4})\le \exp(n^{\varepsilon_3/2})$ for all sufficiently large $n$, this event implies that $v^{(\ell)}$ is $(\frac14 n^{\varepsilon_3},n^{\varepsilon_2})$-HS-lit with respect to $HS(v^{(\ell)})$ at time $\exp(n^{\varepsilon_3/2})$. Therefore, for every $\ell\notin\mathcal I$,
\begin{align}
\mathbb P\Big(\big(\bigcup_{r=1}^{M-1}G_r^{(\ell)}\big)\cap B(S_\ell, v^{(\ell)}) \Bigm|\ B(v^{(\ell_0)}),T_1,\dots,T_M\Big)  \notag\\
\ge1- \exp(-n^{\varepsilon_4/10})
-\exp\left(-\frac14\exp(n^{\varepsilon_4/8})\,n^{-c\log(1/c_{\mathrm{edge}})}\right).
\label{eq:dormant_final_failure_new}
\end{align}

Define
\[
A:=\bigcap_{\ell\in \mathcal I}B(v^{(\ell)})\cap \bigcap_{\ell\notin \mathcal I}\left(\left(\bigcup_{r=1}^{M-1}G_r^{(\ell)}\right)\cap B(S_\ell, v^{(\ell)})\right).
\]
Then on the event $A$, every $v^{(\ell)}$ is $(\frac14 n^{\varepsilon_3},n^{\varepsilon_2})$-HS-lit with respect to $HS(v^{(\ell)})$ at time $\exp(n^{\varepsilon_3/2})$, for $\ell=1,\dots,\lceil n^{1-\delta'}\rceil$. Combining \eqref{eq:initial_centers_survive_new} and \eqref{eq:dormant_final_failure_new}, and taking a union bound over $\ell\notin\mathcal I$, we get
\[
\mathbb P(A)\ge 1-\lceil n^{1-\delta'}\rceil\exp(-n^{\varepsilon_4/10})-\lceil n^{1-\delta'}\rceil\left(\exp(-\frac14\exp(n^{\varepsilon_4/8})\,n^{-c\log(1/c_{\mathrm{edge}})})+ \exp(-n^{\varepsilon_4/10}) \right).
\]
Consequently, for all sufficiently large $n$,
\[
\mathbb P(A)\ge 1-\exp(-n^{\varepsilon_4/20}).
\]
\end{proof}

We now turn to the proof of Theorem~\ref{thm_main1}. 

\begin{proof}[Proof of Theorem \ref{thm_main1}]
    Let 
    $$\delta':=\delta/2, \quad \varepsilon:=\frac{\delta'}{400\tau^3}\, \quad\varepsilon_1=\varepsilon, \quad
\varepsilon_2:=\frac{1}{200}\varepsilon_1\min\{\lambda,\rho,1\},$$
$$
\varepsilon_3:=\frac{1}{4}\varepsilon_2, \quad
\varepsilon_4=\tfrac{1}{16\cdot 200}\min\{\rho, \lambda, 1\}\cdot24\varepsilon_2. $$
By Proposition~\ref{prop:existsmanyhierarchicalstar}, there exists a constant $c_1>0$ such that, for all sufficiently large $n$, with probability at least $1-c_1n^{-\min\{\tau-2, 1\}}(\log n)^{\mathbf{1}\{\tau=3\}}$, the graph $G$ contains at least $\lceil n^{1-\delta'}\rceil$ vertex-disjoint $(n^\varepsilon,n^\varepsilon)$ hierarchical-stars. By Lemma~\ref{lemma_diameter_log_n}, there exists a constant $c_2>0$ such that $\mathbb{P}(\mathrm{diam}(G)\le c_2\log n)\to 1$. We condition throughout on the intersection of these two events, denoted by $F_n$.

On this event $F_n$, fix a family of $\lceil n^{1-\delta'}\rceil$ vertex-disjoint $(n^{\varepsilon}, n^{\varepsilon})$ hierarchical-stars, and denote their centers by
$
v^{(1)},\dots,v^{(\lceil n^{1-\delta'}\rceil)}
$. For each $\ell=1,\dots,\lceil n^{1-\delta'}\rceil$, write
$$
HS(v^{(\ell)}):=\mathrm{HStar}\bigl(v^{(\ell)},\{u_i^{(\ell)}\}_{i\le \lceil n^\varepsilon\rceil},\{w_{i,j}^{(\ell)}\}_{i\le  \lceil n^\varepsilon\rceil,\ j\le \lceil n^{\varepsilon_1}\rceil}\bigr).
$$
Let $(\mathcal{F}_t)_{t\ge 0}$ be the natural filtration of the modified Harris construction, as defined in Subsection~\ref{ssec:harris}. For each $r\in \mathbb{N}$, set $t_r:=r\exp(n^{\varepsilon_3/2})$, and let $\bar{\mathcal{F}}_r:=\sigma(\mathcal{H}|_{[0,t_r]})$ denote the $\sigma$-field generated by the restriction of the modified Harris construction to the time interval $[0,t_r]$. Define
$$
L_r:=\Bigl\{v^{(\ell)}\in \{v^{(1)},\dots, v^{(\lceil n^{1-\delta'}\rceil)} \}:\text{$v^{(\ell)}$ is $(\frac14 n^{\varepsilon_3},n^{\varepsilon_2})$-HS-lit with respect to $HS(v^{(\ell)})$ at time $t_r$}\Bigr\},
$$
$$
W_r:=|L_r|.
$$
Then $W_0=\lceil n^{1-\delta'}\rceil$. To prove that $T>\exp(n^{1-\delta})$, it suffices to show that $W_r>0$ for some $r$ with $t_r>\exp(n^{1-\delta})$, since $W_r>0$ means that at least one hierarchical-star center is still HS-lit at time $t_r$, and in particular the infection has not gone extinct by that time.

Choose $\tilde{L}^{(1)}_0$ to be a subset of $L_0$ such that $|\tilde{L}^{(1)}_0|=\lceil\frac{1}{2} n^{1-\delta'}\rceil+1$, according to a fixed deterministic rule, and let $\tilde{W}^{(1)}_0=|\tilde{L}^{(1)}_0|=\lceil\frac{1}{2}n^{1-\delta'}\rceil+1$. We then define $\tilde{L}^{(1)}_{r+1}$ inductively, given $\tilde{L}^{(1)}_r$, and set $\tilde{W}^{(1)}_{r+1}=|\tilde{L}^{(1)}_{r+1}|$.

Fix an epoch index $r\in\mathbb{N}$, and assume that $\tilde{L}^{(1)}_r$ has already been defined. For each $v^{(\ell)}\in \tilde{L}^{(1)}_r$, let $N_r^{(\ell)}$ be the event that $v^{(\ell)}$ is not $(\frac14 n^{\varepsilon_3},n^{\varepsilon_2})$-HS-lit with respect to $HS(v^{(\ell)})$ at time $t_{r+1}$.

By Proposition~\ref{hierarchicalstar_thm2} applied from time $t_r$, for each $v^{(\ell)}\in \tilde{L}^{(1)}_r$ there exists an event $\tilde{B}_r(v^{(\ell)})\in \mathcal{G}(HS(v^{(\ell)}))$ such that for all sufficiently large $n$,
$$
\mathbb{P}(\tilde{B}_r(v^{(\ell)})\mid \bar{\mathcal F}_r)\ge 1-\exp(-n^{\varepsilon_4/10}),
$$
and $\tilde{B}_r(v^{(\ell)})$ implies that $v^{(\ell)}$ is $(\frac14 n^{\varepsilon_3},n^{\varepsilon_2})$-HS-lit with respect to $HS(v^{(\ell)})$ at time $t_{r+1}$. Therefore,
$$
N_r^{(\ell)}\subseteq (\tilde{B}_r(v^{(\ell)}))^{\mathrm{c}}.
$$
Let
\[
N_r=\sum_{v^{(\ell)}\in \tilde{L}^{(1)}_r}\mathbf{1}\{N_r^{(\ell)}\},
\qquad
\tilde{N}_r=\sum_{v^{(\ell)}\in \tilde{L}^{(1)}_r}\mathbf{1}\{(\tilde{B}_r(v^{(\ell)}))^{\mathrm{c}}\}.
\]
For $\ell_1\ne \ell_2$, the $\sigma$-fields $\mathcal{G}(HS(v^{(\ell_1)}))$ and $\mathcal{G}(HS(v^{(\ell_2)}))$ are independent. Hence the events $(\tilde{B}_r(v^{(\ell_1)}))^{\mathrm{c}}$ and $(\tilde{B}_r(v^{(\ell_2)}))^{\mathrm{c}}$ are conditionally independent given $\bar{\mathcal F}_r$. Therefore,
\[
N_r\le \tilde N_r \le_{\mathrm{st}} \mathrm{Binomial}(\tilde W^{(1)}_r,\,\exp(-n^{\varepsilon_4/10})).
\]
Since $\tilde{W}^{(1)}_r\le \lceil n^{1-\delta'}\rceil$ for all $r$,
\begin{equation}\label{eq:thm1_supermartingaleNr}
  \mathbb{E}\!\left[e^{\tilde N_r}\mid \bar{\mathcal{F}}_r\right]
\le \big(1-\exp(-n^{\varepsilon_4/10})+e\exp(-n^{\varepsilon_4/10})\big)^{\lceil n^{1-\delta'}\rceil}
\le 1+o(1),
\end{equation}
and
\begin{equation}\label{eq:thm1_supermartingaleNr2}
    \mathbb{P}(\tilde{N}_r=0\mid \bar{\mathcal{F}}_r) \ge (1-\exp(-n^{\varepsilon_4/10}))^{\lceil n^{1-\delta'}\rceil}\ge 1-\lceil n^{1-\delta'}\rceil\exp(-n^{\varepsilon_4/10})\ge 1-o(1).
\end{equation}

Choose any vertex \(v^{(x)}\notin  \tilde{L}^{(1)}_r\) according to a fixed deterministic rule, whenever $\tilde{W}_r<\lceil n^{1-\delta'}\rceil$. By Lemma~\ref{lemma_infectionspreadthroughhierarchicalstars}, there exists an event $\tilde A_r$ such that, for all sufficiently large $n$, the following hold:
\begin{enumerate}
    \item[\normalfont (i)] On $\tilde A_r \cap \{\tilde W^{(1)}_r\ge 1\}$, the vertex $v^{(x)}$ is $(\frac14 n^{\varepsilon_3},n^{\varepsilon_2})$-HS-lit with respect to $HS(v^{(x)})$ at time~$t_{r+1}$;
    \item[\normalfont (ii)]
    We have
    \begin{equation}\label{eq:thm1_supermartingaleAr1}
    \mathbb{P}(\tilde A_r\mid\bar{\mathcal{F}}_r)\ \ge\ 1-\exp(-n^{\varepsilon_4/10})\ge 1-o(1).
\end{equation}
\end{enumerate}
Define
\[
\tilde L^{(1)}_{r+1}:=
\begin{cases}
 \tilde{L}^{(1)}_r \backslash \bigcup_{v^{(\ell)}\in \tilde{L}^{(1)}_r: (\tilde{B}_r(v^{(\ell)}))^{\mathrm{c}}\text{ occurs}}\{v^{(\ell)}\} , & \text{if } \{\tilde N_r>0\} \text{ occurs},\\
 \tilde{L}^{(1)}_{r}\cup \{v^{(x)}\} , & \text{if } \tilde A_r\cap \{\tilde{N}_r=0 \} \text{ occurs},\\
\tilde L^{(1)}_r, & \text{otherwise}.
\end{cases}\ .
\]
We claim that if $\tilde L^{(1)}_r\subseteq L_r$ and $\tilde W^{(1)}_r\ge 1$, then $\tilde L^{(1)}_{r+1}\subseteq L_{r+1}$.
\begin{itemize}
    \item If $\tilde N_r>0$ occurs, then for any $v^{(\ell)}\in \tilde{L}^{(1)}_r$ such that $(\tilde{B}_r(v^{(\ell)}))^{\mathrm{c}}$ does not occur, we have that $v^{(\ell)}$ is $(\frac14 n^{\varepsilon_3},n^{\varepsilon_2})$-HS-lit with respect to $HS(v^{(\ell)})$ at time $t_{r+1}$, hence $v^{(\ell)}\in L_{r+1}$. Therefore $\tilde{L}^{(1)}_{r+1}\subseteq L_{r+1}$.
    \item If $\tilde A_r \cap \{\tilde{N}_r=0 \}$ occurs, then $v^{(x)}$ is $(\frac14 n^{\varepsilon_3},n^{\varepsilon_2})$-HS-lit with respect to $HS(v^{(x)})$ at time $t_{r+1}$, hence $v^{(x)}\in L_{r+1}$. Since also $\tilde N_r=0$, every vertex of $\tilde L^{(1)}_r$ remains in $L_{r+1}$. Therefore $\tilde{L}^{(1)}_{r+1}\subseteq L_{r+1}$.
    \item Otherwise, we have $\tilde N_r=0$ and $\tilde A_r$ does not occur. In this case, every vertex $v^{(\ell)}\in \tilde L^{(1)}_r$ remains $(\frac14 n^{\varepsilon_3},n^{\varepsilon_2})$-HS-lit with respect to $HS(v^{(\ell)})$ at time $t_{r+1}$, hence belongs to $L_{r+1}$. Therefore $\tilde L^{(1)}_{r+1}=\tilde L^{(1)}_r\subseteq L_{r+1}$.
\end{itemize}
Since $\tilde L^{(1)}_0\subseteq L_0$, it follows by induction that $\tilde{L}^{(1)}_r\subseteq L_r$ for all $r$ before the first time at which $\tilde W^{(1)}_r=0$. Now let
\[
\tilde W^{(1)}_{r+1}:=|\tilde{L}^{(1)}_{r+1}|=
\begin{cases}
 \tilde{W}^{(1)}_r -\tilde{N}_r , & \text{if } \{\tilde N_r>0\} \text{ occurs},\\
 \tilde{W}^{(1)}_{r}+1 , & \text{if } \tilde A_r\cap \{\tilde{N}_r=0 \} \text{ occurs},\\
\tilde W^{(1)}_r, & \text{otherwise}.
\end{cases}\ .
\]
Then $\tilde{W}^{(1)}_r\le W_r$ for all $r$ before the first time at which $\tilde W^{(1)}_r=0$.
Let $X_r:=\tilde{W}^{(1)}_{r+1}-\tilde{W}^{(1)}_r$. Conditioning on $\bar{\mathcal{F}}_r$, on the event $\tilde W^{(1)}_r\in [1, \lceil n^{1-\delta}\rceil)$, we obtain
\begin{align*}
\mathbb{E}\!\left[e^{-X_r}\mid \bar{\mathcal{F}}_r\right]
&\le \sum_{m=1}^{\tilde W^{(1)}_r} e^{m}\,\mathbb{P}(\tilde{N}_r=m\mid \bar{\mathcal{F}}_r)
  +1\cdot \mathbb{P}((\tilde A_r \cap \{\tilde{N}_r=0 \})^{\mathsf c}\mid \bar{\mathcal{F}}_r) \\
&\qquad
  +e^{-1}\cdot \mathbb{P}(\tilde A_r \cap \{\tilde{N}_r=0 \}\mid \bar{\mathcal{F}}_r)\\
&=\mathbb{E}\!\left[e^{\tilde{N}_r}\mid \bar{\mathcal{F}}_r\right]
  -(1-e^{-1})\,\mathbb{P}(\tilde A_r \cap \{\tilde{N}_r=0 \}\mid \bar{\mathcal{F}}_r).
\end{align*}
Together with \eqref{eq:thm1_supermartingaleNr}, \eqref{eq:thm1_supermartingaleNr2}, and \eqref{eq:thm1_supermartingaleAr1}, this shows that for all sufficiently large $n$, on the event $\tilde W^{(1)}_r\ge 1$,
\[
\mathbb{E}\!\left[e^{-X_r}\mid \bar{\mathcal{F}}_r\right] < 1.
\]
Since $\tilde W^{(1)}_r$ is $\bar{\mathcal F}_r$-measurable and $\exp(-\tilde W^{(1)}_{r+1})=\exp(-\tilde W^{(1)}_r)\exp(-X_r)$, we have
\[
\mathbb E\!\left[\exp(-\tilde W^{(1)}_{r+1})\mid \bar{\mathcal F}_r\right]\le \exp(-\tilde W^{(1)}_r)
\]
on the event $\{\tilde W^{(1)}_r\in[1,\tfrac34 n^{1-\delta'}]\}$.
Define
\[
\tau_*(1):=\inf\{r\ge 0:\ \tilde W^{(1)}_r\notin [1,\,\tfrac{3}{4} n^{1-\delta'}]\},\qquad D_-^{(1)}:=\{\tau_*(1)<\infty,\ \tilde W^{(1)}_{\tau_*(1)}=0\}.
\]
Set $Y^{(1)}_r:=\exp(-\tilde W^{(1)}_{r\wedge\tau_*(1)})$. The preceding local estimate implies $\mathbb E[Y^{(1)}_{r+1}\mid \bar{\mathcal F}_r]\le Y^{(1)}_r$. Hence, for every $R\ge1$, $\mathbb E[Y^{(1)}_R]\le Y^{(1)}_0=\exp(-\tilde W^{(1)}_0)$. On $D_-^{(1)}\cap\{\tau_*(1)\le R\}$, we have $Y^{(1)}_R=1$. Therefore,
\[
\mathbb P(D_-^{(1)}\cap\{\tau_*(1)\le R\})\le \exp(-\tilde W^{(1)}_0)\le \exp(-\lceil\tfrac12 n^{1-\delta'}\rceil).
\]
Letting $R\to\infty$ gives
\[
\mathbb P(D_-^{(1)})\le \exp(-\lceil\tfrac12 n^{1-\delta'}\rceil).
\]
Thus, with probability at least $1-\exp(-\lceil\tfrac12 n^{1-\delta'}\rceil)$, either $\tau_*(1)=\infty$ or $\tau_*(1)<\infty$ and $\tilde W^{(1)}_{\tau_*(1)}\ge \frac{3}{4}n^{1-\delta'}$.

We now iterate the same construction. If $\tau_*(1)=\infty$, then $\tilde W^{(1)}_r\ge1$ for all $r\ge0$, hence $W_r>0$ for all $r\ge0$. In this case, the infection persists for arbitrarily large times, and in particular $T>\exp(n^{1-\delta})$. 

Otherwise, on the event $(D_-^{(1)})^{\mathrm c}$, let $\tilde{L}^{(2)}_0$ be a subset of $\tilde L^{(1)}_{\tau_*(1)}$ such that $|\tilde{L}^{(2)}_0|=\lceil \frac{1}2{}n^{1-\delta'}\rceil+1$, according to a fixed deterministic rule, and let $\tilde{W}^{(2)}_0:=|\tilde{L}^{(2)}_0|$. Then define $\tilde{L}^{(2)}_r,\tilde{W}^{(2)}_r$ by the same inductive construction above. Since $\tilde{L}^{(1)}_{\tau_*(1)}\subseteq L_{\tau_*(1)}$, we have $\tilde{L}^{(2)}_r\subseteq L_{\tau_*(1)+r}$ for all $r$ before the first time at which $\tilde W^{(2)}_r=0$.
Set
\[
\tau_*(2):=\inf\{ r\ge 0: \tilde{W}^{(2)}_r\notin [1,\tfrac{3}{4}n^{1-\delta'}]\},
\qquad
D_-^{(2)}:=\{\tau_*(2)<\infty,\ \tilde{W}^{(2)}_{\tau_*(2)}=0\}.
\]
The same local estimate and stopped-process argument give
\[
\mathbb P(D_-^{(2)}\mid \mathcal F_{t_{\tau_*(1)}})\le \exp(-\lceil\tfrac12 n^{1-\delta'}\rceil)
\]
on $(D_-^{(1)})^{\mathrm c}\cap\{\tau_*(1)<\infty\}$.

Proceeding inductively, for each $i\ge 1$, if none of $D_-^{(1)},\dots,D_-^{(i)}$ occurs and $\tau_*(1),\dots,\tau_*(i)<\infty$, define $\tilde{L}^{(i+1)}_0$ to be any subset of $\tilde L^{(i)}_{\tau_*(i)}$ such that $|\tilde{L}^{(i+1)}_0|=\lceil\frac{1}{2}n^{1-\delta'}\rceil+1$, and let $\tilde{W}^{(i+1)}_0:=|\tilde{L}^{(i+1)}_0|$. Then define $\tilde{L}^{(i+1)}_r,\tilde{W}^{(i+1)}_r$ by the same inductive construction above. Since $\tilde{L}^{(i)}_{\tau_*(i)}\subseteq L_{\sum_{j=1}^i\tau_*(j)}$, we have $\tilde{L}^{(i+1)}_r\subseteq L_{\sum_{j=1}^i\tau_*(j)+r}$ for all $r$ before the first time at which $\tilde W^{(i+1)}_r=0$.
Set
\[
\tau_*(i+1):=\inf\{ r\ge 0: \tilde{W}^{(i+1)}_r\notin [1,\tfrac{3}{4}n^{1-\delta'}]\},
\qquad
D_-^{(i+1)}:=\{\tau_*(i+1)<\infty,\ \tilde{W}^{(i+1)}_{\tau_*(i+1)}=0\}.
\]
Again, on the event that none of $D_-^{(1)},\dots,D_-^{(i)}$ occurs and $\tau_*(1),\dots,\tau_*(i)<\infty$, the same argument gives
\[
\mathbb P(D_-^{(i+1)}\mid \mathcal F_{t_{\sum_{j=1}^i\tau_*(j)}})\le \exp(-\lceil\tfrac12 n^{1-\delta'}\rceil).
\]
Let $M:=\lceil\exp(n^{1-\delta})\rceil$. By a union bound over the possible lower-exit events,
$$
\mathbb P\Big(\bigcup_{i=1}^{M}D_-^{(i)}\Big)\le M\exp\!\Big(-\tfrac{1}{2}n^{1-\delta'}\Big)\le \exp(-n^{1-\delta})
$$
for all sufficiently large $n$, since $\delta=2\delta'$. On the complementary event, either some $\tau_*(i)=\infty$ for $i\le M$, in which case $W_r>0$ for all later epochs and hence $T\ge t_M$, or else $\tau_*(i)<\infty$ for every $i=1,\dots,M$, in which case $\tilde W^{(i)}_{\tau_*(i)}\ge \frac{3}{4}n^{1-\delta'}$ for every $i=1,\dots,M$ and in particular
$$
W_{\sum_{j=1}^{M}\tau_*(j)}=\left|L_{\sum_{j=1}^{M}\tau_*(j)}\right|\ge \frac{3}{4}n^{1-\delta'}>0.
$$
Since $\tilde W_{0}^{(i)}=\lceil\frac{1}{2}n^{1-\delta'}\rceil+1\in [1, \frac{3}{4}n^{1-\delta'})$, we have $\tau_*(i)\ge 1$ whenever $\tau_*(i)<\infty$. Hence, in the second case,
$$
\sum_{j=1}^{M}\tau_*(j)\ge M,
$$
and therefore
$$
t_{\sum_{j=1}^{M}\tau_*(j)}\ge t_M=M\exp(n^{\varepsilon_3/2})> M=\exp(n^{1-\delta}).
$$
In either case, $T>\exp(n^{1-\delta})$ on the complement of $\bigcup_{i=1}^{M}D_-^{(i)}$ and on the event $F_n$. Therefore,
$$
\mathbb{P}(T>\exp(n^{1-\delta}))\ge 1-\mathbb{P}(F_n^\mathrm{c})-\exp(-n^{1-\delta}).
$$
Since $\mathbb{P}(F_n^\mathrm{c})\to0$, this completes the proof.
\end{proof}

Since the proof of Theorem~\ref{thm_main1} uses the randomness of the power-law graph only through the existence of many disjoint hierarchical-stars and the logarithmic diameter bound, the same argument gives the following deterministic-graph version. The proof is identical to the proof of Theorem~\ref{thm_main1}, with the event $F_n$ replaced by the two deterministic assumptions in the statement.

\begin{proposition}\label{prop:deterministic_many_hierarchicalstars}
Fix $\lambda,\rho, c, \varepsilon>0$ and $\delta\in(0,1)$. For all sufficiently large $n$, let $G=(V,E)$ be any deterministic graph on $n$ vertices satisfying the following two conditions:
\begin{enumerate}[\normalfont (i)]
    \item $G$ contains at least $\lceil n^{1-\delta}\rceil$ vertex-disjoint $(n^{\varepsilon},n^{\varepsilon})$ hierarchical-stars;
    \item $\operatorname{diam}(G)\le c\log n$.
\end{enumerate}
Consider the process $\mathrm{SIRS}(G,\lambda,\rho)$ initialized with all vertices infected. Then
\[
\mathbb P\big(T\ge \exp(n^{1-\delta})\big)\ge 1-\exp(-n^{1-\delta}).
\]
Moreover, if the process is initialized from a configuration in which the center of at least one of these $(n^{\varepsilon},n^{\varepsilon})$ hierarchical-stars is infected, then there exists a constant $\varepsilon'>0$ such that
\[
\mathbb P\big(T\ge \exp(n^{1-\delta})\big)\ge 1-n^{-\varepsilon'}.
\]
\end{proposition}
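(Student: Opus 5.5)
The plan is to rerun the proof of Theorem~\ref{thm_main1} on the fixed graph $G$, replacing the random event $F_n$ by hypotheses (i) and (ii). Choose $\delta'$ with $\delta/2\le\delta'<\delta$ (say $\delta'=\delta/2$, as before). Fix $\lceil n^{1-\delta'}\rceil\le\lceil n^{1-\delta}\rceil$ of the given vertex-disjoint $(n^{\varepsilon},n^{\varepsilon})$ hierarchical-stars; this uses the weaker count, so (i) is more than enough. Set $\varepsilon_1=\varepsilon$ and define $\varepsilon_2,\varepsilon_3,\varepsilon_4$ as in Lemma~\ref{lemma_infectionspreadthroughhierarchicalstars}. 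All dynamical inputs are statements about arbitrary graphs containing a designated hierarchical-star, with events measurable in $\mathcal G(HS(v))$. These are Proposition~\ref{hierarchicalstar_thm2}, Lemma~\ref{lemma_hierarchicalstar_infectedtoHSlit}, and Lemma~\ref{lemma_infectionspreadthroughhierarchicalstars}, the last of which needs only $\mathrm{diam}(G)\le c\log n$. So they apply verbatim, and disjointness makes the local events for distinct stars independent.

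\textbf{Fully infected start.} Every center $v^{(\ell)}$ is infected at time $0$. By Lemma~\ref{lemma_hierarchicalstar_infectedtoHSlit}, each center is $(n^{\varepsilon_3},n^{\varepsilon_1/24})$-HS-lit at time $1$ with probability at least $1-n^{-\varepsilon/4}$. These events are independent across the disjoint stars, so a Chernoff bound gives at least $\tfrac34 n^{1-\delta'}$ HS-lit centers at time $1$, except with probability $\exp(-cn^{1-\delta'})$.

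Restarting at time $1$, I run the comparison process $\tilde W^{(i)}_r$ and the supermartingale $\exp(-\tilde W^{(i)}_{r\wedge\tau_*(i)})$, with epochs of length $\exp(n^{\varepsilon_3/2})$. The only change is that the initial HS-lit level is now the stronger one. Proposition~\ref{hierarchicalstar_thm2} still applies, since $(n^{\varepsilon_3},n^{\varepsilon_1/24})$-HS-lit implies $(k,n^{\varepsilon_2})$-HS-lit for an admissible $k$.

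Each cycle exits through the lower end with probability at most $\exp(-\tfrac12 n^{1-\delta'})$. A union bound over $M=\lceil\exp(n^{1-\delta})\rceil$ cycles, together with the Chernoff error, gives
\[
1-\exp(n^{1-\delta}-\tfrac12 n^{1-\delta'})-\exp(-cn^{1-\delta'})\ge 1-\exp(-n^{1-\delta})
\]
for large $n$, since $1-\delta'>1-\delta$.

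\textbf{Single infected center.} Only one center $v^{(\ell_0)}$ is infected. Lemma~\ref{lemma_hierarchicalstar_infectedtoHSlit} makes it HS-lit at time $1$ with probability at least $1-n^{-\varepsilon/4}$. Then Lemma~\ref{lemma_infectionspreadthroughhierarchicalstars} with $\mathcal I=\{\ell_0\}$ makes all $\lceil n^{1-\delta'}\rceil$ centers $(\tfrac14 n^{\varepsilon_3},n^{\varepsilon_2})$-HS-lit at time $1+\exp(n^{\varepsilon_3/2})$, except with probability $\exp(-n^{\varepsilon_4/20})$. From there the same supermartingale iteration as in Theorem~\ref{thm_main1} applies, and the total failure probability is at most $n^{-\varepsilon/4}+\exp(-n^{\varepsilon_4/20})+\exp(-n^{1-\delta})\le n^{-\varepsilon'}$ with $\varepsilon'=\varepsilon/5$.

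\textbf{Main obstacle.} The delicate step is reconciling the exponent in the first claim. The proof of Theorem~\ref{thm_main1} only gives $\exp(-\tfrac12 n^{1-\delta'})$ per cycle. If one insisted on using exactly $\lceil n^{1-\delta}\rceil$ stars, the union over $\exp(n^{1-\delta})$ cycles would fail. I handle this by running on $n^{1-\delta'}$ stars with $\delta'$ slightly less than $\delta$, and taking $M=\exp(n^{1-\delta})$ cycles. If the count in (i) were only $n^{1-\delta}$, I would instead aim for the survival bound $\exp(n^{1-\delta}/4)$ and adjust $\delta$ accordingly.
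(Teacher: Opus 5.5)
There is a genuine gap, and it sits exactly at the step you call the main obstacle. For $\delta'<\delta$ you have $1-\delta'>1-\delta$, hence $\lceil n^{1-\delta'}\rceil\ge\lceil n^{1-\delta}\rceil$, not $\le$. So ``fixing $\lceil n^{1-\delta'}\rceil$ of the given stars'' requires \emph{more} disjoint hierarchical-stars than hypothesis (i) provides. Your own union bound $\exp(n^{1-\delta}-\tfrac12 n^{1-\delta'})\to 0$ relies on precisely $n^{1-\delta'}\gg n^{1-\delta}$.

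With only $N:=\lceil n^{1-\delta}\rceil$ stars, the supermartingale $\exp(-\tilde W_{r\wedge\tau_*})$ from the proof of Theorem~\ref{thm_main1} bounds each lower exit only by $\exp(-\tfrac12 N)$. A union bound over $\lceil\exp(n^{1-\delta})\rceil$ cycles then gives nothing. Your fallback (survival time $\exp(n^{1-\delta}/4)$) is weaker than the statement. Your second claim inherits the same defect, since after Lemma~\ref{lemma_infectionspreadthroughhierarchicalstars} you run the same iteration.

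Be aware that the paper's one-line justification (rerun Theorem~\ref{thm_main1} with $F_n$ replaced by (i)--(ii)) glosses over the same mismatch. That proof works with $\lceil n^{1-\delta/2}\rceil$ stars and uses $\delta=2\delta'$ in its final union bound, so you cannot simply defer to it.

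What is missing is a steeper exponential tilt, which the per-star estimates easily afford. Each star in $\tilde L_r$ fails during an epoch with conditional probability at most $p:=\exp(-n^{\varepsilon_4/10})$, conditionally independently across the disjoint stars. Hence for a fixed $\theta\ge 4$ you still have $\mathbb E[e^{\theta\tilde N_r}\mid\bar{\mathcal F}_r]\le(1+(e^\theta-1)p)^N=1+o(1)$. The same one-step computation as in the paper then gives
\[
\mathbb E\bigl[e^{-\theta X_r}\mid\bar{\mathcal F}_r\bigr]\le\mathbb E\bigl[e^{\theta\tilde N_r}\mid\bar{\mathcal F}_r\bigr]-(1-e^{-\theta})\,\mathbb P\bigl(\tilde A_r\cap\{\tilde N_r=0\}\mid\bar{\mathcal F}_r\bigr)\le e^{-\theta}+o(1)<1 .
\]
Thus $\exp(-\theta\tilde W_{r\wedge\tau_*})$ is a supermartingale on $[1,\tfrac34 N]$. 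Each cycle started at $\lceil\tfrac12 N\rceil+1$ exits at $0$ with probability at most $\exp(-\theta(\tfrac12 N+1))\le\exp(-2N-4)$. Summing over $\lceil\exp(n^{1-\delta})\rceil$ cycles gives total error at most $2\exp(n^{1-\delta}-2N-4)\le\exp(-n^{1-\delta})$, using only the stars that (i) guarantees.

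With this change the rest of your outline goes through for both claims. Incidentally, your Chernoff step at time $1$ in the fully infected case is unnecessary. At time $0$ every first-layer vertex is infected, so every center is already HS-lit at any required level.
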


We next prove Theorem~\ref{thm_main2}. In contrast with Theorem~\ref{thm_main1}, the process now starts from a much weaker initial condition, so the first step is to show that with a probability bounded below by a positive constant, the infection reaches a sufficiently large-degree vertex. Lemmas~\ref{lemma_Mhierarchicalstar} and~\ref{lemma_logMline} provide the graph-theoretic mechanism for this bootstrap. Once such a vertex is reached, the local hierarchical-star dynamics imply that infection survives around it for a stretched-exponential amount of time, which is enough to transfer infection to one of the large hierarchical-stars furnished by Proposition~\ref{prop:existsmanyhierarchicalstar}. From that point onward, Lemma~\ref{lemma_infectionspreadthroughhierarchicalstars} and the proof of Theorem~\ref{thm_main1} apply.

\begin{proof}[Proof of Theorem~\ref{thm_main2}]
  Let 
   $\delta':=\delta/2, \varepsilon:=\frac{\delta'}{400\tau^3}, \varepsilon_1=\varepsilon, \varepsilon_3=\frac{1}{800}\min\{\rho,\lambda,1\}\varepsilon$. By Proposition~\ref{prop:existsmanyhierarchicalstar}, there exists a constant $c_1>0$ such that, for all sufficiently large $n$, with probability at least $1-c_1n^{-\min\{\tau-2, 1\}}(\log n)^{\mathbf{1}\{\tau=3\}}$, the graph $G$ contains at least $\lceil n^{1-\delta'}\rceil$ vertex-disjoint $(n^\varepsilon,n^\varepsilon)$ hierarchical-stars, and denote their centers by
$
v^{(1)},\dots,v^{(\lceil n^{1-\delta'}\rceil)}
$. By Lemma~\ref{lemma_diameter_log_n}, there exists a constant $c_2>0$ such that $\mathbb{P}(\mathrm{diam}(G)\le c_2\log n)\to 1$. We condition throughout on the intersection of these two events, denoted by $F_n$. Then
\begin{equation}\label{eq:main2_Fn}
\mathbb{P}(F_n)\to 1.
\end{equation}
By Lemma~\ref{lemma_Mhierarchicalstar}, Lemma~\ref{lemma_logMline}, Lemma~\ref{lemma_hierarchicalstar_infectedtoHSlit}, and Proposition~\ref{hierarchicalstar_thm2}, there exist constants
$$
M^{(1)}, M^{(2)}, M^{(3)}\ge 1, \qquad \alpha_1,\alpha_2,\alpha_3,\alpha_4, \alpha_5>0
$$
depending only on $\lambda,\rho,\tau$ such that the following hold for all sufficiently large $n$.
\begin{enumerate}
    \item[\normalfont (i)] If $v$ is a vertex with degree at least $M\ge M^{(1)}$, then with probability at least
$
1-\exp(-M^{\alpha_2})
$,
the vertex $v$ is the center of a hierarchical-star $HS(v)$ with parameters $(M^{\alpha_1},M^{\alpha_1})$; denote this event by $A_v$.
    \item[\normalfont (ii)] If $v$ is infected and is the center of such a hierarchical-star, then with probability at least
$
1-M^{-\alpha_3}
$,
the vertex $v$ becomes $(M^{\alpha_3},M^{\alpha_3})$-HS-lit with respect to that hierarchical-star before time $1$; denote this event by $B_v$.
    \item[\normalfont (iii)] If $v$ is $(M^{\alpha_3},M^{\alpha_3})$-HS-lit with respect to that hierarchical-star, then with probability at least
$
1-\exp(-M^{\alpha_4})
$,
there exist random times $(T_j^{(v)})_{j=1}^{\lceil M^{\alpha_4}\rceil+1}$ such that $v$ is infected at time $T_j^{(v)}$, $T_{j+1}^{(v)}-T_j^{(v)}\ge M^{\alpha_5}$ for every $j=1,\dots, \lceil M^{\alpha_4}\rceil$; denote this event by $C_v$.
\end{enumerate}
Let
$
M>\max\{M^{(1)},M^{(2)},M^{(3)}\}
$ be a constant, chosen sufficiently large later.
Since the degree distribution is power-law and $M$ is fixed, there exists a constant $p_0=p_0(M,\tau)>0$ such that for all sufficiently large $n$,
\begin{equation}\label{eq:main2_initial_degree}
\inf_{\mu_n}\mathbb{P}^*_{\mu_n}\big(\deg(v^*)\ge M\big)\ge p_0.
\end{equation}
Indeed, the choice of $v^*$ is uniform and independent of the base configuration sampled from $\mu_n$, and overwriting the state of $v^*$ by $I$ does not affect its degree.

Therefore, it suffices to prove that, starting from an infected vertex $v_1$ with degree at least $M$, the event $T>\exp(n^{1-\delta})$ happens with probability bounded below by a positive constant, uniformly in $n$.

\noindent\textbf{Step 1: bootstrap along a degree ladder.}
Fix an infected vertex $v_1$ with $\deg(v_1)\ge M$. By Lemma~\ref{lemma_logMline}, for all sufficiently large $n$, with probability at least $1/3$, there exist vertices
$
v_2,\dots,v_k$,  $k:=\left\lfloor \frac{\log n}{\log M}\cdot \min \left\{\frac{1}{32\tau}, \frac{\tau-2}{32(\tau-1)} \right\} \right\rfloor
$, 
such that for every $i=1,\dots,k-1$,
$
d(v_i)\ge M^i,  \mathrm{dist}(v_i,v_{i+1})\le 4\tau(i+1)\log M.
$
Let $E$ denote this event.
For each $i=1,\dots,k-1$, let $A_{v_i}$ be the event that $v_i$ is the center of a hierarchical-star with parameters $((M^i)^{\alpha_1},(M^i)^{\alpha_1})$. By enlarging $M$ if necessary, we have
\(
\sum_{i=1}^\infty \exp\big(-(M^i)^{\alpha_2}\big)\le \frac{1}{20}
\).
It then follows that
\begin{equation}\label{eq:main2_EA}
\mathbb{P}\Big(E\cap \bigcap_{i=1}^{k-1}A_{v_i}\Big)\ge \frac14
\end{equation}
for all sufficiently large $n$.
Now condition on the event $E\cap \bigcap_{i=1}^{k-1}A_{v_i}$. We claim that there exists a constant $q_1>0$, depending only on $\lambda,\rho,\tau,\delta$, such that with conditional probability at least $q_1$, the infection reaches the vertex $v_{k-1}$.

We first handle the initial step separately. Since $v_1$ is infected at time $0$, by the definition of the event $B_{v_1}$, with probability at least
$
1-M^{-\alpha_3}
$,
the vertex $v_1$ becomes $(M^{\alpha_3},M^{\alpha_3})$-HS-lit with respect to $HS(v_1)$ before time $1$. Therefore, in order to propagate the infection through the degree ladder, it suffices to estimate, for each $i=1,\dots,k-2$, the probability that the next vertex $v_{i+1}$ eventually becomes $((M^{i+1})^{\alpha_3},(M^{i+1})^{\alpha_3})$-HS-lit with respect to $HS(v_{i+1})$ for some time.

Fix $i\in\{1,\dots,k-2\}$ and suppose that $v_i$ is $((M^i)^{\alpha_3},(M^i)^{\alpha_3})$-HS-lit with respect to $HS(v_i)$ at some time.  Let $L_i:=4\tau(i+1)\log M$, so that $\mathrm{dist}(v_i,v_{i+1})\le L_i$. Fix a shortest path
$
v_i=x_0,x_1,\dots,x_{L_i'}=v_{i+1}
$
with $L_i'\le L_i$. Applying the time-shifted version of the event $C_{v_i}$ from the first time at which $v_i$ becomes $((M^{i})^{\alpha_3},(M^{i})^{\alpha_3})$-HS-lit with respect to $HS(v_{i})$, there exist random times $(T_j^{(v_i)})_{j=1}^{\lceil (M^i)^{\alpha_4}\rceil+1}$ such that $v_i$ is infected at time $T_j^{(v_i)}$ for every $j=1,\dots,\lceil (M^i)^{\alpha_4}\rceil$, and $T_{j+1}^{(v_i)}-T_j^{(v_i)}\ge (M^i)^{\alpha_5}$. Since $(M^i)^{\alpha_5}>L_i+2$ for all sufficiently large constant $M$, the time intervals $[T_j^{(v_i)},T_j^{(v_i)}+L_i+2]$ are pairwise disjoint. Conditionally on the times $T_j^{(v_i)}$, these trials depend on disjoint increments of the modified Harris  construction and are therefore conditionally independent. We use the first
$
N_i:=\lceil (M^i)^{\alpha_4}\rceil
$
such intervals as independent trials.

Consider one such trial, starting at time $T_j^{(v_i)}$. During the first $L_i'+1$ units of time, by repeated applications of Lemma~\ref{lemma_edge_transmission_one} along the path $x_0,\dots,x_{L_i'}$, there is probability at least
$
c_{\mathrm{edge}}^{\,L_i'+1}\ge c_{\mathrm{edge}}^{\,L_i+1}
$
that the infection is transmitted successively from $v_i$ to $v_{i+1}$, so that $v_{i+1}$ is infected by time $T_j^{(v_i)}+L_i'+1\le T_j^{(v_i)}+L_i+1$. Conditional on this, during the remaining unit of the trial interval, Lemma~\ref{lemma_hierarchicalstar_infectedtoHSlit} implies that, with probability at least
$
1-(M^{i+1})^{-\alpha_3},
$
the vertex $v_{i+1}$ becomes $((M^{i+1})^{\alpha_3},(M^{i+1})^{\alpha_3})$-HS-lit with respect to $HS(v_{i+1})$. Hence, after enlarging $M$ if necessary, the probability that a given trial succeeds is at least
$
\frac12 c_{\mathrm{edge}}^{\,L_i+1}.
$

Therefore the probability that all $N_i$ trials fail is at most
$$
\left(1-\frac12 c_{\mathrm{edge}}^{\,L_i+1}\right)^{N_i}
\le \exp\!\left(-\frac12 c_{\mathrm{edge}}^{\,L_i+1}N_i\right).
$$
Combining this with the probability of the event $C_{v_i}$, we conclude that, conditional on $v_i$ being $((M^i)^{\alpha_3},(M^i)^{\alpha_3})$-HS-lit at some time, the probability that $v_{i+1}$ becomes $((M^{i+1})^{\alpha_3},(M^{i+1})^{\alpha_3})$-HS-lit at some later time is at least
$$
1-\exp(-(M^i)^{\alpha_4})-\exp\!\left(-\frac12 c_{\mathrm{edge}}^{\,L_i+1}N_i\right).
$$
By choosing $M$ large enough, we may assume that
\begin{equation}\label{eq:main2_sum_small}
M^{-\alpha_3}+\sum_{i\ge 1}\left(\exp(-(M^i)^{\alpha_4})+\exp\!\left(-\frac12 c_{\mathrm{edge}}^{\,L_i+1}N_i\right)\right)\le \frac15.
\end{equation}
It follows that, on the event $E\cap \bigcap_{i=1}^{k-1}A_{v_i}$, the probability that $v_{k-1}$ becomes infected at some time is at least $4/5$. Combining this with \eqref{eq:main2_EA}, we conclude that there exists a constant $q_1>0$ such that
\begin{equation}\label{eq:main2_hit_vk}
\mathbb{P}\Bigl(\text{$v_{k-1}$ becomes infected  at some time}\Bigr)\ge q_1
\end{equation}
for all sufficiently large $n$.

By construction,
$
d(v_{k-1})\ge M^{k-1}
$.
Since
$
k:=\left\lfloor \frac{\log n}{\log M}\cdot \min \left\{\frac{1}{32\tau}, \frac{\tau-2}{32(\tau-1)} \right\} \right\rfloor
$,
for all sufficiently large $n$ we have
\begin{equation}\label{eq:main2_vk_degree}
d(v_{k-1})\ge n^{\min\{\frac{1}{64\tau}, \frac{\tau-2}{64(\tau-1)} \} }.
\end{equation}

\noindent\textbf{Step 2: from polynomial degree to the large-scale metastable regime.}
Suppose that at some time an infected vertex $x$ satisfies $\deg(x)\ge n^{\min\{\frac{1}{64\tau}, \frac{\tau-2}{64(\tau-1)}\}}$. By Lemma~\ref{lemma_Mhierarchicalstar}, with probability tending to $1$, the vertex $x$ is the center of a hierarchical-star with parameters $(n^{\beta_1},n^{\beta_1})$ for some constant $\beta_1>0$ depending only on $\tau$. By Lemma~\ref{lemma_hierarchicalstar_infectedtoHSlit} and Proposition~\ref{hierarchicalstar_thm2}, there exist constants $\beta_2,\beta_3>0$ such that, with probability tending to $1$, there exist random times $(T_j^{(x)})_{j=1}^{\lceil\exp(n^{\beta_2})\rceil+1}$ such that $x$ is infected at time $T_j^{(x)}$ and $T_{j+1}^{(x)}-T_j^{(x)}\ge n^{\beta_3}$ for every $j=1,\dots, \lceil\exp(n^{\beta_2})\rceil$.

On the event $F_n$, the graph diameter is at most $c_2\log n\le n^{\beta_3}$ for all  sufficiently large $n$. Fix one of the $(n^{\varepsilon}, n^{\varepsilon})$ hierarchical-star center to be $v^{(1)}$. Using the times $(T_j^{(x)})_{j=1}^{\lceil\exp(n^{\beta_2})\rceil+1}$ as disjoint trials and arguing as in the previous degree-ladder step, the infection can repeatedly attempt to travel from $x$ to $v^{(1)}$. Each attempt has probability at least $c_{\mathrm{edge}}^{c_2\log n+1}=n^{-C_3}$ for some constant $C_3>0$, independently conditionally on $(T_j^{(x)})_{j=1}^{\lceil\exp(n^{\beta_2})\rceil+1}$, while the number of available attempts is exponential in $n^{\beta_2}$. Therefore, with probability tending to $1$, $v^{(1)}$ gets infected for some time.

Once $v^{(1)}$ is infected, Lemma~\ref{lemma_hierarchicalstar_infectedtoHSlit} implies that, with probability tending to $1$, it becomes $(n^{\varepsilon_3},n^{\varepsilon_1/24})$-HS-lit with respect to its own hierarchical-star. Then, by Lemma~\ref{lemma_infectionspreadthroughhierarchicalstars}, with probability tending to $1$, all of the $\lceil n^{1-\delta'}\rceil$ vertex-disjoint $(n^{\varepsilon},n^{\varepsilon})$ hierarchical-stars become $(n^{\varepsilon_3},n^{\varepsilon_1/24})$-HS-lit with respect to their respective hierarchical-stars at some time. From that moment onward, exactly the same large-scale argument as in the proof of Theorem~\ref{thm_main1} applies. Therefore, conditional on the event that some vertex of degree at least $n^\varepsilon$ is infected at some time, we have
\begin{equation}\label{eq:main2_from_neps}
\mathbb{P}\big(T\ge \exp(n^{1-\delta})\big)\to 1.
\end{equation}
Combining \eqref{eq:main2_initial_degree}, \eqref{eq:main2_hit_vk}, \eqref{eq:main2_vk_degree},\eqref{eq:main2_from_neps}, and \eqref{eq:main2_Fn}, we obtain that there exists a constant $p>0$, depending only on $\lambda,\rho,\tau,\delta$, such that for all sufficiently large $n$,
\[
\inf_{\mu_n} \mathbb{P}^*_{\mu_n}\big(T\ge \exp(n^{1-\delta})\big)\ge p.
\]
This completes the proof.
\end{proof}

\bibliographystyle{abbrvnat}
\bibliography{refs}

\end{document}